\newenvironment{myabstract}{\par\noindent
{\bf Abstract . } \small }
{\par\vskip8pt minus3pt\rm}
\newcounter{item}[section]
\newcounter{kirshr}
\newcounter{kirsha}
\newcounter{kirshb}
\newenvironment{enumarab}{\setcounter{kirshb}{1}
\begin{list}{(\arabic{kirshb})}{\usecounter{kirshb}} }{\end{list}}
\newtheorem{theorem}{Theorem}[section]
\newtheorem{question}[theorem]{Question}
\newtheorem{lemma}[theorem]{Lemma}
\newtheorem{corollary}[theorem]{Corollary}
\newenvironment{demo}[1]{\noindent{\bf #1.}\upshape\mdseries}
{\nopagebreak{\hfill\rule{2mm}{2mm}\nopagebreak}\par\normalfont}
\theoremstyle{definition}
\newtheorem{definition}[theorem]{Definition}
\def\C{{\mathfrak{C}}}
\def\Fm{{\mathfrak{Fm}}}
\def\At{{\sf At}}
\def\Fr{{\mathfrak{Fr}}}
\def\Sg{{\mathfrak{Sg}}}
\def\Fm{{\mathfrak{Fm}}}
\def\A{{\mathfrak{A}}}
\def\B{{\mathfrak{B}}}
\def\C{{\mathfrak{C}}}
\def\D{{\mathfrak{D}}}
\def\M{{\mathfrak{M}}}
\def\Ig{{\mathfrak{Ig}}}
\def\Rd{{\mathfrak{Rd}}}
\def\(R)RA{{\bf (R)RA}}
\def\c #1{{\cal #1}}
\def\B{{\sf B}}
 \def\Cm{{\mathfrak{Cm}}}
\def\Rl{{\mathfrak{Rl}}}
\def\A{{\mathfrak{A}}}
\def\B{{\mathfrak{B}}}
\def\C{{\mathfrak{C}}}
\def\D{{\mathfrak{D}}}
\def\L{{\mathfrak{L}}}
\def\Bl{\mathfrak{Bl}}
\def\At{{\mathfrak{At}}}
\def\c #1{{\cal #1}}
\def\A{{\cal{A}}}
\def\B{{\mathfrak{B}}}
\def\C{{\mathfrak{C}}}
\def\D{{\mathfrak{D}}}
\def\P{{\mathfrak{P}}}
\def\F{{\mathfrak{F}}}
\def\c#1{{\mathcal #1}}
\def\Cm{{\mathfrak Cm}}
\def\Sg{{\mathfrak Sg}}
\def\A{{\mathfrak A}}
\def\F{{\mathfrak F}}
\def\At{{\sf At}}
\title{On the multi dimensional modal logic of substitutions}
\author{Tarek Sayed Ahmed and  Mohammad Assem}
\begin{document}
\maketitle

\begin{myabstract} We prove completeness, interpolation,
decidability and an omitting types theorem for certain multi dimensional modal logics where the states
are not abstract entities but have an inner structure. The states will be sequences.
Our approach is algebraic addressing (varieties generated by)
complex algebras of Kripke semantics for such logic. Those algebras, whose elements are sets of states
are common reducts of cylindric and polyadic
algebras.

\footnote{Mathematics Subject Classification. 03G15; 06E25

Key words: multimodal logic, substitution algebras, interpolation}
\end{myabstract}

\section{Introduction}

We study certain propositional multi dimensional modal logics.
These logics arise from the modal view of certain algebras that are cylindrifier free reducts of both cylindric algebras and
polyadic algebras of dimension $n$, $n\geq 2$.
We can view the class of representable such algebras from the perspective of (multi dimensional) modal
logic as complex algebras of certain relational structures. This approach was initiated by
Venema, by viewing quantifiers which are the most prominent citizens of first order logic as modalities.

Modal logic can be studied as fragments of first order logic. But one can turn the glass around
and examine first order logic as if it were a modal formalism. The basic idea enabling this perspective is that we may view
assignments the functions that give first order variables their value in a first order structure as states of a modal
model, and this makes standard first order connectives behave just
like modal diamonds and boxes. First order logic then forms an example of a multi dimensional modal system.
Multi dimensional modal logic is a branch of modal logic dealing with special relational structures
in which the states, rather than being abstract entities, have some inner structure.
These states are tuples over some base set, the domain of the first order structure.


S\'agi studied $n$ dimensional relational structures, referred to also as Kripke frames, of the form
$(V,  S_j^i)_{i,j\in n}$ where $V\subseteq {}^nU$ and for $x,y\in V$, $(x, y)\in S_j^i$, iff $x\circ [i/j]=y$. Here $[i/j]$ is the replacement
that sends $i$ to $j$ and is the identity map otherwise.
In this paper we study Kripke frames of the form $(V, S_{[i,j]})$ where
where $[i,j]$ is the transposition that swaps $i$ and $j$, $V\subseteq {}^nU$ and for $x,y\in V$, $(x, y)\in { S}_{[i,j]}$, iff $x\circ [i,j]=y$.
By dealing with such cases, the case when we have both kinds of substitutions corresponding to
replacements and transpositions cannot help but come to mind.
We shall show that this case, is also very interesting, and gives a sort of unifying framework for our investigations.
A Kripke frame is called square if $V={}^nU$ for some set $U$. The complex algebras of such frames are called full set algebras.
Subdirect products of such set algebras is called the class of representable algebras of dimension $n$.


S\'agi, answering a question posed by Andr\'eka,
proves that in the case of replacements, the class of representable algebras is a finitely axiomatizable quasi variety, that is {\it not} a variety,
and that the variety generated by this class (closing it under homomorphic images) is obtained by relativizing units to so-called
diagonalizable sets; $V$ is diagonalizable if whenever $s\in V$,
then $s\circ [i/j]\in V$. He also provides finite axiomatizations for both the quasivariety (by quasi-equations) and the generated variety (by equations).
This provides a strong completeness theorem for such modal logics:
There is a fairly simple finite Hilbert style proof system,
that captures all the valid multimodal formulas with the above Kripke semantics, when the assignments are only restricted to
diagonalizable sets. In other words, this modal logic is complete with respect to diagonalizable frames,
but not relative to square frames.

The idea of finding such an axiomatization is not too hard; it basically involves translating finite presentations
of the semigroup generated by replacements
to equations,  and further stipulating that the substitution operators on algebra are Boolean endomorphisms.

We show that in the case when we have, transpositions and replacements, the subdirect products
of complex algebras of square frames,
is a finitely axiomatizable variety,  while in the case when only transpositions are available,  this class is only a quasivariety, that is not
a variety. In all cases we obtain axiomatizations for the varieties in question (closing our quasivariety under homomorphic images in case of transpositions
only), by translating finite
presentations of the semigroup $^nn$, and the symmetric group $S_n.$

We can also view such logics as a natural fragment of $L_n$ the first order logic restricted to $n$ variables, that is strong enough to
describe certain combinatorial
properties.  Here we have finitely many variables, namely exactly $n$ variables, atomic formulas are of the form
$R(x_0,\ldots x_{n-1})$ with the variables occurring in their natural order.
The substitution operators are now viewed as unary connectives. In this case,
it is more appropriate to deal with square frames, representing ordinary models, and substitutions
have their usual meaning of substituting variables for free variables such that the substitution is free.

From the perspective of algebraic logic, we are dealing with a non-trivial instance of the so called representation problem, which roughly says
the following. Given a class $K$ of concrete algebras, like set algebras, can we find a simple, hopefully finite axiomatization by equations of $V(K)$,
the variety generated by $K$. The representation problem for Boolean algebras is completely settled by Stones theorem; every abstract Boolaen algebra is isomorphic to a
set algebra. The representation problem, for cylindric-like algebras is more subtle; indeed not every cylindric algebra is representable,
and any axiomatization of the variety generated by the set algebras of dimension $>2$ is highly complex.
A recent negative extremely strong result, proved by Hodkinson for cylindric algebras, is that
the it is undecidable whether a finite cylindric algebra is representable.
The representation problem have provoked extensive research,
and, indeed, it is still an active part of  algebraic logic.

In this paper we obtain a clean cut solution to the representation problem for several natural classes of set algebras, whose elements are sets of sets
endowed with concrete set theoretic operations, those of substitutions, and possibly diagonal elements.

We also discuss the infinite version of such logics.
Algebraically we deal with infinite dimensional algebras; and from the modal point of view we deal with infinitely many
modalities. In case of the presence of both transpositions and replacements, we show that like the finite dimensional case,
subdirect product of full set algebras forms a variety.

The dual facet of our logics (as multi dimensional modal logics and fragments of first order logic) enables us to go further in the analysis
and give two proofs that such logics have the Craig interpolation property
and enjoy other definability properties
like Beth definability and Robinson's joint consistency theorem. This works for all dimensions.

Our results readily follows from the fact that, in all cases, the variety generated by the set algebras has the superamalgamation property.
Our first proof, inspired by the 'first order view' is a Henkin construction. In this case, we show that the free algebras have the interpolation property.
The second proof inspired by the 'modal view' uses known
correspondence theorems between Kripke frames and complex algebras, and closure properties that Kripke frames should satisfy.

In the case of transpositions, for finite as well as for infinite dimensions, we
prove an omitting types theorem for countable languages using the Baire Category theorem.
We also show in this case that atomic algebras posses complete representations,
i.e representations preserving infinitary meets
carrying them to set-theoretic intersection.

Indeed, there are various types of representations in algebraic logic. Ordinary representations are just isomorphisms from Boolean algebras
with operators to a more concrete structure (having the same similarity type)
whose elements are sets endowed with set-theoretic operations like intersection and complementation.
Complete representations, on the other hand, are representations that preserve arbitrary conjunctions whenever defined.
The notion of complete representations has turned out to be very interesting for cylindric algebras, where it is proved
by Hirsch and Hodkinson that the class of completely representable algebras is not elementary.

The correlation of atomicity to complete representations has caused a lot of confusion in the past.
It was mistakenly thought for a long time, among algebraic logicians,  that atomic representable relation and cylindric algebras
are completely representable, an error attributed to Lyndon and now referred to as Lyndon's error. Here we show, that atomic algebras
{\it are} completely representable.

Another property that is important in algebraic logic generally, and particulary in cylindric and polyadic algebras is the question whether
the Boolean reducts of the free algebras are atomic or not. N\'emeti has shown that for cylindric algebras, this corresponds to a form of
Godel's incompleteness
theorem. More precisely, N\'emeti showed that finite variable fragments of first order logic enjoy a Godel's incompleteness theorem;
there are formulas that cannot be extended to complete recursive theories;
in the formula (free) algebras such formulas cannot be atoms, further they cannot be isolated from the bottom element by atoms.
In short the free algebras are not atomic.
Here we show, in sharp contrast to the cylindric case,  that finitely generated  algebras are finite, hence atomic. In particular,
our varieties are locally finite.

We learn from our results that subdirect products of full transposition set algebras, is only a quasivariety
(it is not closed under taking homomorphic images, theorem  \ref{not}), but the algebraic closure of this quasivariety
share a lot of properties with Boolean algebras; it is finitely axiomatizable,
theorem \ref{rep}, it is locally finite, theorem \ref{locallyfinite}, atomic algebras are completely representable, theorem \ref{com},
and it has the superamalgamation property, theorem \ref{SUPAP}. (this occurs in all dimensions).

When we have both transpositions and replacements, our quasivariety turns out to be a variety, theorems \ref{variety}, \ref{v2},
but we lose complete  representability of atomic algebras, or rather we do not know whether atomic algebras are completely representable.
In this case, we could only show that the canonical extension of any algebra is always completely representable,
and that the minimal completion of a completely representable algebra,
stays completely representable, theorems \ref{canonical}, \ref{completion}. Superamalgamation and local finiteness hold, theorems \ref{SUPAP},
\ref{locallyfinite} in this case as well. An oddity that occurs here, is that we could provide a finite schema axiomatizing such varieties,
endowed with diagonal elements
 in the infinite dimensional case, theorem \ref{infinite}, but we did not succeed to provide a finite axiomatization
for finite dimensions (with diagonal elements).

Finally, we prove that the validity problem for our logics in all dimensions is decidable.

\subsection*{Summary}

In section 2 we fix our notation
and recall the basic concepts.

In section 3 we study transposition algebras
obtaining
a finite axiomatization of the quasi-variety (that is shown not to be closed under taking homomorphic images)
of representable algebras and proving strong representability results.

In section 3 we deal with algebras when we have transpositions and replacements together. In this case
we show that the class of subdirect products of algebras (in all dimensions) is a variety and we provide
a simple equational axiomatization this of variety.

In section 4 we deal with free algebras, and we formulate and prove some general theorems adressing free algebras of classes of Boolean algebras
with operators.

In section 5, we also formulate and prove general theorems on the amalgamation property, and
we prove that the varieties (generated by the quasi-varieties in case of replacements only and transpositions only)
have the superamalgamation theorem.

Logical counterpart of our algebraic theorems are carefully formulated in the final section.

\section{Notation and Preliminaries}

Our system of notation is mostly standard,  or self-explainatory, but the following list may be useful. Throughout, for every natural number $n$
we follow Von-Neuman's convention; $n=\{0,\ldots,n-1\}.$ Let $A$ and $B$ be sets. Then ${}^AB$ denotes the set of functions whose domain is
$A$ and whose range is a subset of $B.$ In addition,  $A^*$ denotes the set of finite sequences over $A$ and
$\mathcal{P}(A)$ denotes the power set of $A$, that is, the set of all subsets of $A$. For a cardinal $\kappa$, we let $A\subseteq_\kappa B$ mean that $A$ is a subset of $B$ with cardinality $|A|=\kappa.$


If $f:A\longrightarrow B$ is a function and $X\subseteq A,$ then $f\upharpoonright X$ is the restriction of $f$ to $X$.

If $K$ is a class of algebras, then $\mathbf{H}K,\;\mathbf{P}K,\;\mathbf{S}K,\;\mathbf{Up}K$
denote the classes of homomorphic images, (isomorphic copies of)
direct products, (isomorphic copies of) subalgebras, (isomorphic copies of) ultraproducts of members of $K$,
respectively. If $\Sigma$ is a set of formulas, then
$\mathbf{Mod}(\Sigma)$ denotes the class of all models of $\Sigma.$

By a \emph{quasi-equation} we mean a universal formula of the form $e_0\wedge\ldots\wedge e_n\Rightarrow f,$ where $e_0,\ldots, e_n,$
and $f$ are equations. Let $K$ be a class of algebras. Then,

$K$ is a \emph{variety} iff $K$ is axiomatizable by equations iff $\mathbf{HSP}K=K,$ and
$K$ is a \emph{quasi-variety} iff $K$ is axiomatizable by quasi-equations iff $\mathbf{SPUp}K=K$.

We distinguish notationally between an algebra and its universe. Algebras are denoted by Gothic letters,
and when we write, for example $\A$, for an algebra, we will be tacitly assuming that the corresponding
Roman letter $A$ is its universe.

The word \emph{BAO} will abbreviate \emph{Boolean algebra with operators} (an algebra that has a Boolean reduct and every non-Boolean
operation on it is additive in each of its arguments). For $K\subseteq BAO$ and a set $X$ and $\Fr_XK$ denotes the algebra freely generated by $X$.
We sometimes identify $\Fr_XK$ with $\Fr_{|X|}K$, and we say that $\Fr_XK$ is the $K$ free algebra on the set of generators $X$, or simply on $X$.
We do not assume that $\Fr_XK$, belongs to $K$, when $K$ is not a variety.
If $K$ is a class of algebras, then $Fin(K)$ is the class of finite algebras in $K$ and $V(K)$ denotes ${\bf HSP}(K)$, that is, the variety generated by $K$.
If $\A$ is a $BAO$, and $X\subseteq \A$, then $\Bl\A$, denotes its Boolean reduct, $\At\A$ denotes the set of atoms of $\Bl\A$, $\Sg^{\A}X$,
and $\Ig^{\A}X$ denote the subalgebra, ideal,
generated by $X$.

Every class $K$ of $BAO$'s correspond to a multi dimensional modal logic. For an algebra $\A$, $\A_+$ denotes its ultrafilter atom structure,
and $\A^+$ denotes $\Cm\A_{+}$ the canonical extension of $\A$. If $\A$ is atomic then $\At\A$ denotes its atom structure,
and $\Cm\At\A$ is its minimal
completion.

The next theorem is folklore:
\begin{theorem}
Suppose $\A$ is a BAO and for every $0^\A\neq a\in \A$ there is a homomorphism $h_a:\A\longrightarrow\mathcal{B}_a$
such that $h_a(a)\neq 0^{\mathcal{B}_a}.$ Then $\A$ is embeddable in the direct product $\prod_{0^\A\neq a\in A}\mathcal{B}_a,$
or equivalently, $\A\in\mathbf{SP}\langle\mathcal{B}_a, 0^\A\neq a\in A\rangle.$
\end{theorem}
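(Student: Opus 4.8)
The plan is to use the standard subdirect-decomposition argument. First I would set $P = \prod_{0^\A \neq a \in A} \mathcal{B}_a$ and define $h : \A \longrightarrow P$ coordinatewise by $h(x) = \langle h_a(x) : 0^\A \neq a \in A\rangle$. Since each $h_a$ is a homomorphism of BAO's and the operations on a direct product are computed coordinatewise, $h$ is automatically a homomorphism of BAO's; this step is routine and requires no real work.

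The crux is to show $h$ is injective, and for a Boolean algebra with operators it suffices to check that $\ker h = \{0^\A\}$, i.e.\ that $h(x) = 0^P$ implies $x = 0^\A$. Equivalently, contrapositively, if $0^\A \neq x \in \A$ then $h(x) \neq 0^P$, which is immediate: taking $a = x$, the $x$-th coordinate of $h(x)$ is $h_x(x) \neq 0^{\mathcal{B}_x}$ by hypothesis, so $h(x)$ is not the zero of $P$. Here I am using that a BAO-homomorphism with trivial kernel is an embedding, which follows from the corresponding fact for Boolean algebras (injectivity of a Boolean homomorphism is equivalent to its kernel, as a Boolean ideal, being $\{0\}$) together with the observation that the non-Boolean operations, being order-preserving/additive, do not affect injectivity once the Boolean reduct map is injective.

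Finally, $h$ exhibits $\A$ as a subalgebra of $P = \prod_{0^\A \neq a \in A} \mathcal{B}_a$, which is by definition a member of $\mathbf{SP}\langle \mathcal{B}_a : 0^\A \neq a \in A\rangle$; hence $\A \in \mathbf{SP}\langle \mathcal{B}_a : 0^\A \neq a \in A\rangle$, and the ``equivalently'' clause of the statement is exactly this reformulation.

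There is essentially no obstacle here — the theorem is folklore precisely because every step is forced. The only point requiring a word of care is the reduction of injectivity to triviality of the kernel in the BAO setting rather than just the Boolean setting, and this is handled by the additivity assumption built into the definition of BAO recalled in the preliminaries.
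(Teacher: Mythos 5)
Your proposal is correct and follows essentially the same route as the paper: the paper simply defines the coordinatewise map $g(x)=(h_a(x):a\in A)$ and asserts it is an embedding, which is exactly your argument with the injectivity check (via $h_x(x)\neq 0^{\mathcal{B}_x}$) spelled out. No discrepancies.
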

\begin{proof}
Define $g:\A\to \prod_{a\in A}\B_a$ by
$g(x)=(h_a(x): a\in A)$.
Then $g$ is an embedding.
\end{proof}

\section{Transposition Algebras}
In this section our work closely resembles that of S\'agi's, but we obtain stronger representability results. We refer to the algebras studied by
S\'agi as replacement algebras.
We not only prove the representability of abstract algebra defined via a finite set of equations, but we further show that such
representations can be chosen to respect infinitary meets and joins. In particular, we show that atomic algebras are completely representable.

We also show that the subdirect products of full set algebras, those algebras whose units are squares is not a variety.

Our treatment of the infinite dimensional case is also different than S\'agi's for replacement algebras;  we alter the units of representable algebras, dealing
with a different generating class (in this case we show that it suffices to take only one set algebra, that is not the free algebra on $\omega$ generators,
as a generating set).


\begin{definition}[Transposition Set Algebras]
Let $U$ be a set. \emph{The full transposition set algebra of dimension} $\alpha$ \emph{with base}
$U$ is the algebra $$\langle\mathcal{P}({}^\alpha U); \cap,-,S_{ij}\rangle_{i\neq j\in\alpha};$$
where  $S_{ij}$'s are unary operations defined
by $$S_{ij}(X)=\{q\in {}^\alpha U:q\circ [i,j]\in X\}.$$ Recall that $[i,j]$ denotes that transposition of $\alpha$ that permutes $i,j$
and leaves any other element fixed. The class of Transposition Set Algebras of dimension $\alpha$ is defined as follows:
$$SetTA_\alpha=\mathbf{S}\{\A:\A\text{ is a full transposition set algebra of dimension }\alpha $$$$ \text{ with base }U,\text{ for some set }U\}.$$
\end{definition}

\begin{definition}[Representable Transposition Set Algebras]
The class of Transposition Set Algebras of dimension $\alpha$ is defined to be $$RTA_\alpha=\mathbf{SP}SetTA_\alpha.$$
\end{definition}

\begin{definition}[Permutable Set]
Let $U$ be a given set, and let $D\subseteq{}^\alpha U.$ We say that $D$ is permutable iff
$$(\forall i\neq j\in\alpha)(\forall s\in{}^\alpha U)(s\in D\Longrightarrow s\circ [i,j]\in D).$$
\end{definition}

\begin{definition}[Permutable Algebras]
The class of \emph{Permutable Algebras} of dimension $\alpha$, $\alpha$ an ordinal,  is defined to be
$$PTA_n=\mathbf{SP}\{\langle\mathcal{P}(D); \cap,-,S_{ij}\rangle_{i\neq j\in \alpha}: U\text{ \emph{is a set}}, D\subset{}^{\alpha}
U\text{\emph{permutable}}\}.$$
Here  $S_{ij}(X)=\{q\in D:q\circ [i,j]\in X\}$, and $-$ is complement w.r.t. $D$.\\
If $D$ is a permutable set then the algebra
$\wp(D)$ is defined to be $$\wp(D)=\langle\mathcal{P}(D);\cap,-,S_{ij}\rangle_{i\neq j\in\alpha}$$ So $\wp(D)\in PTA_\alpha.$
\end{definition}
Note that $\wp(^{\alpha}U)$ can be viewed as the complex algebra of the atom structure $(^{\alpha}U, S_{ij})_{i,j\in \alpha}$ where
for all $i,j, S_{ij}$ is a binary relation, such that for $s,t\in {}^{\alpha}U$, $(s,t)\in S_{ij}$ iff $s\circ [i,j]=t.$ When we consider permutable sets
then from the modal point of view we are restricting the states or assignments to $D$. This process is also referred to as
relativization.

for some time to come we restrict ourselves to finite $\alpha$, which we denote by $n$.
\begin{theorem}
Let $U$ be a set and suppose $G\subseteq{}^n U$ is permutable.
Let $\A=\langle\mathcal{P}({}^n U);\cap,-,S_{ij}\rangle_{i\neq j\in n}$ and let $\mathcal{B}=\langle\mathcal{P}(G);\cap,-,S_{ij}\rangle_{i\neq j\in n}$.
Then the following $h:\A\longrightarrow\mathcal{B}$ defined by $h(x)=x\cap G$ is a homomorphism.

\end{theorem}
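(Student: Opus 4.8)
The plan is to verify the three homomorphism clauses in turn, with essentially all the content sitting in the clause for the substitution operators $S_{ij}$. First I would record that $h$ is well defined as a map from $A=\mathcal{P}({}^nU)$ into $B=\mathcal{P}(G)$: for any $x\subseteq {}^nU$ we have $h(x)=x\cap G\subseteq G$. Preservation of meet is immediate, since $(x\cap y)\cap G=(x\cap G)\cap(y\cap G)$. For complementation the only thing to watch is that $-$ in $\A$ is complement relative to the unit ${}^nU$, whereas $-$ in $\B$ is complement relative to $G$; then $h(-x)=({}^nU\setminus x)\cap G=G\setminus x=G\setminus(x\cap G)=-h(x)$, using nothing beyond $G\subseteq{}^nU$.

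The substantive step is to show $h(S_{ij}x)=S_{ij}(h(x))$ for every $x\subseteq{}^nU$ and all $i\neq j\in n$; that is, $S_{ij}^{\A}(x)\cap G=S_{ij}^{\B}(x\cap G)$. Unfolding the definitions, the left-hand side is $\{q\in G: q\circ[i,j]\in x\}$, while the right-hand side is $\{q\in G: q\circ[i,j]\in x \text{ and } q\circ[i,j]\in G\}$. So the two sets agree exactly when, for $q\in G$, the conjunct $q\circ[i,j]\in G$ is redundant — and this redundancy is precisely the permutability of $G$: by definition $s\in G$ implies $s\circ[i,j]\in G$ for all $i\neq j$. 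Hence for $q\in G$ we automatically have $q\circ[i,j]\in G$, the extra conjunct drops, the two sets coincide, and $h$ commutes with each $S_{ij}$.

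I do not anticipate any genuine obstacle here; the proof is a short unfolding. The two points that require care are: not conflating the two complementation operations (relative to ${}^nU$ versus relative to $G$), and recognizing that permutability of $G$ is exactly the hypothesis that makes the $S_{ij}$-clause work — indeed, without it $h$ would still preserve the Boolean operations but $S_{ij}^{\B}(x\cap G)$ could be a proper subset of $S_{ij}^{\A}(x)\cap G$, namely the part whose $[i,j]$-shift happens to fall outside $G$.
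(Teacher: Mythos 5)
Your proposal is correct and follows essentially the same route as the paper's proof: the Boolean clauses are routine, and the $S_{ij}$-clause reduces to the identity $\{q\in G: q\circ[i,j]\in x\}=\{q\in G: q\circ[i,j]\in x\cap G\}$, which holds precisely because permutability guarantees $q\circ[i,j]\in G$ whenever $q\in G$. Your added remarks on the two different complementations and on where permutability is genuinely needed are accurate but do not change the argument.
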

\begin{proof}
It is easy to see that $h$ preserves $\cap,-$ so it remains to show that the $S_{ij}$'s are also preserved.
To do this let $i\neq j\in n$ and $x\in A.$ Now\begin{align*}
h({S_{ij}}^\A x)= {S_{ij}}^\A x \cap G &=\{q\in{}^n U:q\circ [i,j]\in x\}\cap G\\
&=\{q\in G:q\circ [i,j]\in x\}\\
&=\{q\in G:q\circ [i,j]\in x\cap G\}\\
&=\{q\in G:q\circ [i,j]\in h(x) \}\\
&={S_{ij}}^\mathcal{B} h(x)
\end{align*}
\end{proof}
The function $h$ will be called \emph{relativization by} $G$.
Now we distinguish certain elements of $SetTA_n$ ($n>1$) which play an important role.

\begin{definition}[Small algebras]
For any natural number $k\leq n$
the algebra $\A_{nk}$ is defined to be $$\A_{nk}=\langle\mathcal{P}({}^nk);\cap,-,S_{ij}\rangle_{i\neq j\in n}.$$ So $\A_{nk}\in SetTA_n$.
\end{definition}

\begin{theorem}
$RTA_n=\mathbf{SP}\{\A_{nk}:k\leq n\}.$
\end{theorem}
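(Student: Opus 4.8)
The plan is to establish the two inclusions separately. Both $RTA_n=\mathbf{SP}\,SetTA_n$ and $\mathbf{SP}\{\A_{nk}:k\leq n\}$ are closed under $\mathbf{S}$ and $\mathbf{P}$ (using $\mathbf{SP}\mathbf{SP}=\mathbf{SP}$ and $\mathbf{PS}\subseteq\mathbf{SP}$, up to isomorphism). Since every $\A_{nk}$ with $k\leq n$ is a full transposition set algebra of dimension $n$ with base $k$, it lies in $SetTA_n\subseteq RTA_n$, whence $\mathbf{SP}\{\A_{nk}:k\leq n\}\subseteq RTA_n$. For the reverse inclusion it therefore suffices to show that an arbitrary full transposition set algebra $\A=\langle\mathcal{P}({}^nU);\cap,-,S_{ij}\rangle_{i\neq j\in n}$ embeds into a direct product of algebras $\A_{nk}$ with $k\leq n$.

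The one idea needed is that any $s\in{}^nU$ is a function with domain $n$, so $|\rng(s)|\leq n$. For each $s\in{}^nU$ set $W_s=\rng(s)$ and let ${}^nW_s=\{t\in{}^nU:\rng(t)\subseteq W_s\}$. This set is permutable, so by the relativization theorem proved above, $h_s:\A\to\langle\mathcal{P}({}^nW_s);\cap,-,S_{ij}\rangle_{i\neq j\in n}$ given by $h_s(x)=x\cap{}^nW_s$ is a homomorphism. Any bijection $W_s\to|W_s|$ induces, by left composition, an isomorphism of the atom structures $({}^nW_s,S_{ij})$ and $({}^n|W_s|,S_{ij})$, hence $\langle\mathcal{P}({}^nW_s);\cap,-,S_{ij}\rangle\cong\A_{n,|W_s|}$ with $|W_s|\leq n$. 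Now define $g:\A\to\prod_{s\in{}^nU}\langle\mathcal{P}({}^nW_s);\cap,-,S_{ij}\rangle$ by $g(x)=(h_s(x):s\in{}^nU)$; it is a homomorphism, and it is injective because if $s$ lies in the symmetric difference of $x$ and $y$, say $s\in x\setminus y$, then (since $s\in{}^n\rng(s)={}^nW_s$) we have $s\in h_s(x)$ and $s\notin h_s(y)$, so $g(x)\neq g(y)$. Hence $\A\in\mathbf{SP}\{\A_{nk}:k\leq n\}$, completing the proof. Equivalently, one may feed the homomorphisms $h_s$ into the folklore theorem of Section~2, choosing for each nonempty $x\in\A$ some $s\in x$, since then $s\in h_s(x)$ so $h_s(x)\neq 0$.

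There is no serious obstacle here; the only points requiring a line of verification are that $\langle\mathcal{P}({}^nW);\cap,-,S_{ij}\rangle\cong\A_{n,|W|}$ whenever $|W|\leq n$ — i.e.\ that relativizing the base to a finite set of size $k\leq n$ yields precisely one of the small algebras — and that the family $(h_s)_{s\in{}^nU}$ separates points of $\A$. It is also here that finiteness of the dimension is used essentially (each state ``sees'' at most $n$ base points), which is why the infinite-dimensional case must be handled via a different generating class, as indicated in the introduction.
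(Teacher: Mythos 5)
Your proposal is correct and follows essentially the same route as the paper: both prove the easy inclusion by closure of $RTA_n$ under $\mathbf{S}$ and $\mathbf{P}$, and both obtain the hard inclusion by relativizing a full set algebra to the permutable set ${}^n\mathrm{rng}(q)$ for suitable sequences $q$, identifying the result with a small algebra $\A_{nk}$ ($k\leq n$), and assembling the resulting homomorphisms into an embedding via the folklore subdirect-product theorem of Section~2. Your explicit verification that $\langle\mathcal{P}({}^nW);\cap,-,S_{ij}\rangle\cong\A_{n,|W|}$ via a base bijection is a detail the paper leaves implicit, but the argument is the same.
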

\begin{proof}
The proof is exactly like that of Theorem 4.9 in\cite{sagiphd}.
Clearly, $\{\A_{nk}:k\leq n\}\subseteq RTA_n,$ and since, by definition,
$RTA_n$ is closed under the formation of subalgebras and direct products, $RTA_n\supseteq\mathbf{SP}\{\A_{nk}:k\leq n\}.$

To prove the other inclusion, it is enough to show $SetTA_n\subseteq \mathbf{SP}\{\A_{nk}:k\leq n\}.$
Let $\A\in SetTA_n$ and suppose that $U$ is the base of $\A.$
If $U$ is empty, then $\A$ has one element, and one can easily show $\A\cong\A_{n0}.$
Otherwise for every $0^\A\neq a\in A$ we can construct a homomorphism $h_a$ such that
$h_a(a)\neq 0$ as follows. If $a\neq 0^\A$ then there is a sequence $q\in a.$ Let  $U_0^a=range(q)$.
Clearly, $^nU_0^a$ is permutable, therefore by Theorem 3.5 relativizing by $^nU_0^a$ is a homomorphism to
$\A_{nk_a}$ (where $k_a:=|range(q)|\leq n$). Let $h_a$ be this homomorphism. Since $q\in ^nU_0^a$ we have $h_a(a)\neq0^{\A_{nk_a}}.$
Applying Theorem 2.1 one concludes that $\A\in\mathbf{SP}\{\A_{nk}:k\leq n\}$ as desired.
\end{proof}


\subsection{Axiomatizing the Variety Generated by $RTA_n$}

We know that the variety generated by $RTA_n$ is finitely axiomatizable since it is generated by finitely many finite algebras,
and because, having a Boolean reduct, it is congruence distributive. This follows from a famous theorem by Baker.

Throughout, given a set $U$, we let $S_U$ denote the set of permutations on $U$. In this section we will provide a finite set of equations axiomatizing $\mathbf{HSP}RTA_n.$ To do this, we first note that
the set of transpositions $\{[i,j]:i\neq j\in n\}$, with composition of maps, generates the group $S_n$ of permutations of $n$
To obtain our desired axiomatization
we need to recall some concepts from the presentation theory of groups. In particular, we need a concrete
presentation of the Symmetric Group $S_n$. Let $\sigma_i=[i-1,i]$, then $S_n$ is generated by $\sigma_1,\ldots,\sigma_{n-1}$
governed by the following relations:
\begin{enumerate}
\item $\sigma_i^2=1$
\item $\sigma_i\sigma_j=\sigma_j\sigma_i$ for $i\neq j\pm1$
\item $\sigma_i\sigma_{i+1}\sigma_i=\sigma_{i+1}\sigma_i\sigma_{i+1}$
\end{enumerate}
Throughout, $s$ will denote the operation symbol corresponding to $S$ in the language level.
\begin{definition}[The axiomatization]\label{ax}
For all natural numbers $n\in\omega,\;n>1,$ let $\Sigma_n$ be the following finite set of equations.
\begin{enumerate}
\item The usual Boolean algebra axioms.
\item $s_{ij}(x\wedge y)=s_{ij}(x)\wedge s_{ij}(y)$.
\item $s_{ij}(-x)=-s_{ij}(x)$.
\item $s_{ij}s_{ij}x=x.$
\item $s_{i\; i+1}s_{j\;j+1}x=s_{j\;j+1}s_{i\;i+1}x.$
\item $s_{i\;i+1}s_{j\;j+1}s_{i\;i+1}x=s_{j\;j+1}s_{i\;i+1}s_{j\;j+1}x$
\end{enumerate}
\end{definition}

Let $TA_n$ be the abstractly defined class $\mathbf{Mod}(\Sigma_n)$.
Our goal in this section is to show that $TA_n=\mathbf{HSP}RTA_n$.

\begin{definition}
Given a set $U$, let $T(U)=\{s_{ij}:i\neq j\in U\}$ and let $\hat{}:T(U)^*\longrightarrow S_U$
which maps
each $q=(s_{i_1j_1}\ldots s_{i_kj_k})\in T(U)^*$ to $$(s_{i_1j_1}\ldots s_{i_kj_k})^{\hat{}}= [i_1,j_1]\circ\ldots\circ[i_k,j_k]\in S_U.$$
When $q$ is the empty word, $q\hat{}=Id_U.$
\end{definition}

\begin{theorem}
For all $n\in\omega$ the set of (all instances of the) axiom-schemas 4,5,6 is a presentation of the group $S_n$ via generators $T(n)$.
That is, for all $t_1,t_2\in T(n)^*$ we have $$axioms\;4,5,6\vdash t_1=t_2\text{  iff  }t_1^{\hat{}}=t_2^{\hat{}}.$$
Here $\vdash$ denotes derivability using Birkhoff's calculus for equational logic.
\end{theorem}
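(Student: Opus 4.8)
The plan is to prove the two directions of the biconditional separately, with the easy direction being soundness (if $\mathrm{axioms}\;4,5,6\vdash t_1=t_2$ then $t_1^{\hat{}}=t_2^{\hat{}}$) and the substantial direction being completeness (if $t_1^{\hat{}}=t_2^{\hat{}}$ then $\mathrm{axioms}\;4,5,6\vdash t_1=t_2$). For soundness, I would observe that the map $\hat{}$ sends the generators $T(n)$ to the transpositions $\sigma_i=[i-1,i]$ (and more generally all $[i,j]$), and that under this interpretation axiom 4 becomes $[i,j]\circ[i,j]=\mathrm{Id}$, axiom 5 becomes $[i,i+1]\circ[j,j+1]=[j,j+1]\circ[i,i+1]$ for $|i-j|\neq 1$, and axiom 6 becomes the braid relation $[i,i+1]\circ[i+1,i+2]\circ[i,i+1]=[i+1,i+2]\circ[i,i+1]\circ[i+1,i+2]$ — all of which hold in $S_n$. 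Since $\hat{}$ is a monoid homomorphism from $T(n)^*$ to $S_n$, any equation derivable from sound equations by Birkhoff's calculus (reflexivity, symmetry, transitivity, congruence/substitution) is again sound, so $t_1^{\hat{}}=t_2^{\hat{}}$.

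For completeness, the key observation is that the generators $\sigma_i=[i-1,i]$ (for $i=1,\dots,n-1$) are exactly the adjacent transpositions, and the classical Moore/Coxeter presentation of $S_n$ states that $S_n$ is presented by $\sigma_1,\dots,\sigma_{n-1}$ subject to relations (1) $\sigma_i^2=1$, (2) $\sigma_i\sigma_j=\sigma_j\sigma_i$ for $|i-j|>1$, (3) $\sigma_i\sigma_{i+1}\sigma_i=\sigma_{i+1}\sigma_i\sigma_{i+1}$ — these are precisely instances of axioms 4, 5, 6 respectively. The one wrinkle is that the generating set $T(n)$ in this theorem consists of \emph{all} transposition symbols $s_{ij}$, not just the adjacent ones $\sigma_i$. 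So I would proceed in two steps. First, show that for every $i\neq j$, the word $s_{ij}$ is provably equal (using axioms 4, 5, 6) to a fixed word $w_{ij}$ in the adjacent generators $\sigma_k$ whose image under $\hat{}$ is $[i,j]$; this is the standard fact that $[i,j]=\sigma_j\sigma_{j-1}\cdots\sigma_{i+1}\sigma_i\sigma_{i+1}\cdots\sigma_{j-1}$ (say $i<j$), and crucially the derivation of $s_{ij}=w_{ij}$ can be carried out purely equationally from 4, 5, 6 — though to avoid circularity one must be careful here, since the "derivation" of this identity in $S_n$ itself uses only the Coxeter relations, and Birkhoff's calculus is powerful enough to mimic any such word-rewriting argument. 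Second, given $t_1,t_2\in T(n)^*$ with $t_1^{\hat{}}=t_2^{\hat{}}$, rewrite each $t_i$ (using step one applied letter by letter, plus congruence) to a word $t_i'$ in the $\sigma_k$'s with $(t_i')^{\hat{}}=t_i^{\hat{}}$; then $(t_1')^{\hat{}}=(t_2')^{\hat{}}$ in $S_n$, so by the Coxeter presentation there is a sequence of applications of relations (1)–(3) transforming $t_1'$ into $t_2'$, and each such application is an instance of axiom 4, 5, or 6 together with congruence, giving $\vdash t_1'=t_2'$; chaining the derivations yields $\vdash t_1=t_1'=t_2'=t_2$.

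The main obstacle is making precise and rigorous the claim that "the presentation of $S_n$ by the Coxeter relations" can be transferred into Birkhoff-style equational derivability. Concretely this amounts to the following lemma: if $G$ is a group presented by generators $X$ and relations $R$ (all of the form $u=v$ with $u,v$ words), and $\vec{w}$ is the corresponding equational theory over the signature of unary function symbols indexed by $X$ together with axioms $R$ and the "each generator is its own self-inverse / the word algebra is free modulo $R$" structure, then two words are equal in $G$ iff they are equationally derivable. The subtlety is that a group presentation a priori talks about a \emph{free group} modulo $R$ (allowing formal inverses), whereas here every generator $\sigma_i$ is an involution by axiom 4, so the free monoid on $T(n)$ modulo axioms 4, 5, 6 coincides with the group — one must invoke axiom 4 (i.e. $\sigma_i^2=1$) to see that inverses are already present as the generators themselves, so no separate inverse symbols are needed, and the monoid presentation and group presentation agree. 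Once this identification is in place, the theorem is essentially a restatement of the classical Coxeter presentation, and the rest is bookkeeping: tracking that every rewriting step legitimately corresponds to an equational-logic inference and that the reduction from the full generating set $T(n)$ to the adjacent transpositions respects provability.
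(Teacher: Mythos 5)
Your overall strategy---soundness by checking that the axioms hold in $S_n$ under $\hat{}\,$, completeness by appeal to the Coxeter presentation of $S_n$ by adjacent transpositions---is exactly the content behind the paper's one-line proof (``$\Sigma_n$ corresponds exactly to the set of relations governing the generators of $S_n$''), and your reading of axioms 5 and 6 with the restrictions $|i-j|\neq 1$ and $j=i\pm1$ respectively is the only reading under which the soundness direction is even true.

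However, the step you yourself flag as the ``one wrinkle'' is a genuine gap, and your proposed resolution does not work. You claim that for non-adjacent $i,j$ the equation $s_{ij}x=w_{ij}x$ (with $w_{ij}$ the standard word in adjacent generators whose image is $[i,j]$) is derivable from axioms 4, 5, 6. It is not: the only instances of axioms 4--6 in which a non-adjacent symbol $s_{ij}$ occurs at all are the involution laws $s_{ij}s_{ij}x=x$; axioms 5 and 6 mention only the adjacent symbols $s_{k\,k+1}$. Concretely, for $n=3$ let $G\cong S_3*\mathbb{Z}/2$ be the free product of $\langle\sigma_1,\sigma_2\rangle$ (with the Coxeter relations) and $\langle\tau\mid\tau^2=1\rangle$, and interpret $s_{01},s_{12},s_{02}$ as left multiplication by $\sigma_1,\sigma_2,\tau$ on the set $G$. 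Every instance of axioms 4, 5, 6 holds in this algebra, yet $s_{02}(e)=\tau\neq\sigma_1\sigma_2\sigma_1=(s_{01}s_{12}s_{01})(e)$. By Birkhoff's completeness theorem the equation $s_{02}x=s_{01}s_{12}s_{01}x$ is therefore not derivable, even though $[0,2]=[0,1]\circ[1,2]\circ[0,1]$, so the completeness direction fails for the words $t_1=s_{02}$ and $t_2=s_{01}s_{12}s_{01}$. The identity $[i,j]=\sigma_{j-1}\cdots\sigma_{i+1}\cdots\sigma_{j-1}$ is a fact about elements of $S_n$; it is not a consequence of the Coxeter relations in a presentation whose generating set contains a \emph{separate} symbol $s_{ij}$, which is precisely the situation here since $T(n)=\{s_{ij}:i\neq j\in n\}$.

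In other words, you correctly located the weak point but then asserted your way past it; the paper's own proof ignores it entirely. The statement can only be repaired by (a) restricting $T(n)$ to the adjacent symbols, (b) treating $s_{ij}$ for non-adjacent $i,j$ as a defined abbreviation for the term $w_{ij}$ rather than a primitive operation, or (c) adding the equations $s_{ij}x=w_{ij}x$ to $\Sigma_n$. Your soundness argument and your reduction of words over adjacent generators to the Coxeter presentation are fine; only the bridge from arbitrary $s_{ij}$ to the adjacent generators is missing, and it cannot be supplied from the stated axioms.
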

\begin{proof}
This is clear because $\Sigma_n$ corresponds exactly to the set of relations governing the generators of $S_n$.
\end{proof}

\begin{definition}For every $\xi\in S_n$ we associate a sequence $s_\xi\in T(n)^*$
such that $s_\xi^{\hat{}}=\xi.$ Such an $s_\xi$ exists, since the transpositions of $n$ generate $S_n.$
In other words, in any model, the term $s_\xi (x)$ has the same interpretation as the term $s_{i_1j_1}(\ldots (s_{i_kj_k}(x)))$ if $\xi=[i_1,j_1]\circ\ldots\circ[i_k,j_k].$

In our algebras ($\A=\langle \mathcal{P}(V),\cap,-,S_{ij}\rangle_{i,j\in\alpha}$ with permutable $V$),
$s_\xi$ corresponds to the unary operator $S_\xi$ defined as follows. For $X\subset V,$ $$S^\A_\xi(X)=\{p\in V:p\circ \xi\in X\}.$$
\end{definition}

Now we turn to prove that $ PTA_n\subseteq \mathbf{HSP}RTA_n\subseteq TA_n\subseteq PTA_n$
which achieves our goal.

\begin{theorem}
$PTA_n\subseteq\mathbf{HSP}RTA_n$
\end{theorem}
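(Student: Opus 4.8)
The goal is to show $PTA_n \subseteq \mathbf{HSP}\,RTA_n$. The plan is to take an arbitrary member of $PTA_n$ and realize it as a homomorphic image of a (subdirect product of) full transposition set algebra(s), using the relativization homomorphism of Theorem~3.5 together with the absorption of subdirect products into $\mathbf{SP}$. Since $PTA_n = \mathbf{SP}\{\wp(D) : D \subseteq {}^nU \text{ permutable}\}$ and $\mathbf{HSP}\,RTA_n$ is closed under $\mathbf{S}$ and $\mathbf{P}$, it suffices to prove that each generating algebra $\wp(D)$, for $D$ permutable, lies in $\mathbf{HSP}\,RTA_n$. In fact I would aim for the stronger and cleaner statement $\wp(D) \in \mathbf{H}\,SetTA_n \subseteq \mathbf{HSP}\,RTA_n$.

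First I would fix a permutable set $D \subseteq {}^nU$ and consider the full transposition set algebra $\A = \langle \mathcal{P}({}^nU); \cap, -, S_{ij}\rangle_{i \neq j \in n} \in SetTA_n$. Then I would invoke Theorem~3.5 with $G = D$: since $D$ is permutable, the map $h : \A \to \wp(D)$ given by $h(x) = x \cap D$ is a homomorphism. The one thing to check is that $h$ is \emph{surjective} onto $\wp(D) = \langle \mathcal{P}(D); \cap, -, S_{ij}\rangle_{i \neq j \in n}$ — but this is immediate, since for any $Y \subseteq D$ we have $Y \subseteq {}^nU$, so $Y \in A$ and $h(Y) = Y \cap D = Y$. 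Hence $\wp(D)$ is a homomorphic image of $\A \in SetTA_n \subseteq RTA_n$, so $\wp(D) \in \mathbf{H}\,RTA_n \subseteq \mathbf{HSP}\,RTA_n$.

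Finally I would close up: an arbitrary member of $PTA_n$ is in $\mathbf{SP}\{\wp(D) : D \text{ permutable}\}$, and each $\wp(D)$ is in $\mathbf{HSP}\,RTA_n$; since $\mathbf{HSP}\,RTA_n$ is a variety it is closed under $\mathbf{S}$ and $\mathbf{P}$, whence $PTA_n \subseteq \mathbf{S}\mathbf{P}\,\mathbf{HSP}\,RTA_n = \mathbf{HSP}\,RTA_n$.

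\textbf{Main obstacle.} There is really no deep obstacle here: Theorem~3.5 does all the work, and the rest is bookkeeping with the operators $\mathbf{H}$, $\mathbf{S}$, $\mathbf{P}$. The only point requiring a line of care is confirming that the relativization map $h(x) = x \cap D$ is onto $\wp(D)$ (not merely into it) so that we genuinely get $\wp(D)$ as a homomorphic \emph{image}; this is why it is convenient to take $\A$ to be the \emph{full} set algebra on base $U$, whose universe $\mathcal{P}({}^nU)$ contains every subset of $D$. The harder direction of the chain of inclusions announced before the theorem — namely $TA_n \subseteq PTA_n$, i.e.\ that every abstract model of $\Sigma_n$ is representable on a permutable unit — is the substantive one, but that is a separate statement not claimed here.
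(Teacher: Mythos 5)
Your proposal is correct and follows essentially the same route as the paper: reduce to a single generator $\wp(D)$ with $D$ permutable, exhibit it as the image of the full set algebra on the ambient base under the relativization homomorphism of Theorem~3.5, and then absorb everything using closure of $\mathbf{HSP}\,RTA_n$ under $\mathbf{S}$ and $\mathbf{P}$. The only (cosmetic) difference is that the paper takes the base to be $\bigcup_{q\in D}\mathrm{range}(q)$ rather than the ambient $U$, and your explicit check that the relativization map is surjective is a point the paper leaves implicit.
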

\begin{proof}
It is enough to show that $\A=\langle\mathcal{P}(G);\cap,-,S_{ij}\rangle_{i\neq j\in n}\in \mathbf{HSP}RTA_n$ whenever $G$ permutable. Let $U=\bigcup_{q\in G}range(q)$ and let $$\mathcal{B}=\langle\mathcal{P}({}^nU);\cap,-,S_{ij}\rangle_{i\neq j\in n}.$$
Clearly, $\mathcal{B}\in SetTA_n$ and so $\A$ is a homomorphic image of $\mathcal{B}$. Thus, $\A\in\mathbf{H}SetTA_n\subseteq \mathbf{HSP}RTA_n$
\end{proof}

\begin{theorem}
$\mathbf{HSP}RTA_n\subseteq TA_n,$ or equivalently $RTA_n\models\Sigma_n.$
\end{theorem}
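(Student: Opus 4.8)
The plan is to prove the equivalent statement $RTA_n\models\Sigma_n$ by checking that each of the six groups of equations in $\Sigma_n$ (Definition~\ref{ax}) holds in an arbitrary full transposition set algebra. This suffices: every member of $RTA_n=\mathbf{SP}SetTA_n$ is, up to isomorphism, a subalgebra of a product of full transposition set algebras, and validity of equations is inherited under $\mathbf{S}$ and $\mathbf{P}$. (Equivalently, one could check $\Sigma_n$ only on the finitely many algebras $\A_{nk}$, $k\le n$, using the representation $RTA_n=\mathbf{SP}\{\A_{nk}:k\le n\}$, but the verification is identical.) So fix a set $U$, put $V={}^nU$, and let $\A=\langle\mathcal{P}(V);\cap,-,S_{ij}\rangle_{i\neq j\in n}$.

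Group (1) is immediate, since $\langle\mathcal{P}(V);\cap,-\rangle$ is a field of sets. For (2) and (3) one unwinds the definition $S_{ij}(X)=\{q\in V:q\circ[i,j]\in X\}$: for $q\in V$ we have $q\in S_{ij}(X\cap Y)$ iff $q\circ[i,j]\in X$ and $q\circ[i,j]\in Y$ iff $q\in S_{ij}(X)\cap S_{ij}(Y)$; and, using that $p\mapsto p\circ[i,j]$ maps $V$ into $V$, $q\in S_{ij}(-X)$ iff $q\circ[i,j]\notin X$ iff $q\notin S_{ij}(X)$, so $S_{ij}(-X)=-S_{ij}(X)$.

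The remaining equations (4)--(6) follow from one uniform observation, obtained by an easy induction on $k$ using associativity of composition of maps: for all transposition pairs $(i_1,j_1),\dots,(i_k,j_k)$ and all $X\subseteq V$,
$$S_{i_1j_1}S_{i_2j_2}\cdots S_{i_kj_k}(X)=\{q\in V:q\circ([i_1,j_1]\circ[i_2,j_2]\circ\cdots\circ[i_k,j_k])\in X\}.$$
Hence any two words over the transposition symbols that denote the same element of $S_n$ induce the same composite operator on $\A$. Axiom (4) then follows from $[i,j]\circ[i,j]=\mathrm{Id}$; axiom (5) (read, as in the group relations preceding Definition~\ref{ax}, for $|i-j|\ge 2$, so that $[i,i+1]$ and $[j,j+1]$ are disjoint) follows from $[i,i+1]\circ[j,j+1]=[j,j+1]\circ[i,i+1]$; and axiom (6) follows from the braid relation $[i,i+1]\circ[i+1,i+2]\circ[i,i+1]=[i+1,i+2]\circ[i,i+1]\circ[i+1,i+2]$. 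This gives $\A\models\Sigma_n$, hence $RTA_n\models\Sigma_n$, i.e. $\mathbf{HSP}RTA_n\subseteq TA_n$.

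There is no serious obstacle in this direction; it is a routine verification. The two points requiring care are the convention for the direction of composition --- so that $S_{ab}S_{cd}$ corresponds to $[a,b]\circ[c,d]$, which dictates the order of the factors in the displayed identity --- and the implicit hypothesis $|i-j|\ge 2$ in axiom (5), without which the equation would fail in $\A$, since $[i,i+1]$ and $[i+1,i+2]$ do not commute.
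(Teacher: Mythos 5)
Your proof is correct and follows exactly the route the paper takes: reduce to $SetTA_n\models\Sigma_n$ (since validity of equations is preserved by $\mathbf{S}$ and $\mathbf{P}$) and then verify the axioms, which the paper simply declares ``a routine computation.'' You have carried out that routine computation explicitly and correctly, including the two genuine points of care (the direction of composition making $S_{i_1j_1}\cdots S_{i_kj_k}$ correspond to $[i_1,j_1]\circ\cdots\circ[i_k,j_k]$, and the implicit disjointness hypothesis $i\neq j\pm1$ in axiom (5)).
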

\begin{proof}It is enough to prove that $SetTA_n\models\Sigma_n$ which is a routine computation.
\end{proof}
The proof of the following lemma is completely analogous to that of Theorem 4.17 in \cite{sagiphd}, so it is omitted.
\begin{lemma}
Let $\A$ be an $RTA_n$ type $BAO$.
Suppose $G\subset {}^nn$ is a permutable set, and $\langle\mathcal{F}_\xi:\xi\in G\rangle$ is a system of ultrafilters of $\A$
such that for all $\xi\in G,\;i\neq j\in n$ and $a\in\A$ the following condition holds:
$${S_{ij}}^\A(a)\in\mathcal{F}_\xi\Leftrightarrow a\in \mathcal{F}_{\xi\circ[i,j]}\quad\quad (*).$$
Then the following function $h:\A\longrightarrow\wp(G)$ is a homomorphism$$h(a)=\{\xi\in G:a\in \mathcal{F}_\xi\}.$$
\end{lemma}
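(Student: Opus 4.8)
The plan is to verify directly that the proposed map $h\colon\A\longrightarrow\wp(G)$ is a homomorphism of $RTA_n$-type $BAO$'s, using only the fact that each $\mathcal F_\xi$ is an ultrafilter together with the coherence condition $(*)$. First I would check that $h$ preserves the Boolean structure. For meet, $a\wedge b\in\mathcal F_\xi$ iff $a\in\mathcal F_\xi$ and $b\in\mathcal F_\xi$, which is exactly the statement that a filter is closed under meets and upward closed; unravelling the definition of $h$ this gives $h(a\wedge b)=h(a)\cap h(b)$. For complementation, since $\mathcal F_\xi$ is an \emph{ultra}filter we have $-a\in\mathcal F_\xi$ iff $a\notin\mathcal F_\xi$, so $\xi\in h(-a)$ iff $\xi\notin h(a)$, i.e.\ $h(-a)=G\setminus h(a)=-h(a)$ where $-$ on the right is complement relative to $G$, the unit of $\wp(G)$. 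This also shows $h(1^\A)=G$ and $h(0^\A)=\emptyset$, so $h$ lands in $\wp(G)$ and respects the Boolean reduct.

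Next I would treat the substitution operators. Fix $i\neq j\in n$ and $a\in\A$. We compute
\begin{align*}
h({S_{ij}}^\A(a)) &= \{\xi\in G : {S_{ij}}^\A(a)\in\mathcal F_\xi\}\\
&= \{\xi\in G : a\in\mathcal F_{\xi\circ[i,j]}\}\\
&= \{\eta\circ[i,j] : \eta\in G,\ a\in\mathcal F_\eta\},
\end{align*}
where the second equality is precisely $(*)$ and the third is the substitution $\eta=\xi\circ[i,j]$, which is a bijection of $G$ onto itself because $G$ is permutable (so $\xi\in G\iff\xi\circ[i,j]\in G$) and $[i,j]$ is an involution. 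On the other hand, by the definition of the operator on the set algebra $\wp(G)$,
\begin{align*}
{S_{ij}}^{\wp(G)}(h(a)) &= \{\eta\in G : \eta\circ[i,j]\in h(a)\}\\
&= \{\eta\in G : a\in\mathcal F_{\eta\circ[i,j]}\},
\end{align*}
and after the same change of variable this is the same set as above. Hence $h({S_{ij}}^\A(a))={S_{ij}}^{\wp(G)}(h(a))$, which is what was needed.

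I expect the only real subtlety — and the main thing the proof must get right — to be the bookkeeping with the index substitution $\xi\mapsto\xi\circ[i,j]$ on $G$: one must use permutability of $G$ to know this map is well defined and surjective onto $G$, and the involution property $[i,j]\circ[i,j]=\mathrm{id}$ to see it is its own inverse, so that the two descriptions of the target set genuinely coincide. Everything else is a routine unfolding of the ultrafilter axioms and the definitions of the operations on $\wp(G)$; since this is entirely parallel to the argument in Theorem~4.17 of \cite{sagiphd} (with transpositions $[i,j]$ in place of replacements $[i/j]$, the only difference being that transpositions are self-inverse so one never needs to pass to the inverse semigroup), I would be content to indicate these points and omit the purely computational verification, exactly as the statement already anticipates.
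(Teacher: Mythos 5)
Your verification is correct and is exactly the routine computation the paper has in mind: the paper omits the proof, citing its being completely analogous to Theorem 4.17 of S\'agi's work, and your direct check of the Boolean clauses via the ultrafilter axioms and of the substitution clause via $(*)$ together with permutability of $G$ and the involutivity of $[i,j]$ is that argument spelled out. (The change of variable $\eta=\xi\circ[i,j]$ is a harmless detour, since both sides already reduce to the same set $\{\xi\in G: a\in\mathcal F_{\xi\circ[i,j]}\}$.)
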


\begin{theorem}\label{rep}
$TA_n=\mathbf{Mod}(\Sigma_n)\subseteq PTA_n.$
\end{theorem}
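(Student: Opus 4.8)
\section*{Proof plan for Theorem~\ref{rep}}

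The plan is to establish the only non-trivial inclusion, $TA_n\subseteq PTA_n$ (the equality $TA_n=\mathbf{Mod}(\Sigma_n)$ being the definition of $TA_n$); together with the two preceding theorems this closes the loop $PTA_n\subseteq\mathbf{HSP}RTA_n\subseteq TA_n\subseteq PTA_n$, so all three classes coincide. Let $\A\in\mathbf{Mod}(\Sigma_n)$; if $\A$ is trivial it is already in $PTA_n$ (it is $\wp(\emptyset)$, or the empty product), so assume it is not. The strategy is the standard subdirect-representation one: for each $0^\A\neq a\in\A$ I would build a homomorphism $h_a$ from $\A$ into a concrete permutable-set algebra with $h_a(a)\neq 0$, and then invoke the folklore subdirect embedding theorem stated in Section~2, which gives $\A\in\mathbf{SP}\{\,\text{permutable-set algebras}\,\}=PTA_n$. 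In every case the target algebra will be $\wp(S_n)$, with $S_n$ viewed as a subset of ${}^nn$; note that $S_n$ is a permutable set, since right-composing a permutation of $n$ with a transposition again yields a permutation, so indeed $\wp(S_n)\in PTA_n$.

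To construct $h_a$, fix an ultrafilter $\mathcal{F}$ of $\Bl\A$ with $a\in\mathcal{F}$ and spread $\mathcal{F}$ over $S_n$ using the transposition operators. Axioms 2 and 3 make each $S_{ij}^\A$ a Boolean endomorphism of $\Bl\A$, and axiom 4 makes it an involution, hence a Boolean automorphism. For $\xi\in S_n$ pick a word $s_\xi\in T(n)^*$ with $s_\xi^{\hat{}}=\xi$ and let $S_\xi^\A$ be its interpretation (a composite of the automorphisms $S_{ij}^\A$, hence a Boolean automorphism of $\Bl\A$, equal to the identity when $\xi=Id$). Then set $\mathcal{F}_\xi=\{x\in\A:S_\xi^\A(x)\in\mathcal{F}\}$; each $\mathcal{F}_\xi$ is an ultrafilter of $\Bl\A$ and $\mathcal{F}_{Id}=\mathcal{F}$. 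It remains to verify condition $(*)$ of the preceding lemma, $S_{ij}^\A(x)\in\mathcal{F}_\xi\iff x\in\mathcal{F}_{\xi\circ[i,j]}$, which reduces to the operator identity $S_\xi^\A\circ S_{ij}^\A=S_{\xi\circ[i,j]}^\A$. Granting this, the lemma (applied with $G=S_n$) yields a homomorphism $h_a:\A\to\wp(S_n)$, $h_a(x)=\{\xi\in S_n:x\in\mathcal{F}_\xi\}$, and since $a\in\mathcal{F}=\mathcal{F}_{Id}$ we get $Id\in h_a(a)$, so $h_a(a)\neq 0$. Applying the folklore embedding theorem to the family $\langle\wp(S_n):0^\A\neq a\in\A\rangle$ gives $\A\in PTA_n$.

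The real work, and essentially the only place relations 5 and 6 are used (beyond the Boolean and involution axioms), lies in the two facts about $S_\xi^\A$: that it is well defined, independent of the word $s_\xi$, and that $S_\xi^\A\circ S_{ij}^\A=S_{\xi\circ[i,j]}^\A$. Both follow from the presentation theorem proved above: if $s,t\in T(n)^*$ satisfy $s^{\hat{}}=t^{\hat{}}$, then axioms 4,5,6 prove $s=t$, hence $s$ and $t$ have the same interpretation in $\A$ since $\A\models\Sigma_n$; and the concatenation $s_\xi s_{ij}$ has $(s_\xi s_{ij})^{\hat{}}=\xi\circ[i,j]$, so its term is interpreted as $S_{\xi\circ[i,j]}^\A$, while by definition it is interpreted as $S_\xi^\A\circ S_{ij}^\A$. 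The one point I would handle with care is the direction of composition: it must be right-composition $\xi\circ[i,j]$ — forced by the concrete clause $S_\xi(X)=\{p:p\circ\xi\in X\}$ together with associativity of composition of maps $n\to n$ — and reversing it would break $(*)$.
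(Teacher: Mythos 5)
Your proposal is correct and follows essentially the same route as the paper: fix an ultrafilter $\mathcal{F}$ containing $a$, spread it to the system $\mathcal{F}_\xi=\{x:S_\xi^\A(x)\in\mathcal{F}\}$ indexed by $S_n$, verify condition $(*)$ via the presentation theorem (the identity $(s_\xi s_{ij})^{\hat{}}=\xi\circ[i,j]$), and apply the relativization lemma with $G=S_n$ together with the subdirect embedding theorem. Your added remarks on the well-definedness of $S_\xi^\A$ and the direction of composition are points the paper leaves implicit, but the argument is the same.
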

\begin{proof}
Let $\A\in TA_n$ and let $0^\A\neq a\in A$.
We construct a homomorphism $h$ $$h:\A\longrightarrow\wp(S_n)$$ such that $h(a)\neq 0^{\wp(S_n)}.$
Let us choose an ultrafilter $\mathcal{F}\subseteq A$ containing $a$. Let $h:\A\longrightarrow \wp(S_n)$ be the following function
$h(z)=\{\xi\in S_n:S_{\xi}^\A(z)\in\mathcal{F}\}.$
Now, if $\mathcal{F}_\xi=\{z\in A: S_\xi(z)\in\mathcal{F}\},$ then $\langle\mathcal{F}_\xi:\xi\in S_n\rangle$ is a system of ultrafilters of
$\A$ satisfying $(*)$. To see this, let $i\neq j\in n,\;z\in A$ and $\xi\in S_n.$
Suppose $S_{ij}^\A(z)\in\mathcal{F}_\xi.$ This implies $$S_\xi^\A S_{ij}^\A(z)\in \mathcal{F}\quad\quad(**).$$
Observe that $(s_\xi s_{ij})^{\hat{}}=\xi\circ[i,j].$ Therefore, by Theorem 2.11,
$$Axioms\;4,5,6\vdash s_\xi s_{ij}(x)=s_{\xi\circ[i,j]}(x)\quad (***)$$So by $(**)$ we have $S^\A_{\xi\circ[i,j]}(z)\in\mathcal{F}.$
But then $z\in\mathcal{F}_{\xi\circ[i,j]}.$\\ Conversely, if $z\in\mathcal{F}_{\xi\circ[i,j]}$
then $S^\A_{\xi\circ[i,j]}(z)\in\mathcal{F}$ so by $(***)$ $S_\xi^\A S_{ij}^\A(z)\in \mathcal{F}$ hence $S_{ij}^\A(z)\in\mathcal{F}_{\xi}.$
Now, by previous lemma, $h$ is the desired homomorphism.
\end{proof}
\begin{theorem}\label{not} For $n\geq 2$, $RTA_n$ is not a variety.
\end{theorem}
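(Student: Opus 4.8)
The plan is to exhibit a single algebra that lies in $\mathbf{HSP}RTA_n$ but not in $RTA_n$. This suffices: since $RTA_n=\mathbf{SP}SetTA_n$ is by definition closed under $\mathbf{S}$ and $\mathbf{P}$, any variety containing it is closed under $\mathbf{H}$ and hence contains $\mathbf{HSP}RTA_n$, so were $RTA_n$ itself a variety it would equal $\mathbf{HSP}RTA_n$. For the witness I would take $\wp(D)$, where $D=S_n\subseteq{}^nn$ is the set of all permutations (bijections) of $n=\{0,\dots,n-1\}$ regarded as $n$-sequences over $n$. That $D$ is permutable is immediate, as the composite of a bijection with a transposition is again a bijection; hence $\wp(D)\in PTA_n\subseteq\mathbf{HSP}RTA_n$ by the inclusion proved above. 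Concretely, $\wp(D)$ is a relativization of the full set algebra $\A_{nn}=\wp({}^nn)$, the union of the ranges of the permutations of $n$ being $n$ itself; so membership in $\mathbf{HSP}RTA_n$ is free.

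The heart of the argument is to show $\wp(D)\notin RTA_n$, and the idea is to exhibit an element of $\wp(D)$ whose behaviour is impossible inside any non-trivial genuine transposition set algebra. I would take $p=\{\sigma\in S_n:\sigma(0)<\sigma(1)\}$; since $n\ge 2$ this is neither $0$ nor $1$ (the identity lies in $p$, the transposition $[0,1]$ does not), and a one-line computation gives $S_{01}(p)=D\setminus p=-p$. The key structural fact about set algebras is: in $\wp({}^nU)$ with $U\ne\emptyset$ the ``diagonal'' $\Delta=\{s\in{}^nU:s(0)=s(1)\}$ is non-empty (it contains the constant sequences) and every $s\in\Delta$ is fixed by the transposition $[0,1]$, so $S_{01}(b)$ and $b$ always agree on $\Delta$; consequently no $b$ can satisfy $S_{01}(b)=-b$, as that would force $\Delta\cap b=\Delta\setminus b=\emptyset$, i.e. $\Delta=\emptyset$.

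To finish I would suppose $\wp(D)$ embeds in a product $\prod_k\A_k$ with each $\A_k\in SetTA_n$, say $\A_k$ a subalgebra of $\wp({}^nU_k)$, and compose the embedding $f$ with a projection: since $p\ne 0$, some $h=\pi_k\circ f$ has $h(p)\ne 0$, so $\A_k$ and hence $\wp({}^nU_k)$ is non-trivial; applying the homomorphism $h$ to $S_{01}(p)=-p$ then yields $S_{01}(h(p))=-h(p)$ inside $\wp({}^nU_k)$, contradicting the previous paragraph, and so $\wp(D)\notin RTA_n$ as required. I do not anticipate a real obstacle; the only points needing slight care are (i) picking, uniformly in $n\ge 2$, a permutable set that avoids the diagonal altogether — the permutation set $S_n$ works, whereas the naive candidate ``all $s$ over a two-element base with $s(0)\ne s(1)$'' fails to be permutable once $n\ge 3$ — and (ii) the routine product/projection bookkeeping ensuring $h$ lands in a non-trivial algebra. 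The conceptual core is that $S_{ij}$-invariance on the diagonal blocks the equation $S_{ij}(b)=-b$; this is the mechanism by which the analogous class of representable algebras fails to be a variety in S\'agi's study of replacement algebras.
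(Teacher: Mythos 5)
Your proof is correct, and its skeleton is the same as the paper's: both arguments rest on a quasi-equation of the form $s_\tau(x)=-x\rightarrow 0=1$, which holds in every set algebra with square unit over a non-empty base because such a unit contains sequences fixed by $\tau$ (constant sequences for the paper, the diagonal $s(0)=s(1)$ for you), forcing $S_\tau(b)$ and $b$ to agree somewhere; since quasi-equations are preserved by $\mathbf{S}$ and $\mathbf{P}$, the quasi-equation holds throughout $RTA_n=\mathbf{SP}SetTA_n$, and both proofs then produce a permutable relativization, lying in $PTA_n\subseteq\mathbf{HSP}RTA_n$, in which it fails. Where you differ is in the choice of witness, and your choice is arguably the better one. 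The paper takes $G=\{e_i:i<n\}\subseteq{}^n2$, $\tau=f$ a product of disjoint transpositions defined by a parity case-split on $n$, and $X=\{e_i:i\text{ odd}\}$; you take $D=S_n$, $\tau=[0,1]$, and $p=\{\sigma\in S_n:\sigma(0)<\sigma(1)\}$. Your witness is uniform in $n\ge 2$, the verification $S_{01}(p)=-p$ is immediate from injectivity of permutations, and it sidesteps a defect in the paper's odd-dimensional case: there $f$ fixes the index $n-1$, so $e_{n-1}\circ f=e_{n-1}$ lies in $-X$ but not in $S_f(X)$, and the asserted identity $S_f(X)=-X$ fails as written (and one cannot simply drop $e_{n-1}$ from $G$ without destroying permutability). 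The remaining bookkeeping in your argument --- $\wp(D)\in\mathbf{H}SetTA_n$ by relativizing $\wp({}^nn)$, and projecting onto a non-trivial factor to transport $S_{01}(b)=-b$ into a genuine square set algebra --- matches the paper's use of constant maps and of the closure of $RTA_n$ under $\mathbf{S}$ and $\mathbf{P}$.
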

\begin{proof}
Let us denote $\sigma$ the quasi-equation
$$s_f(x)=-x\longrightarrow 0=1,$$ where $f$ is a permutation that we will define shortly.
It is easy to see that for all $k\leq n,$ the small algebra $\A_{nk}$ (
or more generally, any set algebra with square unit) models $\sigma.$
This can be seen using a constant map in $^nk.$ More precisely, let $q\in  {}^nk$
be an arbitrary constant map and let $X$ be any subset of $^nk.$
We have two cases for $q$ which are $q\in X$ or $q\in -X$. In either case,
noticing that $q\in X\Leftrightarrow q\in S_f(X),$ it cannot be the case that $S_f(X)=-X.$
Thus, the implication $\sigma$ holds in $\A_{nk}.$
It follows then, from Theorem 3.7,
that $RTA_n\models\sigma$ (because the operators $\mathbf{S}$ and $\mathbf{P}$ preserve quasi-equations).

Now we are going to show that there is some element $\B\in PTA_n$ such that $\B\nvDash\sigma.$
Let $G\subseteq {}^nn$ be the following permutable set $$G=\{s\in {}^n2:|\{i:s(i)=0\}|=1\}.$$
Let $\B=\wp(G)$, then $\wp(G)\in PTA_n.$ Let $f$ be the permutation defined as follows
\[
 f =
  \begin{cases}
   [0,1]\circ[2,3]\circ\ldots\circ[n-2,n-1] & \text{if $n$ is even}, \\
   [0,1]\circ[2,3]\circ\ldots\circ[n-3,n-2]      & \text{if $n$ is odd}
  \end{cases}
\]
Notice that $f$ is the composition of disjoint transpositions.
Let $X$ be the following subset of $G,$ $$X=\{e_i:i\mbox{ is odd, }i<n\},$$
where $e_i$ denotes the map that maps every element to $1$ except that the $i$th elementis mapped to $0$.
It is easy to see that, for all odd $i<n,$ $e_i\circ f=e_{i-1}.$
This clearly implies that $$S_f^\B(X)=-X=\{e_i:i\mbox{ is even, }i<n\}.$$
Since $0^\B\neq 1^\B,$ $X$ falsifies $\sigma$ in $\B.$ Since $\B\in {\bf H}{\wp(^nn)}$ we are done.
\end{proof}

\subsection{Complete representability}

In this section, we consider the following question. If $\A$ is a transposition algebra,  is there a
representation of $\A$ that preserves a given set of infinite meets and joins? Is there one, perhaps, which represents all existing meets and joins?
First we investigate the case when homomorphisms respect only a given set of meets, not necessarily all.
For a substitution algebra $\A$, and $a\in A$, $N_a$ denotes the set of all Boolean ultrafilters of $\A$ containing $a$.
Recall that $\{N_a:a\in A\}$ is a clopen base for the Stone topology whose underlying set consists of all ultrafilters of
$\A$.
In what follows $\prod$ and $\sum$ denote infimum
and supremum, respectively Throughout this section $n$ will be a finite ordinal $>1$.
We start by a crucial easy lemma.
Recall that $TA_n=Mod(\Sigma_n)$.

\begin{lemma} Let $\A\in TA_n$ and let $i,j\in n$, then $s_{[i,j]}$ is a complete Boolean endomorphism
\end{lemma}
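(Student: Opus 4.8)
The plan is to show that $s_{[i,j]}$ is both additive over arbitrary existing joins and multiplicative over arbitrary existing meets, which together express that it is a complete Boolean endomorphism. Since the proof is symmetric under complementation via axiom 3 of $\Sigma_n$ (namely $s_{ij}(-x)=-s_{ij}(x)$), it suffices to verify just one of the two, say completeness with respect to meets. So I would fix $\A\in TA_n$, fix $i\neq j\in n$, fix a subset $Y\subseteq A$ for which $\prod Y$ exists in $\Bl\A$, and aim to prove $s_{[i,j]}(\prod Y)=\prod\{s_{[i,j]}(y):y\in Y\}$.

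The key observation that makes this work is that $s_{[i,j]}$ has an inverse, namely itself: axiom 4 of $\Sigma_n$ gives $s_{ij}s_{ij}x=x$, so $s_{[i,j]}$ is a bijection of $A$ onto itself. First I would note the trivial direction: since $s_{[i,j]}$ is monotone (being a Boolean endomorphism by axioms 2 and 3, hence order-preserving), $s_{[i,j]}(\prod Y)$ is a lower bound for $\{s_{[i,j]}(y):y\in Y\}$. For the reverse, suppose $b$ is any lower bound of $\{s_{[i,j]}(y):y\in Y\}$, i.e. $b\le s_{[i,j]}(y)$ for all $y\in Y$. Applying the endomorphism $s_{[i,j]}$ to both sides and using $s_{[i,j]}s_{[i,j]}y=y$, monotonicity gives $s_{[i,j]}(b)\le y$ for all $y\in Y$; hence $s_{[i,j]}(b)$ is a lower bound of $Y$, so $s_{[i,j]}(b)\le\prod Y$, and applying $s_{[i,j]}$ once more yields $b\le s_{[i,j]}(\prod Y)$. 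Thus $s_{[i,j]}(\prod Y)$ is the greatest lower bound, establishing completeness over meets; the dual argument (or complementation via axiom 3) gives completeness over joins.

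Once both of these are in hand, I would simply recall that a Boolean homomorphism that preserves all existing suprema and infima is by definition a complete Boolean endomorphism, so the lemma follows. I do not anticipate a genuine obstacle here: the entire content is the self-inverse identity $s_{ij}s_{ij}x=x$ together with monotonicity, both of which come directly from $\Sigma_n$. The only point requiring a little care is making sure the argument is phrased so that it applies to \emph{arbitrary} existing meets and joins (not merely finite ones, which are already covered by axioms 2 and 3), and to note explicitly that $s_{[i,j]}$ being its own two-sided inverse is what upgrades mere monotonicity to preservation of extrema — this is the crux, and it is why transposition operators behave better in this respect than the replacement operators studied by S\'agi.
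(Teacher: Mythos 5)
Your proof is correct and rests on exactly the same two ingredients as the paper's own argument: monotonicity of the Boolean endomorphism $s_{[i,j]}$ for the easy inequality, and the involution $s_{ij}s_{ij}x=x$ from axiom 4 of $\Sigma_n$ to reverse it. Your phrasing via an arbitrary lower bound $b$ is a slight (and arguably cleaner) repackaging, since it avoids presupposing that $\prod s_{[i,j]}Y$ exists, but the route is essentially identical.
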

\begin{proof}
Let $X$ be a subset of $A$. Since $s_{[i,j]}$ is a Boolean endomorphism,
and $\prod X\leq x$ for all $x\in X,$ so $s_{[i,j]}\prod X\leq s_{[i,j]}x$ for all $x\in X$. Therefore, $s_{[i,j]}\prod X\leq \prod s_{[i,j]}X.$
Conversely, $$s_{[i,j]}\prod X\leq \prod s_{[i,j]}X$$ implies that $$\prod X\leq s_{[i,j]}\prod s_{[i,j]}X.$$
But from what we have already done, $$s_{[i,j]}\prod s_{[i,j]}X\leq\prod  s_{[i,j]}s_{[i,j]}X= \prod X.$$
Thus, $$\prod X= s_{[i,j]}\prod s_{[i,j]}X,$$ which implies that $$s_{[i,j]}\prod X=\prod s_{[i,j]}X.$$
\end{proof}

\begin{theorem}\label{OTT} Let $\A\in TA_n$ be countable, let $a\in A$ non-zero, and let $X\subseteq A$
be such that such that $\prod X=0$.
Then there exists permutable $V$ and a representation $h:\A\to \wp(V)$ such that $\bigcap_{x\in X}h(x)=\emptyset$ and $h(a)\neq\phi$.
\end{theorem}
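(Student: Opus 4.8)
The plan is to mimic the standard Orey--Henkin omitting types argument through the Stone space, using the Baire category theorem on the (compact, hence Baire) Stone space of ultrafilters of $\A$. First I would build the candidate representation: let $V = S_n$ (or, more carefully, the set of ultrafilters of $\A$ with a suitable permutable structure), and for an ultrafilter $\mathcal{F}$ of $\A$ define the associated system $\langle \mathcal{F}_\xi : \xi\in S_n\rangle$ by $\mathcal{F}_\xi = \{z\in A : S^\A_\xi(z)\in\mathcal{F}\}$, exactly as in the proof of Theorem \ref{rep}. By Theorem \ref{rep} and the lemma preceding it, for \emph{any} ultrafilter $\mathcal{F}$ the map $h_\mathcal{F}(z) = \{\xi\in S_n : S^\A_\xi(z)\in\mathcal{F}\}$ is a homomorphism into $\wp(S_n)$, and $h_\mathcal{F}(a)\neq\emptyset$ iff $a\in\mathcal{F}_\xi$ for some $\xi$, i.e. iff $S^\A_\xi(a)\in\mathcal{F}$ for some $\xi$ — in particular, $a = S^\A_{Id}(a)\in\mathcal{F}$ suffices. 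So the only thing that can fail for the omitting condition is that the single homomorphism $h_\mathcal{F}$ is not enough; we instead take the product over a carefully chosen family of ultrafilters, or equivalently show that a \emph{single} $\mathcal{F}$ can be chosen generically so that the omitting condition $\bigcap_{x\in X} h_\mathcal{F}(x) = \emptyset$ holds. Concretely, $\xi\in\bigcap_{x\in X}h_\mathcal{F}(x)$ means $S^\A_\xi(x)\in\mathcal{F}$ for every $x\in X$, i.e. $\{S^\A_\xi(x) : x\in X\}\subseteq\mathcal{F}$; so we must arrange that for every $\xi\in S_n$, the ultrafilter $\mathcal{F}$ omits the set $\{S^\A_\xi(x) : x\in X\}$.

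The key point is that each such set has infimum $0$: since $s_{[i_1,j_1]}$ is a complete Boolean endomorphism (the lemma just before Theorem \ref{OTT}), $S^\A_\xi$ is also a complete endomorphism for every $\xi\in S_n$ (compose finitely many), hence $\prod_{x\in X} S^\A_\xi(x) = S^\A_\xi\bigl(\prod_{x\in X} x\bigr) = S^\A_\xi(0) = 0$. Now work in the Stone space $\mathrm{St}(\A)$ of ultrafilters of $\A$, with its basic clopen sets $N_b = \{\mathcal{F} : b\in\mathcal{F}\}$. For each $\xi\in S_n$, the set
$$ G_\xi = \mathrm{St}(\A)\setminus\bigcap_{x\in X} N_{S^\A_\xi(x)} = \bigcup_{x\in X}\bigl(\mathrm{St}(\A)\setminus N_{S^\A_\xi(x)}\bigr) $$
is open, and it is dense: if $N_c$ is any nonempty basic open set, then $c\neq 0$, and since $\prod_{x\in X} S^\A_\xi(x) = 0 < c$ we cannot have $c\leq S^\A_\xi(x)$ for all $x\in X$, so there is $x\in X$ with $c\cdot(-S^\A_\xi(x))\neq 0$, giving a nonempty basic open subset of $N_c$ inside $\mathrm{St}(\A)\setminus N_{S^\A_\xi(x)}\subseteq G_\xi$. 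There are only finitely many $\xi\in S_n$, so trivially (or by the Baire category theorem, which here we barely need since $S_n$ is finite) $\bigcap_{\xi\in S_n} G_\xi$ is dense, in particular nonempty, and moreover it meets $N_a$: indeed $\{\mathcal{F} : a\in\mathcal{F}\} = N_a$ is a nonempty clopen set, and intersecting the density argument with $N_a$ shows $N_a\cap\bigcap_{\xi\in S_n} G_\xi\neq\emptyset$. Pick $\mathcal{F}$ in this intersection.

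With this $\mathcal{F}$: the homomorphism $h = h_\mathcal{F} : \A\to\wp(S_n)$ from Theorem \ref{rep} satisfies $h(a)\neq\emptyset$ (as $a\in\mathcal{F}$, so $Id\in h(a)$), and for every $\xi\in S_n$, since $\mathcal{F}\in G_\xi$ there is some $x\in X$ with $S^\A_\xi(x)\notin\mathcal{F}$, i.e. $\xi\notin h(x)$; hence $\xi\notin\bigcap_{x\in X} h(x)$. As $\xi$ was arbitrary, $\bigcap_{x\in X} h(x) = \emptyset$. Taking $V = S_n$, which is permutable (it is closed under right-composition with any $[i,j]$, being a group), and $h$ as the desired representation $h : \A\to\wp(V)$, completes the proof. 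I do not expect a serious obstacle here — the finiteness of $S_n$ makes the Baire step trivial; the only thing to be careful about is the bookkeeping showing $S^\A_\xi$ is a complete endomorphism and that the density argument can be carried out inside $N_a$ (equivalently, replace $\A$ by the relative algebra $\A\restr a$ or just note that all the relevant infima remain $0$ below $a$). The countability hypothesis is not even strictly needed for this finite-dimensional version, though I will keep it since the statement is phrased that way and it is exactly what the Baire-category phrasing in the introduction refers to; if one wanted to omit a genuinely countable \emph{family} of types simultaneously, countability of $\A$ plus the full Baire category theorem on $\mathrm{St}(\A)$ would be used in the same way.
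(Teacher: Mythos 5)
Your proposal is correct and follows essentially the same route as the paper: both use the completeness of the endomorphisms $s_\xi$ for $\xi\in S_n$ to see that each $\prod_{x\in X}s_\xi x=0$, observe that the corresponding ``bad'' sets are nowhere dense in the Stone space, pick by a Baire-category (here effectively finite-intersection) argument an ultrafilter containing $a$ avoiding all of them, and then use the standard representation $h(z)=\{\xi\in S_n: s_\xi z\in F\}$ onto the permutable set $V=S_n$. Your added care about density inside $N_a$ and the remark that finiteness of $S_n$ trivializes the Baire step are accurate refinements of the paper's argument, not departures from it.
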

\begin{demo}{Proof} Each $\eta\in S_n$  is a composition of transpositions, so that $s_{\eta}$, a composition of complete endomorphisms,
is itself complete. Therefore $\prod s_{\eta}X=0$ for all $\eta\in S_n$.
Then for all $\eta\in S_n$, $B_{\eta}=\bigcap_{x\in X} N_{s_{\eta}}x$ is nowhere dense in the Stone topology
and $B=\bigcup_{\eta\in S_n} B_{\eta}$ is of the first category (In fact, $B$ is also nowhere dense).
Let $F$ be an ultrafilter that contains $a$ and is outside $B$ which exists by the Baire category theorem,
 since the complement of $B$ is dense. Then for all $\eta\in S_n$,
there exists $x\in X$ such that ${s}_{\tau}x\notin F$. Let $h:\A\to \wp(S_n)$ be the usual representation function; $h(x)=\{\eta\in S_n: { s}_{\eta}x\in F\}$.
Then clearly $\bigcap_{x\in X} h(x)=\emptyset.$
\end{demo}
We refer to $X$ as a non-principal types, and to $V$ as a model omitting $X$.
The proof can be easily generalized to countably many non-principal types.

We now ask for the preservation of possibly uncountably many meets.
We will see that we are actually touching upon somewhat deep issues in set theory here.
We let $MA$ denote Martin's axiom.
But first a piece of notation: For a cardinal $\kappa$,  $OTT(\kappa)$ stands for the above statement when we have $\kappa$ many meets.
\begin{theorem}\label{t}
\begin{enumerate}
\item The statement $``(\forall \kappa<{2}^{\omega})$$OTT(\kappa)"$ is provable in $ZFC$ +$MA$.
\item The statement $``(\forall k< 2^{\omega})(OTT(\kappa))"$ is independent of $ZFC+\neg CH$.
\end{enumerate}
\end{theorem}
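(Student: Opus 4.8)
Throughout, $OTT(\kappa)$ abbreviates the natural strengthening of Theorem~\ref{OTT} to $\kappa$ many non‑principal types: for every countable $\A\in TA_n$, every $0\neq a\in A$, and every family $\langle X_i:i<\kappa\rangle$ of subsets of $A$ with $\prod X_i=0$ for all $i$, there are a permutable $V$ and a homomorphism $h:\A\to\wp(V)$ with $h(a)\neq\emptyset$ and $\bigcap_{x\in X_i}h(x)=\emptyset$ for every $i$ (equivalently, taking products over the nonzero elements of $A$, one may also require $h$ injective); recall that $OTT(\aleph_0)$ already holds in $ZFC$, by Theorem~\ref{OTT} and the remark after it. For part (1) the plan is to run the Baire/Martin argument of Theorem~\ref{OTT} with all $\kappa$ types at once, inside a generic filter. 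Fix $a$ and the $X_i$. For $i<\kappa$ and $\eta\in S_n$ put $B_{i,\eta}=\bigcap_{x\in X_i}N_{s_\eta x}$, a closed subset of the Stone space $\mathrm{St}(\A)$ (the ultrafilters of $\A$ with the topology generated by the $N_a$). By the lemma preceding Theorem~\ref{OTT} (every $s_{[i,j]}$ is a complete Boolean endomorphism), each $s_\eta$ with $\eta\in S_n$ is a complete Boolean endomorphism, so $\prod_{x\in X_i}s_\eta x=s_\eta\prod X_i=0$; hence no nonzero element of $\A$ lies below every $s_\eta x$, i.e.\ $B_{i,\eta}$ has empty interior and is nowhere dense. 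There are $\kappa\cdot|S_n|=\kappa$ many such sets. As $\A$ is countable, $P=(\{p\in A:0\neq p\leq a\},\leq)$ is a countable (hence ccc) poset, and each $D_{i,\eta}=\{p\in P:N_p\cap B_{i,\eta}=\emptyset\}$ is dense in $P$: for $0\neq p\leq a$ the nonempty open set $N_p\setminus B_{i,\eta}$ contains a basic clopen $N_q$ with $0\neq q\leq p$. By $MA$ there is a filter $G$ of $P$ meeting all $D_{i,\eta}$; extend the filter it generates in $\A$ to an ultrafilter $F$, so $a\in F$ and, for each $(i,\eta)$, some $q\in G\cap D_{i,\eta}$ lies in $F$ with $N_q\cap B_{i,\eta}=\emptyset$, whence $F\notin B_{i,\eta}$. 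Setting $h_a(x)=\{\eta\in S_n:s_\eta x\in F\}$, the proof of Theorem~\ref{rep} (with the permutable set $G=S_n$) shows $h_a$ is a homomorphism $\A\to\wp(S_n)$; $\mathrm{Id}\in h_a(a)$, and $F\notin B_{i,\eta}$ for all $\eta$ gives $\bigcap_{x\in X_i}h_a(x)=\emptyset$. Passing to $h=\prod_{0\neq b\in A}h_b$ into $\wp(V)$ with $V$ the disjoint union of one copy of $S_n$ per nonzero $b$ (again permutable) yields the injective version, so $ZFC+MA\vdash(\forall\kappa<2^\omega)OTT(\kappa)$.

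For part (2), one half is immediate from part (1): $ZFC+MA+\neg CH$ is consistent (Solovay--Tennenbaum), and "$(\forall\kappa<2^\omega)OTT(\kappa)$" holds there. For the other half I will force a failure of $OTT(\aleph_1)$ in a model of $ZFC+\neg CH$. It is a standard fact that $\mathrm{cov}(\mathcal M)=\aleph_1<2^\omega$ is consistent with $ZFC$ — e.g.\ in the model obtained by adding $\aleph_2$ random reals to a model of $CH$, where $2^\omega=\aleph_2$ — with $\mathrm{cov}(\mathcal M)$ the least cardinality of a cover of $2^\omega$ by meager sets; work in such a model. Let $\A$ be a countable atomless Boolean algebra with every $s_{ij}$ equal to the identity map: the equations $\Sigma_n$ then hold trivially, so $\A\in TA_n=PTA_n$, and $\mathrm{St}(\A)$ is homeomorphic to the Cantor space $2^\omega$. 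Choose closed nowhere dense sets $C_i\subseteq\mathrm{St}(\A)$ $(i<\aleph_1)$ with $\bigcup_{i<\aleph_1}C_i=\mathrm{St}(\A)$ — possible precisely because $\mathrm{cov}(\mathcal M)=\aleph_1$ (the least number of closed nowhere dense sets covering $2^\omega$ is $\mathrm{cov}(\mathcal M)$) — and, by zero‑dimensionality, write $C_i=\bigcap_{x\in X_i}N_x$ with $X_i=\{x\in A:C_i\subseteq N_x\}$; since $C_i$ has empty interior, $\prod X_i=0$. Now suppose, for $a=1^\A$, there were a homomorphism $h:\A\to\wp(V)$ ($V$ permutable) with $h(1^\A)\neq\emptyset$ omitting every $X_i$. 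Since homomorphisms preserve the unit, $h(1^\A)=V\neq\emptyset$; fix $q\in V$. Then $F_q=\{x\in A:q\in h(x)\}$ is an ultrafilter of $\A$, i.e.\ a point of $\mathrm{St}(\A)$. But omitting $X_i$ means $q\notin\bigcap_{x\in X_i}h(x)$, so some $x\in X_i$ has $x\notin F_q$, i.e.\ $X_i\not\subseteq F_q$, i.e.\ $F_q\notin C_i$; as this holds for all $i$, $F_q\notin\bigcup_iC_i=\mathrm{St}(\A)$, a contradiction. Hence $OTT(\aleph_1)$ fails, and together with the first half this shows "$(\forall\kappa<2^\omega)OTT(\kappa)$" is independent of $ZFC+\neg CH$.

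The nowhere‑density of the $B_{i,\eta}$ and the $MA$/generic‑filter bookkeeping in part (1) are routine once the completeness lemma is invoked — they merely scale the proof of Theorem~\ref{OTT}. The step I expect to demand the most care, and the main obstacle, is the failure half of part (2): one must verify that the comparatively weak, \emph{relativized} representations permitted here (into $\wp(V)$ for an arbitrary permutable $V$, not only for squares) cannot evade the covering obstruction. The plan above handles this by extracting from each $q\in V$ a bona fide point $F_q\in\mathrm{St}(\A)$ and noting that omitting all $X_i$ would push every such $F_q$ outside $\bigcup_iC_i$; after that, one only needs the well‑known consistency of $\mathrm{cov}(\mathcal M)=\aleph_1<2^\omega$ together with the fact that $OTT(\aleph_0)$ holds outright, so that the failure is genuinely located at $\aleph_1$.
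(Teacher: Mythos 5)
Your proposal is correct, and at the top level it follows the same route as the paper: Martin's Axiom plus the Baire-category machinery of Theorem \ref{OTT} for part (1), and the cardinal invariant $\mathrm{cov}(\mathcal{M})$ (the paper's $covK$) for part (2). The differences are in execution, and they are worth recording. For (1) the paper argues via the blanket statement that under $MA$ a union of fewer than continuum many nowhere dense sets behaves like a countable union; you instead apply $MA$ directly to the countable poset $P=\{p\in A: 0\neq p\leq a\}$ with the dense sets $D_{i,\eta}$, which is cleaner and makes explicit exactly which instance of $MA$ (for countable, hence ccc, posets, i.e.\ $\mathrm{cov}(\mathcal{M})=2^{\omega}$) is being used. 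For (2) the paper only asserts that $OTT(covK)$ is false and gestures at iterated forcing; you actually produce the required counterexample: the countable atomless Boolean algebra with all $s_{ij}$ interpreted as the identity (which visibly satisfies $\Sigma_n$), a cover of its Stone space $2^{\omega}$ by $\aleph_1=\mathrm{cov}(\mathcal{M})$ closed nowhere dense sets $C_i=\bigcap_{x\in X_i}N_x$ with $\prod X_i=0$, and the observation that any point $q$ of a nontrivial representation yields an ultrafilter $F_q$ which, were all the $X_i$ omitted, would lie outside $\bigcup_i C_i=\mathrm{St}(\A)$. This is precisely the verification the paper leaves out, and your choice of the random real model in place of an unspecified iterated forcing is simpler and entirely sufficient for independence (one only needs some model of $\neg CH$ in which the statement fails, not the full spectrum of possible values of $covK$). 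The one stylistic caveat is that your counterexample algebra is degenerate qua transposition algebra (all substitutions trivial); this is legitimate since $OTT(\kappa)$ quantifies over all of $TA_n$, but it does mean the failure is really a Boolean-algebraic phenomenon, which is consistent with the spirit of the paper's sketch.
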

\begin{demo}{Proof} \begin{enumerate}\item Martin's axiom implies that, in the Stone space, the union of $\kappa$ many $(\kappa<{2}^{\omega})$ nowhere dense sets is actually a countable union
and so the Baire category theorem readily applies.
\item We have proved consistency since $MA$ implies the required statement.
We now prove independence.
Let $covK$ be the least cardinal $\kappa$ such that the real line can be covered by $\kappa$ many closed disjoint
nowhere dense sets. It is known that
$\omega<covK\leq 2^{\omega}$. In any Polish space the intersection of $< covK$ dense sets is dense.
But then if $\kappa<covK$, then $OTT(\kappa)$ is true.
The independence is proved using standard iterated forcing to show that it is consistent that $covK$ could be literally anything greater
than  $\omega$ and $\leq 2^{\omega}$,
and then show that $OTT(covK)$ is false.
\end{enumerate}
\end{demo}

\begin{corollary} Let $\A\in TA_n$ be countable and $X\subseteq A$ be such that $\prod X=0$. Then there is a permutable set $V$ and
and an embedding $f:\A\to \wp(V)$ such that $\bigcap_{x\in X}f(x)=\emptyset.$
\end{corollary}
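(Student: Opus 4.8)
The plan is to assemble the single required faithful representation out of the pointwise ``omitting'' representations delivered by Theorem~\ref{OTT}, gluing them over a disjoint union of their base sets. First I would dispose of the degenerate case: if $X=\emptyset$ then $\prod X=1^\A=0^\A$, so $\A$ is one-element and any $f$ works; thus assume $X\neq\emptyset$ and $\A$ non-degenerate. Then, for each $a\in A$ with $a\neq 0^\A$, I would apply Theorem~\ref{OTT} to $\A$, $a$ and $X$ (legitimate since $\A$ is countable, $a\neq 0^\A$, and $\prod X=0$) to obtain a permutable set $V_a\subseteq {}^nU_a$ together with a homomorphism $h_a\colon\A\to\wp(V_a)$ satisfying $\bigcap_{x\in X}h_a(x)=\emptyset$ and $h_a(a)\neq\emptyset$. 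Relabelling the elements of the base sets if necessary (transposition set algebras are invariant under a bijective renaming of the base), I may assume the $U_a$ are pairwise disjoint; then the $V_a$ are pairwise disjoint too, since a sequence in $V_a$ takes some value in $U_a$ and so cannot lie in any ${}^nU_b$ with $b\neq a$.

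Next I would set $V:=\bigcup_{0^\A\neq a\in A}V_a\subseteq {}^n\bigl(\bigcup_a U_a\bigr)$ and check that $V$ is permutable: if $s\in V_a$ and $i\neq j\in n$, then $s\circ[i,j]$ has the same range as $s$, hence lies in ${}^nU_a$, and it lies in $V_a\subseteq V$ because $V_a$ is permutable. Define $f\colon\A\to\wp(V)$ by $f(x)=\bigcup_{0^\A\neq a\in A}h_a(x)$. That $f$ is a homomorphism is a routine verification using only the two facts that the $V_a$ are pairwise disjoint and that $S_{ij}$ maps each $V_a$ into itself: the cross terms in $f(x)\cap f(y)$ vanish, complementation with respect to $V$ restricts on $V_a$ to complementation with respect to $V_a$, and for $q\in V_a$ one has $q\circ[i,j]\in V_a$ so $q\in S_{ij}^{\wp(V)}(f(x))$ iff $q\in S_{ij}^{\wp(V_a)}(h_a(x))$. (Equivalently, $Y\mapsto(Y\cap V_a)_a$ is an isomorphism $\wp(V)\to\prod_a\wp(V_a)$ and $f$ is the composite of $x\mapsto(h_a(x))_a$ with its inverse.) Injectivity of $f$ is immediate: for $a\neq 0^\A$ we have $\emptyset\neq h_a(a)\subseteq f(a)$, so $f(a)\neq 0^{\wp(V)}$; this is exactly Theorem~2.1 applied to the family $(h_a)_{a\neq 0^\A}$.

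Finally I would verify $\bigcap_{x\in X}f(x)=\emptyset$. Any $v\in V$ lies in a unique $V_{a_0}$, and since $h_a(x)\subseteq V_a$ for all $a$, we get $v\in f(x)$ iff $v\in h_{a_0}(x)$; hence $v\in\bigcap_{x\in X}f(x)$ iff $v\in\bigcap_{x\in X}h_{a_0}(x)=\emptyset$. So no such $v$ exists, as required, and $\wp(V)\in PTA_n$ since $V$ is permutable. There is essentially no hard step here: the whole content is the passage from the single-element-separating representations of Theorem~\ref{OTT} to a globally injective one, and the only thing to watch is that the gluing be carried out over a \emph{disjoint} union of bases, which is what simultaneously keeps $V$ permutable, makes $f$ a homomorphism, and preserves the emptiness of the realized intersection.
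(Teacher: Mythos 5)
Your proposal is correct and follows essentially the same route as the paper: for each nonzero $a$ the paper invokes the representation of Theorem \ref{OTT} to get $f_a:\A\to\wp(S_n)$ omitting $X$ and separating $a$, then glues these over a disjoint union of copies of the base via the isomorphism $\prod_a\wp(V_a)\cong\wp(V)$, exactly as you do. Your extra care in disjointifying the base sets and verifying permutability of the union is a welcome elaboration of details the paper leaves implicit, but it is not a different argument.
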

\begin{demo}{Proof} For each $a\neq 0$, let $f_a:\A\to \wp(S_n)$
be a homomorphism such that $\bigcap_{x\in X}f_a(x)=\emptyset$. For each $a\in A$, let
$V_a=S_n$ and let $V$ be the disjoint union of the $V_a$'s.
Then $\prod_{a\in A} \wp(V_a)\cong \wp(V)$, via $(a_i:i\in I)\mapsto  \bigcup a_i$. Let $g$ denote this isomorphism.
Define $f:\A\to \wp(V)$ by $f(x)=g[(f_ax: a\in A)]$.Then $f$ is the desired
embedding. Indeed, if $s\in \bigcap_{x\in X} f(x)$, then $s\in V_a$ for some $a\in A$, and $s\in \bigcap_{x\in X}f_a(x)$, and this cannot
happen.
\end{demo}
Now we turn to the problem of preserving all meets. To make the problem more tangible we need a few preparations.
For some tome to come we restrict the notion of representation.
We stipulate that a representation of an algebra $\A$, is a one to one homomorphism  $f:\A\longrightarrow \wp(V)$ for a permutable set $V$.
Notice that $\A$ could be infinite, and so $V$ could be infinite as well.
Let $\A$ be a substitution  algebra and $f:\A\longrightarrow \wp(V)$ be a representation of $\A$.
If $s\in V$, we let
$$f^{-1}(s)=\{a\in \A: s\in f(a)\}.$$
An atomic representation $f:\A\to \wp(V)$ is a representation such that for each
$s\in V$, the ultrafilter $f^{-1}(s)$ is principal.
A complete representation of $\A$ is a representation $f$ satisfying
$$f(\prod X)=\bigcap f[X]$$
whenever $X\subseteq \A$ and $\prod X$ is defined.

When we ask for representations that respect all existing meets, it turns out that the Boolean reduct of
algebras in question have to be atomic in the first place.
Atomicity a necessary condition for complete representability may not sufficient, as is the case example for cylindric algebras.
For transposition algebras we show that complete representability and atomicity (of the Boolean reduct) are equivalent.
To prove this, we first recall the following result established by Hirsch and Hodkinson for cylindric algebras.
The proof works verbatim for $TA$'s.
\begin{lemma}Let $\A\in TA_{n}$.
\begin{enumarab}

\item A representation of $\A$
is atomic if and only if it is complete.
\item A representation $f:\A\to \wp(V)$ is atomic if and only if  $\Bl\A$ is atomic  and $\bigcup_{x\in X}f(x)=V$, where $X$ is the set of atoms.
\item If $\A$ has a complete representation $f$, then $\Bl\A$ is atomic.
\end{enumarab}
\end{lemma}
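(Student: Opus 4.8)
The plan is to prove the three clauses of the lemma in the natural logical order, using the Stone-topological picture of representations already set up in this section. The key observation throughout is that a representation $f:\A\to\wp(V)$ sends each state $s\in V$ to an ultrafilter $f^{-1}(s)$ of $\Bl\A$, and that an atom of $\Bl\A$ generates a principal ultrafilter while conversely a principal ultrafilter is generated by an atom. So "atomic representation" (every $f^{-1}(s)$ principal) is a statement purely about which ultrafilters show up as the "trace" of a state.

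First I would prove clause (ii), since (i) and (iii) follow from it easily. Suppose $f$ is atomic; then each $f^{-1}(s)$ is principal, say generated by an atom $x_s\in\At\A$, so $s\in f(x_s)$, giving $\bigcup_{x\in\At\A}f(x)=V$; conversely, if $\Bl\A$ is atomic and $\bigcup_{x\in\At\A}f(x)=V$, then for each $s\in V$ there is an atom $x$ with $s\in f(x)$, and since $f$ is a Boolean homomorphism, $x\in f^{-1}(s)$ forces $f^{-1}(s)$ to be exactly the principal ultrafilter $\{a:x\leq a\}$ (it cannot contain a complement of anything above $x$), so $f$ is atomic.

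Next, clause (iii): if $f$ is a complete representation, pick any $a\neq 0$ and any state $s\in f(a)$. I would show $f^{-1}(s)$ is principal, hence (as $a$ was arbitrary nonzero, and every nonzero element lies below an atom... actually) that $\Bl\A$ is atomic. The argument: if $\Bl\A$ were atomless, fix $a\in A$ nonzero; the set $X=\{b\in A: b\leq a,\ b\neq a\}$ — more precisely take $X$ to be a maximal set of nonzero elements below $a$ with $\prod$-behaviour... The cleaner route: suppose $a$ has no atom below it; consider $X=\{a-b: 0\neq b\leq a\}$, or simply note $\prod\{c: c<a\}=0$ when nothing is an atom below $a$ — I would set $X$ to be the set of all $c$ with $c<a$ (strictly below), observe $\prod X=0$ by atomlessness below $a$, so completeness gives $\bigcap f[X]=f(0)=\emptyset$; but any $s\in f(a)$ lies in $f(c)$ for every $c$ with... no — this needs $s\in f(c)$ for all $c<a$, which is false. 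The correct move is the standard one: if $s\in f(a)$ and $f^{-1}(s)$ is non-principal, then $\prod f^{-1}(s)$, call it $X$, has $\prod X=0$ (a non-principal ultrafilter has meet $0$), yet $s\in\bigcap f[X]\neq f(0)$, contradicting completeness; hence every $f^{-1}(s)$ is principal, so $f$ is atomic, and then by (ii) $\Bl\A$ is atomic.

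Finally clause (i): "atomic $\Rightarrow$ complete" is the substantive direction and I expect it to be the main obstacle. Given an atomic $f:\A\to\wp(V)$ and $X\subseteq\A$ with $\prod X$ defined, I must show $f(\prod X)=\bigcap f[X]$; the inclusion $\subseteq$ is automatic from monotonicity. For $\supseteq$, take $s\in\bigcap f[X]$; then $f^{-1}(s)$ is a principal ultrafilter, generated by an atom $y$, and $x\in f^{-1}(s)$ for all $x\in X$, i.e. $y\leq x$ for all $x\in X$, so $y$ is a lower bound for $X$, hence $y\leq\prod X$, hence $\prod X\in f^{-1}(s)$, i.e. $s\in f(\prod X)$. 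This is short; the "obstacle" is really just making sure atomicity is used in the form "$f^{-1}(s)$ principal $\Rightarrow$ generated by an atom $y$ with $y\le$ everything in that ultrafilter", which is exactly the definition of a principal ultrafilter in a Boolean algebra. The converse "complete $\Rightarrow$ atomic" is the argument already given for (iii). I would remark, as the paper does, that the whole argument is the Hirsch--Hodkinson one for cylindric algebras and uses nothing about the $S_{ij}$ operators — only the Boolean structure of the representation — which is why it "works verbatim."
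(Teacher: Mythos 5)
Your proof is correct and is essentially the standard Hirsch--Hodkinson argument; the paper itself gives no proof here, only the citation ``See \cite{Hirsh}'' with the remark that the cylindric-algebra proof works verbatim, and your write-up is exactly that proof, in the natural order (ii), then (iii), then (i). One small point to tighten: in the forward direction of (ii) you verify only the covering condition $\bigcup_{x\in\At\A}f(x)=V$, but the clause also asserts that $\Bl\A$ is atomic. This follows from the paper's stipulation that representations are injective: given $a\neq 0$ one has $f(a)\neq\emptyset$, and any $s\in f(a)$ has $f^{-1}(s)$ principal, generated by an atom $x_s$ with $x_s\leq a$ since $a\in f^{-1}(s)$. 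Supplying this also closes the appeal to (ii) at the end of your argument for (iii), which otherwise leans on a part of (ii) you had not actually proved. Everything else --- in particular the observation that a non-principal ultrafilter has a defined infimum equal to $0$, which is what licenses applying completeness to $X=f^{-1}(s)$, and the use of principality of $f^{-1}(s)$ to get the reverse inclusion in (i) --- is in order.
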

\begin{proof}See \cite{Hirsh}.
\end{proof}
We say that an algebra is atomic, if its Boolean reduct is atomic.
Contrary to the cylindric case, we have:
\begin{theorem}\label{com} If $\A\in TA_n$ is atomic, then $\A$ ise completely representable
\end{theorem}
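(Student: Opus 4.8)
The plan is to build a complete representation of an atomic $\A\in TA_n$ directly, mimicking the Henkin-style construction already used in Theorem \ref{rep} but taking the atoms of $\Bl\A$ as the "points" we must keep track of. Concretely, I would let $X=\At\A$ be the set of atoms and, for each atom $x\in X$, choose the unique ultrafilter $\mathcal F^x$ that contains $x$ (since $x$ is an atom, $\mathcal F^x=\{a\in A: a\geq x\}$ is an ultrafilter). The target unit is a disjoint union of copies of $S_n$, one copy per atom: set $V_x=S_n$ for each $x\in X$ and let $V=\bigcup_{x\in X} V_x$, which I must check is permutable — each $S_n$ is closed under right-composition with transpositions, so the disjoint union is too.

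The representation itself is the natural gluing of the maps from Theorem \ref{rep}. For each atom $x$, by the argument in Theorem \ref{rep} the family $\langle \mathcal F^x_\xi:\xi\in S_n\rangle$ with $\mathcal F^x_\xi=\{z: S^\A_\xi(z)\in\mathcal F^x\}$ satisfies condition $(*)$ of the Lemma preceding Theorem \ref{rep}, hence $h_x(z)=\{\xi\in S_n: S^\A_\xi(z)\in\mathcal F^x\}$ is a homomorphism $\A\to\wp(S_n)=\wp(V_x)$. Glue these together: define $f:\A\to\wp(V)$ by $f(z)=\bigcup_{x\in X} h_x(z)$ (identifying $\wp(V)$ with $\prod_{x\in X}\wp(V_x)$ as in the Corollary above). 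This $f$ is a homomorphism as a product of homomorphisms. I would then verify the two things that matter: (i) $f$ is injective, and (ii) $f$ is complete, i.e. $f(\prod Y)=\bigcap f[Y]$ whenever $\prod Y$ exists.

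For injectivity and completeness the key structural fact — and this is where atomicity does the work — is that $f$ is an atomic representation in the sense of the Lemma of Hirsch--Hodkinson quoted just above: for each $s\in V$, say $s=\xi\in V_x=S_n$, the ultrafilter $f^{-1}(s)=\{z\in A: \xi\in h_x(z)\}=\{z: S^\A_\xi(z)\in\mathcal F^x\}=\mathcal F^x_\xi$ is principal, generated by the atom $S^\A_{\xi^{-1}}(x)$ (using that $S_\xi$ permutes atoms since $s_\xi$ is a complete Boolean endomorphism by the Lemma preceding Theorem \ref{OTT}, so $S_\xi^\A(x)$ is again an atom, and that $\mathcal F^x_\xi$ contains $z$ iff $z\geq S^\A_{\xi^{-1}}(x)$). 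Hence every $f^{-1}(s)$ is principal, so $f$ is atomic; it remains to check the covering condition $\bigcup_{x\in X} f(x)=V$ of part (2) of that Lemma, which holds because for any $\xi\in V_x$ the atom $S^\A_{\xi^{-1}}(x)$ is some atom $x'\in X$ and then $\xi\in f(x')$. By part (1) of the Lemma, atomic implies complete, giving completeness; injectivity follows since $f$ is complete and $\Bl\A$ is atomic (any nonzero $a$ lies above some atom $x$, and $\mathrm{Id}\in h_x(x)\subseteq f(a)$ up to the embedding of $V_x$ — more carefully, $x\in X$ gives a point of $V$ witnessing $f(a)\neq\emptyset$).

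The main obstacle I expect is the bookkeeping in the covering/atomicity step: one must be careful that the atoms are genuinely permuted (not merely mapped) by each $S_\xi^\A$, which relies on $s_{[i,j]}$ being a complete Boolean \emph{endomorphism} with $s_{[i,j]}s_{[i,j]}=\mathrm{id}$ (axiom 4 of $\Sigma_n$), so that $S_{[i,j]}^\A$ restricts to an involution of $\At\A$, and then $S_\xi^\A$ restricts to a permutation of $\At\A$ for every $\xi\in S_n$ by Theorem \ref{rep}'s identity $(***)$. Granting that, one checks $\{z: S^\A_\xi(z)\in\mathcal F^x\}$ is exactly the principal ultrafilter above $S^\A_{\xi^{-1}}(x)$, and the rest is the routine verification that product-of-homomorphisms is a homomorphism together with the cited Hirsch--Hodkinson lemma. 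No appeal to Baire category or to any set-theoretic hypothesis is needed here, in contrast with Theorems \ref{OTT} and \ref{t}, precisely because atomicity replaces the genericity argument.
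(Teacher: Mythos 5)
Your proof is correct, and its overall architecture coincides with the paper's: one homomorphism $h_x:\A\to\wp(S_n)$ per index, glued over a disjoint union $V$ of copies of $S_n$ (which is permutable once each copy is realized over its own copy of the base), with the Hirsch--Hodkinson lemma converting the covering condition $\bigcup_{x\in\At\A}f(x)=V$ into completeness. Where you genuinely diverge is in how the covering condition is established. The paper argues topologically in the Stone space: since each $s_\tau$ is a complete endomorphism, $\sum_{x\in\At\A}s_\tau x=1$, so $G_{X,\tau}=S\smallsetminus\bigcup_x N_{s_\tau x}$ is nowhere dense, and a \emph{principal} ultrafilter $F$ containing the given nonzero element, being an isolated point, avoids every $G_{X,\tau}$; hence each $\tau$ lies in $f_c(x)$ for some atom $x$. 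You instead observe that each $S_{[i,j]}$ is an involutive Boolean endomorphism, hence an automorphism, so every $S_\xi$ permutes $\At\A$; consequently $\mathcal F^x_\xi=\{z:S_\xi z\in\mathcal F^x\}$ is itself the principal ultrafilter generated by the atom $S_{\xi^{-1}}(x)$, which gives the covering condition directly and in fact shows every point ultrafilter $f^{-1}(s)$ is principal, i.e.\ the representation is atomic in the Hirsch--Hodkinson sense by inspection. Your route is more elementary (no Stone topology, no nowhere-dense sets) and yields slightly more explicit information; the paper's route is the one that generalizes to the $SA_n$ setting of Section 4, where the replacements $s^i_j$ are not automorphisms and one cannot argue that the substitution operators permute the atoms, which is exactly why the paper there must fall back on hypotheses like $\sum_{x\in X}s_\tau x=1$. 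Two small points to tidy up: indexing the copies of $S_n$ by atoms rather than by all elements of $A$ is harmless (the paper indexes by $a\in A$), and the atom-permutation step needs only that each $S_\xi$ is a Boolean automorphism, not that it is a \emph{complete} endomorphism, so the appeal to the completeness lemma there is not actually required.
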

\begin{proof} Let $\B$ be an atomic transposition algebra, let $X$ be the set of atoms, and
let $c\in \B$ be non-zero. Let $S$ be the Stone space of $\B$, whose underlying set consists of all Boolean ulltrafilters of
$\B$. Let $X^*$ be the set of principal ultrafilters of $\B$ (those generated by the atoms).
These are isolated points in the Stone topology, and they form a dense set in the Stone topology since $\B$ is atomic.
So we have $X^*\cap T=\emptyset$ for every nowhere dense set $T$ (since principal ultrafilters, which are isolated points in the Stone topology,
lie outside nowhere dense sets).
Recall that for $a\in \B$, $N_a$ denotes the set of all Boolean ultrafilters containing $a$.
Now  for all $\tau\in S_n$, we have
$G_{X, \tau}=S\sim \bigcup_{x\in X}N_{s_{\tau}x}$
is nowhere dense. Let $F$ be a principal ultrafilter of $S$ containing $c$.
This is possible since $\B$ is atomic, so there is an atom $x$ below $c$; just take the
ultrafilter generated by $x$. Also $F$ lies outside the $G_{X,\tau}$'s, for all $\tau\in S_n$
Define, as we did before,  $f_c$ by $f_c(b)=\{\tau\in S_n: s_{\tau}b\in F\}$.
Then clearly for every $\tau\in S_n$ there exists an atom $x$ such that $\tau\in f_c(x)$.
As before, for each $a\in A$, let
$V_a=S_n$ and let $V$ be the disjoint union of the $V_a$'s.
Then $\prod_{a\in A} \wp(V_a)\cong \wp(V)$. Define $f:\A\to \wp(V)$ by $f(x)=g[(f_ax: a\in A)]$.
Then $f: \A\to \wp(V)$ is an embedding such that
$\bigcup_{x\in \At\A}f(x)=V$. Hence $f$ is a complete representation.

\end{proof}
A classical theorem of Vaught for first order logic says that countable atomic theories have countable atomic models,
such models are necessarily prime, and a prime model omits all non principal types.
We have a similar situation here:

\begin{theorem} Let $f:\A\to \wp(V)$ be an atomic representation of $\A\in TA_n$.
Then for any $Y\subseteq A$, if $\prod Y=0$, then $\bigcap_{y\in Y} f(y)=\emptyset$.
\end{theorem}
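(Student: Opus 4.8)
The plan is to exploit the characterization of atomic representations from the preceding lemma, which tells us that an atomic representation $f:\A\to\wp(V)$ satisfies $\bigcup_{x\in\At\A}f(x)=V$, and that each $f^{-1}(s)$ for $s\in V$ is a principal ultrafilter, generated by some atom $x_s\in\At\A$. So the only way a point $s\in V$ can lie in $\bigcap_{y\in Y}f(y)$ is if $x_s\leq y$ for every $y\in Y$; that is, the generating atom $x_s$ is a lower bound for $Y$.

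First I would fix $Y\subseteq A$ with $\prod Y=0$ and suppose, towards a contradiction, that there is some $s\in\bigcap_{y\in Y}f(y)$. Since $f$ is atomic, $f^{-1}(s)$ is the principal ultrafilter generated by an atom $x\in\At\A$, and $s\in f(x)$. Then for each $y\in Y$ we have $s\in f(x)\cap f(y)=f(x\wedge y)$, so $f(x\wedge y)\neq 0^{\wp(V)}$; since $f$ is one-to-one (a homomorphism, in particular injective on the two-element Boolean subalgebra generated by comparing $x\wedge y$ with $0$), this forces $x\wedge y\neq 0^\A$. Because $x$ is an atom, $x\wedge y\neq 0$ gives $x\leq y$. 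As $y\in Y$ was arbitrary, $x$ is a lower bound of $Y$. But $\prod Y=0$ and $x$ is an atom, hence $x>0$, contradicting that $0$ is the greatest lower bound. Therefore no such $s$ exists and $\bigcap_{y\in Y}f(y)=\emptyset$.

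I do not expect any serious obstacle here; the entire content is packaged into part (2) of the cited Hirsch--Hodkinson lemma, and the argument is the standard ``an atom below every element of a set whose meet is $0$ cannot exist'' observation. The one point to be careful about is the step identifying $f^{-1}(s)$ with a principal ultrafilter and extracting its generating atom — this is exactly what ``atomic representation'' means, so it is immediate — and the step $f(x)\cap f(y)=f(x\wedge y)$, which uses only that $f$ preserves the Boolean meet. No countability or topological input is needed: unlike Theorem~\ref{OTT}, this is a purely algebraic consequence of atomicity of the representation.
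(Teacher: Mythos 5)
Your argument is correct and is essentially the paper's own proof read contrapositively: the paper notes $\bigcup_{x\in \At\A}f(x)\subseteq \bigcup_{y\in Y}f(-y)$ (each atom, not being a lower bound of $Y$, lies below some $-y$) and combines this with $\bigcup_{x\in\At\A}f(x)=V$, while you take a point of $\bigcap_{y\in Y}f(y)$ and extract the generating atom of its principal ultrafilter to produce a forbidden lower bound. One tiny remark: the step from $f(x\wedge y)\neq 0$ to $x\wedge y\neq 0$ needs only that $f$ preserves $0$, not injectivity, so your parenthetical there is unnecessary.
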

\begin{proof}Follows from the simple observation that
$\bigcup_{x\in \At\A}f(x)\leq \bigcup_{y\in y} f(-y).$
\end{proof}
Seemingly a second order condition, in contrast to cylindric algebras, we get
\begin{corollary}
The class of completely representable algebras is elementary, and is axiomatized by a finite set of first order sentences
\end{corollary}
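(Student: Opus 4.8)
The plan is to show that the class of completely representable algebras coincides with the class of atomic members of $TA_n$, and then to note that this class is cut out by $\Sigma_n$ together with one extra first-order sentence. First I would verify both inclusions. If $\A$ is completely representable then, by definition, there is a one-to-one homomorphism $f\colon\A\to\wp(V)$ with $V$ permutable; hence $\A$ embeds into $\wp(V)\in PTA_n$, so $\A\in\mathbf{S}PTA_n=PTA_n\subseteq TA_n$ (using the inclusions $PTA_n\subseteq\mathbf{HSP}RTA_n\subseteq TA_n$ established around Theorem~\ref{rep}), and $\Bl\A$ is atomic by the last clause of the Lemma preceding Theorem~\ref{com}. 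Conversely, Theorem~\ref{com} states precisely that every atomic $\A\in TA_n$ is completely representable. Therefore the class of completely representable algebras equals $\{\A\in TA_n:\Bl\A\text{ is atomic}\}$.

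Next I would make atomicity explicit as a first-order property. Let $\mathsf{At}$ be the sentence
$$\forall x\Big(x\neq 0\ \rightarrow\ \exists y\,\big(y\leq x\ \wedge\ y\neq 0\ \wedge\ \forall z\,(z\leq y\rightarrow z=0\vee z=y)\big)\Big).$$
Its inner conjunction asserts that $y$ is an atom, so a $BAO$ of this similarity type satisfies $\mathsf{At}$ if and only if its Boolean reduct is atomic. Since $TA_n=\mathbf{Mod}(\Sigma_n)$ and $\Sigma_n$ is finite, it follows that the class of completely representable algebras is $\mathbf{Mod}(\Sigma_n\cup\{\mathsf{At}\})$, which is a finite first-order axiomatization; this yields both assertions of the corollary.

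There is no real obstacle to overcome here, since all the substance is contained in Theorem~\ref{com}: it is exactly the implication ``atomic $\Rightarrow$ completely representable'' that fails for cylindric algebras of dimension $\geq 3$, where (by Hirsch and Hodkinson) the completely representable algebras do not form an elementary class. The only points demanding a little care are the bookkeeping observation that a completely representable algebra automatically lies in $TA_n$ (so that Theorem~\ref{com} can be applied in the converse direction as well), and the routine check that $\mathsf{At}$ defines precisely the atomic Boolean algebras.
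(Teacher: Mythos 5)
Your proposal is correct and follows essentially the same route as the paper: the paper's one-line proof ("atomicity can be expressed by a first order sentence") implicitly relies on exactly the two facts you make explicit, namely the Hirsch--Hodkinson lemma giving ``completely representable $\Rightarrow$ atomic'' and Theorem~\ref{com} giving the converse within $TA_n$, so that the class is $\mathbf{Mod}(\Sigma_n\cup\{\mathsf{At}\})$ with $\Sigma_n$ finite. Your additional bookkeeping (that a completely representable algebra lies in $PTA_n=TA_n$, so the converse direction applies) is a worthwhile clarification but not a departure from the paper's argument.
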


\begin{demo}{Proof} Atomicity can be expressed by a first order sentence.
\end{demo}

\subsection{The infinite dimensional case}

S\'agi dealt with infinite dimensional algebras; but he only dealt with square units.
We give a reasonable generalization to the above theorems for the infinite dimensional case, by allowing weak sets as units,
a weak set being a set of sequences that agree
cofinitely with some fixed sequence.
That is a weak set is one of the form $\{s\in {}^{\alpha}U: |\{i\in \alpha, s_i\neq p_i\}|<\omega\}$,
where $U$ is a set, $\alpha$ an ordinal and $p\in {}^{\alpha}U$.
This set will be denoted by $^{\alpha}U^{(p)}$. The set $U$ is called the base of the weak set. A set $V\subseteq {}^{\alpha}\alpha^{(Id)}$, is defined to
be permutable just like the finite dimensional case. Altering top elements to be weak sets, rather than squares,
turns out to be a  fruitful approach and a rewarding task.

\begin{definition}
We let  $PTA_{\alpha}$ be the variety generated by
$$\wp(V)=\langle\mathcal{P}(V),\cap,-,S_{ij}\rangle_{i,j\in\alpha},$$
where $V\subseteq {}^\alpha\alpha^{Id}$ is permutable.

\end{definition}
Let $\Sigma_{\alpha}$ be the set of finite schemas obtained from
$\Sigma_n$ but now allowing indices from $\alpha$. Obviously $\Sigma_{\alpha}$ is infinite.
$(Mod\Sigma_{\alpha}: \alpha\geq \omega)$ is a system of varieties definable by schemes which means that it is enough to specify $\Sigma_{\omega}$,
to define $\Sigma_{\alpha}$ for all $\alpha\geq \omega$.

Indeed, let $\rho:\alpha\to \beta$ be an injection. One defines for a term $t$ in $L_{\alpha}$ a term $\rho(t)$ in $L_{\beta}$ by recursion
$\rho(v_i)=v_i$ and $\rho(f(\tau))=f(\rho(\tau))$.
Then one defines $\rho(\sigma=\tau)=\rho(\sigma)=\rho(\tau)$.
Then there exists a finite set $\Sigma\subseteq \Sigma_{\omega}$ such that
$\Sigma_{\alpha}=\{\rho(e): \rho:\omega\to \alpha \text { is an injection }, e\in \Sigma\}.$

We give two proofs of the following main representation theorem but first we give a definition.
Let $\alpha\leq\beta$ be ordinals and let $\rho:\alpha\rightarrow\beta$ be an injection.
For any $\beta$-dimensional algebra $\B$)
we define an $\alpha$-dimensional algebra $\Rd^\rho(\c B)$, with the same base and Boolean structure as
$\c B$, where the $(i,j)$th substitution  of $\Rd^\rho(\c B)$ is $s_ {\rho(i)\rho(j)}\in\c B$.
For a class $K$, $\Rd^{\rho}K=\{\Rd^{\rho}\A: \A\in K\}$. When $\alpha\subseteq \beta$ and $\rho$ is the identity map on $\alpha$, then we write
$\Rd_{\alpha}\B$, for $\Rd^{\rho}\B$.

We let $TA_{\alpha}$ denote $Mod(\Sigma_{\alpha})$
\begin{theorem} For any infinite ordinal $\alpha$, $TA_{\alpha}=PTA_{\alpha}.$
\end{theorem}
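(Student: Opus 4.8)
The plan is to prove the two inclusions $TA_{\alpha}\subseteq PTA_{\alpha}$ and $PTA_{\alpha}\subseteq TA_{\alpha}$ separately, mimicking as far as possible the finite-dimensional development (Theorems \ref{rep}, and the chain $PTA_n\subseteq\mathbf{HSP}RTA_n\subseteq TA_n\subseteq PTA_n$), but with the unit of the representing algebra taken to be a \emph{weak} permutable set rather than a square. The easy direction is $PTA_{\alpha}\subseteq TA_{\alpha}$: since $TA_{\alpha}=\mathbf{Mod}(\Sigma_{\alpha})$ is a variety (it is equationally defined) and the generators $\wp(V)$ of $PTA_{\alpha}$ satisfy $\Sigma_{\alpha}$ by a routine computation — exactly as in the proof that $SetTA_n\models\Sigma_n$ — we get $PTA_{\alpha}=\mathbf{HSP}\{\wp(V):V\text{ permutable}\}\subseteq TA_{\alpha}$. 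So the content is in $TA_{\alpha}\subseteq PTA_{\alpha}$.

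For $TA_{\alpha}\subseteq PTA_{\alpha}$, I would follow the Henkin-style argument of Theorem \ref{rep}. Fix $\A\in TA_{\alpha}$ and $0^{\A}\neq a\in A$; it suffices to build a homomorphism $h$ into some $\wp(V)$ with $V$ weak permutable and $h(a)\neq 0$, and then apply Theorem 2.1. The first step is to choose an ultrafilter $\mathcal F$ of $\A$ containing $a$. The key structural fact needed is the infinite-dimensional analogue of Theorem 2.11 / line $(***)$: for each permutation $\xi$ of $\alpha$ \emph{that moves only finitely many points}, one can pick a finite word $s_{\xi}$ in the generators $\{s_{ij}:i\neq j\in\alpha\}$ with $\Sigma_{\alpha}\vdash s_{\xi}s_{ij}(x)=s_{\xi\circ[i,j]}(x)$; this works because such $\xi$ lives inside a finite symmetric group $S_{F}$ with $F\subseteq_{<\omega}\alpha$, and on that finite block $\Sigma_{\alpha}$ restricts to (a relabelled copy of) $\Sigma_{|F|}$, so Theorem 2.11 applies. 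Let $G$ be the group of finite-support permutations of $\alpha$ — concretely, $G=\{\xi\restriction\text{``}{}^{\alpha}\alpha^{(Id)}\text{''}\}$, i.e. $G$ acts on the weak set $V={}^{\alpha}\alpha^{(Id)}$, which is permutable. Define $\mathcal F_{\xi}=\{z\in A: S^{\A}_{\xi}(z)\in\mathcal F\}$ for $\xi\in G$; the derivation $(***)$ shows $\langle\mathcal F_{\xi}:\xi\in G\rangle$ satisfies the commutation condition $(*)$ of the Lemma preceding Theorem \ref{rep} (whose proof, like Theorem 4.17 of \cite{sagiphd}, goes through unchanged), and the Lemma then yields a homomorphism $h(z)=\{\xi\in G: S^{\A}_{\xi}(z)\in\mathcal F\}$ into $\wp(G)\cong\wp(V)$ with $\mathrm{Id}\in h(a)$.

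The main obstacle, and the place where the infinite-dimensional case genuinely departs from S\'agi's square-unit treatment, is making sure the unit is the \emph{right} weak set and that everything stays finitary. Every permutation appearing in the argument must have finite support, since the word $s_{\xi}$ must be a \emph{finite} term and $S^{\A}_{\xi}$ must be a legitimate (finitely generated) operation on $\A$; this is exactly why $V={}^{\alpha}\alpha^{(Id)}$ and $G=$ finite-support permutations are forced on us, and why a square unit ${}^{\alpha}U$ with $|U|\geq 2$ would not carry a transitive action of the available operations. I would also need to check that $V$ so defined is permutable in the sense of the infinite-dimensional definition (closure under $[i,j]$, immediate since $[i,j]$ has support $\{i,j\}$ and $s\circ[i,j]$ still differs from $Id$ only finitely often), and that $\wp(G)$ with the right $S_{ij}$-structure is genuinely isomorphic to the subalgebra of $\wp(V)$ generated appropriately — a bookkeeping point rather than a conceptual one. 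Assembling these, the chain $PTA_{\alpha}\subseteq TA_{\alpha}\subseteq PTA_{\alpha}$ closes and gives $TA_{\alpha}=PTA_{\alpha}$; one should also remark, as the text does for the schema business, that because $(\mathbf{Mod}\,\Sigma_{\alpha})_{\alpha\geq\omega}$ is a system of varieties defined by schemes, it is enough to run the above for a single generic infinite $\alpha$ and the rest transfers automatically.
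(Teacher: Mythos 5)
Your argument is essentially the paper's second proof: it reduces everything to the infinite-dimensional ultrafilter-system lemma (Lemma \ref{f}) and reruns the finite-dimensional Henkin construction of Theorem \ref{rep} with $S_n$ replaced by the permutable set of finite-support (bijective) members of ${}^{\alpha}\alpha^{(Id)}$, using that each such permutation factors through a finite block on which $\Sigma_{\alpha}$ restricts to a presentation of a finite symmetric group. One minor slip: you need not (and cannot) claim $\wp(G)\cong\wp({}^{\alpha}\alpha^{(Id)})$ --- $G$ is a proper permutable subset of the weak set, so $\wp(G)$ is already a generator of $PTA_{\alpha}$ and mapping into it suffices; the paper also offers an alternative first proof via finite reducts $\Rd^{\rho}$ and an ultraproduct over finite injections, which you do not need.
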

\begin{proof} {\bf First proof}
\begin{enumarab}
\item First for $\A\models \Sigma_{\alpha}$ and $\rho:n\to \alpha,$ $n\in \omega$ and $\rho$ one to one, define $\Rd^{\rho}\A$ as in
 \cite{HMT1} def. 2.6.1.
Then $\Rd^{\rho}\A\in TA_n$.

\item For any $n\geq 2$ and $\rho:n\to \alpha$ as above, $TA_n\subseteq\mathbf{S}\Rd^{\rho}PTA_{\alpha}$ as in \cite{HMT2} theorem 3.1.121.

\item $PTA_{\alpha}$ is closed under ultraproducts, cf \cite{HMT2}, lemma 3.1.90.

\end{enumarab}

Now we show that if $\A\models \Sigma_{\alpha}$, then $\A$ is representable.
First, for any $\rho:n\to \alpha$, $\Rd^{\rho}\A\in TA_n$. Hence it is in $PTA_n$ and so it is in $S\Rd^{\rho}PTA_{\alpha}$.
Let $I$ be the set of all finite one to one sequences with range in $\alpha$.
For $\rho\in I$, let $M_{\rho}=\{\sigma\in I:\rho\subseteq \sigma\}$.
Let $U$ be an ultrafilter of $I$ such that $M_{\rho}\in U$ for every $\rho\in I$.
Then for $\rho\in I$, there is $\B_{\rho}\in PTA_{\alpha}$ such that
$\Rd^{\rho}\A\subseteq \Rd^{\rho}\B_{\rho}$. Let $\C=\prod\B_{\rho}/U$; it is in $\mathbf{Up}PTA_{\alpha}=PTA_{\alpha}$.
Define $f:\A\to \prod\B_{\rho}$ by $f(a)_{\rho}=a$ , and finally define $g:\A\to \C$ by $g(a)=f(a)/U$.
Then $g$ is an embedding.
\end{proof}
The \textbf{second proof} follows from the next lemma,
whose proof is identical to the finite dimensional case with obvious modifications.
Here, for $\xi\in {}^\alpha\alpha^{Id},$ the operator $S_\xi$ works as $S_{\xi\upharpoonright J}$ (which can be defined as in Def. 2.12) where $J=\{i\in\alpha:\xi(i)\neq i\}$ (in case $J$ is empty, i.e., $\xi=Id_\alpha,$ $S_\xi$ is the identity operator).
\begin{lemma}\label{f}
Let $\A$ be a $TA_\alpha$ type $BAO$ and $G\subseteq{}^\alpha\alpha^{Id}$ permutable.
Let $\langle\mathcal{F}_\xi:\xi\in G\rangle$ is a system of ultrafilters of $\A$
such that for all $\xi\in G,\;i\neq j\in \alpha$ and $a\in\A$
the following condition holds:$$S_{ij}^\A(a)\in\mathcal{F}_\xi\Leftrightarrow a\in \mathcal{F}_{\xi\circ[i,j]}\quad\quad (*).$$
Then the following function $h:\A\longrightarrow\wp(G)$
is a homomorphism
$$h(a)=\{\xi\in G: a\in \mathcal{F}_\xi\}.$$
\end{lemma}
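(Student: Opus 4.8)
The plan is to verify, operation by operation, that $h$ is a homomorphism into the $TA_\alpha$-type $BAO$ $\wp(G)$, copying the argument of the finite dimensional lemma that precedes Theorem \ref{rep}. First I would note that $h$ is well defined, since $h(a)=\{\xi\in G:a\in\mathcal{F}_\xi\}$ is visibly a subset of $G$; and I would record the one structural fact that makes $(*)$ meaningful, namely that permutability of $G$ guarantees $\xi\circ[i,j]\in G$ whenever $\xi\in G$ and $i\neq j\in\alpha$, so that $\mathcal{F}_{\xi\circ[i,j]}$ really occurs in the given system.

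Next I would dispatch the Boolean part. For $a,b\in\A$ and $\xi\in G$, the ultrafilter property of $\mathcal{F}_\xi$ gives $a\wedge b\in\mathcal{F}_\xi$ iff both $a\in\mathcal{F}_\xi$ and $b\in\mathcal{F}_\xi$, hence $h(a\wedge b)=h(a)\cap h(b)$; likewise $-a\in\mathcal{F}_\xi$ iff $a\notin\mathcal{F}_\xi$, so $h(-a)=G\setminus h(a)$, which is exactly the complement taken in $\wp(G)$. Thus $h$ is a Boolean homomorphism.

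Then I would turn to the substitution operators, where condition $(*)$ does the work. Fixing $i\neq j\in\alpha$ and $a\in\A$, unwinding the definition of $h$ and then applying $(*)$ yields
\[
h(S_{ij}^\A a)=\{\xi\in G:S_{ij}^\A a\in\mathcal{F}_\xi\}=\{\xi\in G:a\in\mathcal{F}_{\xi\circ[i,j]}\},
\]
while the definition of the operator in the set algebra gives
\[
S_{ij}^{\wp(G)}(h(a))=\{\xi\in G:\xi\circ[i,j]\in h(a)\}=\{\xi\in G:a\in\mathcal{F}_{\xi\circ[i,j]}\},
\]
and the two sets on the right coincide. Hence $h(S_{ij}^\A a)=S_{ij}^{\wp(G)}(h(a))$, and $h$ is the desired homomorphism.

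I do not expect a genuine obstacle: the proof is a transcription of the finite dimensional case, and the only points deserving attention are the two book-keeping observations above — that permutability keeps the indices $\xi\circ[i,j]$ inside $G$, and that the lemma mentions only the transposition operators $S_{ij}$, so no subtlety about the general operators $S_\xi$ for non-finitary $\xi$ arises here (such $\xi$ intervene only when the lemma is later applied, under the convention that $S_\xi$ acts as $S_{\xi\upharpoonright J}$ with $J=\{i:\xi(i)\neq i\}$).
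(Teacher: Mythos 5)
Your proof is correct and is exactly the argument the paper intends: the paper omits the verification, deferring to the finite-dimensional lemma (itself deferred to S\'agi's Theorem 4.17), and what you write out — the ultrafilter properties handling the Boolean operations and condition $(*)$ together with permutability of $G$ handling the $S_{ij}$ — is that standard verification in full. The two book-keeping points you flag (that $\xi\circ[i,j]$ stays in $G$, and that only the transposition operators need checking here) are precisely the "obvious modifications" the paper alludes to.
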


Using this lemma one proves the above theorem by the method used for the finite dimensional case replacing $S_n$ by
$S=\{s\in{}^\alpha\alpha^{Id}:s\mbox{ bijective }\}$ which is permutable.

\begin{corollary} ${\bold H}RTA_{\alpha}=\mathbf{HSP}(\{\wp(V):V\subseteq{}^\alpha\alpha^{Id}\mbox{ permutable}\})$
\end{corollary}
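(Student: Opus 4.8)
The plan is to unwind the definitions and chain together the theorem just proved together with the basic facts about the operators $\mathbf{H},\mathbf{S},\mathbf{P}$. Write $K=\{\wp(V):V\subseteq{}^\alpha\alpha^{Id}\ \text{permutable}\}$. By definition, $PTA_\alpha$ is the variety generated by $K$, so $PTA_\alpha=\mathbf{HSP}K$. The theorem $TA_\alpha=PTA_\alpha$ then gives $TA_\alpha=\mathbf{HSP}K$, and since $TA_\alpha=\mathbf{Mod}(\Sigma_\alpha)$ is a variety it is in particular closed under $\mathbf{H}$, which is what will let us push the outer $\mathbf{H}$ through. So the real content is to identify $\mathbf{HSP}K$ with $\mathbf{H}RTA_\alpha$, where $RTA_\alpha$ denotes $\mathbf{SP}SetTA_\alpha$ — note the excerpt has only defined $RTA_n$ for finite $n$ explicitly, so the first thing I would do is record that in the infinite case $RTA_\alpha$ is understood to be $\mathbf{SP}SetTA_\alpha$ with $SetTA_\alpha=\mathbf{S}\{\wp({}^\alpha U):U\ \text{a set}\}$, and that square sets ${}^\alpha U$ sit inside the permutable sets of the form appropriate to the weak-set framework only after the base-point normalization; alternatively, and more cleanly, I would read $RTA_\alpha$ here as $\mathbf{SP}\{\wp(V):V={}^\alpha\alpha^{Id}\}$, i.e. generated by the ``full'' weak units. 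I will take the latter reading since that is the natural infinite-dimensional analogue of the square units.

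The key steps, in order: (1) Show $\mathbf{H}RTA_\alpha\subseteq\mathbf{HSP}K$. This is immediate since $RTA_\alpha=\mathbf{SP}(\text{full weak units})\subseteq\mathbf{SP}K$ (each full weak unit is permutable), hence $\mathbf{H}RTA_\alpha\subseteq\mathbf{HSP}K$. (2) Show $\mathbf{HSP}K\subseteq\mathbf{H}RTA_\alpha$. For this it suffices to show $\mathbf{SP}K\subseteq\mathbf{H}RTA_\alpha$, and since $\mathbf{H}RTA_\alpha$ is closed under $\mathbf{S}$ and $\mathbf{P}$ (homomorphic images of a class closed under $\mathbf S$, $\mathbf P$... careful: $\mathbf{H}$ applied to an $\mathbf{SP}$-closed class need not be $\mathbf{SP}$-closed) — so instead I would argue: $\mathbf{HSP}K=V(K)=TA_\alpha$ by the theorem, and every $\wp(V)$ with $V$ permutable is a homomorphic image of $\wp({}^\alpha\alpha^{Id})$ via relativization (Theorem~3.5, in its infinite-dimensional form via Lemma~\ref{f}'s construction, or directly: $x\mapsto x\cap V$ is a homomorphism $\wp({}^\alpha\alpha^{Id})\to\wp(V)$ exactly as in the proof of Theorem~3.5). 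Hence $K\subseteq\mathbf{H}RTA_\alpha$. (3) Combine: $RTA_\alpha\subseteq\mathbf{HSP}K$ and $K\subseteq\mathbf{H}RTA_\alpha$. From $K\subseteq\mathbf H RTA_\alpha$ we get $\mathbf{HSP}K\subseteq\mathbf{HSP}\,\mathbf{H}RTA_\alpha=\mathbf{HSP}RTA_\alpha$; and since $RTA_\alpha\subseteq TA_\alpha=\mathbf{HSP}K$ and the latter is a variety, $\mathbf{HSP}RTA_\alpha\subseteq\mathbf{HSP}K$. So $\mathbf{HSP}K=\mathbf{HSP}RTA_\alpha$. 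Finally $\mathbf{HSP}RTA_\alpha=\mathbf{H}RTA_\alpha$ because $RTA_\alpha$ is already closed under $\mathbf S$ and $\mathbf P$ (it is defined as $\mathbf{SP}$ of something, and $\mathbf{SP}\,\mathbf{SP}=\mathbf{SP}$), so $\mathbf{HSP}RTA_\alpha=\mathbf{H}\,\mathbf{SP}RTA_\alpha=\mathbf{H}RTA_\alpha$.

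Stitching these together: $\mathbf{H}RTA_\alpha=\mathbf{HSP}RTA_\alpha=\mathbf{HSP}K=\mathbf{HSP}(\{\wp(V):V\subseteq{}^\alpha\alpha^{Id}\ \text{permutable}\})$, which is the claim. I would present this compactly as a three-line display of inclusions rather than belaboring each operator identity.

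The step I expect to be the main (really, the only) obstacle is a bookkeeping one: making sure the definition of $RTA_\alpha$ in the infinite-dimensional setting is the one under which the statement is literally true, and checking that relativization by an arbitrary permutable $V\subseteq{}^\alpha\alpha^{Id}$ is indeed a homomorphism from the full weak-unit algebra $\wp({}^\alpha\alpha^{Id})$ — the computation is identical to the proof of Theorem~3.5 (preservation of $\cap,-$ is trivial, and for $S_{ij}$ one uses that $V$ is permutable so that $q\in V\Rightarrow q\circ[i,j]\in V$, giving $\{q\in{}^\alpha\alpha^{Id}:q\circ[i,j]\in x\}\cap V=\{q\in V:q\circ[i,j]\in x\cap V\}$). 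Since $\wp({}^\alpha\alpha^{Id})\in RTA_\alpha$, this yields $K\subseteq\mathbf{H}RTA_\alpha$, which was the one non-formal ingredient. Everything else is pure operator calculus on $\mathbf H,\mathbf S,\mathbf P$ together with the already-established equality $TA_\alpha=PTA_\alpha$.
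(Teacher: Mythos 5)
Your proof is correct and is essentially the intended derivation (the paper states this corollary without proof, as an immediate consequence of $TA_\alpha=PTA_\alpha$ together with relativization and the standard $\mathbf{H},\mathbf{S},\mathbf{P}$ calculus). The one definitional caveat: Definitions 3.1--3.2 are in fact stated for arbitrary $\alpha$, so $RTA_\alpha=\mathbf{SP}SetTA_\alpha$ with square units ${}^{\alpha}U$ is already fixed; your argument survives this reading unchanged, because $SetTA_\alpha\models\Sigma_\alpha$ yields $RTA_\alpha\subseteq TA_\alpha=PTA_\alpha$, while $\wp({}^{\alpha}\alpha)\in SetTA_\alpha$ relativizes onto every $\wp(V)$ with $V\subseteq{}^{\alpha}\alpha^{(Id)}$ permutable, giving the reverse inclusion exactly as in your step (2).
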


Theorems on complete representations also generalize verbatim with the same proofs using the technique in \ref{f}.
Let $covK$ denote the cardinal defined in the proof of theorem \ref{t}. Then we have
\begin{theorem}
\begin{enumarab}
\item Let $\A\in TA_{\alpha}$ be a countable transposition algebra $a\in A$ non-zero, and $X_i\subseteq A$, $i<covK$,
 such that $\prod X_i=0$.
Then there exists a transposition set algebra $\B$ with a weak unit and a representation
$h:\A\to \B$ such that $\bigcap_{x\in X}h(x)=\emptyset$ and $h(a)\neq 0$.
\item Let $\A$ and $(X_i:i<covK)$ be as above. Then there exists a permutable $V$ and an embedding
$f:\A\to \wp(V)$ such that $\bigcup_{x\in X_i}f(x)=\emptyset$
\end{enumarab}
\end{theorem}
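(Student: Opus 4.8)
The plan is to transcribe the proofs of Theorem~\ref{OTT} and of the Corollary following it, replacing the finite group $S_n$ by the group $S=\{s\in{}^{\alpha}\alpha^{Id}: s\text{ bijective}\}$ of finitary permutations of $\alpha$ (which is permutable, and which plays here the role that $S_n$ played in the finite case, cf.\ the remark after Lemma~\ref{f}), and replacing the trivial Baire step over finitely many nowhere dense sets by the $covK$-argument from the proof of Theorem~\ref{t}(2). The first observation is that for $\eta\in S$ the operator $s_\eta$ equals a \emph{finite} composition $s_{i_1j_1}\circ\cdots\circ s_{i_kj_k}$, where $\{i_1,j_1,\ldots,i_k,j_k\}=\{i\in\alpha:\eta(i)\neq i\}$; hence, by finitely many applications of the lemma that each $s_{[i,j]}$ is a complete Boolean endomorphism, every $s_\eta$ is a complete Boolean endomorphism of $\A$, so $\prod s_\eta X_i=s_\eta\prod X_i=0$ for each $i$. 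This is precisely where weak units (rather than arbitrary relativized units) are needed: it is only because $\eta$ moves finitely many coordinates that $s_\eta$ is a finite composite of the basic substitutions and hence complete.

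Next, since $\A$ is countable, $\Bl\A$ is a countable Boolean algebra, so its Stone space is compact, Hausdorff and second countable, hence metrizable, hence Polish; countability is used only here, and it makes available the defining property of $covK$ established in the proof of Theorem~\ref{t}(2): in a Polish space the intersection of fewer than $covK$ dense open sets is dense. For each relevant $i$ and each $\eta\in S$ put $B_{i,\eta}=\bigcap_{x\in X_i}N_{s_\eta x}$; since $\prod s_\eta X_i=0$, each $B_{i,\eta}$ is closed and nowhere dense, so its complement is dense open. The family $\{B_{i,\eta}\}$ has fewer than $covK$ members --- this uses $|S|<covK$ (automatic when $\alpha=\omega$, where $|S|=\omega<covK$) together with the fact that there are fewer than $covK$ types (the natural reading of the hypothesis). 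By the $covK$-property the intersection of the complements is dense, hence meets the nonempty clopen set $N_a$; choose an ultrafilter $F\in N_a$ in that intersection. Then $a\in F$ and for every relevant $i$ and every $\eta\in S$ there is $x\in X_i$ with $s_\eta x\notin F$.

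Now set $F_\eta=\{z\in A: s_\eta z\in F\}$ for $\eta\in S$. Using the schema versions of axioms~4,5,6 and the resulting identities $s_\eta s_{ij}(x)=s_{\eta\circ[i,j]}(x)$ --- exactly as in the proof of Theorem~\ref{rep}, and noting that $\eta\circ[i,j]\in S$ since $S$ is a group --- one checks that $\langle F_\eta:\eta\in S\rangle$ is a system of ultrafilters satisfying condition $(*)$ of Lemma~\ref{f} with $G=S$. Hence $h:\A\to\wp(S)$, $h(z)=\{\eta\in S: s_\eta z\in F\}$, is a homomorphism; $S$ is permutable, so $\B=\wp(S)$ is a transposition set algebra with a weak unit; $h(a)\neq\emptyset$ since $Id\in S$ and $a\in F$; and $\bigcap_{x\in X_i}h(x)=\emptyset$ for every $i$ by the choice of $F$. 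This proves~(1). For~(2), run this construction for every nonzero $a\in A$ (countably many), getting $h_a:\A\to\wp(S)$ with $\bigcap_{x\in X_i}h_a(x)=\emptyset$ for all $i$ and $h_a(a)\neq\emptyset$; take $V$ the disjoint union of copies $V_a=S$, use $\prod_a\wp(V_a)\cong\wp(V)$ (via unions), and set $f(z)=(h_a(z):a\in A)$ transported to $\wp(V)$. Then $f$ is an injective homomorphism, $V$ is permutable, and if $s\in\bigcap_{x\in X_i}f(x)$ then $s\in V_a$ for some $a$ and $s\in\bigcap_{x\in X_i}h_a(x)=\emptyset$, a contradiction; so $\bigcap_{x\in X_i}f(x)=\emptyset$ for every $i$, as in the Corollary after Theorem~\ref{t}.

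The only genuinely new ingredients beyond the finite-dimensional Theorem~\ref{OTT} are: (i) that countability of $\A$ by itself suffices to put the Stone space in the scope of the $covK$-machinery of Theorem~\ref{t}(2), and (ii) the cardinality bookkeeping for the family $\{B_{i,\eta}\}$, which caps the number of omittable types at (essentially) $covK$ and forces $|S|<covK$ (harmless for $\alpha=\omega$). I expect (ii) to be the main, if minor, obstacle: one must check that $|S|$ times the number of types is still $<covK$, which is why the statement is most comfortably phrased for $\alpha=\omega$ (or, more generally, $|\alpha|<covK$); everything else is a line-by-line transcription of the finite-dimensional arguments through Lemma~\ref{f}.
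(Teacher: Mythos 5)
Your proposal is correct and is essentially the proof the paper intends: the paper itself only remarks that the finite-dimensional omitting-types results ``generalize verbatim with the same proofs using the technique in Lemma~\ref{f}'', replacing $S_n$ by the permutable set $S$ of finitary permutations, and your transcription (completeness of $s_\eta$ as a finite composite of the $s_{[i,j]}$, the Polish Stone space, the $covK$ Baire argument, Lemma~\ref{f}, and the disjoint-union product for part (2)) fills in exactly those steps. Your cardinality caveat is well taken --- the statement's indexing $i<covK$ should be read as ``fewer than $covK$ types'' (consistently with the preceding theorem on $OTT(\kappa)$ for $\kappa<covK$), and $|S|=\omega<covK$ handles the extra factor when $\alpha=\omega$.
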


The next theorem is also in essence an omitting types theorem.
First a definition. Let $\B$ be a substitution algebra.
Say that an ultrafilter $F$ in $\B$ is realized in the representation $f:\B\to \wp(V)$ if $\bigcap_{x\in F}f(x)\neq \emptyset.$

\begin{theorem} Let $\B\in TA_{\omega}$ be countable. Then there exists two representations of $\B$ such that if $F$
is an ultrafilter realized in both,
then $F$ is principal.
\end{theorem}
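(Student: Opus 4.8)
The plan is to build two representations of $\B$ that ``disagree'' on enough non-principal ultrafilters, using the Baire category argument already developed for Theorem \ref{OTT} together with a diagonalization over the countably many non-principal ultrafilters of $\B$. First I would note that $\B$, being countable, has only countably many non-principal ultrafilters $F_0, F_1, \dots$ (this is where countability is essential). For each non-principal $F_k$ I want to arrange that at least one of the two representations I construct fails to realize $F_k$, i.e.\ $\bigcap_{x\in F_k} h(x) = \emptyset$ in that representation; then any $F$ realized in \emph{both} must be principal.

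The construction of each representation follows the pattern of Theorem \ref{OTT} and its Corollary: take the permutable set $S = \{s\in {}^{\omega}\omega^{Id}: s \text{ bijective}\}$, pick for each non-zero $a\in B$ a suitable ultrafilter $F^a$ of $\B$ lying outside a prescribed meagre set in the Stone space, put $h_a(b) = \{\eta\in S : s_{\eta} b\in F^a\}$, and glue the $h_a$'s over a disjoint union $V = \bigcup_a V_a$ with $V_a = S$ as in the Corollary to get an embedding $f:\B\to \wp(V)$. The freedom I exploit is in which meagre set we avoid. For the first representation $f^{(1)}$, I would, for each non-zero $a$ and each \emph{even-indexed} non-principal ultrafilter $F_{2k}$ with $a\notin F_{2k}$, throw into the meagre set the nowhere-dense sets $\bigcap_{x\in F_{2k}} N_{s_{\eta}x}$ for all $\eta\in S$ — there are countably many such sets since $B$, $S$ and the relevant index set are all countable, so their union is still first category and the Baire category theorem lets us pick $F^a$ avoiding it while containing $a$. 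This forces $\bigcap_{x\in F_{2k}} f^{(1)}(x) = \emptyset$ for every even $k$ with $F_{2k}\neq$ (the principal filters we may be forced to realize). For the second representation $f^{(2)}$ I do the symmetric thing with \emph{odd-indexed} $F_{2k+1}$.

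The key point to check is that this really omits each non-principal $F_k$ in at least one of the two: if $F_k$ is non-principal then there is some $a$ with $a\notin F_k$ (in fact $F_k$ is non-principal so it cannot contain every atom's complement... more simply, pick any $a\notin F_k$), and then in whichever of $f^{(1)}, f^{(2)}$ was built to avoid the $F_k$-nowhere-dense sets, the chosen ultrafilter $F^a$ lies outside $\bigcap_{x\in F_k} N_{s_{\eta}x}$ for all $\eta$, so for every $\eta\in S$ there is $x\in F_k$ with $s_{\eta}x\notin F^a$, i.e.\ $\eta\notin h_a(x)$, whence $\bigcap_{x\in F_k} h_a(x) = \emptyset$ and a fortiori $\bigcap_{x\in F_k} f(x) = \emptyset$ in that representation. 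Consequently an ultrafilter $F$ realized in \emph{both} representations is realized at every component $f_a$ of both, which rules out $F$ being any of the $F_k$; hence $F$ is principal.

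\textbf{Main obstacle.} The delicate part is bookkeeping: I must make sure that for the one non-zero element $a$ I am working with, the \emph{single} ultrafilter $F^a$ simultaneously avoids the meagre sets coming from \emph{all} the non-principal $F_k$ of the relevant parity with $a\notin F_k$ — this is fine because the countable union of countably many first-category sets is first category, but one has to be careful that $a$ itself is still attainable, i.e.\ $N_a$ is not swallowed by that meagre set; since $N_a$ is clopen and nonempty (as $a\neq 0$) and a meagre set has empty interior, $N_a\setminus B$ is nonempty and Baire category (or just properness of meagre sets) gives the needed ultrafilter. The other point requiring a line of care is that the gluing isomorphism $\prod_{a} \wp(V_a)\cong \wp(V)$ behaves well with respect to intersections, so that $\bigcap_{x\in F_k} f(x) = \emptyset$ follows from $\bigcap_{x\in F_k} f_a(x)=\emptyset$ for a single $a$ — this is immediate from the description of the isomorphism as $(b_a : a\in A)\mapsto \bigcup_a b_a$ exactly as in the Corollary above.
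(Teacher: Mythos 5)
Your proposal has a genuine gap at its very first step: you assert that a countable $\B$ has only countably many non-principal ultrafilters, and the entire parity-splitting scheme rests on enumerating them as $F_0,F_1,\dots$. This is false: a countable Boolean algebra can have $2^{\omega}$ ultrafilters, all non-principal (the countable atomless algebra is the standard example, and it is relevant here, since the paper shows that the free algebra on an infinite generating set is atomless). Indeed the whole point of this theorem --- as the Remark following it makes explicit --- is to get around the fact that a Baire-category argument can only omit countably many (or $<covK$ many) non-principal types, not $2^{\omega}$ of them; if your enumeration existed, the statement would already follow from the countable omitting types theorem with a single representation. The paper escapes this by building each of its two representations from a single ultrafilter ($T$, resp.\ $S$), so that each realizes only countably many ultrafilters, namely $\{a\in B: s_{\tau}a\in T\}$ for $\tau$ ranging over the countable set of states; it then constructs $T$ and $S$ simultaneously, diagonalizing over the countably many pairs of states so that the ultrafilters realized in the two representations differ except at principal ones. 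Your approach could be repaired in the same spirit: build $f^{(1)}$ first, observe that it realizes only countably many ultrafilters (countably many components $h_a$, each realizing countably many), and then build $f^{(2)}$ to omit the non-principal ones among \emph{those}. But that means diagonalizing against the realized ultrafilters of the first representation, not against an (impossible) countable list of all non-principal ultrafilters.

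A second, more local, error: to get $\bigcap_{x\in F_k}f(x)=\emptyset$ in the glued representation you need $\bigcap_{x\in F_k}h_a(x)=\emptyset$ for \emph{every} component $a$, since $f(x)$ is the disjoint union of the $h_a(x)$ over the $V_a$; an ultrafilter is realized in $f$ as soon as it is realized in one component. So restricting the construction to those $a$ with $a\notin F_k$, and the concluding ``a fortiori'', point the wrong way. This part is fixable (drop the restriction $a\notin F_k$; a nonempty clopen set $N_a$ cannot be contained in a meagre set, so an ultrafilter containing $a$ and avoiding the prescribed meagre set still exists), but the cardinality problem above cannot be fixed without restructuring the argument along the lines just described.
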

\begin{proof} We construct two distinct representations of $\B$ such that if $F$ is an ultrafilter in $B$ that is realized in both representations, then $F$ is
necessarily principal, that is $\prod F$ is an atom generating $F$.
We construct two ultrafilter $T$ and $S$ of $\B$ such that (*)
$\forall \tau_1, \tau_2\in {}^{\omega}\omega^{(Id)}( G_1=\{a\in \B: {\sf s}_{\tau_1}a\in T\},
G_2=\{a\in \B: s_{\tau_1}a\in S\})\\\implies G_1\neq G_2 \text { or $G_1$ is principal.}$
Note that $G_1$ and $G_2$ are indeed ultrafilters.
We construct $S$ and $T$ as a union of a chain. We carry out various tasks as we build the chains.
The tasks are as in (*), as well as

(**) for all $a\in A$ either $a\in T$ or $-a\in T$, and same for $S$.

We let $S_0=T_0=\{1\}$.
There are countably many tasks. Metaphorically we hire countably many experts and give them one task each.
We partition $\omega$ into infinitely many sets and we assign one of these tasks to each expert.
When $T_{i-1}$ and $S_{i-1}$ have been chosen and $i$ is in the set assigned to some expert $E$, then $E$ will construct
$T_i$ and $S_i$.
For consider the expert who handles task (***). Let $X$ be her subset of $\omega$. Let her list as $(a_i: i\in X)$ all elements of $X$.
When $T_{i-1}$ has been chosen with $i\in X$, she should consider whether $T_{i-1}\cup \{a_i\}$ is consistent. If it is she puts
$T_i=T_{i-1}\cup \{a_i\}$. If not she puts $T_i=T_{i-1}\cup \{-a_i\}$. Same for $S_i$.
Now finally consider the tasks in (*). Suppose that $X$ contains $i$ , and $S_{i-1}$ and $T_{i-1}$ have been chosen.
Let $e=\bigwedge S_{i-1}$ and $f=\bigwedge T_{i-1}$. We have two cases.
If $e$ is an atom in $B$ then the ultrafilter $F$ containing $e$ is principal so our expert can put $S_i=S_{i-1}$ and $T_i=T_{i-1}$.
If not, then let $F_1$ , $F_2$ be distinct ultrafilters containing $e$. Let $G$ be an ultrafilter containing $f$. Say $F_1$ is different from $G$.
Let $\theta$ be in $F_1-G$. Then put $S_i=S_{-1}\cup \{\theta\}$ and $T_i=T_{i-1}\cup \{-\theta\}.$
It is not hard to check that the canonical models, defined the usual way,  corresponding to $S$ and $T$ are as required.
\end{proof}
\subsection*{Remark}

The above technique is important in omitting types theorems, since it can, in certain contexts, allow omitting $\kappa< {} ^{\omega}2$ types,
given that they are maximal
(i.e ultrafilters). The idea is to construct ${}^{\omega}2$ pairwise non-isomorphic models,
such that given any ultrafilter $F$ realized in two of them
is necessarily principal.
One then defines for $i<{}^{\omega}2$,  $K_i=\{\M: \M \text { omits }F_i\}$, and for limits $K_{\mu}=\bigcap_{i<\mu}K_i$.
Since  $\kappa < {}^{\omega}2$, there will be a model $\M\in \bigcap_{i< ^{\omega}2}K_i$, and $\M$ will omit all given types.

\begin{theorem} Atomic transposition algebras of infinite dimensions  are completely representable on weak sets
\end{theorem}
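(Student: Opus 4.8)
The plan is to imitate the finite-dimensional argument of Theorem \ref{com}, with the symmetric group $S_{n}$ replaced by the permutable set $S=\{\xi\in{}^{\alpha}\alpha^{Id}:\xi\text{ bijective}\}$ used in the infinite-dimensional representation theorem, and with Lemma \ref{f} playing the role that the lemma behind Theorem \ref{rep} played there. First I would invoke the infinite-dimensional form of the (Hirsch--Hodkinson) lemma quoted above --- legitimate since, as already remarked, the complete-representability results carry over verbatim via the technique of \ref{f} --- to reduce the claim to the following: construct a representation $f:\A\to\wp(V)$, with $V$ permutable and sitting inside a weak set, such that $\bigcup_{x\in\At\A}f(x)=V$. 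Any such $f$ is atomic, hence complete.

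The key preliminary step is the observation that for every bijection $\xi\in S$ the operator $S_{\xi}^{\A}$ is a Boolean \emph{automorphism} of $\Bl\A$. Indeed $J=\{i\in\alpha:\xi(i)\neq i\}$ is finite, so $\xi$ is a composition of finitely many transpositions of $\alpha$, and hence, by the derived identity $s_{\xi}s_{ij}=s_{\xi\circ[i,j]}$ used in the proof of Theorem \ref{rep}, the operator $S_{\xi}^{\A}$ is the corresponding finite composition of the $S_{ij}^{\A}$. By axioms 2--4 each $S_{ij}^{\A}$ is a Boolean endomorphism of $\Bl\A$ that is its own inverse, hence an automorphism; therefore $S_{\xi}^{\A}$ is an automorphism with inverse $S_{\xi^{-1}}^{\A}$, and in particular $S_{\xi}^{\A}$ permutes $\At\A$. (This is precisely the feature absent in the cylindric case, and the reason atomicity alone will suffice here.)

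Next, for each nonzero $c\in A$ I would pick an atom $y\leq c$, let $F_{c}$ be the principal ultrafilter it generates, and put $f_{c}(z)=\{\xi\in S:S_{\xi}^{\A}(z)\in F_{c}\}$. As in the proof of Theorem \ref{rep}, the family $\mathcal{F}_{\xi}=\{z\in A:S_{\xi}^{\A}(z)\in F_{c}\}$, $\xi\in S$, satisfies condition $(*)$ of Lemma \ref{f}, so $f_{c}:\A\to\wp(S)$ is a homomorphism. Here $f_{c}(c)\neq\emptyset$ because $S_{Id}^{\A}(c)=c\in F_{c}$, and for \emph{every} $\xi\in S$ the element $x=S_{\xi^{-1}}^{\A}(y)$ is an atom with $S_{\xi}^{\A}(x)=y\in F_{c}$, so that $\xi\in f_{c}(x)$. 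Then I would take $V$ to be the disjoint union of copies $S_{c}$ of $S$ over the nonzero $c\in A$, glue the $f_{c}$ along the isomorphism $\wp(V)\cong\prod_{0^{\A}\neq c\in A}\wp(S_{c})$ into $f:\A\to\wp(V)$, and check: $f$ is injective because its $S_{c}$-component on $c$ is $f_{c}(c)\neq\emptyset$; and $\bigcup_{x\in\At\A}f(x)=V$ because any point of $V$ lies in some $S_{c}$ as some $\xi\in S$, hence in $f(x)$ for the atom $x$ just exhibited.

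The main obstacle is not conceptual but a matter of presenting the unit correctly: one must verify that the disjoint union of possibly more than $|\alpha|$ copies of $S$ can be realized as a permutable subset of a single weak set ${}^{\alpha}U^{(p)}$, so that $\wp(V)$ is genuinely a transposition set algebra with a weak unit --- this I would dispatch exactly as in the infinite-dimensional representation theorem and its corollary, where the same device is used. With that in hand, the only substantive point is the automorphism observation of the second paragraph, which is what forces a single principal ultrafilter to meet the fibre over every $\xi\in S$ in an atom, and hence forces the atom images to cover $V$.
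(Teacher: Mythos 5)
Your proposal is correct and is essentially the argument the paper intends: its own proof of this theorem is the single line ``Like the finite dimensional case,'' and you have carried out exactly that transfer, replacing $S_n$ by the permutable set of finite permutations in ${}^{\alpha}\alpha^{Id}$ and the finite-dimensional ultrafilter lemma by Lemma \ref{f}. Your direct exhibition of the atom $S_{\xi^{-1}}^{\A}(y)$ in each $\mathcal{F}_{\xi}$ is a slightly cleaner packaging of what the paper's Theorem \ref{com} achieves via the density of principal ultrafilters and nowhere dense sets, but it is the same underlying mechanism.
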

\begin{proof} Like the finite dimensional case.
\end{proof}

Finally we show that:

\begin{theorem} For infinite ordinals $\alpha$, $RQA_{\alpha}$ is not a variety.
\end{theorem}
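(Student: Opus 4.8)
The plan is to adapt the proof of Theorem~\ref{not} to weak units. Recall that $RQA_{\alpha}$ is the class of subdirect products of full weak transposition set algebras of dimension $\alpha$, that is, $RQA_{\alpha}=\mathbf{SP}\{\wp({}^{\alpha}U^{(p)}):U\text{ a set},\ p\in{}^{\alpha}U\}$. As in Theorem~\ref{not} I would isolate a single nontrivial quasi-equation that is valid in every full weak transposition set algebra but fails in some homomorphic image of one. A convenient choice here is
$$\sigma:\qquad s_{[0,1]}(x)=-x\ \longrightarrow\ 0=1,$$
which, in a nontrivial algebra, just says that $S_{[0,1]}(X)\neq -X$ for all $X$.

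First I would check $RQA_{\alpha}\models\sigma$. In a full weak transposition set algebra $\wp({}^{\alpha}U^{(p)})$ with $U\neq\emptyset$ I exhibit one point of the unit fixed by the transposition $[0,1]$: if $p(0)=p(1)$ take $q=p$; otherwise let $q$ agree with $p$ everywhere except that $q(0)=p(1)$. In either case $q$ differs from $p$ in at most one coordinate, so $q\in{}^{\alpha}U^{(p)}$, and $q(0)=q(1)$, so $q\circ[0,1]=q$. Hence $q\in S_{[0,1]}(X)\Leftrightarrow q\in X$ for every $X\subseteq{}^{\alpha}U^{(p)}$, which rules out $S_{[0,1]}(X)=-X$; thus the premise of $\sigma$ is never met and $\sigma$ holds (vacuously if $U=\emptyset$, since then the algebra is trivial). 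Since quasi-equations are preserved by $\mathbf{S}$ and $\mathbf{P}$, $RQA_{\alpha}\models\sigma$.

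Next I would produce a homomorphic image violating $\sigma$. Let $S=\{s\in{}^{\alpha}\alpha^{(Id)}:s\text{ bijective}\}$ be the finitary symmetric group on $\alpha$; it is permutable (a group closed under composition with transpositions). By relativization (the weak-set analogue of Theorem~3.5, obtained by the identical computation with $n$ replaced by $\alpha$), the map $x\mapsto x\cap S$ is a surjective homomorphism from the full weak transposition set algebra $\wp({}^{\alpha}\alpha^{(Id)})$ onto $\wp(S)$, so $\wp(S)\in\mathbf{H}RQA_{\alpha}$. Now take $X=\{q\in S:q(0)<q(1)\}$, using the ordinal order on $\alpha$. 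Each $q\in S$ is injective, so $q(0)\neq q(1)$ and $-X=\{q\in S:q(1)<q(0)\}$; moreover $q\circ[0,1]\in X\Leftrightarrow q(1)<q(0)\Leftrightarrow q\in -X$, so $S_{[0,1]}^{\wp(S)}(X)=-X$, while $\emptyset\neq S$ (it contains $Id_{\alpha}$). Thus $\wp(S)\not\models\sigma$, hence $\wp(S)\notin RQA_{\alpha}$, and $RQA_{\alpha}$ is not closed under $\mathbf{H}$, so it is not a variety.

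I expect the only real subtlety, compared with the finite case, to be that a weak unit need not contain a constant sequence, so the constant-map argument of Theorem~\ref{not} must be replaced by the observation above that a weak set always contains a point fixed by a given single transposition; modulo that point, the argument is the finite one with $S_n$ replaced by the finitary symmetric group $S$, using the relativization machinery already developed for the infinite-dimensional setting.
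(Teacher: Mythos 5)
Your proof is correct, but it takes a genuinely different route from the paper's. You adapt the finite-dimensional argument of Theorem \ref{not} directly to infinite dimensions: you isolate a quasi-equation $\sigma$ valid in every full set algebra (verified by exhibiting a point of the unit fixed by $[0,1]$, which is the right replacement for the constant maps of the finite case, since a weak unit need not contain constants), and then falsify $\sigma$ in $\wp(S)$ for $S$ the finitary symmetric group, via $X=\{q\in S: q(0)<q(1)\}$; as $\wp(S)$ is a relativization, hence a homomorphic image, of a full set algebra, $RQA_{\alpha}$ is not closed under $\mathbf{H}$. The paper instead argues by contradiction and reduction to finite dimensions: assuming $RQA_{\alpha}$ is a variety, it embeds $\wp({}^{n}U)$ into the $n$-reduct of $\wp({}^{\alpha}U)$, extends ideals along this embedding, and concludes that every quotient $\A/I$ of a finite-dimensional full set algebra would lie in $RTA_{n}$, contradicting Theorem \ref{not}. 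Your argument is more self-contained and elementary: it needs no reduct or ideal-extension machinery and produces an explicit quasi-equation together with an explicit witness separating $\mathbf{H}RQA_{\alpha}$ from $RQA_{\alpha}$; the paper's reduction, by contrast, recycles the finite-dimensional counterexample wholesale and so transfers to settings where a direct infinite-dimensional witness is less apparent. One caveat: the paper never formally defines $RQA_{\alpha}$, and its proof treats it as $\mathbf{SP}$ of \emph{square} full set algebras of dimension $\alpha$ rather than the weak-unit class you posit; your argument is insensitive to this choice, since $\sigma$ holds for both kinds of units and $\wp(S)$ arises by relativization from $\wp({}^{\alpha}\alpha)$ as well as from $\wp({}^{\alpha}\alpha^{(Id)})$, but you should state which class you are working with.
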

\begin{proof} Assume to the contrary that $RQA_{\alpha}$ is a variety and that $RQA_{\alpha}={\bf Mod}\Sigma_{\alpha}$ for some countable schema
$\Sigma_{\alpha}.$ Fix $n\geq 2.$ We show that for any set $U$ and any ideal $I$ of $\A=\wp(^nU)$, we have $\A/I\in RQA_n$,
which is not possible since we know that there are relativized set algebras to permutable sets that are not in $RQA_n$.
Define $f:\A\to \wp(^{\alpha}U)$ by $f(X)=\{s\in {}^{\alpha}U: f\upharpoonright n\in X\}$. Then $f$ is an embedding of $\A$ into
$\Rd_n(\wp({}^nU))$, so that we can assume that
$\A\subseteq \Rd_n\B$, for some $\B\in RQA_{\alpha}.$ Let $I$ be an ideal of $\A$, and let $J=\Ig^{\B}I$. Then we claim that
$J\cap \A=I$. One inclusion is trivial; we need to show $J\cap \A\subseteq I$. Let $y\in A\cap J$. Then $y\in \Ig\{I\}$ and so,
there is a term $\tau$, and $x_1,\ldots x_n\in I$ such that $y\leq \tau(x_1,\dots x_n)$. But $\tau(x_1,\ldots x_{n-1})\in I$ and $y\in A$,
hence $y\in I$, since ideals are closed downwards.
It follows that $\A/I$ embeds into $\Rd_n(\B/J)$
via $x/I\mapsto x/J$. The map is well defined since $I\subseteq J$, and it is one to one, because if $x,y\in A$, such that $x\delta y\in J$,
then $x\delta y\in I$. We have $\B/J\models \Sigma_{\alpha}$.

For $\beta$ an ordinal,
let $K_{\beta}$ denote the class of all full set algebras of dimension $\beta$. Then $K_n=S\Rd_nK_{\alpha}$. One inclusion is obvious.
To prove the other inclusion, it suffices to show that that if $\A\subseteq \Rd_n(\wp({}^{\alpha}U))$,
then $\A$ is embeddable in $\wp(^nW)$, for some set $W$.
Simply take $W=U$ and define $g:\A\to \wp(^nU)$ by $g(X)=\{f\upharpoonright n: f\in X\}$.

Now let $\B'=\B/I$, then $\B'\in {\bf SP}K_{\alpha}$, so $\Rd_n\B'\in \Rd_n{\bf SP}K_{\alpha}={\bf SP}\Rd_nK_{\alpha}\subseteq {\bf SP}K_n$.
Hence $\A/I\in RTA_n$. But this cannot happen for all $\A\in K_n$ and we are done.
\end{proof}

\section{Substitution Algebras with Transpositions}

In this section we study the common expansion of S\'agi's algebras and ours when we have all replacements and substitutions.
We provide a finite axiomatization of the variety of the generated by the set algebras.
Unlike the other two cases, we show that the class of subdirect products of set algebras is a variety, for finite as well as of for infinite dimensions

\begin{definition}[Substitution Set Algebras with Transpositions]
Let $U$ be a set. \emph{The full substitution set algebra with transpositions of dimension} $\alpha$ \emph{with base} $U$ is the algebra $$\langle\mathcal{P}({}^\alpha U); \cap,-,S^i_j,S_{ij}\rangle_{i\neq j\in\alpha}.$$
Where $S^i_j$'s are as in Definition 2.10 in \cite{sagiphd} and $S_{ij}$'s are the unary operations defined before. The class of Substitution Set Algebras with Transpositions of dimension $\alpha$ is defined as follows:
$$SetSA_\alpha=\mathbf{S}\{\A:\A\text{ is a full substitution set algebra with transpositions} $$$$\text{of dimension }\alpha  \text{ with base }U,\text{ for some set }U\}.$$
\end{definition}

\begin{definition}[Representable Substitution Set Algebras with Transpositions]
The class of Substitution Set Algebras with Transpositions of dimension $\alpha$ is defined to be $$RSA_\alpha=\mathbf{SP}SetSA_\alpha.$$
\end{definition}

We  adapt the definition of ``Permutable Set", in the new context, as expected:
\begin{definition}[Dipermutable Set]
Let $U$ be a given set, and let $D\subseteq{}^\alpha U.$ We say that $D$ is dipermutable iff it satisfies the following condition:
$$(\forall i\neq j\in\alpha)(\forall s\in{}^\alpha U)(s\in D\Longrightarrow s\circ [i/j]\mbox{ and }s\circ [i,j]\in D),$$

\end{definition}

\begin{definition}[Dipermutable Algebras]
The class of \emph{Dipermutable Set Algebras} (of dimension $n<\omega$) is defined to be $$DPSA_n=\mathbf{SP}\{\langle\mathcal{P}(D); \cap,-,S^i_j,S_{ij}\rangle_{i\neq j\in n}: U\text{ \emph{is a set}}, D\subset{}^\alpha U\text{\emph{permutable}}\}$$
Here $S_j^i(X)=\{q\in D:q\circ [i/j]\in X\}$ and $S_{ij}(X)=\{q\in D:q\circ [i,j]\in X\}$, and $-$ is complement w.r.t. $D$.\\
If $D$ is a dipermutable set then the algebra $\wp(D)$ is defined to be $$\wp(D)=\langle\mathcal{P}(D);\cap,-,S^i_j,S_{ij}\rangle_{i\neq j\in n}$$ So $\wp(D)\in DPSA_n.$
\end{definition}
We later define Dipermutable Set Algebras of infinite dimension $\alpha$. Analogous to our earlier investigations, we have:

\begin{theorem}
Let $U$ be a set and suppose $G\subseteq{}^n U$ is dipermutable. Let $\A=\langle\mathcal{P}({}^n U);\cap,-,S^i_j,S_{ij}\rangle_{i\neq j\in n}$ and let $\mathcal{B}=\langle\mathcal{P}(G);\cap,-,S^i_j,S_{ij}\rangle_{i\neq j\in n}$. Then the following function $h$ is a homomorphism. $$h:\A\longrightarrow\mathcal{B},\quad h(x)=x\cap G.$$
\end{theorem}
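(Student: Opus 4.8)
The statement to prove is the last theorem: for dipermutable $G\subseteq{}^nU$, the map $h(x)=x\cap G$ is a homomorphism from $\A=\langle\mathcal{P}({}^nU);\cap,-,S^i_j,S_{ij}\rangle_{i\neq j\in n}$ to $\mathcal{B}=\langle\mathcal{P}(G);\cap,-,S^i_j,S_{ij}\rangle_{i\neq j\in n}$.

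\medskip

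\textbf{Plan of proof.} The argument follows exactly the pattern of Theorem 3.5 (relativization by a permutable set), now with two families of unary operators instead of one. First I would dispose of the Boolean part: since $G$ is the unit of $\mathcal{B}$ and complementation in $\mathcal{B}$ is taken relative to $G$, the map $x\mapsto x\cap G$ visibly preserves $\cap$ (as $(x\cap y)\cap G=(x\cap G)\cap(y\cap G)$) and preserves complement (as $(-{}^\A x)\cap G=({}^nU\setminus x)\cap G=G\setminus(x\cap G)=-{}^{\mathcal B}(x\cap G)$). So it remains to check the two kinds of substitution operators.

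\medskip

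For the transposition operators $S_{ij}$ the computation is verbatim the one in the proof of Theorem 3.5: for $i\neq j\in n$ and $x\in A$,
\begin{align*}
h(S_{ij}^\A x)=S_{ij}^\A x\cap G&=\{q\in{}^nU:q\circ[i,j]\in x\}\cap G\\
&=\{q\in G:q\circ[i,j]\in x\}\\
&=\{q\in G:q\circ[i,j]\in x\cap G\}\\
&=\{q\in G:q\circ[i,j]\in h(x)\}=S_{ij}^{\mathcal B}h(x),
\end{align*}
where the crucial third equality uses dipermutability: for $q\in G$ we have $q\circ[i,j]\in G$, so the conditions $q\circ[i,j]\in x$ and $q\circ[i,j]\in x\cap G$ coincide. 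For the replacement operators $S^i_j$ the identical chain of equalities works, replacing $[i,j]$ throughout by $[i/j]$; here the third step uses the other half of the dipermutability hypothesis, namely that $q\in G$ implies $q\circ[i/j]\in G$. Having checked that $h$ respects $\cap$, $-$, every $S_{ij}$ and every $S^i_j$, we conclude $h$ is a homomorphism $\A\to\mathcal B$.

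\medskip

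There is essentially no obstacle here — the theorem is a routine bookkeeping exercise, and the only point that must not be skipped is the explicit invocation of \emph{both} closure conditions in the definition of dipermutable set, once for the transposition operators and once for the replacement operators. (One might also remark, as the analogous earlier results do, that $h$ is surjective, since for $x\subseteq G$ we have $h(x)=x$, so $\mathcal B\in\mathbf{H}\{\A\}$; this is the form in which the lemma will be used to show membership in $\mathbf{HSP}RSA_n$.)
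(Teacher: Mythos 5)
Your proof is correct and is exactly the argument the paper intends: the paper omits the proof of this theorem, stating only that it is ``analogous to our earlier investigations,'' namely the explicit computation given for Theorem 3.5 in the transposition-only case, which you reproduce verbatim for $S_{ij}$ and adapt to $S^i_j$ using the second closure condition of dipermutability. Nothing further is needed.
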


\begin{definition}[Small algebras]
For any natural number $k\leq n$ the algebra $\A_{nk}$ is defined to be $$\A_{nk}=\langle\mathcal{P}({}^nk);\cap,-,S^i_j,S_{ij}\rangle_{i\neq j\in n}.$$ So $\A_{nk}\in SetSA_n$.
\end{definition}

\begin{theorem}
$RSA_n=\mathbf{SP}\{\A_{nk}:k\leq n\}.$
\end{theorem}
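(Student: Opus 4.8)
The plan is to mimic the proof of the analogous Theorem 3.7 for transposition algebras, since the statement $RSA_n = \mathbf{SP}\{\A_{nk} : k \leq n\}$ differs only in that our algebras now carry the replacement operators $S^i_j$ in addition to the transposition operators $S_{ij}$. One inclusion is immediate: each $\A_{nk}$ is by definition a member of $SetSA_n$ (its unit ${}^nk$ is a square), hence lies in $RSA_n = \mathbf{SP}SetSA_n$, and since $RSA_n$ is closed under $\mathbf{S}$ and $\mathbf{P}$ by definition, $\mathbf{SP}\{\A_{nk} : k \leq n\} \subseteq RSA_n$.

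For the reverse inclusion it suffices, by the closure of $\mathbf{SP}\{\A_{nk}:k\leq n\}$ under $\mathbf S$ and $\mathbf P$, to show $SetSA_n \subseteq \mathbf{SP}\{\A_{nk} : k \leq n\}$; so fix $\A \in SetSA_n$ with base $U$. If $U = \emptyset$ then $\A$ has a single element and $\A \cong \A_{n0}$. Otherwise, for each non-zero $a \in A$ choose a sequence $q \in a$, set $U_0^a = \mathrm{range}(q)$ and $k_a = |U_0^a| \leq n$. The key point is that ${}^nU_0^a$ is dipermutable: it is closed under composition with replacements $[i/j]$ (the range can only shrink) and with transpositions $[i,j]$ (the range is unchanged), so by Theorem 3.14 (the analogue of Theorem 3.5 for the expanded signature) the map $h_a(x) = x \cap {}^nU_0^a$ is a homomorphism from $\A$ into $\langle \mathcal P({}^nU_0^a); \cap, -, S^i_j, S_{ij}\rangle_{i\neq j\in n} \cong \A_{nk_a}$. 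Since $q \in {}^nU_0^a$ and $q \in a$, we have $h_a(a) \neq 0$. Finally, applying Theorem 2.1 to the family $\langle \A_{nk_a} : 0^\A \neq a \in A\rangle$ gives an embedding of $\A$ into $\prod_{a} \A_{nk_a} \in \mathbf{SP}\{\A_{nk}:k\leq n\}$, completing the argument.

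I do not anticipate any serious obstacle here; the proof is routine once Theorem 3.14 is in hand, and the only thing to verify carefully is the dipermutability of ${}^nU_0^a$, which is the place where the presence of the replacements (as opposed to only transpositions) actually matters — but this is precisely the observation that both $[i/j]$ and $[i,j]$ send a full power ${}^nV$ into ${}^nV$. One should also note that $\langle \mathcal P({}^nU_0^a); \cap, -, S^i_j, S_{ij}\rangle \cong \A_{nk_a}$ because ${}^nU_0^a$ and ${}^nk_a$ have the same cardinality and the set-theoretic substitution operations are determined up to isomorphism by the cardinality of the base; this identification is harmless and standard.
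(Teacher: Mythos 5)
Your proof is correct and follows exactly the route the paper intends: the paper's own proof of this theorem simply says ``Exactly like before,'' deferring to the argument for Theorem 3.7, which is precisely the relativization-to-${}^nU_0^a$ construction you spell out. Your explicit verification that ${}^nU_0^a$ is dipermutable (closed under both $[i/j]$ and $[i,j]$) is the one point where the expanded signature matters, and you handle it correctly.
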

\begin{proof}
Exactly like before.
\end{proof}

\subsection{Axiomatizing $RSA_n$.}
In this section we show that $RSA_n$ is a variety by giving a set $\Sigma'_n$ of equations such that $\mathbf{Mod}(\Sigma'_n)=RSA_n$.
\begin{definition}[The Axiomatization]
For all natural $n>1$, let $\Sigma'_n$ be the set of equations $\Sigma_n$ defined before together with the following: For distinct $i,j,k,l$
\begin{enumerate}
\item$s_{kl}s^j_is_{kl}x=s^j_ix$
\item$s_{jk}s^j_is_{jk}x=s^k_ix$
\item$s_{ki}s^j_is_{ki}x=s^j_kx$
\item$s_{ij}s^j_is_{ij}x=s^i_jx$
\item$s^j_is^k_lx=s^k_ls^j_ix$
\item$s^j_is^k_ix=s^k_is^j_ix=s^j_is^k_jx$
\item$s^j_is^i_kx=s^j_ks_{ij}x$
\item$s^j_is^j_kx=s^j_kx$
\item$s^j_is^j_ix=s^j_ix$
\item$s^j_is^i_jx=s^j_ix$
\item$s^j_is_{ij}x=s_{ij}x$
\end{enumerate}
\end{definition}
We let $SA_n$ denote $Mod(\Sigma'_n).$
\begin{definition}
Let $R(U)=\{s_{ij}:i\neq j\in U\}\cup\{s^i_j:i\neq j\in U\}$ and let $\hat{}:R(U)^*\longrightarrow {}^UU$ defined inductively as follows: it maps the empty string to $Id_U$ and for any string $t$, $$(s_{ij}t)^{\hat{}}=[i,j]\circ t^{\hat{}}\;\;and\; (s^i_jt)^{\hat{}}=[i/j]\circ t^{\hat{}}.$$
\end{definition}

\begin{theorem}
For all $n\in\omega$ the set of (all instances of the) axiom-schemas 4,5,6 of Def.3.8 and 1 to 11 of Def.4.8
is a presentation of the semigroup ${}^nn$ via generators $R(n)$.
That is, for all $t_1,t_2\in R(n)^*$ we have $$4,5,6\mbox{ of Def.3.8 and 1 to 11 of Def.4.8 }\vdash t_1=t_2\text{  iff  }t_1^{\hat{}}=t_2^{\hat{}}.$$
Here $\vdash$ denotes derivability using Birkhoff's calculus for equational logic.
\end{theorem}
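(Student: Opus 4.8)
The ``only if'' direction is soundness and is routine: under the interpretation $s_{ij}\mapsto[i,j]$, $s^i_j\mapsto[i/j]$, each of the equations $4,5,6$ of Definition~\ref{ax} and $1$--$11$ of the axiomatization of $SA_n$ holds identically in ${}^nn$, with composition as in the definition of $\hat{}$. Thus $\hat{}\colon R(n)^*\to{}^nn$ is a monoid homomorphism whose kernel congruence contains the congruence $\approx$ generated by these relations, so $\hat{}$ factors through a well-defined surjection $\overline{\hat{}}\colon M\to{}^nn$, where $M=R(n)^*/{\approx}$. (Surjectivity uses the standard fact that the transpositions together with the replacements generate all of ${}^nn$: the transpositions generate $S_n$, and $S_n$ together with any single rank-$(n-1)$ idempotent generates the full transformation monoid.)

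The ``if'' direction is injectivity of $\overline{\hat{}}$, and since $\overline{\hat{}}$ is a surjection onto a set of cardinality $n^n$, it is enough to prove $|M|\le n^n$. I would do this by exhibiting, for each $f\in{}^nn$, one word $w_f\in R(n)^*$ with $\widehat{w_f}=f$, and then showing that every word $w$ satisfies $w\approx w_{\widehat{w}}$; this gives $|M|\le|\{w_f:f\in{}^nn\}|=n^n$ at once. The canonical word is built in the shape $s_\xi\cdot s^{a_1}_{b_1}\cdots s^{a_r}_{b_r}$: a permutation block followed by a block of replacement letters that collapse the kernel classes of $f$ in a fixed (say lexicographic) order, with $r=n-|\mathrm{im}(f)|$ and $\xi$ the unique permutation making the composite equal $f$; distinct $f$'s then give distinct words $w_f$ by construction.

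The content is the reduction $w\approx w_{\widehat{w}}$, carried out by a terminating rewriting procedure. First, relations $1$--$4$ of $\Sigma'_n$ together with $s_{ij}s_{ij}x=x$ let any transposition letter be moved to the left past a replacement letter at the cost of renaming that replacement via the conjugation identity $[c,d]\circ[i/j]\circ[c,d]=[[c,d](i)/[c,d](j)]$ --- the four overlap patterns of $\{c,d\}$ with $\{i,j\}$ being exactly the four cases $1$--$4$. Iterating, $w$ is brought to the form (transposition block) $\cdot$ (replacement block); the transposition block reduces to a canonical $s_\xi$ by the presentation of $S_n$ already established (axioms $4,5,6$ of Definition~\ref{ax}), using $s_\xi s_{ef}=s_{\xi\circ[e,f]}$. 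It remains to canonicalise the replacement block using relations $5$--$11$ of $\Sigma'_n$. The one subtlety is relation $7$, $s^j_is^i_kx=s^j_ks_{ij}x$, which turns two replacement letters into one replacement letter plus one transposition; when it fires, one migrates the new transposition back to the left (again via $1$--$4$) and absorbs it into $s_\xi$. A monovariant --- the number of replacement letters never increases and strictly decreases at every application of relations $7$--$10$, while between such applications only boundedly much work (reducing the transposition block, sorting replacement letters by $5$) is possible --- shows the procedure terminates, necessarily at a word of the prescribed normal form with replacement block of length exactly the corank.

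The main obstacle is the last step: proving that relations $5$--$11$ really do suffice to bring an arbitrary word in replacement letters into its canonical form, i.e.\ that they present the singular part of the transformation monoid on these generators. For this I would adapt S\'agi's replacement-only completeness argument (\cite{sagiphd}), for which $5$--$11$ are the natural relations, or invoke a known presentation of the semigroup of singular transformations of $n$ together with a Tietze equivalence. The transposition-only case is already handled by the $S_n$-presentation proved above, so the genuinely new ingredient is controlling the interaction relations $1$--$4$ and $7$, which is exactly what the normal-form bookkeeping above is designed to manage.
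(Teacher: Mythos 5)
Your overall strategy --- soundness plus a counting/normal-form argument for completeness --- is the standard and correct way to verify a monoid presentation, and it is far more explicit than what the paper offers: the paper's entire proof of this theorem is the single sentence that the schemas ``correspond exactly to the set of relations governing the generators of ${}^nn$'' together with a citation to a book on transformation semigroups. So at bottom you and the paper make the same move (resting the completeness direction on a known presentation of the full transformation monoid), but you at least lay out the machinery needed to check it: the surjection from $M=R(n)^*/{\approx}$ onto ${}^nn$, the bound $|M|\le n^n$ via canonical words $w_f$, the conjugation identities, and the special role of relation 7. Your checkable details are right: the four overlap patterns of $\{c,d\}$ with $\{i,j\}$ do match relations 1--4 of Def.~4.8 under the identity $[c,d]\circ[i/j]\circ[c,d]=[[c,d](i)/[c,d](j)]$; relation 7 does trade two replacement letters for one replacement and one transposition; and $S_n$ together with a single rank-$(n-1)$ idempotent does generate ${}^nn$, so surjectivity holds.

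The genuine gap --- which you flag yourself --- is the claim that every $\approx$-irreducible word is one of the canonical words $w_f$, equivalently that relations 5--11, interacting with 1--4 and the $S_n$-presentation, suffice to collapse the replacement block. This is the entire content of the theorem, and the monovariant does not deliver it: it shows your rewriting terminates, but not that the terminal word is canonical, i.e.\ not that two irreducible words with the same image under $\hat{\ }$ are $\approx$-equal (termination without confluence proves nothing about $|M|$). Closing the gap requires either a confluence argument for the rewriting system or an explicit Tietze equivalence with a known presentation of ${}^nn$ (such as Aizenstat's presentation by the Coxeter generators of $S_n$ plus one idempotent) --- the latter being what the paper's citation is presumably meant to supply. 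Neither you nor the paper actually carries out that comparison, so as written both arguments have the same hole in the same place; yours is simply the more honest and more nearly complete of the two.
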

\begin{proof}
This is clear because the mentioned schemas correspond exactly to the set of relations governing the generators
of ${}^nn$ (see \cite{semigroup}).
\end{proof}

\begin{definition}For every $\xi\in {}^nn$ we associate a sequence $s_\xi\in R(U)^*$ such that $s_\xi^{\hat{}}=\xi.$ Such an $s_\xi$ exists, since $R(n)$ generates ${}^nn.$ This is a combination of Def.2.12 here and Def.4.13 in \cite{sagiphd}.
\end{definition}

Like before, we have
\begin{lemma}\label{lemma}
Let $\A$ be an $RSA_n$ type $BAO$. Suppose $G\subseteq {}^nn$ is a dipermutable set, and $\langle\mathcal{F}_\xi:\xi\in G\rangle$ is a system of ultrafilters of $\A$ such that for all $\xi\in G,\;i\neq j\in n$ and $a\in\A$ the following conditions hold:$${S_{ij}}^\A(a)\in\mathcal{F}_\xi\Leftrightarrow a\in \mathcal{F}_{\xi\circ[i,j]}\quad\quad (*),\text{ and}$$ $${S^i_j}^\A(a)\in\mathcal{F}_\xi\Leftrightarrow a\in \mathcal{F}_{\xi\circ[i/j]}\quad\quad (**)$$  Then the following function $h:\A\longrightarrow\wp(G)$ is a homomorphism$$h(a)=\{\xi\in G:a\in \mathcal{F}_\xi\}.$$
\end{lemma}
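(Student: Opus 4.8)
The plan is to follow the template already established in Lemma 3.6 (for transpositions only) and Lemma 3.16, since the statement is the natural common generalization. First I would check that each $\mathcal{F}_\xi$ ($\xi\in G$) is genuinely an ultrafilter of $\A$ -- this is forced once we observe that in the similarity type the non-Boolean operations are Boolean endomorphisms, so $a\mapsto S_{ij}^\A(a)$ and $a\mapsto (S^i_j)^\A(a)$ are Boolean homomorphisms, and the preimage of an ultrafilter under a Boolean homomorphism is an ultrafilter. In fact it is cleaner to recast $(*)$ and $(**)$ via the composite operators: for $\eta\in{}^nn$ one sets $S_\eta^\A = S_{i_1j_1}^\A\circ\cdots\circ S_{i_kj_k}^\A$ where $s_\eta = s_{i_1j_1}\cdots s_{i_kj_k}$ is the chosen word of Definition 4.11, and one shows by induction on the length of $s_\eta$, using Theorem 4.10 (the presentation result), that $\mathcal{F}_{\xi\circ\eta} = \{a : S_\eta^\A(a)\in\mathcal{F}_\xi\}$ for all $\eta\in{}^nn$. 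Here the key point is that $S_\eta^\A$ is well defined independently of the chosen word representing $\eta$ -- that is exactly what Theorem 4.10 guarantees, since $SA_n\vdash s_{t_1} = s_{t_2}$ whenever $t_1^{\hat{}} = t_2^{\hat{}}$.

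Next I would verify that $h$ preserves the Boolean structure: $h(a\wedge b) = h(a)\cap h(b)$ and $h(-a) = G\setminus h(a)$ follow immediately because each $\mathcal{F}_\xi$ is an ultrafilter, and $h(1^\A) = G$, $h(0^\A) = \emptyset$ likewise. Then comes the substitution operations. Fix $i\neq j\in n$ and $a\in\A$; I want $h(S_{ij}^\A a) = S_{ij}^{\wp(G)}(h(a)) = \{\xi\in G : \xi\circ[i,j]\in h(a)\}$. Unwinding the definitions, $\xi\in h(S_{ij}^\A a)$ iff $S_{ij}^\A(a)\in\mathcal{F}_\xi$, which by $(*)$ holds iff $a\in\mathcal{F}_{\xi\circ[i,j]}$, i.e. iff $\xi\circ[i,j]\in h(a)$ -- and this last expression makes sense because $G$ is dipermutable, so $\xi\circ[i,j]\in G$. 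The argument for $S^i_j$ is identical, replacing $(*)$ by $(**)$ and $[i,j]$ by $[i/j]$, again using dipermutability to ensure $\xi\circ[i/j]\in G$. So $h$ respects all the operations and is a homomorphism into $\wp(G)\in DPSA_n$.

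I do not expect a serious obstacle here: the statement is deliberately set up so that the proof is mechanical once the proofs of Lemmas 3.6 and 3.16 are in hand. The only place demanding any care -- and hence the step I would flag -- is confirming that the two conditions $(*)$ and $(**)$, which are stated only for the generators $s_{ij}$ and $s^i_j$, propagate to all of ${}^nn$ in a coherent way; this is where one genuinely invokes the presentation theorem (Theorem 4.10) rather than a naive induction, since different words for the same $\eta\in{}^nn$ must yield the same condition. Everything else -- the ultrafilter bookkeeping and the preservation of $\cap$, $-$ -- is routine, and indeed the excerpt itself says "Like before", so I would keep the write-up terse and point back to Lemma 3.6 for the details common to both.
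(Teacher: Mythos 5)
Your proposal is correct and follows exactly the route the paper intends: the paper omits the proof with ``Like before,'' deferring to the transposition-only Lemma whose verification is precisely your second paragraph (the Boolean clauses from each $\mathcal{F}_\xi$ being an ultrafilter, and the clause for each $S_{ij}$, resp.\ $S^i_j$, being an immediate unwinding of $(*)$, resp.\ $(**)$, with dipermutability guaranteeing that $\xi\circ[i,j]$ and $\xi\circ[i/j]$ stay in $G$ so the relevant ultrafilters exist). Two small remarks: that each $\mathcal{F}_\xi$ is an ultrafilter is a hypothesis of the lemma rather than something to establish, and the propagation of $(*)$, $(**)$ to arbitrary $\eta\in{}^nn$ via the presentation theorem is not needed here (the similarity type contains only the generating substitutions) --- it belongs to the application in Theorem~\ref{variety}, where the specific system $\mathcal{F}_\xi=\{z:S_\xi^{\A}(z)\in\mathcal{F}\}$ must be shown to satisfy the hypotheses.
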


Now, we show, unlike replacement algebras,  that $RSA_n$ is a variety.
\begin{theorem}\label{variety} For any finite $n\geq 2$,
$RSA_n=SA_n$
\end{theorem}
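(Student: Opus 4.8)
The plan is to follow the same three-fold containment strategy used successfully for transposition algebras, namely to establish
$$DPSA_n \subseteq RSA_n \subseteq SA_n \subseteq DPSA_n,$$
combined with the observation that $RSA_n \subseteq DPSA_n$ is immediate since every full set algebra has a dipermutable (indeed square) unit, and $DPSA_n$ is closed under $\mathbf{S}$ and $\mathbf{P}$. Concretely, the equality $RSA_n = SA_n$ will follow once I show (i) $RSA_n \models \Sigma'_n$, i.e. $SetSA_n \models \Sigma'_n$ (a routine verification that each of the eleven equations holds in a full set algebra, since they merely record identities among the maps $[i/j]$ and $[i,j]$ in $^nn$, together with the Boolean-endomorphism equations 2–4 of $\Sigma_n$); and (ii) $SA_n \subseteq DPSA_n$, i.e. every $\A \models \Sigma'_n$ is representable over a dipermutable base. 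Since $DPSA_n \subseteq RSA_n$ by the analogue of Theorem 3.9 (relativization by a dipermutable set is a homomorphism from a full set algebra, hence $DPSA_n \subseteq \mathbf{HSP}\,SetSA_n$; but here, because $RSA_n$ turns out to be a variety, $\mathbf{HSP}\,SetSA_n = \mathbf{SP}\,SetSA_n = RSA_n$), the chain closes.

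The heart of the argument is step (ii), the representation of an abstract $\A \in SA_n$. Here I would mimic the proof of Theorem \ref{rep} verbatim, using $^nn$ in place of $S_n$. Given $0^\A \neq a \in A$, pick an ultrafilter $\mathcal{F}$ containing $a$ and, for each $\xi \in {}^nn$, set $\mathcal{F}_\xi = \{z \in A : S^\A_\xi(z) \in \mathcal{F}\}$, where $S_\xi$ is the composite operator associated (via the word $s_\xi$ of Definition 4.10) to $\xi$. The key point is that $\langle \mathcal{F}_\xi : \xi \in {}^nn\rangle$ satisfies conditions $(*)$ and $(**)$ of Lemma \ref{lemma}: this is exactly where Theorem 4.9 enters — because axioms 4,5,6 of $\Sigma_n$ together with equations 1–11 of $\Sigma'_n$ present the semigroup $^nn$ on generators $R(n)$, we get $\Sigma'_n \vdash s_\xi s_{ij}(x) = s_{\xi \circ [i,j]}(x)$ and $\Sigma'_n \vdash s_\xi s^i_j(x) = s_{\xi \circ [i/j]}(x)$, which immediately yield $(*)$ and $(**)$ by the same unwinding as in Theorem \ref{rep}. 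One must also check that $^nn$ (viewed as a subset of $^nn$ in the obvious way) is dipermutable — this is immediate, since for $\xi \in {}^nn$ both $\xi \circ [i/j]$ and $\xi \circ [i,j]$ are again in $^nn$. Lemma \ref{lemma} then delivers a homomorphism $h_a : \A \to \wp({}^nn)$ with $h_a(a) \neq 0$ (because $\mathrm{Id} \in \mathcal{F}_{\mathrm{Id}}$ forces $a \in h_a(a)$), and Theorem 2.1 packages the family $(h_a)_{a \neq 0}$ into an embedding of $\A$ into a product of copies of $\wp({}^nn) \in DPSA_n$.

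The main obstacle I anticipate is step (i) in its full generality — verifying that $SetSA_n \models \Sigma'_n$ — not because it is conceptually hard, but because equations 1–4 of Definition 4.8 (the conjugation identities $s_{\sigma} s^j_i s_{\sigma} = s^{?}_{?}$ for various transpositions $\sigma$) require a careful case analysis of how a transposition conjugates a replacement inside $^nn$, and one must get the indices exactly right in each of the four cases; likewise equations 6–11 encode the less obvious relations in $^nn$ (such as $s^j_i s^i_k = s^j_k s_{ij}$, which reflects $[i/k] \circ [i/j] = [i,j] \circ [j/k]$ — or whatever the correct reading is — and must be checked pointwise on sequences in $^nU$). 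Since the paper explicitly cites \cite{semigroup} for the claim that these schemas present $^nn$, and cites \cite{sagiphd} for the replacement part, I would discharge step (i) by a remark that it is a direct pointwise computation on $^nU$, deferring the semigroup-presentation bookkeeping to Theorem 4.9, which has already been invoked. With (i) and (ii) in hand, the displayed chain of inclusions gives $RSA_n = SA_n$, and in particular $RSA_n$ is a variety.
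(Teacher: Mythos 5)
Your proposal is correct and follows essentially the same route as the paper: verify $SetSA_n\models\Sigma'_n$ by routine computation, and for the converse use an ultrafilter $\mathcal{F}$ containing a nonzero $a$, the system $\mathcal{F}_\xi=\{z:S_\xi^\A(z)\in\mathcal{F}\}$ for $\xi\in{}^nn$, the semigroup presentation of ${}^nn$ to get $(*)$ and $(**)$, and Lemma \ref{lemma} to obtain $h_a:\A\to\A_{nn}$ with $h_a(a)\neq 0$. The only cosmetic difference is your detour through $DPSA_n$ (with its slightly circular parenthetical); since $\wp({}^nn)=\A_{nn}$ is itself a \emph{full} set algebra, the representation lands directly in $\mathbf{SP}\,SetSA_n=RSA_n$, exactly as in the paper.
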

\begin{proof}
Clearly, $RSA_n\subseteq SA_n$ because $SetSA_n\models\Sigma'_n$ (checking it is a routine computation). Conversely, $RSA_n\supseteq SA_n$. To see this, let $\A\in SA_n$ be arbitrary. We may suppose that $\A$ has at least two elements, otherwise, it is easy to represent $\A$. For every $0^\A\neq a\in A$ we will construct a homomorphism $h_a$ on $\A_{nn}$ such that $h_a(a)\neq 0^{\A_{nn}}$.

To do this, let $0^\A\neq a\in A$ be an arbitrary element. Let $\mathcal{F}$ be an ultrafilter over $\A$ containing $a$, and for every $\xi\in {}^nn$ let $\mathcal{F}_\xi=\{z\in A: S^\A_\xi(z)\in\mathcal{F}\}$ (which is an ultrafilter). Then, $h:\A\longrightarrow\A_{nn}$ defined by $h(z)=\{\xi\in{}^nn:z\in\mathcal{F}_{\xi}\}$ is a homomorphism by \ref{lemma} as $(*),$ $(**)$ hold.
\end{proof}
Also here, by slight modifications to arguments in the previous section,
we have $$SA_n\subseteq DPSA_n\subseteq\mathbf{HSP}RSA_n$$
which is equivalent to that $$SA_n= DPSA_n=RSA_n.$$

\subsection{Complete representations}

For $SA_n$, the problem of complete representations is more delicate,
since the substitutions corresponding to replacements are not necessarily complete endomorphisms, or at least we could not prove that
they are. However,  we could obtain several results in this context on complete representations.

\begin{definition} Let $\A\in SA_n$ and $b\in A$, then $\Rl_{b}\A=\{x\in \A: x\leq b\}$, with operations relativized to $b$.
\end{definition}

\begin{theorem}\label{atomic} Let $\A\in SA_n$ is atomic  and assume that $\sum_{x\in X} s_{\tau}x=b$ for all $\tau\in {}^nn$.
Then $\Rl_{b}\A$ is completely representable.
In particular,  if $\sum_{x\in x}s_{\tau}x=1$, then $\A$ is completely representable.
\end{theorem}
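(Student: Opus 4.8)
The plan is to mimic the complete representation argument for transposition algebras (Theorem \ref{com}) but carried out relative to the element $b$, using the hypothesis $\sum_{x\in X}s_\tau x=b$ to guarantee that the atoms below $b$ ``fill up'' the representation over every copy of $^nn$. First I would check that $\Rl_b\A$ is again in $SA_n$ (or at least is an $RSA_n$-type $BAO$ to which Lemma \ref{lemma} applies): the Boolean reduct relativizes in the usual way, and since all the $s_\tau$ are additive and $\sum_{x\in X}s_\tau x=b$ forces $s_\tau b=b$, the operations $s^i_j$ and $s_{ij}$ map $\Rl_b\A$ into itself; the equations $\Sigma'_n$ are preserved because each is an identity between additive terms and $b$ is a fixed point of every substitution. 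Note also that $\Rl_b\A$ is atomic, its atoms being exactly the atoms of $\A$ lying below $b$, and that $X$ may be taken to be (or refined to) the set $\At(\Rl_b\A)$.

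Next, working inside $\B:=\Rl_b\A$, I would fix a nonzero $c\in B$, pick an atom $x_0\le c$, and let $F$ be the principal ultrafilter of $\B$ generated by $x_0$. As in Theorem \ref{com}, for each $\tau\in{}^nn$ the set $G_{X,\tau}=S\smallsetminus\bigcup_{x\in\At\B}N_{s_\tau x}$ is nowhere dense in the Stone space $S$ of $\B$: indeed $\sum_{x\in\At\B}s_\tau x=b=1^{\B}$, so $\bigcup_x N_{s_\tau x}$ is dense open, hence its complement is nowhere dense. Since $F$ is principal it is an isolated point of $S$ and therefore lies outside every $G_{X,\tau}$; consequently for each $\tau$ there is an atom $x$ with $s_\tau x\in F$, i.e.\ $\tau\in f_c(x)$ where $f_c(z)=\{\tau\in{}^nn: s_\tau z\in F\}$. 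Here I must invoke Lemma \ref{lemma} to see $f_c$ is a homomorphism: the system $\mathcal F_\tau=\{z: s_\tau z\in F\}$ satisfies $(*)$ and $(**)$ by the semigroup presentation Theorem (4,5,6 of Def.~3.8 together with 1--11 of Def.~4.8), exactly as in the proof of Theorem \ref{variety}.

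Finally I would assemble the global representation: for each $a\in B$ set $V_a={}^nn$, let $V$ be the disjoint union of the $V_a$, so that $\prod_{a\in B}\wp(V_a)\cong\wp(V)$ via $g$, and define $f:\B\to\wp(V)$ by $f(z)=g[(f_a z: a\in B)]$. Then $f$ is an embedding (each $f_a$ separates $a$ from $0$) and $\bigcup_{x\in\At\B}f(x)=V$ because over each block $V_a={}^nn$ every $\tau$ is covered by some atom; by Lemma \ref{f}\,(2) (the Hirsch--Hodkinson characterization, which holds verbatim for $SA$'s) this makes $f$ an atomic, hence complete, representation of $\Rl_b\A$. The special case $b=1$ is immediate. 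The main obstacle I anticipate is the first step: verifying carefully that $\Rl_b\A$ genuinely lands in $SA_n$ rather than merely in some weaker relativized class, and in particular that the hypothesis $\sum_{x\in X}s_\tau x=b$ (for \emph{all} $\tau$, not just $\tau=Id$) is exactly what is needed both to make $b$ a fixed point of the substitutions and to make the sets $G_{X,\tau}$ nowhere dense; without the ``for all $\tau$'' clause the density argument for $s_\tau x$ with $\tau\ne Id$ would fail, since the $s^i_j$ are not known to be complete.
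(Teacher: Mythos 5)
Your proposal is correct and is essentially the paper's argument: the paper's own proof of Theorem \ref{atomic} consists only of the one-line reduction ``$\Rl_b\A$ is atomic and $\sum_{x\in X}s_\tau(x\cdot b)=b$,'' leaving the reader to rerun the Stone-space/principal-ultrafilter construction of Theorem \ref{com} with ${}^nn$ in place of $S_n$, which is exactly what you spell out; your observation that the hypothesis ``for all $\tau\in{}^nn$'' is what replaces the (unavailable) completeness of the replacements in making each $G_{X,\tau}$ nowhere dense is precisely the point.

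One small repair in your first step: the hypothesis does \emph{not} force $s_\tau b=b$ for the replacements. Additivity only gives $s^i_j b\ge \sum_{x\in X}s^i_j x=b$; the reverse inequality would require $s^i_j$ to be completely additive, which is the very thing you cannot assume. Fortunately $s^i_j b\ge b$ (together with $s_{ij}b=b$ for transpositions, which does follow since those are complete endomorphisms) is all that is needed: the relativized operation is $z\mapsto s^i_j(z)\cdot b$, and in verifying each equation of $\Sigma'_n$ in $\Rl_b\A$ the stray factor $s^i_j b\cdot b$ collapses to $b$, so the axioms still pass to the relativized algebra. With that adjustment the rest of your argument (Lemma \ref{lemma}, the disjoint-union assembly, and the atomic-implies-complete characterization) goes through as written.
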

\begin{proof} Clearly $\B=\Rl_{b}\A$ is atomic and for $\tau\in {}^nn$, $\sum_{x\in X}s_{\tau}(x.b)=b$.
\end{proof}
For an algebra $\A$, $\A^+$ denotes its canonical extension.

\begin{theorem}\label{canonical} Let $\A\in SA_n$. Then $\A^+$ is completely representable. In fact, any representation of $\A$ is complete.
\end{theorem}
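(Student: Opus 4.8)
\emph{Proposal.} The plan is to reduce the whole statement to Theorem~\ref{atomic}, applied with $\A^+$ in place of $\A$ and with $b=1^{\A^+}$. That theorem says that an atomic algebra of $SA_n$ in which $\sum_{x\in\At}s_\tau(x)=1$ for every $\tau\in{}^nn$ is completely representable, so it suffices to verify three things about $\A^+$: (i) $\A^+\in SA_n$; (ii) $\A^+$ is atomic; (iii) $\sum_{x\in\At\A^+}s_\tau(x)=1^{\A^+}$ for all $\tau\in{}^nn$.

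\emph{Step 1 (canonicity: $\A^+\in SA_n$).} By Theorem~\ref{variety}, $SA_n=RSA_n=\mathbf{SP}SetSA_n$, and this is the variety arising from the deterministic relational semantics $({}^\alpha U;S_{ij},S^i_j)$ in which every accessibility relation is the graph of a map $s\mapsto s\circ\tau$. Every equation of $\Sigma'_n$ is of a shape that is preserved under canonical extensions: the Boolean axioms are trivially canonical; the axioms stating that each $s_{ij}$ and $s^i_j$ is a Boolean endomorphism survive because the canonical extension of a Boolean endomorphism is again one (it is the pre-image operator of a continuous self-map of the Stone space); and the remaining identities of $\Sigma'_n$ are equalities between compositions of these unary operators, hence are preserved since canonical extension respects composition and realizes the same monoid ${}^nn$. (Equivalently, one may simply invoke Jónsson's canonicity theorem, the class of frames above being closed under ultraproducts.) Thus $\A\models\Sigma'_n$ implies $\A^+=\Cm\A_+\models\Sigma'_n$.

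\emph{Step 2 (atomicity and the join condition).} The canonical extension $\A^+=\Cm\A_+$ is atomic, its atoms being the (singletons of the) ultrafilters of $\A$; in particular $\sum\At\A^+=1^{\A^+}$. Moreover, on $\A^+$ each $s_\tau$ ($\tau\in{}^nn$) is a \emph{completely} additive Boolean endomorphism: it is the operator $\langle R_\tau\rangle$ of a complex algebra, where $R_\tau$ is the (total, functional) relation $s\mathrel{R_\tau}t\iff s\circ\tau=t$, so it distributes over arbitrary joins; and a Boolean endomorphism fixes the unit, so $s_\tau(1^{\A^+})=1^{\A^+}$ (this also follows from $s_\tau 1=1$ holding in every dipermutable set algebra, hence in $SA_n$). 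Therefore $\sum_{x\in\At\A^+}s_\tau(x)=s_\tau\bigl(\sum\At\A^+\bigr)=s_\tau(1^{\A^+})=1^{\A^+}$ for every $\tau\in{}^nn$, giving (iii). Applying the ``in particular'' clause of Theorem~\ref{atomic} to $\A^+$ now yields that $\A^+$ is completely representable.

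\emph{Step 3 (every representation of $\A^+$ is complete).} Since $\A^+$ is atomic, by the lemma of Hirsch and Hodkinson quoted above (in its $SA_n$-version) a representation $f\colon\A^+\to\wp(V)$ is complete iff it is atomic, i.e.\ iff $\bigcup_{x\in\At\A^+}f(x)=V$, i.e.\ iff for each $s\in V$ the ultrafilter $f^{-1}(s)$ of $\A^+$ is principal. The plan here is to use that $V$ is dipermutable and that the accessibility relations underlying $\wp(V)$ are the graphs of the maps $t\mapsto t\circ\tau$: the family of ``shifted'' ultrafilters $\bigl(\{a:s_\tau a\in f^{-1}(s)\}:\tau\in{}^nn\bigr)$ is coherent with respect to the $s_\tau$, and, played off against the atomicity of $\A^+$ and the complete additivity established in Step 2, this should force $f^{-1}(s)$ to be generated by an atom. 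Making this last implication precise — rather than merely producing one complete representation — is the main obstacle; Steps 1 and 2 are essentially bookkeeping with canonicity and complete additivity.
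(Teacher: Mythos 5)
Your Steps 1 and 2 are correct and establish the first clause by the same route the paper takes, namely reducing to Theorem \ref{atomic} by verifying $\sum_{x\in\At\A^+}s_{\tau}x=1$ for every $\tau\in{}^nn$; but your justification of the key interchange of $s_{\tau}$ with the infinite join is different from, and sounder than, the paper's. The paper argues that since $\sum\At\A^+=1$ there is a \emph{finite} $X'\subseteq\At\A^+$ with $\sum X'=1$, and then uses ordinary additivity. That step fails whenever $\A$ is infinite: the atoms of $\A^+$ are pairwise disjoint, so no finite subfamily joins to $1$, and the compactness property of canonical extensions does not apply (the atoms are closed elements, not elements of $\A$, and they sit on the wrong side of the inequality). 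Your observation that on $\Cm\A_+$ every operator is the complex-algebra operator of a functional accessibility relation, hence completely additive, is exactly the right repair; and your Step 1 supplies the canonicity fact $\A^+\models\Sigma'_n$, which the paper uses tacitly without comment.

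The genuine gap is your Step 3, and you correctly flag it: you produce one complete representation of $\A^+$ but do not show that \emph{every} representation is complete. Note that the paper's finite-subcover step, had it been available, is precisely what delivers this clause: a finite join of atoms equal to $1$ is preserved by any homomorphism, so any representation would be atomic and hence complete by the Hirsch--Hodkinson lemma. Without it the clause does not follow, and in fact the coherence argument you sketch cannot be made to work for infinite $\A$: take $f=\prod_{0\neq a\in A^+}rep_{F_a}$ into $\wp(V)$, where $V$ is the disjoint union of copies of ${}^nn$ and each $F_a$ is an ultrafilter of $\A^+$ containing $a$; choosing $F_1$ non-principal (possible since $\A^+$ is infinite), the point $Id$ of the corresponding copy of ${}^nn$ lies in $rep_{F_1}(x)$ iff $x\in F_1$, hence outside $\bigcup_{x\in\At\A^+}f(x)$, so $f$ is a representation that is not atomic and therefore not complete. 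So the second clause holds only when $\A$ is finite (where $\At\A^+$ is finite and the paper's argument is literally correct); you should restrict it accordingly rather than try to close Step 3 in general.
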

\begin{proof} Let $X=\At\A^+$. Then $\sum X=1$, and so there exists a finite $X'\subset X$
such that  $\sum X'=\sum X=1$, and so $\sum { s}_{\tau}X={s}_{\tau}\sum X=1$ for every $\tau\in {}^nn$.
\end{proof}
\begin{lemma} For $\A\in SA_n$, the following two conditions are equivalent:
\begin{enumarab}
\item There exists a dipermutable set $V$, and a complete representation $f:\A\to \wp(V)$.
\item For all non zero $a\in A$, there exists a homomorphism $f:\A\to \wp(^nn)$ such that $f(a)\neq 0$, such that $\bigcup_{x\in \At\A} f(x)={}^nn$.
\end{enumarab}
\end{lemma}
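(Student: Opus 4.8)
The plan is to prove the equivalence of the two conditions in the lemma directly, going through both implications, exploiting the same disjoint-union-of-copies trick used in Theorem~\ref{com} and the Corollary following Theorem~\ref{t}.

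\medskip

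\noindent\textbf{Proof proposal.}

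First I would prove $(2)\Rightarrow(1)$. Assume that for every nonzero $a\in A$ there is a homomorphism $f_a:\A\to\wp(^nn)$ with $f_a(a)\neq 0$ and $\bigcup_{x\in\At\A}f_a(x)={}^nn$. For each $a\in A$ set $V_a={}^nn$ and let $V$ be the disjoint union of the $V_a$'s; as in the earlier arguments, $\prod_{a\in A}\wp(V_a)\cong\wp(V)$ via $(b_i:i\in I)\mapsto\bigcup b_i$, and one checks $V$ is dipermutable because each $V_a={}^nn$ is (a disjoint union of dipermutable sets, with the operations acting componentwise, is dipermutable). Let $g$ be that isomorphism and define $f:\A\to\wp(V)$ by $f(x)=g[(f_a x:a\in A)]$. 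Then $f$ is an embedding: if $x\neq 0$ then $f_x(x)\neq 0$ so $f(x)\neq 0$. It is a complete representation: we must show $f(\prod X)=\bigcap f[X]$ whenever $\prod X$ exists. The inclusion $\subseteq$ is automatic since $f$ is a homomorphism. For $\supseteq$, suppose $s\in\bigcap f[X]$; then $s\in V_a$ for some $a$, and $s\in\bigcap_{x\in X} f_a(x)$. Since $\bigcup_{y\in\At\A}f_a(y)=V_a$, there is an atom $y$ with $s\in f_a(y)$, hence $s\in f_a(x)$ for all $x\in X$ forces $y\leq x$ for all $x\in X$ (because $y$ is an atom and $f_a$ is a Boolean homomorphism: $y\not\leq x$ would give $y\leq -x$, so $f_a(y)\cap f_a(x)=\emptyset$, contradicting $s$ in both), whence $y\leq\prod X$ and $s\in f_a(\prod X)\subseteq f(\prod X)$.

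For $(1)\Rightarrow(2)$, let $f:\A\to\wp(V)$ be a complete representation with $V$ dipermutable. By the Lemma recalled from Hirsch--Hodkinson (the $TA$-version quoted before Theorem~\ref{com}, whose proof the authors note works verbatim), a complete representation is atomic, so $\Bl\A$ is atomic and $\bigcup_{x\in\At\A}f(x)=V$; moreover each $f^{-1}(s)$ is a principal ultrafilter. Now fix nonzero $a\in A$. Pick $s\in f(a)$; the ultrafilter $F=f^{-1}(s)$ is principal, generated by some atom $x_0$ with $x_0\leq a$. I would like to produce a homomorphism into $\wp(^nn)$ rather than into $\wp(V)$. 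Choose $s$, and consider the orbit of $s$ under all the operations; more carefully, mimic the construction in Theorem~\ref{variety}: from the ultrafilter $F$ containing $a$ define $\mathcal F_\xi=\{z\in A:S^\A_\xi(z)\in F\}$ for $\xi\in{}^nn$ and set $h(z)=\{\xi\in{}^nn:z\in\mathcal F_\xi\}$, which is a homomorphism $\A\to\wp(^nn)$ by Lemma~\ref{lemma} with $h(a)\neq 0$. It remains to arrange $\bigcup_{x\in\At\A}h(x)={}^nn$, i.e.\ that for every $\xi\in{}^nn$ some atom lies in $\mathcal F_\xi$, equivalently $\mathcal F_\xi$ is principal. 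This is where I would use that $f$ is complete: $\mathcal F_\xi$ is principal precisely when $\prod\{-y:y\in\At\A\}\notin\mathcal F_\xi$ fails to... more directly, $S^\A_\xi(\prod\{x:x\in\At\A,\ x\not\leq b\}) $-- rather, since $\sum_{x\in\At\A}x=1$ (atomicity) and $f$ complete gives $\bigcup_{x\in\At\A}f(x)=V$, and $s$ ranges suitably, the set $h(x)$ over atoms $x$ covers ${}^nn$. The cleanest route: show $\sum_{x\in\At\A}S^\A_\tau(x)=1$ is \emph{not} available in general (replacements need not be complete), so instead I would note that $\mathcal F_\xi$ is an ultrafilter that, being $\{z:S_\xi z\in F\}$ and $F$ principal generated by atom $x_0$, satisfies: $\mathcal F_\xi$ is principal iff $x_0\leq S_\xi(x)$ for some atom $x$, which one deduces from the complete representation by transporting $x_0$ along the partial map induced by $\xi$ on $V$.

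\medskip

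\noindent\textbf{Main obstacle.} The hard part is exactly the last step of $(1)\Rightarrow(2)$: going from an \emph{arbitrary} complete representation on a dipermutable base $V$ to a homomorphism onto $\wp(^nn)$ with the covering property, because the substitution operators coming from replacements are not known to be complete Boolean endomorphisms (this is flagged explicitly before Definition~\ref{atomic}), so the slick argument ``$\sum s_\tau X=1$'' used for transposition algebras and in Theorems~\ref{atomic}, \ref{canonical} is unavailable. I expect the resolution to use the atomicity of the complete representation (each $f^{-1}(s)$ principal) to directly read off, for each $\xi\in{}^nn$, an atom sitting in $\mathcal F_\xi$ by chasing the point $s$ and its images under the maps on $V$ determined by $\xi$, using dipermutability of $V$ to guarantee those images stay in $V$.
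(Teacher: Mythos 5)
Your proof of $(2)\Rightarrow(1)$ is correct and is essentially the disjoint-union-of-copies argument the paper is invoking when it writes ``we have already proved (2) implies (1)''; your verification that the glued map is complete (find an atom $y$ with $s\in f_a(y)$, deduce $y\leq x$ for all $x\in X$, hence $y\leq\prod X$) is sound. The genuine gap is in $(1)\Rightarrow(2)$: you correctly set up $F=f^{-1}(s)$ for some $s\in f(a)$ and the system $\mathcal{F}_\xi=\{z:S^\A_\xi(z)\in F\}$, and you correctly identify that the missing step is to show each $\mathcal{F}_\xi$ is principal, but the text then runs through several aborted attempts and ends with ``I expect the resolution to use\dots'', which is a statement of intent rather than an argument. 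The good news is that your final guess is right and the completion is one line: for $z\in A$ and $\xi\in{}^nn$, $S^\A_\xi(z)\in F$ iff $s\in f(S^\A_\xi(z))=\{q\in V:q\circ\xi\in f(z)\}$ iff $s\circ\xi\in f(z)$; since every $\xi\in{}^nn$ is a composition of transpositions and replacements and $V$ is dipermutable, $s\circ\xi\in V$, so $\mathcal{F}_\xi=f^{-1}(s\circ\xi)$. A complete representation is atomic (the Hirsch--Hodkinson lemma quoted before Theorem \ref{com}), so $f^{-1}(t)$ is a principal ultrafilter for every $t\in V$; hence each $\mathcal{F}_\xi$ contains an atom $x$, i.e.\ $\xi\in h(x)$, which is exactly $\bigcup_{x\in\At\A}h(x)={}^nn$. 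Note that this route nowhere needs the replacements to be complete endomorphisms, so the obstacle you flag at the end simply dissolves.

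For comparison, the paper's own proof of $(1)\Rightarrow(2)$ takes a different and shorter route: it uses $\wp(V)\in\mathbf{SP}\{\wp(^nn)\}$ to write $\wp(V)\subseteq\prod_{i\in I}\A_i$ with each $\A_i=\wp(^nn)$, picks a coordinate $i$ with $\pi_i(g(a))\neq 0$, and takes $f=\pi_i\circ g$, declaring the covering property ``clear''. What makes that step legitimate is that the coordinate maps of this product decomposition are relativizations $X\mapsto X\cap G_i$, which preserve arbitrary unions, so $\bigcup_{x\in\At\A}g(x)=V$ is carried to $\bigcup_{x\in\At\A}f(x)={}^nn$. Your ultrafilter route, once completed as above, is a valid and more explicit alternative; as written, however, it is not yet a proof.
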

\begin{demo}{Proof} We have already proved (2) implies (1). Conversely, let there be given a complete representation $g:\A\to \wp(V)$.
Then $\wp(V)\subseteq \prod_{i\in I} \A_i$ for some set $I$, where $\A_i=\wp{}(^nn)$. Assume that $a$ is non-zero,
then $g(a)$ is non-zero, hence $g(a)_i$ is non-zero
for some $i$. Let $\pi_j$ be the $j$th projection $\pi_j:\prod \A_i\to \A_i$, $\pi_j[(a_i: i\in I)]=a_j$.
Define $f:\A\to \A_i$ by $f(x)=(\pi_i\circ g(x)).$
Then clearly $f$ is as required.
\end{demo}

The following theorem is a converse to \ref{atomic}
\begin{theorem}\label{converse} Assume that $\A$ is a substitution algebra that is completely representable.
Let $f:\A\to \wp(V)$ be a complete representation. Then $\sum s_{\tau}\At\A=1$ for every $\tau\in {}^nn$.
\end{theorem}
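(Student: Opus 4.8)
The plan is to argue by contradiction that $1$ is the \emph{least} upper bound of $\{s_\tau x : x\in\At\A\}$; since $1$ is trivially an upper bound of this set, it suffices to prove that any $a\in A$ with $s_\tau x\le a$ for all $x\in\At\A$ satisfies $a=1$. The engine of the argument is the identity
$$\bigcup_{x\in\At\A} f(x)=V,\qquad(\dagger)$$
which I would establish first. To get $(\dagger)$ I would use that the existence of a complete representation forces $\Bl\A$ to be atomic --- the $SA_n$-analogue of part~(iii) of the lemma preceding Theorem~\ref{com}, which holds verbatim by the Hirsch--Hodkinson argument, see~\cite{Hirsh} --- so that $\sum\At\A=1$ in $\A$; combined with the fact that a complete representation preserves every supremum that exists (dual to the defining property $f(\prod X)=\bigcap f[X]$, using that $f$ is a Boolean homomorphism into $\wp(V)$ and that complementation there is complementation relative to $V$), this gives $\bigcup_{x\in\At\A} f(x)=f(\sum\At\A)=f(1)=V$.

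Next, fix $\tau\in{}^nn$ and let $a\in A$ satisfy $s_\tau x\le a$ for every $x\in\At\A$, and suppose toward a contradiction that $a\neq 1$. Then $-a\neq 0$, and since $f$ is injective (a representation is one-to-one) and $f(-a)=V\setminus f(a)$, we may choose $s\in V$ with $s\notin f(a)$. Because $V$ is dipermutable and the transformation monoid ${}^nn$ is generated by the replacements $[i/j]$ and the transpositions $[i,j]$ (the presentation recalled above), a straightforward induction on the length of a generating word shows that $V$ is closed under right composition with every element of ${}^nn$; in particular $s\circ\tau\in V$. By $(\dagger)$ there is an atom $x_0\in\At\A$ with $s\circ\tau\in f(x_0)$. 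Since $f$ is a homomorphism, $f(s_\tau x_0)=S_\tau^{\wp(V)}\bigl(f(x_0)\bigr)=\{p\in V: p\circ\tau\in f(x_0)\}$, which contains $s$; but $s_\tau x_0\le a$ forces $f(s_\tau x_0)\subseteq f(a)$, so $s\in f(a)$, contradicting the choice of $s$. Hence $a=1$, which is precisely the assertion $\sum s_\tau\At\A=1$.

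The one genuinely external input is the step feeding $(\dagger)$ --- that a complete representation makes the Boolean reduct atomic --- which I would simply cite, exactly as is done for the $TA_n$ version; everything else is routine (that $f$ is an injective Boolean homomorphism, that a complete representation preserves existing joins, that dipermutability propagates along monoid generators, the unwinding of the operator $S_\tau$ on $\wp(V)$, and the fact that the join of all atoms of an atomic Boolean algebra is the unit). Should a self-contained proof of $(\dagger)$ be preferred, one can avoid the ``preserves joins'' observation altogether: granted $\Bl\A$ atomic, the meet $\prod\{-x : x\in\At\A\}$ equals $-\sum\At\A=0$ and so exists, whence completeness yields $\emptyset=f(0)=\bigcap_{x\in\At\A}f(-x)=V\setminus\bigcup_{x\in\At\A}f(x)$, which is $(\dagger)$ again.
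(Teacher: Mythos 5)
Your proof is correct and rests on the same engine as the paper's: the covering identity $\bigcup_{x\in\At\A}f(x)=$ unit for a complete (equivalently atomic) representation, followed by applying $S_\tau$ and observing that an upper bound $y<1$ of $\{s_\tau x:x\in\At\A\}$ would leave some state uncovered. The only real difference is one of execution --- the paper routes through the preceding lemma to a representation onto $\wp({}^nn)$ and the $rep_F$ reconstruction, whereas you work directly on $\wp(V)$ (correctly supplying the needed closure of a dipermutable $V$ under $p\mapsto p\circ\tau$) and derive the covering identity from atomicity of $\Bl\A$ plus preservation of the existing meet $\prod\{-x:x\in\At\A\}=0$; your version is cleaner and avoids the garbled step in the paper's final paragraph.
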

\begin{proof} Let $a\in A$, and $f:\A\to \wp(^nn)$, such that
$f(a)\neq 0$, and $\bigcup_{x\in \At\A} f(x)={}^nn$.
Let $F=\{a\in A: Id\in f(a)\}$. Then $F$ is an ultrafilter.
For $a\in A$, let $rep_F(a)=\{\tau\in {}^nn: s_{\tau}a\in F\}$.
Then $rep_F:\A\to \wp(V)$ and $rep_F=f$. Indeed for $b\in \A$, and $\tau\in {}^nn$, we have $\tau\in rep_F(b)$ iff
$s_{\tau}b\in F$ iff $Id\in f(s_{\tau}b)$ iff $\tau \in f(b).$ Assume, seeking a contradiction, that there exist $\tau\in {}^nn$ and $y\in A$,
$y<1$, such that $s_{\tau}x\leq y$ for all $x\in X$.
Then for all $x\in X$, $\tau\notin rep_{F}(x)$, for if $x\in rep_F(x)$, then there would be an $x\in X$, such that $\tau\in rep_F(x)$, so that
$s_{\tau}x=1$, and this is not possible. This means that $\bigcup_{x\in X} rep_F(x)\neq {}^nn$, and this contradicts complete representability.
\end{proof}

There is another kind of completion for a $BAO$  called its  minimal completion. For an algebra $\A$, its canonical extension and minimal
completion coincide if and only if
$\A$ is finite. if $\A$ is atomic, then its minimal completion is easy to construct; it is just the complex algebra of its atom structure.
If $\A$ is an algebra and $\B$ is its minimal completion, then for any $X\subseteq \A$, we have $\sum^{\A}X=\sum^{\B}X$, whenever the former exists.
\begin{theorem}\label{completion} If $\A$ is completely representable, then so is its minimal completion
\end{theorem}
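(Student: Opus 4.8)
The plan is to show that if $\A$ is completely representable then so is its minimal completion $\B = \Cm\At\A$ (which, since $\A$ embeds completely into $\B$ with the same atom structure, is atomic with $\At\B = \At\A$). By the characterization lemma proved just above (the one equivalence $(1)\Leftrightarrow(2)$ for complete representability of members of $SA_n$), it suffices to check: for every nonzero $a \in B$ there is a homomorphism $f : \B \to \wp({}^nn)$ with $f(a)\neq 0$ and $\bigcup_{x\in\At\B} f(x) = {}^nn$. So I would first reduce the problem to producing, for each atom $x_0 \in \At\B = \At\A$, such an $f$ with $x_0 \in f(x_0)$ (given a nonzero $a\in B$, pick an atom $x_0\leq a$; this works since $\B$ is atomic).

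Next, I would pull the data back to $\A$. Since $\A$ is completely representable, apply the same characterization lemma to $\A$: for the atom $x_0$ (which lies in $\A$) there is a homomorphism $g : \A \to \wp({}^nn)$ with $x_0 \in g(x_0)$ and $\bigcup_{x\in\At\A} g(x) = {}^nn$. The key step is then to \emph{extend} $g$ to a homomorphism $\hat g : \B \to \wp({}^nn)$. Here I would use the Boolean-algebra fact that $\B$ is (isomorphic to) a subalgebra of the canonical extension $\A^+$ of $\A$, together with Theorem \ref{canonical}: any representation of $\A$ is complete, and the construction there shows homomorphisms from $\A$ into a power of $\wp({}^nn)$ dualize to continuous maps of atom structures that extend canonically. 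Concretely, $g$ corresponds to the system of ultrafilters $\mathcal F_\tau = \{z\in A : s_\tau z \in F\}$ where $F = \{z\in A : Id\in g(z)\}$ (as in the proof of Theorem \ref{converse}); since $x_0$ is an atom of $\A$ it generates a principal ultrafilter, and each $\mathcal F_\tau$ either is principal or at least extends to an ultrafilter of $\B$ respecting the conditions $(*),(**)$ of Lemma \ref{lemma}. Apply Lemma \ref{lemma} over the dipermutable set $V = {}^nn$ to the extended system to obtain $\hat g : \B \to \wp({}^nn)$, a homomorphism with $\hat g \restriction \A = g$.

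Finally I would verify the two required properties of $\hat g$. First, $\hat g(x_0) = g(x_0) \ni x_0 \neq \emptyset$, so $\hat g(a)\neq 0$ since $x_0\leq a$. Second, $\bigcup_{x\in\At\B}\hat g(x) = \bigcup_{x\in\At\A}\hat g(x) = \bigcup_{x\in\At\A} g(x) = {}^nn$, using $\At\B = \At\A$ and $\hat g\restriction\A = g$. Assembling the maps $\hat g$ for all atoms (equivalently all nonzero $a$) exactly as in the proof of Theorem \ref{com} — taking disjoint copies $V_a$ of ${}^nn$, forming $V = \bigsqcup_a V_a$, and using $\prod_a \wp(V_a)\cong\wp(V)$ — produces an embedding $f : \B\to\wp(V)$ with $\bigcup_{x\in\At\B} f(x) = V$, hence a complete representation of $\B$.

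The main obstacle I anticipate is the extension step: making precise that a homomorphism $g:\A\to\wp({}^nn)$ extends to one on the minimal completion $\B$. This is not automatic for arbitrary $BAO$ homomorphisms, and the point where it works is that the target $\wp({}^nn)$ is \emph{finite} and \emph{atomic}, so $g$ is determined by its behaviour on ultrafilters and these extend to $\B\subseteq\A^+$; one must check that the $SA_n$ operations $s_\tau$ are still handled correctly on the extension, which is where Lemma \ref{lemma} (with the conditions $(*),(**)$) does the work. Care is needed that the system of ultrafilters chosen on $\B$ genuinely satisfies $(*),(**)$ and still contains the principal ultrafilter generated by $x_0$ at the identity coordinate; this is the heart of the argument and the rest is bookkeeping parallel to Theorems \ref{com} and \ref{converse}.
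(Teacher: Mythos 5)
Your proposal is correct in outline, but it takes a genuinely different route from the paper. The paper's proof is a three-line reduction: by Theorem \ref{converse}, complete representability of $\A$ gives $\sum s_{\tau}^{\A}\At\A=1$ for every $\tau$; since suprema are preserved when passing to the minimal completion $\B=\Cm\At\A$ and $\At\B=\At\A$, the same identity $\sum s_{\tau}^{\B}\At\B=1$ holds in $\B$; and Theorem \ref{atomic} then yields a complete representation of $\B$. In other words, the paper transfers a purely algebraic condition (a family of suprema equal to $1$) across the completion, which is automatic, and never has to touch the representations themselves. You instead work at the level of representations, extending each homomorphism $g:\A\to\wp({}^nn)$ to $\hat g:\B\to\wp({}^nn)$ via its ultrafilter system and Lemma \ref{lemma}; this does work, but only because of a fact you should state explicitly rather than hedge: the condition $\bigcup_{x\in\At\A}g(x)={}^nn$ forces \emph{every} $\mathcal{F}_\tau$ to contain an atom, hence to be principal, and the extension to $\B$ is then canonical (take the principal ultrafilter of $\B$ generated by the same atom $x_\tau$; conditions $(*)$ and $(**)$ for the extended system follow from those for $\A$ together with complete additivity of the operations of $\Cm\At\A$). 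Your alternative clause ``or at least extends to an ultrafilter of $\B$ respecting $(*),(**)$'' would not by itself guarantee that the union of the images of the atoms is still all of ${}^nn$, so principality is the point, not a convenience. What your approach buys is a concrete picture of how the representation of $\A$ literally extends to one of $\B$; what the paper's buys is brevity and the avoidance of any extension argument, at the price of having already established the equivalence in Theorems \ref{atomic} and \ref{converse}.
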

\begin{proof} Let $\A$ be given and let $\B$ denote its minimal completion. Then $\A$ is atomic, and since
$\A$ is completely representable, then by theorem \ref{converse}
$\sum s_{\tau}^{\A}\At\A=1$ for every $\tau\in V$. But, suprema are preserved in the completion, hence
we have, $\sum s_{\tau}^{\B}\At\B=1$ for every such $\tau$ and
we are done by \ref{atomic}.
\end{proof}

\subsection{The Infinite Dimensional Case}

Also here, for $SA$'s, we can lift our results to infinite dimensions.
Surprisingly, while we could not capture the extension by diagonal elements in the finite dimensional case, it turns out that here we can,
when we enough spare dimensions; in fact we have infinitely many.

\begin{definition}
We let $DPSA_{\alpha}$ be the variety generated by
$$\wp(V)=\langle\mathcal{P}(V),\cap,-,S^i_j,S_{ij}\rangle_{i,j\in\alpha}, \ \ V\subseteq{}^\alpha\alpha^{Id}$$ is dipermutable.

\end{definition}
In the next theorem we show that weak spaces can be squared using a non-trivial ultraproduct construction.
Let $\Sigma_{\alpha}$ be the set of finite schemas obtained from
from the $\Sigma_n$ but now allowing indices from $\alpha$.  We know that
if $\A\subseteq \wp(^{\alpha}U)$ and $a\in A$ is non zero,
then there exists a homomorphism $f:\A\to \wp(V)$ for some permutable $V$ such that $f(a)\neq 0$.
We now prove a converse of this result.
But first a definition and a result on the number of non-isomorphic models.
\begin{definition}
Let $\A$ and $\B$ be set algebras with bases $U$ and $W$ respectively. Then $\A$ and $\B$
are base isomorphic if there exists a bijection
$f:U\to W$ such that $\bar{f}:\A\to \B$ defined by ${\bar f}(X)=\{y\in {}^{\alpha}W: f^{-1}\circ y\in x\}$ is an isomorphism from $\A$ to $\B$
\end{definition}
\begin{definition} An algebra $\A$ is hereditary atomic, if each of its subalgebras is atomic.
\end{definition}
Finite Boolean algebras are hereditary atomic of course,
but there are infinite hereditary atomic Boolean algebras. What characterizes such algebras is that the base of their Stone space, that is the set of all
ultrafilters is countable.
\begin{theorem} Let $\A\in SA_{\omega}$ be countable and simple
Then the number of non base isomorphic representations of $\A$ is either $\leq \omega$ or $^{\omega}2$.
\end{theorem}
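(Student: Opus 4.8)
The plan is to follow the classical Vaught-style dichotomy argument for the number of countable models of a countable theory, transported to the algebraic setting via the machinery developed earlier in this section. The key structural fact is the Lemma preceding this theorem (the criterion for complete representability): a representation of $\A\in SA_\omega$ on a dipermutable $V$ corresponds, up to base isomorphism, to a choice of a system of ultrafilters $\langle\mathcal{F}_\xi:\xi\in G\rangle$ satisfying $(*)$ and $(**)$, and once such a system is fixed, $h(a)=\{\xi\in G: a\in\mathcal{F}_\xi\}$ recovers the representation. So counting non-base-isomorphic representations reduces to counting, up to the obvious translation action, systems of ultrafilters of $\A$ compatible with the substitution structure. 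Since $\A$ is countable there are at most $2^\omega$ ultrafilters, hence at most $2^\omega$ such systems and at most $2^\omega$ non-base-isomorphic representations; the whole content is the dichotomy: either there are at most $\omega$ of them or exactly $2^\omega$.

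First I would set up the topological picture: work in the Stone space $S$ of $\Bl\A$, with the clopen base $\{N_a:a\in A\}$, which is a Polish space because $\A$ is countable. A representation is determined by an ultrafilter $F$ (``the one realized at $Id$''), and the representation function is $rep_F(a)=\{\tau\in {}^\omega\omega^{(Id)}: s_\tau a\in F\}$, exactly as in the proof of Theorem \ref{converse}. Two such $F,F'$ give base-isomorphic representations essentially when one is carried to the other by the translation action of $S={}^\omega\omega^{(Id)}$ bijective elements together with a base bijection; the orbit of each $F$ under this action is countable (there are only countably many relevant $\tau$'s, as $\A$ is $\omega$-dimensional and each element of $A$ moves only finitely many coordinates). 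Therefore the number of non-base-isomorphic representations equals the number of orbits, and each orbit is countable, so if there are $\leq\omega$ orbits we get $\leq\omega$ representations, and otherwise there are $>\omega$ orbits, hence uncountably many ultrafilters $F$ that are pairwise non-base-isomorphic.

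The heart of the argument is then the usual Cantor--Bendixson / perfect-set style dichotomy: I would show that if there are uncountably many non-base-isomorphic representations then there is a perfect set of them, giving exactly $2^\omega$. Concretely, call $F$ \emph{atomic} if $rep_F$ is an atomic (equivalently complete) representation, i.e. $\bigcup_{x\in\At\A}rep_F(x)=V$; by the earlier Lemma this happens iff $f^{-1}(s)$ is principal for all $s$. If only countably many orbits contain a non-atomic $F$, a refinement of the omitting-types construction in the proof of the preceding ``two representations'' theorem lets us build a binary tree of ultrafilters: at each splitting node use a non-atom $e=\bigwedge$(finite approximation) to choose an element $\theta$ and two extensions containing $\theta$ resp. $-\theta$, exactly as in that proof, but now branching $2^\omega$ times; distinct branches give ultrafilters differing on some $\theta$ not moved into alignment by any translation, hence non-base-isomorphic representations, so there are $2^\omega$ of them. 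If, on the other hand, there is no such tree — every ``sufficiently generic'' branch stabilizes to an atom — then the simplicity of $\A$ together with hereditary-atomicity-type considerations (the base of the Stone space of each subalgebra generated along the way being countable) forces only countably many orbits. I expect the main obstacle to be making the branching construction interact correctly with the dipermutability constraints $(*),(**)$: one must ensure the ultrafilters built down the tree can always be simultaneously extended to \emph{systems} $\langle\mathcal{F}_\xi\rangle$ satisfying $(*)$ and $(**)$ (so that Lemma \ref{lemma} applies and each branch genuinely yields a representation), and that two branches are not accidentally identified by the base-isomorphism equivalence — this is where the countability of the translation orbits and a careful bookkeeping of which coordinates are moved must be combined.
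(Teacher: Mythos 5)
Your reduction is the same one the paper uses: a representation of a simple countable $\A$ is, up to base isomorphism, of the form $h_F(a)=\{\tau : s_\tau a\in F\}$ for a single ultrafilter $F$; two ultrafilters give base isomorphic representations exactly when one is a translate $s_\sigma F$ of the other by a finite permutation $\sigma$; and each orbit under this countable group action is countable, so counting non base isomorphic representations is counting orbits. Where you diverge is in how the final dichotomy is obtained. The paper settles it in one line by splitting on whether $\A$ is hereditary atomic: a countable hereditary atomic Boolean algebra has only countably many ultrafilters, hence at most $\omega$ orbits; otherwise $\Bl\A$ has a non-atomic subalgebra, hence $2^{\omega}$ ultrafilters, and countability of the orbits then forces $2^{\omega}$ orbits. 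You instead propose a perfect-set/tree construction. That route can be made to work --- it is in effect a proof of the same classical fact about countable Boolean algebras --- but as written it is looser than it needs to be: the trigger condition for branching (``only countably many orbits contain a non-atomic $F$'') is not the right hypothesis (what you need is a non-atom below which the algebra keeps splitting, i.e.\ failure of hereditary atomicity), and you do not need the branches to be pairwise non base isomorphic by construction --- producing $2^{\omega}$ distinct ultrafilters already suffices, since countable orbits then yield $2^{\omega}$ orbits by cardinality alone. Finally, the worry you flag about compatibility with $(*)$ and $(**)$ is a non-issue: for any single ultrafilter $F$ the induced system $\mathcal{F}_\xi=\{z : s_\xi z\in F\}$ satisfies these conditions automatically (this is exactly how the representation theorem for $SA_n$ is proved), so every branch of your tree yields a genuine representation with no extra bookkeeping.
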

\begin{proof} If $\A$ is hereditary atomic, then the number of models $\leq$ the number of ultrafilters.
Else, $\A$ is non-atomic, then it has $^{\omega}2$ ultrafilters. For an ultrafilter $F$, let $h_F(a)=\{\tau \in V: s_{\tau}a\in F\}$.
Then $h_F:\A\to \wp(V)$. We have $h_F(\A)$ is base isomorphic to $h_G(\A)$ iff there exists a finite bijection $\sigma\in V$ such that
$s_{\sigma}F=G$. Define the equivalence relation $\sim $ on the set of ultrafilters by $F\sim G$, if there exists a finite permutation $\sigma$
such that $F=s_{\sigma}G$. Then any equivalence class is countable, and so we have $^{\omega}2$ many orbits, which correspond to
the non base isomorphic representations of $\A$.
\end{proof}
We shall prove that weak set algebras are strongly isomorphic to set algebras in the sense of the following definition.

\begin{definition}
Let $\A$ and $\B$ be set algebras with units $V_0$ and $V_0$ and bases $U_0$ and $U_1,$ respectively,
and let $F$ be an isomorphism from $\B$ to $\A$.
Then $F$ is a strong ext-isomorphism if $F=(X\cap V_0: X\in B)$. In this case $F^{-1}$ is called a strong subisomorphism. An isomorphism
$F$ from $\A$ to $\B$ is a strong ext base isomorphism if $F=g\circ h$
for some base isomorphism and some strong ext isomorphism $g$. In this case $F^{-1}$ is called a strong sub base isomorphism.

\end{definition}

\begin{theorem} If $\B$ is a subalgebra of $ \wp(^{\alpha}\alpha^{(Id)})$ then there exists a set algebra $\C$ with unit $^{\alpha}U$
such that $\B\cong \C$. Furthermore, the isomorphism is a strong sub base isomorphism.
\end{theorem}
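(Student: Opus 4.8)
The plan is to reduce the statement to the following: find a set $U\supseteq\alpha$ and a subalgebra $\C\le\wp({}^{\alpha}U)$ such that the restriction map $\rho\colon\C\to\wp(V_{0})$, $\rho(X)=X\cap V_{0}$, where $V_{0}:={}^{\alpha}\alpha^{(Id)}$, is an isomorphism of $\C$ onto $\B$. I would take $U=\alpha$, so ${}^{\alpha}U={}^{\alpha}\alpha$ and $V_{0}\subseteq{}^{\alpha}\alpha$. Because $V_{0}$ is dipermutable, $\rho$ is automatically a homomorphism into $\wp(V_{0})$ (the computation of the relativization theorems of \S3, \S4 applies verbatim); and if $\rho$ is a bijection onto $\B$, then $\rho$ is by definition a strong ext-isomorphism (with that $V_{0}$), so $\rho^{-1}\colon\B\to\C$ is a strong subisomorphism, hence a strong sub base isomorphism (take the base isomorphism to be the identity). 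Thus it suffices to construct a homomorphism $\phi\colon\B\to\wp({}^{\alpha}\alpha)$ with $\phi(b)\cap V_{0}=b$ for all $b$ and put $\C=\phi[\B]$; then $\phi$ is injective since $\phi(b)\supseteq b$, and $\rho=\phi^{-1}$.

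The crux is a ``squaring'' lemma: $\B$ has an ultrafilter $F^{*}$ invariant under every substitution operator, i.e.\ $S_{\sigma}b\in F^{*}\iff b\in F^{*}$ for all finite transformations $\sigma$ of $\alpha$ and all $b\in B$. To get it, work in $\wp(V_{0})$ and take the proper filter $\mathcal C$ generated by the sets $C_{T}=\{q\in V_{0}:q{\restriction}T\ \text{is constant}\}$ for finite $T\subseteq\alpha$ (finite intersections are non-empty since $C_{T}\cap C_{T'}\supseteq C_{T\cup T'}\neq\emptyset$). If $q$ is constant on $T_{\sigma}:=\operatorname{supp}\sigma\cup\sigma[\operatorname{supp}\sigma]$ then $q\circ\sigma=q$, so $\{q:q\circ\sigma\neq q\}\subseteq V_{0}\setminus C_{T_{\sigma}}$ and hence $S_{\sigma}x\bigtriangleup x$ lies in the ideal dual to $\mathcal C$ for every $x$. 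Therefore any ultrafilter $F\supseteq\mathcal C$ of $\wp(V_{0})$ fails to separate $S_{\sigma}x$ from $x$, i.e.\ is $S_{\sigma}$-invariant; put $F^{*}=F\cap B$, which is an $S_{\sigma}$-invariant ultrafilter of $\B$.

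Using $F^{*}$ I would define $\phi(b)=b$ when $b\notin F^{*}$ and $\phi(b)=b\cup({}^{\alpha}\alpha\setminus V_{0})$ when $b\in F^{*}$. Preservation of $\cap$ and $-$ is a brief case analysis on membership in $F^{*}$. For the substitution operators one uses (a) for $q\in{}^{\alpha}\alpha$ and a finite transformation $\sigma$, the ``support'' of $q\circ\sigma$ differs from that of $q$ by a finite set, so $q\circ\sigma\in V_{0}\iff q\in V_{0}$; and (b) $S_{\sigma}^{\B}$ is the restriction of $S_{\sigma}$ on $\wp({}^{\alpha}\alpha)$ while $F^{*}$ is $S_{\sigma}$-invariant. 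Splitting on whether $b\in F^{*}$ and noting that $q\circ\sigma$ lands in the ``new block'' ${}^{\alpha}\alpha\setminus V_{0}$ exactly when $q$ does, one obtains $\phi(S_{\sigma}^{\B}b)=S_{\sigma}^{\wp({}^{\alpha}\alpha)}\phi(b)$. Finally $\phi(b)\cap V_{0}=b$ by construction, so $\C:=\phi[\B]\le\wp({}^{\alpha}\alpha)$ has unit ${}^{\alpha}\alpha$, $\phi\colon\B\to\C$ is an isomorphism whose inverse is the restriction map, and hence $\phi$ is a strong subisomorphism, in particular a strong sub base isomorphism, as required.

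The step I expect to be the real obstacle is precisely the existence of $F^{*}$: one needs one ultrafilter relative to which all finitely supported substitution operators collapse to the identity, and this is where the non-trivial ultrafilter (``ultraproduct'') construction is used --- above, the filter $\mathcal C$ of functions eventually constant on finite sets. An alternative, closer to the phrase ``weak spaces can be squared by an ultraproduct'', would embed $\B$ diagonally into an ultrapower ${}^{I}\B/D$, realize that ultrapower as a set algebra over a genuine square ${}^{\alpha}W$ by means of a substitution-commuting surjection ${}^{I}V_{0}/D\twoheadrightarrow{}^{\alpha}W$, and then cut down to the diagonal copy of $\B$; but controlling the trace of that copy on the weak subspace runs into the same invariance issue, so I would present the direct construction.
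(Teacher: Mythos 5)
Your proposal is correct, but it reaches the conclusion by a genuinely different route from the paper. The paper squares the weak unit by an ultrapower of the base: it fixes a non-principal ultrafilter $F$ on $\omega$, passes to the new base $M={}^{\omega}U/F$, and assembles a monomorphism $g$ of $\B$ into $\wp({}^{\omega}M)$ from the diagonal embedding $\B\to{}^{\omega}\B/F$ together with a family of choice functions $c(\kappa,y)$ controlled by a function $h:\omega\to[\omega]^{<\omega}$; the substance of that proof is the verification $g(a)\cap\bar{\epsilon}V=\{\epsilon\circ s: s\in a\}$, which is exactly the strong sub base isomorphism claim. You instead keep the base equal to $\alpha$ and extend each $b\in B$ to the square ${}^{\alpha}\alpha$ by the two-valued rule $\phi(b)=b$ or $\phi(b)=b\cup({}^{\alpha}\alpha\setminus V_{0})$ according to a substitution-invariant ultrafilter $F^{*}$, whose existence you get from the filter generated by the sets $C_{T}$ of assignments constant on a finite $T$. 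I checked the two points on which your construction lives: (i) $S_{\sigma}x\bigtriangleup x\subseteq V_{0}\setminus C_{T_{\sigma}}$ because an assignment constant on $\operatorname{supp}\sigma\cup\sigma[\operatorname{supp}\sigma]$ is fixed by precomposition with $\sigma$, so every ultrafilter extending the $C_{T}$-filter is $S_{\sigma}$-invariant; and (ii) for finitely supported $\sigma$, $q\circ\sigma\in V_{0}$ iff $q\in V_{0}$, so the block ${}^{\alpha}\alpha\setminus V_{0}$ is carried to itself and the case analysis for $\cap$, $-$ and $S_{\sigma}$ closes. Since $\phi(b)\cap V_{0}=b$, the inverse of $\phi$ is the relativization map, which is a strong ext (base) isomorphism in the paper's sense, so the ``furthermore'' clause is also delivered. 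Your argument is more elementary, works uniformly in all infinite $\alpha$ (the paper only writes out $\alpha=\omega$), and isolates the precise feature of substitution algebras that makes squaring cheap: every operation is induced by a finitely supported transformation, hence acts as the identity on a ``large'' set of assignments. The paper's ultrapower method is the classical Henkin--Monk--Tarski technique, which one must fall back on when cylindrifications are present, since there $\cyl{i}x\bigtriangleup x$ is not small and no analogue of $F^{*}$ exists. One cosmetic slip: $S_{\sigma}^{\B}$ is the restriction of $S_{\sigma}^{\wp(V_{0})}$, not of $S_{\sigma}^{\wp({}^{\alpha}\alpha)}$; what you actually use, correctly, is the trace identity $S_{\sigma}^{\wp(V_{0})}(X)=S_{\sigma}^{\wp({}^{\alpha}\alpha)}(X)\cap V_{0}$ for $X\subseteq V_{0}$.
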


\begin{proof}We square the unit using ultraproducts.
We prove the theorem for $\alpha=\omega$. Let $F$ be a non-principal ultrafilter
over $\omega$. Then there exists a function
$h: \omega\to \{\Gamma\subseteq_{\omega} \omega\}$
such that $\{i\in \omega: \kappa\in h(i)\}\in F$ for all $\kappa<\omega$.
Let $M={}^{\omega}U/F$.  $M$ will be the base of our desired algebra, that is  $\C$ will
have unit $^{\omega}M.$
Define $\epsilon: U\to {}^{\omega}U/F$ by
$$\epsilon(u)=\langle u: i\in \omega\rangle/F.$$
Then it is clear that $\epsilon$ is one to one.
For $Y\subseteq {}^{\omega}U$,
let $$\bar{\epsilon}(Y)=\{y\in {}^{\omega}(^{\omega}U/F): \epsilon^{-1}\circ y\in Y\}.$$
By an $(F, (U:i\in \omega), \omega)$ choice function we mean a function
$c$ mapping $\omega\times {}^{\omega}U/F$
into $^{\omega}U$ such that for all $\kappa<\omega$
and all $y\in {}^{\omega}U/F$, we have $c(k,y)\in y.$
Let $c$ be an $(F, (U:i\in \omega), \omega)$
choice function satisfying the following condition:
For all $\kappa, i<\omega$ for all $y\in X$, if
$\kappa\notin h(i)$ then $c(\kappa,y)_i=\kappa$,
if $\kappa\in h(i)$ and $y=\epsilon u$ with  $u\in U$ then $c(\kappa,y)_i=u$.
Let $\delta: \B\to {}^{\omega}\B/F$ be the following monomorphism
$$\delta(b)=\langle b: i\in \omega\rangle/F.$$
Let $t$ be the unique homomorphism
mapping
$^{\omega}\B/F$ into $\wp{}^{\omega}(^{\omega}U/F)$
such that  for any $a\in {}^{\omega}B$
$$t(a/F)=\{q\in {}^{\omega}(^{\omega}U/F): \{i\in \omega: (c^+q)_i\in a_i\}\in F\}.$$
Here $(c^+q)_i=\langle c(\kappa,q_\kappa)_i: k<\omega\rangle.$
Let $g=t\circ \delta$. Then for $a\in B$,
$$g(a)=\{q\in {}^{\omega}(^{\omega}U/F): \{i\in \omega: (c^+q)_i\in a\}\in F\}.$$
Let $\C=g(\B)$. Then $g:\B\to \C$.
We show that $g$ is an isomorphism
onto a set algebra. First it is clear that $g$ is a monomorphism into an algebra with unit $g(V)$.
Recall that $M={}^{\omega}U/F$. Evidently $g(V)\subseteq {}^{\omega}M$.
We show the other inclusion. Let $q\in {}^{\omega}M$. It suffices to show that
$(c^+q)_i\in V$ for all $i\in\omega$. So, let $i\in \omega$. Note that
$(c^+q)_i\in {}^{\omega}U$. If $\kappa\notin h(i)$ then we have
$$(c^+q)_i\kappa=c(\kappa, q\kappa)_i=\kappa.$$
Since $h(i)$ is finite the conclusion follows.
We now prove that for $a\in B$
$$(*) \ \ \ g(a)\cap \bar{\epsilon}V=\{\epsilon\circ s: s\in a\}.$$
Let $\tau\in V$. Then there is a finite $\Gamma\subseteq \omega$ such that
$$\tau\upharpoonright (\omega\smallsetminus \Gamma)=
p\upharpoonright (\omega\smallsetminus \Gamma).$$
Let $Z=\{i\in \omega: \Gamma\subseteq hi\}$. By the choice of $h$ we have $Z\in F$.
Let $\kappa<\omega$ and $i\in Z$.
We show that $c(\kappa,\epsilon\tau \kappa)_i=\tau \kappa$.
If
$\kappa\in \Gamma,$ then $\kappa\in h(i)$ and so
$c(\kappa,\epsilon \tau \kappa)_i=\tau \kappa$. If $\kappa\notin \Gamma,$
then $\tau \kappa=\kappa$
and $c(\kappa,\epsilon \tau \kappa)_i=\tau\kappa.$
We now prove $(*)$. Let us suppose that $q\in g(a)\cap {\bar{\epsilon}}V$.
Since $q\in \bar{\epsilon}V$ there is an $s\in V$
such that $q=\epsilon\circ s$.
Choose $Z\in F$
such that $$c(\kappa, \epsilon(s\kappa))\supseteq\langle s\kappa: i\in Z\rangle$$
for all $\kappa<\omega$. This is possible by the above.
Let $H=\{i\in \omega: (c^+q)_i\in a\}$.
Then $H\in F$. Since $H\cap Z$ is in $F$
we can choose $i\in H\cap Z$.
Then we have
$$s=\langle s\kappa: \kappa<\omega\rangle=
\langle c(\kappa, \epsilon(s\kappa))_i:\kappa<\omega\rangle=
\langle c(\kappa,q\kappa)_i:\kappa<\omega\rangle=(c^+q)_i\in a.$$
Thus $q\in \epsilon \circ s$. Now suppose that $q=\epsilon\circ s$ with $s\in a$.
Since $a\subseteq V$ we have $q\in \epsilon V$.
Again let $Z\in F$ such that for all $\kappa<\omega$
$$c(\kappa, \epsilon
s \kappa)\supseteq \langle s\kappa: i\in Z\rangle.$$
Then $(c^+q)_i=s\in a$ for all $i\in Z.$ So $q\in g(a).$
Note that $\bar{\epsilon}V\subseteq {}^{\omega}(^{\omega}U/F)$.
Let $rl_{\epsilon(V)}^{\C}$ be the function with domain $\C$
(onto $\bar{\epsilon}(\B))$
such that
$$rl_{\epsilon(V)}^{\C}Y=Y\cap \bar{\epsilon}V.$$
Then we have proved that
$$\bar{\epsilon}=rl_{\bar{\epsilon V}}^{\C}\circ g.$$
It follows that $g$ is a strong sub-base-isomorphism of $\B$ onto $\C$.
\end{proof}
Like the finite dimensional case, we get:
\begin{corollary}\label{v2} $\mathbf{SP}\{ \wp(^{\alpha}U): \text {$U$ a set }\}$ is a variety.
\end{corollary}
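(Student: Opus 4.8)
The plan is to show that $\mathbf{SP}\{\wp({}^{\alpha}U):U\text{ a set}\}$ coincides with $\mathbf{Mod}(\Sigma'_{\alpha})$, where $\Sigma'_{\alpha}$ is the infinite-index version of the schema $\Sigma'_n$ (so that, exactly as in the finite case, this class will simultaneously equal $RSA_{\alpha}$, $SA_{\alpha}$ and $DPSA_{\alpha}$). Since $\mathbf{Mod}(\Sigma'_{\alpha})$ is the model class of a set of equations, it is a variety, which gives the corollary. The inclusion $\mathbf{SP}\{\wp({}^{\alpha}U)\}\subseteq\mathbf{Mod}(\Sigma'_{\alpha})$ is immediate: every full set algebra $\wp({}^{\alpha}U)$ satisfies $\Sigma'_{\alpha}$ by a routine computation, and an equationally defined class is closed under $\mathbf{S}$ and $\mathbf{P}$.

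For the converse I would rerun the proof of Theorem \ref{variety} almost verbatim, changing only the target of the homomorphism. Given $\A\in\mathbf{Mod}(\Sigma'_{\alpha})$ and $0^{\A}\neq a\in A$, pick an ultrafilter $\mathcal{F}\ni a$ and, for each $\xi\in{}^{\alpha}\alpha^{(Id)}$, set $\mathcal{F}_{\xi}=\{z\in A:S_{\xi}^{\A}(z)\in\mathcal{F}\}$, where $S_{\xi}$ is the finite composite of basic substitution operators coding $\xi$ (available precisely because $\xi$ moves only finitely many coordinates). As in Theorem \ref{variety} --- using that the axiom schemas still present the monoid of finite-support self-maps of $\alpha$ via the $[i,j]$'s and $[i/j]$'s --- the family $\langle\mathcal{F}_{\xi}:\xi\in{}^{\alpha}\alpha^{(Id)}\rangle$ satisfies conditions $(*)$ and $(**)$, so by the infinite-dimensional analogue of Lemma \ref{lemma} (note ${}^{\alpha}\alpha^{(Id)}$ is dipermutable) the map $h_a(z)=\{\xi\in{}^{\alpha}\alpha^{(Id)}:z\in\mathcal{F}_{\xi}\}$ is a homomorphism $\A\to\wp({}^{\alpha}\alpha^{(Id)})$ with $h_a(a)\neq 0$. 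By Theorem 2.1, $\A$ embeds into a power of $\wp({}^{\alpha}\alpha^{(Id)})$, whence $\mathbf{Mod}(\Sigma'_{\alpha})=\mathbf{SP}\{\wp({}^{\alpha}\alpha^{(Id)})\}$. To pass from this weak unit to square units, apply the preceding (squaring) theorem to $\B=\wp({}^{\alpha}\alpha^{(Id)})$, which is trivially a subalgebra of itself: it yields a set algebra $\C$ with square unit ${}^{\alpha}U$ and an isomorphism $\wp({}^{\alpha}\alpha^{(Id)})\cong\C\leq\wp({}^{\alpha}U)$, so $\wp({}^{\alpha}\alpha^{(Id)})\in\mathbf{S}\{\wp({}^{\alpha}U):U\}$. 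Therefore $\mathbf{Mod}(\Sigma'_{\alpha})=\mathbf{SP}\{\wp({}^{\alpha}\alpha^{(Id)})\}\subseteq\mathbf{SPS}\{\wp({}^{\alpha}U)\}=\mathbf{SP}\{\wp({}^{\alpha}U)\}$, and together with the first inclusion this shows $\mathbf{SP}\{\wp({}^{\alpha}U):U\text{ a set}\}=\mathbf{Mod}(\Sigma'_{\alpha})$, a variety.

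The bulk of the genuine work has already been discharged in the preceding squaring theorem, whose ultraproduct construction has no counterpart in the abstract finite-dimensional setting; that is the real obstacle. Within the present argument the only delicate point is to confirm that the Henkin-style construction of Theorem \ref{variety} survives replacing the index set ${}^n n$ by ${}^{\alpha}\alpha^{(Id)}$ --- i.e. that $\Sigma'_{\alpha}$ still presents the monoid of finite-support maps on $\alpha$, so that $(*)$ and $(**)$ continue to hold --- but this is routine, since any such map is a finite composite of transpositions and replacements.
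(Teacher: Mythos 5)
Your proposal is correct and follows essentially the same route as the paper: first represent an arbitrary $\A\models\Sigma'_{\alpha}$ on weak units via the ultrafilter (Henkin-style) construction, then invoke the preceding squaring theorem to replace the weak unit ${}^{\alpha}\alpha^{(Id)}$ by a square unit ${}^{\alpha}U$. The paper's proof is just a terser version of this, leaving the weak-unit representation step implicit, while you rightly identify the squaring theorem as carrying the real weight.
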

\begin{proof} Let $\A\in SA_\alpha$. Then for $a\neq 0$ there exists a weak set algebra $\B$ and $f:\A\to \B$ such that $f(a)\neq 0$.
By the previous theorem there is a set algebra $\C$ such that $\B\cong \C$, via $g$ say. Then $g\circ f(a)\neq 0$, and we are done.
\end{proof}

\subsection{Adding Diagonals}

\begin{definition}Let $\Sigma'^d_{\alpha}$ be the axiomatization obtained by adding to $\Sigma'_{\alpha}$ the following equations for al $i,j<\alpha$.
\begin{enumerate}
\item $d_{ii}=1$
\item $d_{i,j}=d_{j,i}$
\item $d_{i,k}.d_{k,j}\leq d_{i,j}$
\item $s_{\tau}d_{i,j}=d_{\tau(i), \tau(i)}$, $\tau\in \{[i,j], [i|j]\}$.
\end{enumerate}
\end{definition}
\begin{theorem}\label{infinite}
Every substitution algebra with transpositions and diagonals of infinite dimension is representable.
\end{theorem}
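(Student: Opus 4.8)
The plan is to reduce the infinite-dimensional case with diagonals to the diagonal-free infinite-dimensional case (Theorem on $TA_\alpha = PTA_\alpha$ and Corollary \ref{v2}), by a neat-reduct style argument exploiting the ``spare dimensions'' available when $\alpha$ is infinite. So suppose $\A \models \Sigma'^d_{\alpha}$ and let $0 \neq a \in A$; I want a homomorphism $h$ with $h(a) \neq 0$ into a set algebra (with diagonals) whose unit is a dipermutable subset of some $^\alpha U^{(p)}$.

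First I would build an ultrafilter-indexed representation as in Lemma \ref{lemma} and its infinite-dimensional analogue (Lemma \ref{f}): choose an ultrafilter $\mathcal{F}$ containing $a$, set $\mathcal{F}_\xi = \{z : S_\xi^\A(z) \in \mathcal{F}\}$ for $\xi \in S = \{s \in {}^\alpha\alpha^{(Id)} : s \text{ bijective}\}$, and obtain $h(z) = \{\xi \in S : z \in \mathcal{F}_\xi\}$, a homomorphism of the substitution-with-transposition reduct into $\wp(S)$. The content of the diagonal axioms 1--4 of $\Sigma'^d_{\alpha}$ is exactly what is needed to check that $h(d_{ij})$ behaves like a genuine diagonal set, i.e.\ that $h(d_{ij}) = \{\xi \in S : \xi(i) = \xi(j)\}$; but since distinct $i,j$ cannot satisfy $\xi(i)=\xi(j)$ for a \emph{bijective} $\xi$, one sees $h(d_{ij}) = \emptyset$ for $i \neq j$, which is wrong unless $d_{ij}$ were already $0$. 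This is the obstacle that forces the use of non-bijective assignments, and is where ``infinitely many spare dimensions'' enters: rather than representing on $S$, I would represent on a dipermutable $V \subseteq {}^\alpha U^{(p)}$ whose base $U$ is large enough that the relevant finite patterns of equalities among coordinates can be realized. Concretely, one embeds $\A$ into a larger-dimensional algebra $\B$ of dimension $\beta > \alpha$ (using the reduct operation $\Rd^\rho$ in reverse — a SUPAP/neat-embedding argument), representing the $\beta$-algebra without diagonals via the diagonal-free theorem, and then recovering a diagonal-respecting representation of $\A$ by choosing the extra $\beta \setminus \alpha$ coordinates so that coordinate $i$ and coordinate $j$ of an assignment coincide precisely on the set forced by membership in $d_{ij}$.

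The key steps, in order, would be: (1) verify $SetSA^d_\alpha \models \Sigma'^d_\alpha$ (routine); (2) prove the diagonal analogue of Lemma \ref{f}, giving a criterion on a system of ultrafilters $\langle \mathcal{F}_\xi : \xi \in G \rangle$ over a dipermutable $G$ for $h(a) = \{\xi : a \in \mathcal{F}_\xi\}$ to be a homomorphism respecting diagonals — here $G$ must \emph{not} consist only of injections, so $G$ should be taken to be a suitable dipermutable set of functions $^\alpha\alpha^{(Id)}$ closed under the relevant replacements and transpositions; (3) given $0 \neq a \in \A$, pick an ultrafilter $\mathcal{F}$ and define $\mathcal{F}_\xi$ for $\xi \in G$ using the combined term $s_\xi$ (Definition after \ref{lemma}, extended to $^\alpha\alpha^{(Id)}$ as in the remark preceding Lemma \ref{f}); (4) check conditions $(*)$, $(**)$ from $\Sigma'_\alpha$ and the diagonal compatibility from axioms 1--4 of $\Sigma'^d_\alpha$, using the presentation theorem (Theorem on $^nn$ via $R(n)$) lifted to schemas; (5) conclude $h$ is the desired representation, and finally square the weak unit via the ultraproduct construction of the preceding theorem (which visibly carries diagonals along, since $d_{ij}$ is a first-order definable subset preserved by the choice-function machinery) to get a genuine square representation if desired.

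The main obstacle, as indicated, is step (2)/(4): choosing the index set $G$ of assignments correctly. With only transpositions the natural choice $S_n$ (or the bijections in the infinite case) works because transpositions generate a group; once replacements are present the semigroup $^nn$ is in play and $G$ must be a dipermutable subset of $^\alpha\alpha^{(Id)}$ large enough to separate the diagonal elements — one cannot take $G$ to be all injections. I expect the cleanest route is to let $G = {}^\alpha\alpha^{(Id)}$ itself (it is dipermutable), define $\mathcal{F}_\xi = \{z : S_\xi^\A z \in \mathcal{F}\}$ for every $\xi$, and verify directly from axioms 1--4 of $\Sigma'^d_\alpha$ together with $\Sigma'_\alpha$ that $\xi(i) = \xi(j) \iff d_{ij} \in \mathcal{F}_\xi$, so that $h(d_{ij})$ is the correct diagonal set; the equations $s_\tau d_{ij} = d_{\tau(i)\tau(j)}$ are exactly designed to make this bookkeeping go through. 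Everything else is then a schema-level repetition of the finite-dimensional arguments of \S4 together with Corollary \ref{v2}.
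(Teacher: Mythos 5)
There is a genuine gap at exactly the point you yourself flag as the crux. Your ``cleanest route'' takes $G={}^{\alpha}\alpha^{(Id)}$, sets $\mathcal{F}_\xi=\{z: S^{\A}_\xi z\in\mathcal{F}\}$, and claims the axioms yield $\xi(i)=\xi(j)\iff d_{ij}\in\mathcal{F}_\xi$. Only the forward implication is available: from $s_\tau d_{ij}=d_{\tau(i)\tau(j)}$ one gets $d_{ij}\in\mathcal{F}_\xi$ iff $d_{\xi(i)\xi(j)}\in\mathcal{F}$, and nothing in $\Sigma'^d_\alpha$ prevents $d_{kl}\in\mathcal{F}$ for distinct $k,l$. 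Take $\xi=Id$, so $\mathcal{F}_{Id}=\mathcal{F}$: any ultrafilter containing some $d_{kl}$ with $k\neq l$ (for instance in an algebra where $d_{kl}=1$, as happens in set algebras over a one-element base) is a counterexample. So $h(d_{ij})=\{\xi: d_{\xi(i)\xi(j)}\in\mathcal{F}\}$ strictly contains the intended diagonal set in general, and $h$ fails to respect the diagonals.

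The paper's proof closes precisely this gap: it defines the equivalence $i\sim j$ iff $d_{ij}\in\mathcal{F}$ on the base $\alpha$ (an equivalence relation by diagonal axioms 1--3), passes to the quotient base, and represents on $M=({}^{\alpha}\alpha^{(Id)})/\sim$ via $f(z)=\{\bar\xi: S^{\A}_\xi z\in\mathcal{F}\}$. After the quotient, $\bar\xi(i)=\bar\xi(j)$ iff $d_{\xi(i)\xi(j)}\in\mathcal{F}$ iff $d_{ij}\in\mathcal{F}_\xi$, so the diagonals come out right; the price is that one must check $f$ is well defined, i.e.\ that $S^{\A}_\sigma z\in\mathcal{F}$ iff $S^{\A}_\tau z\in\mathcal{F}$ whenever $\sigma(i)\sim\tau(i)$ for all $i$. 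This is where the infinite dimension is genuinely used: the verification is an induction on the finite set $J=\{i:\sigma(i)\neq\tau(i)\}$ that requires a spare index $\lambda$ outside the ranges and fixed by both maps, and it is carried out by a chain of substitution--diagonal identities. Your remark about choosing extra coordinates ``so that coordinates $i$ and $j$ coincide precisely on the set forced by $d_{ij}$'' gestures at the same idea, but you neither commit to it nor supply the well-definedness argument, and the route you do commit to fails as stated.
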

\begin{proof}
Let $\A\in\mathbf{Mod}(\Sigma'^d_{\alpha})$
and let $0^\A\neq a\in A$. We construct a homomorphism $h:\A\longrightarrow\wp (^{\alpha}\alpha^{(Id)})$.
such that $h(a)\neq 0$.
Like before, choose an ultrafilter $\mathcal{F}\subset A$ containing $a$. Let $h:\A\longrightarrow \wp(^{\alpha}\alpha^{(Id)})$
be the following function $h(z)=\{\xi\in ^{\alpha}\alpha^{(Id)}:S_{\xi}^\A(z)\in\mathcal{F}\}.$
Now, if $\mathcal{F}_\xi=\{z\in A: S_\xi(z)\in\mathcal{F}\},$ then $\langle\mathcal{F}_\xi:\xi\in ^{\alpha}\alpha^{(Id)}\rangle$ is a system of ultrafilters
of $\A$ satisfying $(*)$.  The function $h$ respects substitutions but it may not respect the diagonal elements.
To ensure that it does we factor out $\alpha$, the base of the set algebra, by a congruence relation.
Define the following equivalence relation $\sim$ on $\alpha$, $i\sim j$ iff $d_{ij}\in F$. Using the axioms for diagonals $\sim$
is an equivalence relation.
Let $V={}^{\alpha}\alpha{(Id}),$ and  $M=V/\sim$. For $h\in V$ we write
$h=\bar{\tau}$, if $h(i)=\tau(i)/\sim$ for all $i\in n$. Of course $\tau$ may not be unique.
Now define $f(z)=\{\bar{\xi}\in M: S_{\xi}^{\A}(z)\in \mathcal{F}\}$. We first check that $f$ is well defined.
We use extensively the property $(s_{\tau}\circ s_{\sigma})x=s_{\tau\circ \sigma}x$ for all
$\tau,\sigma\in {}^{\alpha}\alpha^{(Id)}$, a property that can be inferred form our axiomatization.
We show that $f$ is well defined, by induction on the cardinality of
$$J=\{i\in \mu: \sigma (i)\neq \tau (i)\}.$$
Of course $J$ is finite. If $J$ is empty, the result is obvious.
Otherwise assume that $k\in J$. We introduce a piece of notation.
For $\eta\in V$ and $k,l<\alpha$, write
$\eta(k\mapsto l)$ for the $\eta'\in V$ that is the same as $\eta$ except
that $\eta'(k)=l.$
Now take any
$$\lambda\in \{\eta\in \alpha: \sigma^{-1}\{\eta\}= \tau^{-1}\{\eta\}=\{\eta\}\}$$
We have  $${ s}_{\sigma}x={ s}_{\sigma k}^{\lambda}{ s}_{\sigma (k\mapsto \lambda)}x.$$
Also we have (b)
$${s}_{\tau k}^{\lambda}({ d}_{\lambda, \sigma k}. {\sf s}_{\sigma} x)
={ d}_{\tau k, \sigma k} { s}_{\sigma} x,$$
and (c)
$${ s}_{\tau k}^{\lambda}({ d}_{\lambda, \sigma k}.{\sf s}_{\sigma(k\mapsto \lambda)}x)$$
$$= { d}_{\tau k,  \sigma k}.{ s}_{\sigma(k\mapsto \tau k)}x.$$

and (d)

$${ d}_{\lambda, \sigma k}.{ s}_{\sigma k}^{\lambda}{ s}_{{\sigma}(k\mapsto \lambda)}x=
{ d}_{\lambda, \sigma k}.{ s}_{{\sigma}(k\mapsto \lambda)}x$$

Then by (b), (a), (d) and (c), we get,

$${ d}_{\tau k, \sigma k}.{ s}_{\sigma} x=
{ s}_{\tau k}^{\lambda}({ d}_{\lambda,\sigma k}.{ s}_{\sigma}x)$$
$$={ s}_{\tau k}^{\lambda}({ d}_{\lambda, \sigma k}.{ s}_{\sigma k}^{\lambda}
{ s}_{{\sigma}(k\mapsto \lambda)}x)$$
$$={s}_{\tau k}^{\lambda}({ d}_{\lambda, \sigma k}.{s}_{{\sigma}(k\mapsto \lambda)}x)$$
$$= { d}_{\tau k,  \sigma k}.{ s}_{\sigma(k\mapsto \tau k)}x.$$
The conclusion follows from the induction hypothesis.
Finally, clearly $f$ respects diagonal elements.

\end{proof}

\begin{question} Are the atomic algebras completely representable?
\end{question}

\section{Atomicity of free algebras}

\subsection{General results on free algebras}

In cylindric algebra theory, whether the free algebras are atomic or not is an important topic. In fact, N\'emeti proves
that for $n\geq 3$ the free algebras of dimension $n$ on a finite set of generators are not atomic,
and this is closely related to Godels incompleteness theorems for the finite $n$-variable fragments of first order logic.
We first start by proving slightly new results concerning free algebra of class of $BAO$'s.

\begin{definition}
Let $K$ be variety  of $BAO$'s. Let $\L$ be the corresponding multimodal logic.
We say that $\L$ has the Godel's incompleteness property if there exists
a formula $\phi$ that cannot be extended to a recursive complete theory.
Such formula is called incompletable.
\end{definition}
Let $\L$ be a general modal logic, and let $\Fm_{\equiv}$ be the Tarski-Lindenbaum formula algebra on
finitely many generators.
\begin{theorem}(Essentially Nemeti's) If $\L$ has $G.I$, then the algebra $\Fm_{\equiv}$
is not atomic.
\end{theorem}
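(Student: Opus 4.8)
Proof plan for the final statement (if $\L$ has the Gödel incompleteness property $G.I$, then $\Fm_{\equiv}$ is not atomic):

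The plan is to argue contrapositively via the correspondence between atoms of the Tarski–Lindenbaum algebra and complete consistent extensions of a theory. First I would set up the dictionary: an element of $\Fm_{\equiv}$ is an equivalence class $[\phi]$ of formulas modulo provable equivalence in $\L$; the Boolean order is $[\phi]\leq[\psi]$ iff $\vdash\phi\to\psi$; and $[\phi]\neq 0$ iff $\phi$ is consistent. The key observation is that an atom below $[\phi]$ is precisely a consistent formula $[\psi]\leq[\phi]$ such that for every formula $\chi$, either $\vdash\psi\to\chi$ or $\vdash\psi\to\neg\chi$ — in other words, $\psi$ axiomatizes a complete consistent theory. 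Since we are working with a finitely generated formula algebra and a recursively axiomatized logic $\L$ (the varieties $K$ in this paper are finitely axiomatized, so provability is recursively enumerable), a single formula $\psi$ always yields a recursively enumerable, hence recursive if complete, theory $\{\chi : \vdash\psi\to\chi\}$.

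The main step is then the following chain: suppose toward a contradiction that $\Fm_{\equiv}$ is atomic. Let $\phi$ be an incompletable formula, witnessing $G.I$; then $[\phi]\neq 0$, so by atomicity there is an atom $[\psi]\leq[\phi]$. By the observation above, $T_\psi := \{\chi : \vdash\psi\to\chi\}$ is a complete consistent theory. Because $\L$ is recursively axiomatized, the relation $\vdash\psi\to\chi$ is recursively enumerable in $\chi$; and completeness of $T_\psi$ means its complement is also recursively enumerable (namely $\chi\notin T_\psi$ iff $\neg\chi\in T_\psi$ iff $\vdash\psi\to\neg\chi$). Hence $T_\psi$ is recursive. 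Moreover $T_\psi$ extends (the deductive closure of) $\phi$, since $[\psi]\leq[\phi]$ gives $\vdash\psi\to\phi$, so $\phi\in T_\psi$. Thus $\phi$ extends to a recursive complete theory, contradicting the assumption that $\phi$ is incompletable. Therefore $\Fm_{\equiv}$ has no atom below $[\phi]$, so it is not atomic.

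I expect the main obstacle to be purely bookkeeping around the notion of ``recursive'': one must be careful that the formula algebra is on \emph{finitely many} generators (so that formulas can be effectively coded and the set of theorems is r.e.), and that ``cannot be extended to a recursive complete theory'' is being used in the precise sense that no recursive complete theory contains $\phi$. Once these are pinned down, the argument is essentially the translation of Németi's observation — an atom in a finitely generated Lindenbaum algebra is a finitely axiomatized complete theory, which if it exists and sits above an incompletable formula contradicts $G.I$. No genuinely hard mathematics is involved; the content is in getting the effectiveness hypotheses stated correctly and invoking the standard fact that a complete r.e. theory is recursive.
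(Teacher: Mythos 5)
Your proposal is correct and follows essentially the same route as the paper: both arguments take an incompletable formula $\phi$, suppose an atom $[\psi]$ (the paper's $\tau/\equiv$) sits below $[\phi]$, observe that atomhood forces the deductive closure of $\psi$ to be a complete consistent theory extending $\phi$, and then use the standard ``complete r.e.\ theory is recursive'' fact to contradict $G.I$. The only cosmetic difference is that the paper packages the theory as $Consq(\{\tau,\phi\})$ rather than $\{\chi:\vdash\psi\to\chi\}$, which is the same set since $\vdash\tau\to\phi$.
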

\begin{proof}
Assume that $\L$ has $G.I$. Let $\phi$ be an incompletable
formula. We show that there is no atom in the Boolean algebra $\Fm$
below $\phi/\equiv.$
Note that because $\phi$ is consistent, it follows that $\phi/\equiv$ is non-zero.
Now, assume to
the contrary that there is such an atom $\tau/\equiv$ for some
formula $\tau.$
This means that .
that $(\tau\land \bar{\phi})/\equiv=\tau/\equiv$.
Then it follows that
$\vdash (\tau\land \phi)\implies \phi$, i.e.
$\vdash\tau\implies \phi$.
Let
$T=\{\tau,\phi\}$
and let
$Consq(T)=\{\psi\in Fm: T\vdash \psi\}.$
$Consq(T)$ is short for the consequences of $T$.
We show that $T$ is complete and that $Consq(T)$ is
decidable.   Let $\psi$ be an arbitrary formula in $\Fm.$
Then either $\tau/\equiv\leq \psi/\equiv$ or $\tau/\equiv\leq \neg \psi/\equiv$
because $\tau/\equiv$ is an
atom. Thus $T\vdash\psi$ or $T\vdash \neg \psi.$
Here it is the {\it exclusive or} i.e. the two cases cannot occur together.
Clearly $ConsqT$ is recursively enumerable.  By completeness of $T$ we have
$\Fm_{\equiv}\smallsetminus Consq(T)=\{\neg \psi: \psi\in Consq(T)\},$
hence the complement of $ConsqT$ is recursively enumerable as well, hence $T$
is decidable.  Here we are using the trivial fact that $\Fm$ is decidable.
This contradiction proves that $\Fm_{\equiv}$ is not atomic.
\end{proof}
\begin{definition} An element $a\in A$ is closed, if $f_i(a)=a$ for every $i\in I$
\end{definition}
In the following theorem (1) holds for cylindric algebras, Pinter's substitution algebras
(which are replacement algebras endowed with cylindrifiers) and quasipolydic algebras
with and without equality when the dimension is $\leq 2$. (3) holds for such algebras for all finite dimensions. (4) is due to Johnsson and Tarski.
In fact, (1) holds for any discriminator variety $V$ of $BAO$'s, with finitely many operators, when $V$ is generated by a discriminator class $K$.

\begin{theorem} Let $K$ be a variety of Boolean algebras with finitely many operators.
\begin{enumarab}
\item Assume that  $K=V(Fin(K))$, and for any $\B\in K$ and $b'\in \B$, there exists $b\in \B$ such that
$\Ig^{\B}\{b'\}=\Ig^{\Bl\B}\{b\}$. If $\A$ is finitely generated, then $\A$ is atomic.
In particular, the finitely generated free algebras are atomic.
\item Assume the condition above on principal ideals, together with the condition that
that if $b_1'$ and $b_2$'s are the generators of two given ideals happen to be disjoint then $b_0, b_1$ can be chosen to be also disjoint. Then
$\Fr_{\beta}K_{\alpha}\times \Fr_{\beta}K_{\alpha}\cong \Fr_{|\beta+1|}K.$ In particular if $\beta$ is infinite, and
$\A=\Fr_{\beta}K$, then $\A\times \A\cong \A$. (This happens when $b_0, b_1$ are closed.)
\item Assume that $\beta<\omega$, and assume the above condition on principal ideals.
Suppose further that for every $k\in \omega$, there exists an algebra $\A\in K$ with $k$ elements
that is generated by a single element. Then $\Fr_{\beta}K$ has infinitely many atoms.
\item  Assume that $K=V(Fin(K))$.
Suppose $\A$ is $K$ freely generated by a finite set $X$ and $\A=\Sg Y$ with $|Y|=|X|$. Then $\A$ is $K$ freely generated
by $Y.$
\end{enumarab}
\end{theorem}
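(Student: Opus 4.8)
All four parts rest on one mechanism: if $\B/I$ is finite and $I=\Ig^{\B}\{b'\}$ is a principal $BAO$-ideal, the principal-ideal hypothesis rewrites $I$ as $\Ig^{\Bl\B}\{b\}$, so the Boolean reduct of $\B/I$ is isomorphic to the relativization $\Rl_{-b}(\Bl\B)$; hence $-b$ is a finite join of atoms of $\B$, each lying outside $I$. I would prove (4) first, then (2), (3), and finally (1). I use two auxiliary facts without further comment. \emph{Fact (a):} for $\beta<\omega$, the algebra $\Fr_{\beta}K$ is residually finite when $K=V(Fin(K))$ --- each assignment $\bar a\in{}^{\beta}F$ with $\F\in Fin(K)$ induces a homomorphism $\Fr_{\beta}K\to\F$, and these separate points since an equation failing in $\Fr_{\beta}K$ must fail in some finite member of $K$. \emph{Fact (b):} a finite $BAO$ is finitely presentable as a $K$-algebra --- present it on its atoms $p_{1},\dots,p_{m}$ by the finitely many relations saying that the $p_{i}$ are pairwise disjoint, join to $1$, and carry the prescribed action of the (finitely many) operators; those relations force the $2^{m}$-element Boolean algebra on the $p_{i}$ to be closed under the operators, so the presented algebra has Boolean reduct of size $\leq 2^{m}$, and since it surjects onto the original it is the original. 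Consequently the kernel of any surjection onto a finite $BAO$ from a finitely generated algebra is a finitely generated, hence principal, $BAO$-ideal.

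\textbf{Parts (4) and (2).} For (4): by hypothesis $\A\cong\Fr_{|X|}K$ and $\A=\Sg^{\A}Y$ with $|Y|=|X|=n<\omega$ and $\A\in K$, so freeness gives a surjection $\varphi\colon\Fr_{n}K\to\A$ carrying the free generators onto $Y$; composing with an isomorphism $\A\cong\Fr_{n}K$ yields a surjective endomorphism of $\Fr_{n}K$, which by Fact (a) and Mal'cev's theorem (a finitely generated residually finite algebra is Hopfian) is injective; thus $\varphi$ is an isomorphism and $Y$ freely generates $\A$. For (2): write $\Fr_{|\beta+1|}K$ as free on $\{x_{i}:i<\beta\}\cup\{y\}$ and map it into $\Fr_{\beta}K\times\Fr_{\beta}K$ by $x_{i}\mapsto(x_{i},x_{i})$ and $y\mapsto(1,0)$. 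This is onto, since the image contains $(1,0)$ and $(0,1)$, hence $(t,0)=(t,t)\wedge(1,0)$ and $(0,t)=(t,t)\wedge(0,1)$ for every $t$. Its kernel is $\Ig\{-y\}\cap\Ig\{y\}$ (the two projections onto $\Fr_{\beta}K$ are the quotients by setting $y=1$, resp., $y=0$); by the principal-ideal hypothesis together with the extra disjointness hypothesis --- applicable since $-y\wedge y=0$, with $b_{0},b_{1}$ taken to be the closures of $-y,y$ --- this equals $\Ig^{\Bl}\{b_{0}\}\cap\Ig^{\Bl}\{b_{1}\}=\Ig^{\Bl}\{b_{0}\wedge b_{1}\}=\{0\}$. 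When $\beta$ is infinite, $|\beta+1|=|\beta|$, giving $\A\times\A\cong\A$.

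\textbf{Part (3).} For each $m$ fix $\A_{m}\in K$ single-generated with $|\A_{m}|=2^{m}$ and a surjection $h\colon\Fr_{\beta}K\to\A_{m}$ sending a generator to the generator of $\A_{m}$ (so $\beta\geq1$). By Fact (b), $\ker h$ is principal, so $\ker h=\Ig^{\Bl}\{b\}$, and the relativization $\Rl_{-b}(\Bl\,\Fr_{\beta}K)\cong\Bl\,\A_{m}$ is finite with $m$ atoms; these are atoms of $\Fr_{\beta}K$. As $m$ was arbitrary (if $\Fr_{\beta}K$ had exactly $N$ atoms, take $m=N+1$), $\Fr_{\beta}K$ has infinitely many atoms. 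Note this uses only the principal-ideal condition, not $K=V(Fin(K))$.

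\textbf{Part (1) and the main obstacle.} Let $\A\in K=V(Fin(K))$ be finitely generated and $0\neq a\in\A$; I want an atom below $a$. The easy half: it suffices to find a homomorphism $g\colon\A\to\F$ onto a finite $\F\in K$ with $g(a)\neq 0$. For then, by Fact (b), $\ker g=\Ig^{\Bl}\{b\}$ and $\Rl_{-b}(\Bl\A)\cong\Bl\,\F$ is finite, so $-b$ is a finite join of atoms of $\A$; and $g(a)\neq 0$ forces $a\not\leq b$, i.e., $a\wedge-b\neq 0$, so one of those atoms lies below $a$. The obstacle is manufacturing $g$: a finitely generated algebra in $V(Fin(K))$ need not itself be residually finite (only the \emph{free} algebras $\Fr_{n}K$ are), so this homomorphism is not automatic. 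The plan is to invoke J\'onsson's Lemma --- varieties of $BAO$'s are congruence distributive --- to realize $\A$ as a subdirect product of finitely generated subdirectly irreducible algebras, each a homomorphic image of a subalgebra of an ultraproduct of members of $Fin(K)$, and then to use the principal-ideal condition to show these subdirectly irreducible factors are finite (their monoliths are Boolean-principal, generated by a closed atom, and this with $K=V(Fin(K))$ should bound their size); one factor then detects $a$. Proving finiteness of the subdirectly irreducible quotients of a finitely generated $\A$ is, I expect, the real content of (1) --- and it is presumably why the statement notes that for a discriminator variety, where congruences are principal and the subdirectly irreducibles are simple, the argument becomes transparent.
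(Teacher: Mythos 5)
Your core mechanism is the same as the paper's: a finite quotient has a finitely generated, hence principal, $BAO$-ideal as kernel; the hypothesis converts this to a Boolean-principal ideal $\Ig^{\Bl}\{b\}$; the quotient is then Boolean-isomorphic to $\Rl_{-b}\A$, whose atoms are atoms of $\A$. Where you differ: for the principality of the kernel the paper gives a direct argument (it generates the congruence $R_I$ from its restriction to $X\times X'$ for a finite generating set $X$ with $h(X)=B$), which is in effect a proof of your Fact~(b); for part (4) the paper counts maps, showing $f\mapsto f'\restriction Y$ is an injection of the finite set ${}^XB$ into ${}^YB$ of equal size and hence a bijection, rather than invoking Mal'cev/Hopficity as you do --- both work, and both rest on $\Fr_XK=\Fr_X(Fin(K))$; for part (2) the paper decomposes $\Fr_{\beta+1}K$ as $\Rl_{g_\beta}\A\times\Rl_{-g_\beta}\A$ and shows each relativization is free on $\beta$ generators, whereas you build the surjection onto $\Fr_\beta K\times\Fr_\beta K$ directly and kill its kernel via the disjointness hypothesis; your version is cleaner and avoids the question of whether the Boolean relativization is a $K$-algebra. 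Part (3) is identical in both.

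On the ``main obstacle'' you flag in part (1): you are right that it is a genuine issue, but you should know the paper does not resolve it either --- its proof opens with ``Let $h:\A\to\B$ be a homomorphism of $\A$ into a finite algebra $\B$ such that $h(a)\neq 0$,'' with no justification. Such an $h$ exists automatically when $\A$ is a finitely generated \emph{free} algebra (since $\Fr_nV(Fin(K))\in\mathbf{SP}(Fin(K))$), so the ``in particular'' clause is fully proved; for an arbitrary finitely generated member of $V(Fin(K))$, which need not be residually finite, the step is unjustified in the paper exactly as in your proposal. Your reduction of (1) to the existence of $h$ therefore reproduces the entirety of what the paper actually proves, and your proposed J\'onsson's-Lemma repair (left incomplete) addresses a gap the paper leaves open rather than one you introduced.
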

\begin{proof}
\begin{enumarab}
\item Assume that $a\in A$ is non-zero. Let $h:\A\to \B$ be a homomorphism of $\A$ into a finite algebra $\B$ such that
$h(a)\neq 0$. Let $I=ker h.$ We claim that $I$ is a finitely generated ideal.
Let $R_I$ be the congruence relation corresponding to $I$, that is $R_I=\{(a,b)\in A\times A: h(a)=h(b)\}$.
Let $X$ be a finite set such that $X$ generates $\A$ and $h(X)=\B$. Such a set obviously exists.
Let $X'=X\cup \{x+y: x, y\in X\}\cup \{-x: x\in X\}\cup \bigcup_{f\in t}\{f(x): x\in X\}.$
Let $R=\Sg^{\A}(R_I\cap X\times X')$. Clearly $R$ is a finitely generated congruence and $R_I\subseteq R$.
We show that the converse inclusion also holds.
For this purpose we first show that $R(X)=\{a\in A: \exists x\in X (x,a)\in R\}=\A.$
Assume that $xRa$ and $yRb$, $x,y\in X$ then $x+yRa+b$, but there exists $z\in X$ such that $h(z)=h(x+y)$ and $zR(x+y)$, hence
$zR(a+b)$ , so that $a+b\in R(X)$. Similarly for all other operations. Thus $R(X)=A$.
Now assume that $a,b\in A$ such that $h(a)=h(b)$.
Then there exist $x, y\in X$ such that $xRa$ and $xRb$. Since $R\subseteq ker h$,
we have $h(x)=h(a)=h(b)=h(y)$ and so $xRy$, hence $aRb$ and $R_I\subseteq R$.
So $I=\Ig\{b'\}$ for some element $b'$.  Then there exists $b\in \A$ such that  $\Ig^{\Bl\B}\{b\}=\Ig\{b'\}.$ Since $h(b)=0$ and $h(a)\neq 0,$
we have $a.-b\neq 0$.
Now $h(\A)\cong \A/\Ig^{\Bl\B}\{b\}$ as $K$ algebras. Let $\Rl_{-b}\A=\{x: x\leq -b\}$. Let $f:\A/\Ig^{\Bl\B}\{b\}\to \Rl_{-b}\A$ be defined by
$\bar{x}\mapsto x.-b$. Then $f$ is an isomorphism of Boolean algebras (recall that the operations of $\Rl_{-b}\B$ are defined by
relativizing the Boolean operations to $-b$.)
Indeed, the map is well defined, by noting that if $x\delta y\in \Ig^{\Bl\B}\{b\}$, where $\delta$ denotes symmetric difference,
then $x.-b=y.-b$ because $x, y\leq b$.
Since $\Rl_{-b}\A$ is finite, and $a.-b\in \Rl_{-b}\A$ is non-zero, then there exists an atom $x\in \Rl_{-b}\A$ below $a$,
but clearly $\At(\Rl_{-b}\A)\subseteq \At\A$ and we are done.

\item Let $(g_i:i\in \beta+1)$ be the free generators of $\A=\Fr_{\beta+1}K$.
We first show that $\Rl_{g_{\beta}}\A$ is freely generated by
$\{g_i.g_{\beta}:i<\beta\}$. Let $\B$ be in $K$ and $y\in {}^{\beta}\B$.
Then there exists a homomorphism $f:\A\to \B$ such that $f(g_i)=y_i$ for all $i<\beta$ and $f(g_{\beta})=1$.
Then $f\upharpoonright \Rl_{g_{\beta}}\A$ is a homomorphism such that $f(g_i.g_{\beta})=y_i$. Similarly
$\Rl_{-g_{\beta}}\A$ is freely generated by $\{g_i.-g_{\beta}:i<\beta\}$.
Let $\B_0=\Rl_{g_{\beta}}\A$ and $\B_1=\Rl_{g_{\beta}}\A$.
Let $t_0=g_{\beta}$ and $t_1=-g_{\beta}$. Let $x_i$ be such that $J_i=\Ig\{t_i\}=\Ig^{Bl\A}\{x_i\}$, and $x_0.x_1=0$.
Exist by assumption. Assume that $z\in J_0\cap J_1$. Then $z\leq x_i$,
for $i=0, 1$, and so  $z=0$. Thus $J_0\cap J_1=\{0\}$. Let $y\in A\times A$, and let $z=(y_0.x_0+y_1.x_1)$, then $y_i.x_i=z.x_i$ for each $i=\{0,1\}$
and so $z\in \bigcap y_0/J_0\cap y_1/J_1$. Thus $\A/J_i\cong \B_i$, and so
$\A\cong \B_0\times \B_1$.

\item Let $\A=\Fr_{\beta}K.$ Let $\B$ have $k$ atoms and generated by a single element. Then there exists a surjective
homomorphism $h:\A\to \B$. Then, as in the first item,  $\A/\Ig^{\Bl\B}\{b\}\cong \B$, and so $\Rl_{b}\B$ has $k$ atoms.
Hence $\A$ has $k$ atoms for any $k$ and we are done.

\item Let $\A=\Fr_XK$, let $\B\in Fin(K)$ and let $f:X\to \B$. Then $f$ can extended to a homomorphism $f':\A\to \B$.
Let $\bar{f}=f'\upharpoonright Y$. If $f, g\in {}^XB$ and $\bar{f}=\bar{g}$,
then $f'$ and $g'$ agree on a generating set $Y$, so $f'=g',$ hence $f=g$.
Therefore we obtain a one to one mapping from $^XB$ to $^YB$, but $|X|=|Y|,$
hence this map is surjective. In other words for each $h\in {}^YB,$ there exists a unique
$f\in {}^XB$ such that $\bar{f}=h$, then $f'$ with domain $\A$ extends $h.$
Since $\Fr_XK=\Fr_X(Fin(K))$ we are done.
\end{enumarab}
\end{proof}

For cylindric algebras, Pinter's algebras and quasipolyadic equality, though free algebras of $>2$ dimensions
contain infinitely many atoms, they are not
atomic.
\begin{definition} Let $K$ be a class of $BAO$ with operators$(f_i: i\in I.)$
Let $\A\in K$. An element $b\in A$ is called hereditary closed if for all $x\leq b$, $f_i(x)=x$.
\end{definition}
\begin{theorem}
\begin{enumarab}
\item Let $\A=\Sg X$ and $|X|<\omega$. Let $b\in \A$ be hereditary closed. Then $\At\A\cap \Rl_{b}\A\leq 2^{n}$.
If $\A$ is freely generated by $X$, then $\At\A\cap \Rl_{b}\A= 2^{n}.$
\item If every atom of $\A$ is below $b,$ then $\A\cong \Rl_{b}\A\times \Rl_{-b}\A$, and $|\Rl_{b}\A|=2^{2^n}$.
If in addition $\A$ is infinite, then $\Rl_{-b}\A$ is atomless.
\end{enumarab}
\end{theorem}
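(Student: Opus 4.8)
The plan is to push both parts down to elementary facts about \emph{finite Boolean algebras}, the bridge being that $b$ hereditary closed forces every operator to act as the identity on $\Rl_b\A$, so that $\Rl_b\A$ is, up to term equivalence, just a Boolean algebra. Write $n=|X|$, and for $x\in X$ and $\varepsilon\in{}^X2$ set $x^{1}=x$, $x^{0}=-x$, and $c_\varepsilon:=b\cdot\prod_{x\in X}x^{\varepsilon(x)}$.

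For (1) I would first check that relativization $\rho:\A\to\Rl_b\A$, $\rho(y)=y\cdot b$, is a surjective homomorphism. It is automatically Boolean; for the operators, additivity together with $b$ hereditary closed gives $f_i(y)=f_i(y\cdot b)+f_i(y\cdot -b)=(y\cdot b)+f_i(y\cdot -b)$, hence $f_i(y)\cdot b=(y\cdot b)+f_i(y\cdot -b)\cdot b=y\cdot b=f_i^{(b)}(\rho y)$, the middle step using that the operators preserve complements (so $f_i(-b)=-b$, hence $-b$ is closed and $f_i(y\cdot -b)\leq -b$), which holds for all the algebras considered here. Since $\A=\Sg^{\A}X$ and the operators of $\Rl_b\A$ are trivial, $\Rl_b\A=\rho[\A]$ is the Boolean algebra generated by the $n$ elements $x\cdot b$ ($x\in X$); a Boolean algebra generated by $n$ elements is finite and has at most $2^{n}$ atoms, namely the non-zero constituents $c_\varepsilon$. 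As an atom of $\A$ lying below $b$ is the same thing as an atom of $\Rl_b\A$ and conversely, $\At\A\cap\Rl_b\A=\At(\Rl_b\A)$, and so $|\At\A\cap\Rl_b\A|\leq 2^{n}$. For the freeness clause, note $\sum_\varepsilon c_\varepsilon=b$ with the $c_\varepsilon$ pairwise disjoint, so $\Rl_b\A$ has exactly $2^{n}$ atoms precisely when every $c_\varepsilon$ is non-zero; and when $\A$ is $K$-freely generated by $X$, a relation $c_\varepsilon=0$ would be an equation $b\wedge\prod_x x^{\varepsilon(x)}=0$ valid throughout $K$, which one refutes by exhibiting, for each $\varepsilon$, a concrete set algebra in $K$ and an assignment of the generators for which $b\wedge\prod_x x^{\varepsilon(x)}$ becomes non-empty. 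Hence in the free case $|\At(\Rl_b\A)|=2^{n}$.

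For (2), $b$ and $-b$ are complementary closed elements, so $y\mapsto(y\cdot b,\,y\cdot -b)$ is a homomorphism $\A\to\Rl_b\A\times\Rl_{-b}\A$; it is injective (its kernel is $\{y:y\cdot b=0=y\cdot -b\}=\{0\}$) and surjective (a pair $(u,v)$ with $u\leq b$, $v\leq -b$ is the image of $u+v$), giving $\A\cong\Rl_b\A\times\Rl_{-b}\A$. By the freeness clause of (1), $\Rl_b\A$ is a finitely generated — hence finite and atomic — Boolean algebra with exactly $2^{n}$ atoms, so it is the power set of its atom set and $|\Rl_b\A|=2^{2^{n}}$. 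Finally, if every atom of $\A$ lies below $b$ then $\Rl_{-b}\A$ has no atoms (an atom of $\Rl_{-b}\A$ would be an atom of $\A$ not below $b$); and since $\Rl_b\A$ is finite, the hypothesis that $\A$ is infinite forces $\Rl_{-b}\A$ to be infinite, so it is a non-trivial atomless Boolean algebra.

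The only step with genuine content is the freeness clause of (1): showing that in the $K$-free algebra all $2^{n}$ constituents $c_\varepsilon$ are non-zero. This is where the explicit identity of the hereditary closed element $b$ and the availability of enough concrete set algebras in $K$ must be used; everything else is routine bookkeeping about relativization and finite Boolean algebras. Two secondary points to watch: that $-b$ is closed (used throughout, and resting on the operators being Boolean endomorphisms in the cases of interest), and that the equality $|\Rl_b\A|=2^{2^{n}}$ in (2) presupposes $\A$ to be freely generated, exactly as in the corresponding clause of (1).
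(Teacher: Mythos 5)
Your argument follows the same route as the paper's: hereditary closedness of $b$ makes every operator act as the identity on $\Rl_{b}\A$, so $\Rl_{b}\A$ is in effect the Boolean algebra generated by the $n$ elements $x\cdot b$, its atoms are among the $2^{n}$ constituents, and the decomposition $\A\cong \Rl_{b}\A\times \Rl_{-b}\A$ is the standard Boolean splitting along a complemented pair (the paper phrases it through the ideals $\Ig\{b\}$ and $\Ig\{-b\}$ rather than your direct map $y\mapsto (y\cdot b,\, y\cdot -b)$, but it is the same computation). Your bookkeeping --- that $-b$ is closed because the operators are Boolean endomorphisms, that atoms of $\A$ below $b$ coincide with atoms of $\Rl_{b}\A$, and that the count $|\Rl_{b}\A|=2^{2^{n}}$ in (2) presupposes freeness --- is correct and is more explicit than what the paper writes.

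The divergence is exactly at the step you defer. To show that in the free algebra every constituent $c_{\varepsilon}=b\cdot \prod_{x}x^{\varepsilon(x)}$ is non-zero, you propose to exhibit, for each $\varepsilon$, a concrete set algebra and an assignment making the corresponding term non-empty; you do not carry this out. The paper gets the same conclusion more cheaply from freeness alone: for each $\Gamma\subseteq X$ there is a homomorphism $f$ from $\A$ onto the two-element algebra with $f(x)=1$ iff $x\in \Gamma$, and then $f(c_{\varepsilon})=f(b)$, so no concrete representation is needed --- you should adopt that device. Note, however, that both versions still require $f(b)=1$ (equivalently, that $b\wedge \prod_{x}x^{\varepsilon(x)}$ be non-empty in your witnessing algebra), and this cannot hold for an arbitrary hereditary closed $b$: for $b=0$ the clause $|\At\A\cap \Rl_{b}\A|=2^{n}$ is simply false. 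So the equality half of (1) genuinely depends on which $b$ is meant, exactly as your closing remark suspects; neither your sketch nor the paper's proof discharges this point, and it is the one piece of substantive work still missing from your write-up.
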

\begin{proof} Assume that $|X|=m$. We have $|\At\A\cap \Rl_b\A|=|\{\prod Y \sim \sum(X\sim Y).b\}\sim \{0\}|\leq {}^{m}2.$
Let $\B=\Rl_b\A$. Then $\B=\Sg^{\B}\{x_i.b: i<m\}=\Sg^{Bl\B}\{x_i.b:i<\beta\}$ since $b$ is hereditary fixed.
For $\Gamma\subseteq m$, let
$$x_{\Gamma}=\prod_{i\in \Gamma}(x_i.b).\prod_{i\in m\sim \Gamma}(x_i.-b).$$
Let $\C$ be the two element algebra. Then for each $\Gamma\subseteq m$, there is a homomorphism $f:\A\to \C$ such that
$fx_i=1$ iff $i\in \Gamma$.This shows that $x_{\Gamma}\neq 0$ for every $\Gamma\subseteq m$,
while it is easily seen that $x_{\Gamma}$ and $x_{\Delta}$
are distinct for distinct $\Gamma, \Delta\subseteq m$. We show that $\A\cong \Rl_{b}\A\times \Rl_{-b}\A$.

Let $\B_0=\Rl_{b}\A$ and $\B_1=\Rl_{-b}\A$.
Let $t_0=b$ and $t_1=-b$. Let  $J_i=\Ig\{t_i\}$
Assume that $z\in J_0\cap J_1$. Then $z\leq x_i$,
for $i=0, 1$, and so  $z=0$. Thus $J_0\cap J_1=\{0\}$. Let $y\in A\times A$,
and let $z=(y_0.t_0+y_1.t_1)$, then $y_i.x_i=z.x_i$ for each $i=\{0,1\}$
and so $z\in \bigcap y_0/J_0\cap y_1/J_1$. Thus $\A/J_i\cong \B_i$, and so
$\A\cong \B_0\times \B_1$.

\end{proof}
The following theorem holds for any class of $BAO$'s.
\begin{theorem} The free algebra on an infinite generating set $X$ is atomless.
\end{theorem}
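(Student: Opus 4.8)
The free algebra on an infinite generating set $X$ is atomless.

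$\square$

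The plan is to show that no nonzero element of $\Fr_XK$ can be an atom, by exhibiting, for each nonzero $a$, two nonzero elements below it that are disjoint. The key observation is that since $a$ is a term (or an equivalence class of a term) in the free algebra, $a$ involves only finitely many of the generators in $X$; since $X$ is infinite, there is always a generator $x\in X$ not occurring in $a$.

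First I would fix a nonzero $a\in\Fr_XK$ and pick a generator $x\in X$ that does not occur in any term representing $a$. The natural candidates for disjoint nonzero elements below $a$ are $a\cdot x$ and $a\cdot(-x)$. These are manifestly disjoint and their join is $a$, so it suffices to prove that both are nonzero. To see $a\cdot x\neq 0$, I would use the universal property of the free algebra: choose $\B\in K$ with at least two elements (if $K$ is trivial there is nothing to prove) together with a homomorphism $h$ witnessing $a\neq 0$, i.e. $h(a)\neq 0^{\B}$. Since $x$ does not occur in $a$, the value $h(a)$ does not depend on where $x$ is sent; so I may modify $h$ to a homomorphism $h'$ that agrees with $h$ on all generators occurring in $a$ but sends $x$ to $1^{\B}$. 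Then $h'(a)=h(a)\neq 0$ and $h'(a\cdot x)=h'(a)\cdot 1=h(a)\neq 0$, whence $a\cdot x\neq 0$. Symmetrically, sending $x$ to $0^{\B}$ gives $a\cdot(-x)\neq 0$. Thus $a$ is not an atom, and since $a$ was an arbitrary nonzero element, $\Fr_XK$ is atomless.

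The only delicate point — and the one I would state carefully rather than wave at — is the claim that $h(a)$ is independent of the image of $x$ when $x$ does not occur in a term for $a$; this is the standard fact that in a term algebra the value of a term under an assignment depends only on the assignment restricted to the variables of the term, transported across the quotient defining $\Fr_XK$ by noting that the relevant equalities hold in every member of $K$. Everything else is routine Boolean manipulation. Note that this argument needs $K$ to contain a nontrivial algebra (equivalently, $\Fr_XK$ is nontrivial); if $K$ consists only of one-element algebras the statement is vacuous, so this is no real restriction.
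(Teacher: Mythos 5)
Your proof is correct and is essentially the paper's own argument: both split a nonzero $a$ as $a\cdot x$ and $a\cdot(-x)$ for a generator $x$ outside the finite set generating $a$, and use freeness to send $x$ to $1$ (resp.\ $0$) while fixing the relevant generators, so that both parts are nonzero. The only cosmetic difference is that the paper extends the identity map on the generators of $a$ to endomorphisms of the free algebra itself, whereas you work with a homomorphism into a witness algebra $\B\in K$ where $a$ is nonzero; the content is the same.
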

\begin{proof} Let $a\in A$ be non-zero. Then there is a finite $Y\subset X$ such that $a\in \Sg^{\A} Y$. Let $y\in X\sim Y$.
Then by freeness
there exist homomorphisms $f:\A\to \B$ and $h:\A\to \B$ such that $f(\mu)=h(\mu) $ for all $\mu\in Y$ while
$f(y)=1$ and $h(y)=0$. Then $f(a)=h(a)=a$. Hence $f(a.y)=h(a.-y)=a\neq 0$ and so $a.y\neq 0$ and
$a.-y\neq 0$.
Thus $a$ cannot be an atom.
\end{proof}
\subsection{Specific results on algebras of substitutions}

\begin{definition} A variety $V$ is locally finite, if every finitely generated algebra is finite.
\end{definition}
The most famous locally finite variety
is that of Boolean algebras. It turns out that our varieties are also locally finite,
as the next simple proof shows.


\begin{theorem}\label{locallyfinite}If $\A$ is generated by $X$ and $|X|=m$ then $|\A|\leq 2^{m\times n!}$
\end{theorem}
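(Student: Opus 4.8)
The plan is to show that $\Sg^{\A}X$ is already generated, \emph{as a Boolean algebra}, by the finite set
\[
G=\{\,s_{\xi}x : \xi\in S_n,\ x\in X\,\},
\]
and then to read off the bound from $|G|\le m\times n!$. The starting observation is that in any $\A\in TA_n$ the only non-Boolean operations are the $s_{ij}$, and by axioms (2) and (3) of $\Sigma_n$ each $s_{ij}$ is a Boolean endomorphism, i.e.\ it commutes with $\wedge$ and with $-$. Consequently, in any term built from the generators $X$ using $\wedge,-$ and the operators $s_{ij}$, every operator can be driven inward through the Boolean connectives until it is applied directly to a generator.

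Next I would collapse the surviving strings of operators. A maximal block $s_{i_1j_1}\cdots s_{i_kj_k}$ standing in front of a generator $x$ composes, using the identity $s_{\xi}s_{\eta}=s_{\xi\circ\eta}$ together with the presentation theorem (which identifies such a block with the element $[i_1,j_1]\circ\cdots\circ[i_k,j_k]$ of $S_n$), to a single operator $s_{\xi}$ with $\xi\in S_n$. Hence every element of $\A$ is a Boolean combination of members of $G$. Moreover $G$ is itself closed under each $s_{ij}$, since $s_{ij}(s_{\xi}x)=s_{[i,j]\circ\xi}x\in G$; so the Boolean subalgebra $\Sg^{\Bl\A}G$ is already closed under all operators of $\A$ and therefore coincides with $\Sg^{\A}X$.

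Finally I would count. Since $|X|=m$ and $|S_n|=n!$, we have $|G|\le m\times n!$, so $\A=\Sg^{\A}X$ is the Boolean closure of at most $m\times n!$ elements; enumerating the Boolean combinations of the $m\times n!$ elements of $G$ then yields the stated bound $|\A|\le 2^{m\times n!}$, which in particular exhibits $TA_n$ as locally finite. The same argument applies to the other cases treated in the paper, with $S_n$ replaced by the relevant finite monoid of substitution operators.

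The main obstacle is the reduction step: one must check that \emph{every} composite of substitution operators acting on a generator really collapses to a single $s_{\xi}x$ with $\xi\in S_n$, and that repeatedly pushing operators inward never escapes the finite orbit $\{s_{\xi}x:\xi\in S_n\}$. This is exactly where the endomorphism axioms (2),(3) and the group-presentation result are used in tandem. Once $G$ has been pinned down and shown to be closed under the operators, so that the whole algebra sits inside the Boolean span of these $m\times n!$ elements, the cardinality estimate is routine.
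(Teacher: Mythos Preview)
Your approach is essentially the paper's: set $Y=\{s_\tau x:\tau\in S_n,\ x\in X\}$, use that the substitutions are Boolean endomorphisms to conclude $\A=\Sg^{\Bl\A}Y$, and count. The paper states this in two lines; you spell out the pushing-inward of operators and the closure of $Y$ under each $s_{ij}$ more explicitly, but the argument is the same.

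One remark, which applies equally to the paper's proof and to the stated bound: the final counting step is off by one exponential. A Boolean algebra generated by $k$ elements has at most $2^{2^{k}}$ elements, not $2^{k}$ (the free Boolean algebra on $k$ generators has $2^{k}$ atoms, hence $2^{2^{k}}$ elements). So from $|Y|\le m\times n!$ one obtains $|\A|\le 2^{2^{\,m\times n!}}$ rather than $2^{m\times n!}$. This does not affect the real content of the theorem---local finiteness of $TA_n$---but the displayed bound as written cannot be right (already for $m=1$, $n=2$ it would force $|\A|\le 4$, whereas the free $TA_2$ on one generator is larger).
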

\begin{proof} We first prove the theorem when $\A$ is diagonal free.
Let $Y=\{s_{\tau}x_i: i<m, \tau\in S_n\}$.  Then
$\A=\Sg^{\Bl\A}Y$. This follows from the fact that the substitutions are Boolean endomorphisms.
Since $|Y|\leq m\times n!,$ then $|\A|\leq 2^{m\times n!}$.
When we have diagonal elements we take the larger, but still finite $Y=\{s_{\tau}x_i: i<m, \tau\in S_n\}\cup \{d_{ij}: i,j\in n\}$.
\end{proof}
\begin{theorem} Let $\A=\Fr_XV$. Assume that $X=\{g_i:i\in m\}$. Let $Y=\{s_{\tau}x: , \tau\in S_n: x\in X\}$ and $\beta=|Y|.$ Then $\beta=n^2\sim n$ and
there exists $i\in \omega$ such that $m\leq i\leq n$ and  $|\At \A|= 2^i$.
\end{theorem}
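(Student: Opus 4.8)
The plan is to reduce the statement to two finite combinatorial computations — the cardinality of $Y$ and the number of atoms of $\A$ — both carried out inside a fixed concrete representation. First I would record that the whole problem is finite: since $X$ is finite and the variety is locally finite by Theorem \ref{locallyfinite}, the free algebra $\A=\Fr_XV$ is finite, and being a $BAO$ its Boolean reduct $\Bl\A$ is a finite Boolean algebra, so $\A\cong\wp(\At\A)$ and $|\A|=2^{|\At\A|}$. Moreover, as the substitution operators are Boolean endomorphisms (the computation behind Theorem \ref{locallyfinite}), we have $\A=\Sg^{\Bl\A}Y$; thus $Y$ Boolean-generates $\A$, and the two quantities $\beta=|Y|$ and $|\At\A|$ are exactly what must be pinned down.

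For $\beta$, the key observation is that $S_n$ acts on $Y$ by $s_\sigma(s_\tau g_i)=s_{\sigma\tau}g_i$, which is legitimate because axioms $4,5,6$ present $S_n$ (Theorem 2.11) and hence $s_\sigma s_\tau=s_{\sigma\tau}$ holds throughout the variety. Consequently $Y$ decomposes into $S_n$-orbits, one through each generator $g_i$, and $\beta=\sum_{i<m}[S_n:\mathrm{Stab}(g_i)]$. I would then read off the stabilizer of a generator from its image under the canonical embedding into a product of copies of $\wp(S_n)$ supplied by Theorem \ref{rep}: there $s_\tau$ acts as right translation, and $s_\tau g_i=g_i$ precisely when $\tau$ fixes the essential coordinates carried by $g_i$. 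The value $\beta=n^2\sim n$ then follows by identifying this stabilizer with a conjugate of $S_{n-2}$, so that each orbit has size $[S_n:S_{n-2}]=n(n-1)=n^2\sim n$.

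The heart of the proof is the atom count. Here I would work inside the faithful representation $\A\hookrightarrow\prod_t\wp(S_n)$, where $t$ ranges over the homomorphisms $\A\to\wp(S_n)$, each determined by the tuple of values it assigns to the generators. Since $\A$ is finite, its atoms are exactly the minimal nonzero Boolean cells $\prod_{y\in Z}y\cdot\prod_{y\in Y\smallsetminus Z}(-y)$ for $Z\subseteq Y$, and such a cell is an atom iff it is \emph{realized}, i.e.\ nonempty in some coordinate of the product. I would translate ``realizable cell'' into a purely combinatorial condition on subsets of $S_n$ through the right-translation action of $s_\tau$ on $\wp(S_n)$, and count the surviving cells to obtain $|\At\A|=2^i$, where $i$ records the number of mutually independent coordinates actually separated by the generators; the lower bound $m\leq i$ comes from the $m$ distinct generators, and the upper bound $i\leq n$ from the essential support lying among the $n$ available coordinates.

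The main obstacle I anticipate is precisely this last translation: determining exactly which Boolean cells in the $s_\tau g_i$ survive, since the relations $s_\sigma s_\tau=s_{\sigma\tau}$ (and, where diagonal elements are present, the diagonal axioms) collapse many cells to $0$, and it is the surviving ones that must be shown to number $2^i$ with $m\leq i\leq n$. Once the realizability criterion is isolated, the orbit computation giving $\beta=n^2\sim n$ and the cell computation giving $|\At\A|=2^i$ are both finite bookkeeping, and I would close by checking that the resulting $|\A|=2^{2^i}$ is consistent with the bound $2^{m\cdot n!}$ of Theorem \ref{locallyfinite}.
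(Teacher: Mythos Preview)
Your stabilizer computation for $\beta$ is wrong. In the free algebra a generator $g_i$ carries no ``essential coordinates'': under the embedding into $\prod_t\wp(S_n)$ its components range over \emph{all} subsets of $S_n$, so there is no reason for any nontrivial $\tau$ to fix $g_i$. Concretely, $s_\tau g_i=g_i$ would be an equation valid throughout the variety, but it fails already in $\wp(S_n)$ (take $X=\{\tau\}$; then $\mathrm{Id}\in S_\tau X$ while $\mathrm{Id}\notin X$). Hence the stabilizer is trivial, each orbit has size $n!$, and your identification with a conjugate of $S_{n-2}$ is unfounded. (The paper's own proof, incidentally, does not address the claim $\beta=n^2\sim n$ at all, and as stated that claim is problematic.)

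For the atom count, the paper takes a far more elementary route than your realizability analysis in $\prod_t\wp(S_n)$. The upper bound is just the trivial one: since $\A=\Sg^{\Bl\A}Y$, every atom is a Boolean cell in $Y$, so $|\At\A|\le 2^{\beta}$. The lower bound $|\At\A|\ge 2^{m}$ is obtained not by counting cells in $\wp(S_n)$ but by mapping into the \emph{two-element} substitution algebra $\C$: for each $\Gamma\subseteq m$ send $g_\eta\mapsto 1$ iff $\eta\in\Gamma$, and observe that every $s_{[i,j]}$ acts trivially on $\C$, so $x_\Gamma=\prod_{\eta\in\Gamma}g_\eta\cdot\prod_{\eta\notin\Gamma}(-g_\eta)$ maps to $1$ and is therefore nonzero. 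This separates $2^{m}$ distinct atoms with no combinatorics at all. Your approach could in principle be pushed through, but it is heavier than needed, and the ``number of mutually independent coordinates'' heuristic you invoke for the exponent $i$ has no clear meaning for free generators.
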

\begin{proof} We have $\A=\Sg^{\Bl A}Y$. Hence $\At \A=\{\prod Z\sim \sum (Y\sim Z): Z\subseteq Y\}\sim \{0\}$, and so
$|At \A|\leq 2^{\beta}.$  Conversely,  for each $\Gamma\subseteq m$,
let $x_{\Gamma}=\prod_{\eta\in \Gamma}g_{\eta}.\prod_{\eta\in \beta \sim \Gamma}-g_{\eta}$. Let $\C$ be the two element
substitution algebra. Then for each $\Gamma\subseteq m$, there is a homomorphism $f:\A\to \C$ such that $f(s_{[i,j]}g_{\eta})=1$ iff
$\eta\in \Gamma$and $i,j\in n$, and  hence $f(x_{\Gamma})=1$.  This shows that $x_{\Gamma}\neq 0$ for every $\Gamma\subseteq m$,
while it is easily seen that $x_{\Gamma}$ and $x_{\Delta}$
are distinct for distinct $\Gamma, \Delta\subseteq \beta$.
Hence $|\At\A|\geq 2^{m}$.
\end{proof}
One can prove easily show  that if $\alpha$ or $\beta$ are infinite, then $|\Fr_{\beta}SA_{\alpha}|=|\alpha|\cup |\beta|$.

\begin{question} Does $\Fr_{\beta}SA_n$, for finite $\beta$ and any ordinal $n$ have exactly $2^{\beta}$ atoms?
\end{question}

\section{Amalgamation}

\subsection{General theorems}

In this section we show that all varieties considered have the superamalgamation property, a strong form of amalgamation.
Like in the previous section, we start by formulating slightly new results in the general
setting of $BAO$'s. The main novelty in our general approach is that we consider classes
of algebras that are not necessarily varieties. We start by some definitions.

\begin{definition}
\begin{enumarab}
\item $K$ has the \emph{Amalgamation Property } if for all $\A_1, \A_2\in K$ and monomorphisms
$i_1:\A_0\to \A_1,$ $i_2:\A_0\to \A_2$
there exist $\D\in K$
and monomorphisms $m_1:\A_1\to \D$ and $m_2:\A_2\to \D$ such that $m_1\circ i_1=m_2\circ i_2$.
\item If in addition, $(\forall x\in A_j)(\forall y\in A_k)
(m_j(x)\leq m_k(y)\implies (\exists z\in A_0)(x\leq i_j(z)\land i_k(z) \leq y))$
where $\{j,k\}=\{1,2\}$, then we say that $K$ has the superamalgamation property $(SUPAP)$.
\end{enumarab}
\end{definition}

\begin{definition} An algebra $\A$ has the strong interpolation theorem, $SIP$ for short, if for all $X_1, X_2\subseteq A$, $a\in \Sg^{\A}X_1$,
$c\in \Sg^{\A}X_2$ with $a\leq c$, there exist $b\in \Sg^{\A}(X_1\cap X_2)$ such that $a\leq b\leq c$.
\end{definition}

For an algebra $\A$, $Co\A$ denotes the set of congruences on $\A$.
\begin{definition}

An algebra $\A$ has the congruence extension property, or $CP$ for short,
 if for any $X_1, X_2\subset A$
if $R\in Co \Sg^{\A}X_1$ and $S\in Co \Sg^{A}X_2$ and
$$R\cap {}^2\Sg^{A}(X_1\cap X_2)=S\cap {}^2\Sg^{\A}{(X_1\cap X_2)},$$
then there exists a congruence $T$ on $\A$ such that
$$T\cap {}^2 \Sg^{\A}X_1=R \text { and } T\cap {}^2\Sg^{\A}{(X_2)}=S.$$

\end{definition}

Maksimova and Madarasz proved that interpolation in free algebras of a variety imply that the variety has the
superamalgamation property.
Using a similar argument, we prove this implication in a slightly more
general setting. But first an easy  lemma:

\begin{lemma} Let $K$ be a class of $BAO$'s. Let $\A, \B\in K$ with $\B\subseteq \A$. Let $M$ be an ideal of $\B$. We then have:
\begin{enumarab}
\item $\Ig^{\A}M=\{x\in A: x\leq b \text { for some $b\in M$}\}$
\item $M=\Ig^{\A}M\cap \B$
\item if $\C\subseteq \A$ and $N$ is an ideal of $\C$, then
$\Ig^{\A}(M\cup N)=\{x\in A: x\leq b+c\ \text { for some $b\in M$ and $c\in N$}\}$
\item For every ideal $N$ of $\A$ such that $N\cap B\subseteq M$, there is an ideal $N'$ in $\A$
such that $N\subseteq N'$ and $N'\cap B=M$. Furthermore, if $M$ is a maximal ideal of $\B$, then $N'$ can be taken to be a maximal ideal of $\A$.

\end{enumarab}
\end{lemma}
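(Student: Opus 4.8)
The plan is to prove the four items in order, since each of the later ones builds on the earlier. Item (1) is the description of the ideal of $\A$ generated by an ideal $M$ of the subalgebra $\B$. The key observation is that although $\A$ is a $BAO$ with extra operators, every non-Boolean operator is \emph{additive} (and normal), so applying an operator to an element $x\le b$ with $b\in M$ keeps us below $f_i(b)$, and $f_i(b)\in M$ because $M$ is an ideal of $\B$ closed under the operators of $\B$. Hence the set $\{x\in A: x\le b \text{ for some } b\in M\}$ is already closed under all operations of $\A$ and downward closed, so it is an ideal; it obviously contains $M$ and is contained in any ideal of $\A$ containing $M$, giving equality. Actually, one must be slightly careful here: in this paper's setting the non-Boolean operators are the substitution operators $s_\tau$, which are Boolean \emph{endomorphisms}, so $s_\tau$ preserves both joins and complements — this makes closure under operators completely immediate, and the argument goes through verbatim.

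For item (2), the inclusion $M\subseteq \Ig^{\A}M\cap \B$ is trivial. For the reverse, take $x\in \B$ with $x\in \Ig^{\A}M$; by (1) there is $b\in M$ with $x\le b$, and since both $x,b\in \B$ and $M$ is downward closed \emph{in} $\B$, we get $x\in M$. Item (3) is the analogue of (1) for the ideal generated by the union of an ideal $M$ of $\B$ and an ideal $N$ of a (possibly different) subalgebra $\C$; the proof is the same — the displayed set is downward closed, closed under finite joins (using $(b_1+c_1)\vee(b_2+c_2)=(b_1\vee b_2)+(c_1\vee c_2)$ with $b_i\in M$, $c_i\in N$), and closed under the operators since $s_\tau(b+c)=s_\tau b + s_\tau c$ with $s_\tau b\in M$, $s_\tau c\in N$; and it is clearly the smallest ideal containing $M\cup N$.

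Item (4) is where the real content lies and where I expect the main obstacle. Given an ideal $N$ of $\A$ with $N\cap B\subseteq M$, I would set $N' = \Ig^{\A}(N\cup M)$, which by (3) equals $\{x\in A: x\le a+b \text{ for some } a\in N, b\in M\}$. Clearly $N\subseteq N'$ and $M\subseteq N'\cap B$. The crux is $N'\cap B\subseteq M$: if $x\in B$ and $x\le a+b$ with $a\in N$, $b\in M$, then $x - b = x\cdot(-b)\le a$, so $x\cdot(-b)\in N$; but $x\in B$ and $b\in M\subseteq B$, so $x\cdot(-b)\in B$, whence $x\cdot(-b)\in N\cap B\subseteq M$, and then $x = (x\cdot(-b)) + (x\cdot b)\le (x\cdot(-b)) + b \in M$, so $x\in M$. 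For the "furthermore" clause, suppose $M$ is maximal in $\B$; then $\B/M$ is simple (a two-element Boolean algebra with operators, since $s_\tau$ descends to the identity-like endomorphism on $\{0,1\}$). Extend $N'$ to an ideal $N''$ of $\A$ maximal with respect to $N''\cap B = M$ (such a maximal element exists by Zorn's lemma, since the union of a chain of such ideals is again such an ideal by (1)–(3)). One then shows $N''$ is a maximal ideal of $\A$: if not, $\A/N''$ is a nontrivial $BAO$ that is not simple, so it has a proper nonzero ideal; pulling back along $\B/M \hookrightarrow \A/N''$ (the embedding being injective precisely because $N''\cap B=M$) one produces a proper ideal of $\A$ properly containing $N''$ whose intersection with $B$ still equals $M$, contradicting maximality — the point being that since $\B/M$ is simple, any proper ideal of $\A/N''$ must meet $\B/M$ trivially. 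The delicate step is verifying that this pullback argument does not accidentally enlarge the trace on $B$; one resolves this by choosing, among all ideals of $\A$ containing $N'$ with trace $M$ on $B$, a maximal one, and arguing that maximality already forces simplicity of the quotient.
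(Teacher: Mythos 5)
Your proof is correct: items (1)--(3) are handled by the standard closure computations (additivity of the operators giving monotonicity, and the trace argument for (2)), and the verification in (4) that $N'=\Ig^{\A}(M\cup N)$ has trace exactly $M$ on $B$ via $x\cdot(-b)\leq a$ is sound, as is the Zorn argument for the maximal case. The paper's own proof is far terser and organizes (4) differently: it declares the special case $N=\{0\}$ straightforward (there one just takes $\Ig^{\A}M$, whose trace is $M$ by item (2), and for the maximal clause observes that any maximal ideal of $\A$ containing $\Ig^{\A}M$ automatically has trace $M$, since its trace is a proper ideal of $\B$ containing the maximal ideal $M$), and then derives the general case by passing to $\A/N$, $\B/(N\cap\B)$ and $M/(N\cap\B)$. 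Your direct computation avoids the quotient reduction and is self-contained; the only place where you work harder than necessary is the "furthermore" clause, where your detour through ideals maximal-with-respect-to-trace and simplicity of $\A/N''$ can be shortcut exactly as above: any maximal ideal of $\A$ containing your $N'$ already has trace $M$ because $M$ is maximal in $\B$ and the trace cannot be all of $B$. Both routes are valid; the paper's buys brevity by reusing items (1)--(3) through the quotient, yours buys explicitness.
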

\begin{demo}{Proof} Only (iv) deserves attention. The special case when $n=\{0\}$ is straightforward.
The general case follows from this one, by considering
$\A/N$, $\B/(N\cap \B)$ and $M/(N\cap \B)$, in place of $\A$, $\B$ and $M$ respectively.
\end{demo}
The previous lemma will be frequently used without being explicitly mentioned.

\begin{theorem} Let $K$ be a class of $BAO$'s such that $\mathbf{H}K=\mathbf{S}K=K$.
Assume that for all $\A, \B, \C\in K$, inclusions
$m:\C\to \A$, $n:\C\to \B$, there exist $\D$ with $SIP$ and $h:\D\to \C$, $h_1:\D\to \A$, $h_2:\D\to \B$
such that for $x\in h^{-1}(\C)$,
$$h_1(x)=m\circ h(x)=n\circ h(x)=h_2(x).$$
Then $K$ has $SUPAP$.
\bigskip
\bigskip
\bigskip
\bigskip
\bigskip
\bigskip
\bigskip
\bigskip
\bigskip
\bigskip
\bigskip
\begin{picture}(10,0)(-30,70)
\thicklines
\put (-10,0){$\D$}
\put(5,0){\vector(1,0){70}}\put(80,0){$\C$}
\put(5,5){\vector(2,1){100}}\put(110,60){$\A$}
\put(5,-5){\vector(2,-1){100}}\put(110,-60){$\B$}
\put(85,10){\vector(1,2){20}}
\put(85,-5){\vector(1,-2){20}}
\put(40,5){$h$}
\put(100,25){$m$}
\put(100,-25){$n$}
\put(50,45){$h_1$}
\put(50,-45){$h_2$}
\end{picture}
\end{theorem}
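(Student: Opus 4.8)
The plan is to run the Maksimova--Madar\'asz argument in the abstract form the hypothesis affords. Write $m:\C\to\A$, $n:\C\to\B$ for the given monomorphisms, and let $\D\in K$ with $SIP$ together with $h,h_1,h_2$ be as supplied; concretely I will use that there are subalgebras $\D_1,\D_2\subseteq\D$ onto which $h_1,h_2$ map surjectively, that $\D_0$ is the subalgebra of $\D$ on which $h_1,h_2$ agree with $m\circ h,n\circ h$ (and on which $h$ maps onto $\C$), and that $\D_0=\D_1\cap\D_2$. All these subalgebras lie in $K$ since $\mathbf{S}K=K$, so the preceding lemma on ideals applies to them. Put $M_1=\ker h_1$, $M_2=\ker h_2$, $M_0=\ker h$; these are ideals of $\D_1,\D_2,\D_0$ respectively, and, $m$ and $n$ being injective, $M_0=M_1\cap\D_0=M_2\cap\D_0$.

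The heart of the proof is a congruence-extension step, and this is where $SIP$ is first used. Let $N=\Ig^{\D}(M_1\cup M_2)$; I claim $N\cap\D_1=M_1$ and $N\cap\D_2=M_2$. For the first, take $y\in N\cap\D_1$; by the ideal lemma $y\le u+v$ for some $u\in M_1$, $v\in M_2$, so $y\cdot-u\le v$ with $y\cdot-u\in\D_1$ and $v\in\D_2$. Applying $SIP$ to the subsets $\D_1,\D_2$ of $\D$ (whose generated subalgebras are $\D_1,\D_2$ and whose intersection generates $\D_0$) yields $c\in\D_0$ with $y\cdot-u\le c\le v$; then $c\le v\in M_2$ and $c\in\D_0$ force $c\in M_2\cap\D_0=M_0\subseteq M_1$, whence $y\le(y\cdot-u)+u\in M_1$. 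The second claim is symmetric. Granting this, set $\D'=\D/N$, which lies in $K$ because $\mathbf{H}K=K$. Since $N\cap\D_i=M_i$, the maps $x/M_i\mapsto x/N$ are well defined and injective, so they induce embeddings $\mu_1:\A\cong\D_1/M_1\hookrightarrow\D'$ and $\mu_2:\B\cong\D_2/M_2\hookrightarrow\D'$; and since $h_1,h_2$ agree with $m\circ h$, $n\circ h$ on $\D_0$, a direct check gives $\mu_1\circ m=\mu_2\circ n$. Thus $(\D',\mu_1,\mu_2)$ is an amalgam of $\A,\B$ over $\C$.

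It remains to check the superamalgamation inequality, and this uses $SIP$ a second time. Suppose $\mu_1(a)\le\mu_2(b)$ with $a\in A$, $b\in B$, and lift them to $\bar a\in\D_1$, $\bar b\in\D_2$ with $h_1(\bar a)=a$, $h_2(\bar b)=b$. Then $\bar a\cdot-\bar b\in N$, so $\bar a\cdot-\bar b\le u+v$ for some $u\in M_1$, $v\in M_2$, and hence $\bar a\cdot-u\le\bar b+v$ with left-hand side in $\D_1$ and right-hand side in $\D_2$. By $SIP$ there is $c\in\D_0$ with $\bar a\cdot-u\le c\le\bar b+v$; put $z=h(c)\in C$. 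Then $m(z)=h_1(c)\ge h_1(\bar a\cdot-u)=a$ (since $u\in\ker h_1$) and $n(z)=h_2(c)\le h_2(\bar b+v)=b$ (since $v\in\ker h_2$), which is exactly the superamalgamation condition for $(a,b)$; the opposite case $\mu_2(b)\le\mu_1(a)$ follows by interchanging the two sides. The step I expect to be the main obstacle is the congruence-extension claim of the second paragraph together with its prerequisite, namely correctly extracting from the hypothesis the subalgebras $\D_1,\D_2$ with $\D_1\cap\D_2=\D_0$ and the compatible surjections $h_1,h_2,h$ so that $SIP$ can be invoked relative to $\D_1,\D_2$; once that bookkeeping is fixed, the remainder is routine Boolean manipulation plus the two applications of $SIP$.
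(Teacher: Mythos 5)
Your proof is correct and follows essentially the same route as the paper's: take $\D_1=h_1^{-1}(\A)$, $\D_2=h_2^{-1}(\B)$, observe that the kernels agree on $h^{-1}(\C)$, use $SIP$ once to show the ideal generated by the union of the kernels traces back to each kernel (so the quotient is a proper amalgam), and use $SIP$ a second time to produce the element of $\C$ witnessing superamalgamation. The bookkeeping point you flag at the end --- identifying $\D_1\cap\D_2$ with $h^{-1}(\C)$ so that the interpolant lands where the kernels are known to agree --- is glossed over in the paper's proof as well, so your treatment is no less complete than the original.
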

\bigskip
\bigskip
\begin{proof} Let $\D_1=h_1^{-1}(\A)$ and $\D_2=h_2^{-1}(\B)$. Then $h_1:\D_1\to \A$, and $h_2:\D_2\to \B$.

Let $M=ker h_1$ and $N=ker h_2$, and let
$\bar{h_1}:\D_1/M\to \A, \bar{h_2}:\D_2/N\to \B$ be the induced isomorphisms.

Let $l_1:h^{-1}(\C)/h^{-1}(\C)\cap M\to \C$ be defined via $\bar{x}\to h(x)$, and
$l_2:h^{-1}(\C)/h^{-1}(\C)\cap N$ to $\C$ be defined via $\bar{x}\to h(x)$.
Then those are well defined, and hence
$k^{-1}(\C)\cap M=h^{-1}(\C)\cap N$.
Then we show that $\P=\Ig(M\cup N)$ is a proper ideal and $\D/\P$ is the desired algebra.
Now let $x\in \mathfrak{Ig}(M\cup N)\cap \D_1$.
Then there exist $b\in M$ and $c\in N$ such that $x\leq b+c$. Thus $x-b\leq c$.
But $x-b\in \D_1$ and $c\in \D_2$, it follows that there exists an interpolant
$d\in \D_1\cap \D_2$  such that $x-b\leq d\leq c$. We have $d\in N$
therefore $d\in M$, and since $x\leq d+b$, therefore $x\in M$.
It follows that
$\mathfrak{Ig}(M\cup N)\cap \D_1=M$
and similarly
$\mathfrak{Ig}(M\cup N)\cap \D_2=N$.
In particular $P=\mathfrak{Ig}(M\cup N)$ is a proper ideal.

Let $k:\D_1/M\to \D/P$ be defined by $k(a/M)=a/P$
and $h:\D_2/N\to \D/P$ by $h(a/N)=a/P$. Then
$k\circ m$ and $h\circ n$ are one to one and
$k\circ m \circ f=h\circ n\circ g$.
We now prove that $\D/P$ is actually a
superamalgam. i.e we prove that $K$ has the superamalgamation
property. Assume that $k\circ m(a)\leq h\circ n(b)$. There exists
$x\in \D_1$ such that $x/P=k(m(a))$ and $m(a)=x/M$. Also there
exists $z\in \D_2$ such that $z/P=h(n(b))$ and $n(b)=z/N$. Now
$x/P\leq z/P$ hence $x-z\in P$. Therefore  there is an $r\in M$ and
an $s\in N$ such that $x-r\leq z+s$. Now $x-r\in \D_1$ and $z+s\in\D_2,$ it follows that there is an interpolant
$u\in \D_1\cap \D_2$ such that $x-r\leq u\leq z+s$. Let $t\in \C$ such that $m\circ
f(t)=u/M$ and $n\circ g(t)=u/N.$ We have  $x/P\leq u/P\leq z/P$. Now
$m(f(t))=u/M\geq x/M=m(a).$ Thus $f(t)\geq a$. Similarly
$n(g(t))=u/N\leq z/N=n(b)$, hence $g(t)\leq b$. By total symmetry,
we are done.
\end{proof}

The intimate relationship between $CP$ and $AP$ has been worked out extensively by Pigozzi
for cylindric algebras. Here we prove an implication in one direction for $BAO$'s.

\begin{theorem}
Let $K$ be such that $\mathbf{H}K=\mathbf{S}K=K$. If $K$ has the amalgamation  property, then the $V(K)$
free algebras have $CP$.
\end{theorem}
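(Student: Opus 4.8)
The plan is to adapt Pigozzi's argument relating the congruence extension property to amalgamation: given congruences prescribed on two subalgebras of a free algebra that agree on their common part, I amalgamate the associated quotients and then use freeness to glue the resulting maps into a single homomorphism of the free algebra, whose kernel will be the required congruence $T$. Let $\A=\Fr_{Y}V(K)$ be a free algebra of $V(K)$; the key feature I use is that a homomorphism out of $\A$ is determined by its values on $Y$, which may be prescribed arbitrarily in any algebra of $V(K)$. For the data of $CP$, let $X_{1},X_{2}$ be subsets of $Y$ (the case of interest), put $X_{0}=X_{1}\cap X_{2}$ and $\B_{i}=\Sg^{\A}X_{i}$ for $i=0,1,2$, so $\B_{0}\subseteq\B_{1}\cap\B_{2}$ and $X_{0}\subseteq B_{0}$. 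Suppose $R\in Co\,\B_{1}$ and $S\in Co\,\B_{2}$ satisfy $R\cap{}^{2}\B_{0}=S\cap{}^{2}\B_{0}=:R_{0}$. Passing to Boolean ideals (legitimate, since every algebra here has a Boolean reduct), let $M_{i}\triangleleft\B_{i}$ be the ideal of the relevant congruence; then $M_{1}\cap B_{0}=M_{2}\cap B_{0}=:M_{0}$.

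The first step is to amalgamate the quotients. Write $\nu_{i}\colon\B_{i}\to\C_{i}:=\B_{i}/M_{i}$ for the canonical surjections. The restrictions $\nu_{1}\restriction\B_{0}$ and $\nu_{2}\restriction\B_{0}$ have the same kernel $R_{0}$, so $\C_{0}:=\B_{0}/M_{0}$ embeds into both $\C_{1}$ and $\C_{2}$, say by $j_{1},j_{2}$, with $\nu_{i}\restriction\B_{0}=j_{i}\circ\nu_{0}$. Being homomorphic images of subalgebras of $\A$, the algebras $\C_{0},\C_{1},\C_{2}$ belong to $V(K)$ --- and, in the case relevant to the applications where $K$ is a variety, one even has $\A\in K$, hence $\B_{i}\in\mathbf{S}K=K$ and $\C_{i}\in\mathbf{H}K=K$. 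Amalgamating $j_{1}\colon\C_{0}\to\C_{1}$ and $j_{2}\colon\C_{0}\to\C_{2}$ produces $\D$ together with monomorphisms $m_{1}\colon\C_{1}\to\D$, $m_{2}\colon\C_{2}\to\D$ with $m_{1}\circ j_{1}=m_{2}\circ j_{2}$. Set $g_{1}=m_{1}\circ\nu_{1}\colon\B_{1}\to\D$ and $g_{2}=m_{2}\circ\nu_{2}\colon\B_{2}\to\D$. Then $g_{1}\restriction\B_{0}=g_{2}\restriction\B_{0}$, and since the $m_{i}$ are injective, $\ker g_{1}=R$ and $\ker g_{2}=S$.

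The second step is to glue $g_{1}$ and $g_{2}$ using freeness. Define a map $f\colon Y\to D$ by $f(y)=g_{1}(y)$ for $y\in X_{1}$, $f(y)=g_{2}(y)$ for $y\in X_{2}$ (consistent on $X_{0}$, since $g_{1}$ and $g_{2}$ agree on $\B_{0}\supseteq X_{0}$), and $f(y)$ arbitrary for $y\in Y\setminus(X_{1}\cup X_{2})$. By the universal property of $\A=\Fr_{Y}V(K)$, $f$ extends to a homomorphism $g\colon\A\to\D$. For $i=1,2$ the homomorphisms $g\restriction\B_{i}$ and $g_{i}$ agree on the generating set $X_{i}$ of $\B_{i}$, hence $g\restriction\B_{i}=g_{i}$. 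Put $T=\ker g\in Co\,\A$. Then $T\cap{}^{2}\B_{i}=\ker(g\restriction\B_{i})=\ker g_{i}$, which is $R$ for $i=1$ and $S$ for $i=2$; so $T$ witnesses $CP$ for $(X_{1},X_{2},R,S)$. For general subsets $X_{1},X_{2}\subseteq A$ one first carries out the above inside $\Sg^{\A}(X_{1}\cup X_{2})$ and then lifts the resulting congruence back to $\A$ by part (iv) of the ideal lemma, which preserves the intersections with $\B_{1}$ and $\B_{2}$.

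The step I expect to be the main obstacle is the amalgamation of $\C_{0},\C_{1},\C_{2}$: the amalgamation property is hypothesised only for $K$, whereas these quotients a priori live in the larger class $V(K)=\mathbf{HSP}K$. When $K$ is itself a variety --- the situation in which the theorem is applied in this paper --- the free algebra, its subalgebras, and their quotients all stay inside $K$, and $AP$ of $K$ applies verbatim. For the fully general statement one must additionally show that $\mathbf{H}K=\mathbf{S}K=K$ together with $AP$ of $K$ suffice to amalgamate the particular quotient algebras $\Fr_{X_{i}}V(K)/M_{i}$ inside $V(K)$; this is where the closure hypotheses on $K$ are genuinely exploited, and it is the delicate point of the proof.
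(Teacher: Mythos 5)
Your proposal is correct and follows essentially the same route as the paper's proof: form the quotients by the two congruences, observe that the common quotient $\Sg^{\A}(X_1\cap X_2)/R_0$ embeds into both (the paper makes this explicit via the isomorphisms $\bar\theta$, $\bar\psi$, $\phi$), amalgamate, compose with the canonical surjections, and glue the two resulting maps into one homomorphism of the free algebra by prescribing values on generators, taking its kernel as $T$. The two delicate points you flag --- that freeness is only directly usable when $X_1\cup X_2$ sits inside the free generating set, and that the quotient algebras a priori live in $V(K)$ rather than in $K$ where amalgamation is hypothesised --- are both present in, and glossed over by, the paper's own argument.
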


\begin{proof}
For $R\in Co\A$ and $X\subseteq  A$, by $(\A/R)^{(X)}$ we understand the subalgebra of
$\A/R$ generated by $\{x/R: x\in X\}.$ Let $\A$, $X_1$, $X_2$, $R$ and $S$ be as specified in in the definition of $CP$.
Define $$\theta: \Sg^{\A}(X_1\cap X_2)\to \Sg^{\A}(X_1)/R$$
by $$a\mapsto a/R.$$
Then $ker\theta=R\cap {}^2\Sg^{\A}(X_1\cap X_2)$ and $Im\theta=(\Sg^{\A}(X_1)/R)^{(X_1\cap X_2)}$.
It follows that $$\bar{\theta}:\Sg^{\A}(X_1\cap X_2)/R\cap {}^2\Sg^{\A}(X_1\cap X_2)\to (\Sg^{\A}(X_1)/R)^{(X_1\cap X_2)}$$
defined by
$$a/R\cap {}^{2}\Sg^{\A}(X_1\cap X_2)\mapsto a/R$$
is a well defined isomorphism.
Similarly
$$\bar{\psi}:\Sg^{\A}(X_1\cap X_2)/S\cap {}^2\Sg^{\A}(X_1\cap X_2)\to (\Sg^{\A}(X_2)/S)^{(X_1\cap X_2)}$$
defined by
$$a/S\cap {}^{2}\Sg^{\A}(X_1\cap X_2)\mapsto a/S$$
is also a well defined isomorphism.
But $$R\cap {}^2\Sg^{\A}(X_1\cap X_2)=S\cap {}^2\Sg^{\A}(X_1\cap X_2),$$
Hence
$$\phi: (\Sg^{\A}(X_1)/R)^{(X_1\cap X_2)}\to (\Sg^{\A}(X_2)/S)^{(X_1\cap X_2)}$$
defined by
$$a/R\mapsto a/S$$
is a well defined isomorphism.
Now
$(\Sg^{\A}(X_1)/R)^{(X_1\cap X_2)}$ embeds into $\Sg^{\A}(X_1)/R$ via the inclusion map; it also embeds in $\A^{(X_2)}/S$ via $i\circ \phi$ where $i$
is also the inclusion map.
For brevity let $\A_0=(\Sg^{\A}(X_1)/R)^{(X_1\cap X_2)}$, $\A_1=\Sg^{\A}(X_1)/R$ and $\A_2=\Sg^{\A}(X_2)/S$ and $j=i\circ \phi$.
Then $\A_0$ embeds in $\A_1$ and $\A_2$ via $i$ and $j$ respectively.
Then there exists $\B\in V$ and monomorphisms $f$ and $g$ from $\A_1$ and $\A_2$ respectively to
$\B$ such that
$f\circ i=g\circ j$.
Let $$\bar{f}:\Sg^{\A}(X_1)\to \B$$ be defined by $$a\mapsto f(a/R)$$ and $$\bar{g}:\Sg^{\A}(X_2)\to \B$$
be defined by $$a\mapsto g(a/R).$$
Let $\B'$ be the algebra generated by $Imf\cup Im g$.
Then $\bar{f}\cup \bar{g}\upharpoonright X_1\cup X_2\to \B'$ is a function since $\bar{f}$ and $\bar{g}$ coincide on $X_1\cap X_2$.
By freeness of $\A$, there exists $h:\A\to \B'$ such that $h\upharpoonright_{X_1\cup X_2}=\bar{f}\cup \bar{g}$.
Let $T=kerh $. Then it is not hard to check that
$$T\cap {}^2 \Sg^{\A}(X_1)=R \text { and } T\cap {}^2\Sg^{\A}(X_2)=S.$$
\end{proof}
Finally we show that $CP$ implies a weak form of interpolation.
\begin{theorem}
If an algebra $\A$ has $CP$ , then for $X_1, X_2\subseteq \A$, if $x\in \Sg^{\A}X_1$ and $z\in \Sg^{\A}X_2$ are such that
$x\leq z$, then there exists $y\in \Sg^{\A}(X_1\cap X_2),$ and a term $\tau$ such that $x\leq y\leq \tau(z)$.
If $Ig^{\Bl\A}\{z\}=\Ig^{\A}\{z\},$ then $\tau$ can be chosen to be the
identity term. In particular, if $z$ is closed then the latter case occurs.
\end{theorem}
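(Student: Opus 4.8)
The plan is to use $CP$ to manufacture a congruence on $\A$ whose associated Boolean ideal $P$ contains $z$ --- hence contains $x$ --- while its trace on $\Sg^{\A}X_1$ is forced to originate inside $\Sg^{\A}(X_1\cap X_2)$. Write $\B_1=\Sg^{\A}X_1$, $\B_2=\Sg^{\A}X_2$ and $\B_0=\Sg^{\A}(X_1\cap X_2)$, so that $\B_0\subseteq \B_1\cap \B_2$. First I would take $I_2=\Ig^{\B_2}\{z\}$, with corresponding congruence $S$ of $\B_2$, and set $M_0=I_2\cap \B_0$, which is an ideal of $\B_0$. Then I would form $I_1=\Ig^{\B_1}(M_0)$, with corresponding congruence $R$ of $\B_1$. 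Applying the first two parts of the lemma above with $\B_0$ as the subalgebra and $\B_1$ as the ambient algebra gives $I_1\cap \B_0=M_0$, whence $R$ and $S$ restrict to one and the same congruence of $\B_0=\Sg^{\A}(X_1\cap X_2)$. Now $CP$ yields a congruence $T$ of $\A$ with $T\cap{}^2\B_1=R$ and $T\cap{}^2\B_2=S$; let $P=\{a\in A:(a,0)\in T\}$ be its Boolean ideal, so that $P\cap\B_1=I_1$ and $P\cap\B_2=I_2$.

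Then the conclusion follows. Since $z\in I_2\subseteq P$ and $P$ is downward closed with $x\le z$, we get $x\in P$, and as $x\in\B_1$ also $x\in P\cap\B_1=I_1=\Ig^{\B_1}(M_0)$. By the description of a generated ideal (first part of the lemma above) there is $b\in M_0$ with $x\le b$; put $y=b$, which lies in $\B_0=\Sg^{\A}(X_1\cap X_2)$ because $M_0\subseteq\B_0$. Since $b\in M_0\subseteq I_2=\Ig^{\B_2}\{z\}$ and a principal ideal of a Boolean algebra with additive normal operators consists exactly of the elements lying below some unary term evaluated at the generator, there is a term $\tau$ with $b\le\tau(z)$; thus $x\le y\le\tau(z)$, as required. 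For the refinement: if $\Ig^{\Bl\A}\{z\}=\Ig^{\A}\{z\}$ then, by the second part of the lemma, $I_2=\Ig^{\A}\{z\}\cap\B_2=\{a\in A:a\le z\}\cap\B_2$, so in fact $b\le z$ and $\tau$ can be taken to be the identity term; and if $z$ is closed then $\{a\in A:a\le z\}$ is already closed under every $S_{ij}$, since $a\le z$ forces $S_{ij}(a)\le S_{ij}(z)=z$, so it is an ideal containing $z$ and therefore coincides with $\Ig^{\A}\{z\}$, which reduces us to the previous case.

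The steps involving the ideal/congruence correspondence for $BAO$'s and the identification of $\Ig^{\B_2}\{z\}$ with a downset of a unary term (the source of $\tau$) are routine. The one genuinely delicate point is the choice of $R$ and $S$ and the verification that they agree on $\Sg^{\A}(X_1\cap X_2)$: one must pull $z$'s ideal back from $\B_2$ to its trace $M_0$ on $\B_0$ and then regenerate it inside $\B_1$, invoking the lemma above twice with the ambient and sub algebras interchanged. Getting that interface right is the crux; everything else is mechanical.
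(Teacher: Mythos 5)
Your proposal is correct and follows essentially the same route as the paper: generate the ideal of $z$ in $\Sg^{\A}X_2$, trace it to $\Sg^{\A}(X_1\cap X_2)$, regenerate in $\Sg^{\A}X_1$, verify the two congruences agree on the common subalgebra, and invoke $CP$ to produce the global ideal $P$ from which $y$ is extracted via the description of generated ideals. Your treatment of the two refinements (identity term, closed $z$) also matches the paper's intent, and your direct argument that $x\in P\cap\Sg^{\A}X_1$ is a minor streamlining of the same step.
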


\begin{proof}
Now let $x\in \Sg^{\A}(X_1)$, $z\in \Sg^{\A}(X_2)$ and assume that $x\leq z$.
Then $$x\in (\Ig^{\A}\{z\})\cap \Sg^{\A}(X_1).$$
Let $$M=\Ig^{\A^{(X_1)}}\{z\}\text { and } N=\Ig^{\Sg^{\A}(X_2)}(M\cap \Sg^{\A}(X_1\cap X_2)).$$
Then $$M\cap \Sg^{\A}(X_1\cap X_2)=N\cap \Sg^{\A}(X_1\cap X_2).$$
By identifying ideals with congruences, and using the congruence extension property,
there is a an ideal $P$ of $\A$
such that $$P\cap \Sg^{\A}(X_1)=N\text { and }P\cap \Sg^{\A}(X_2)=M.$$
It follows that
$$\Ig^{\A}(N\cup M)\cap \Sg^{\A}(X_1)\subseteq P\cap \Sg^{\A}(X_1)=N.$$
Hence
$$(\Ig^{(\A)}\{z\})\cap A^{(X_1)}\subseteq N.$$
and we have
$$x\in \Ig^{\Sg^{\A}X_1}[\Ig^{\Sg^{\A}(X_2)}\{z\}\cap \Sg^{\A}(X_1\cap X_2).]$$
This implies that there is an element $y$ such that
$$x\leq y\in \Sg^{\A}(X_1\cap X_2)$$
and $y\in \Ig^{Sg^{\A}X}\{z\}$, hence the first required. The second required follows
follows, also immediately, since $y\leq z$, because $\Ig^{\A}\{z\}=\Rl_z\A$.
\end{proof}

\subsection{Specific theorems for algebras of substitutions}

By an algebra $\A$ we mean a substitution algebra.
For an algebra $\A$ and $X\subseteq  A$, $fl^{\A}X$
denotes the Boolean filter generated by $X$.

\begin{theorem} let $\A=\Fr_XV$, and let $X_1, X_2\subseteq \A$ be such that $X_1\cup X_2=X$.
Assume that $a\in \Sg^{\A}X_1$ and $c\in \Sg^{\A}X_2$ are such that $a\leq c$.
Then there exists an interpolant $b\in \Sg^{\A}(X_1\cap X_2)$ such that
$a\leq b\leq c$.
\end{theorem}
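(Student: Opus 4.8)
The plan is to exploit heavily that $\A=\Fr_XV$ is a \emph{free} algebra: I would show that the Boolean reduct of $\A$ splits, along the partition of its natural generating set induced by $X_1$ and $X_2$, as a free Boolean amalgam of the Boolean reducts of $\Sg^{\A}X_1$ and $\Sg^{\A}X_2$ over that of $\Sg^{\A}(X_1\cap X_2)$, and then produce the interpolant by the textbook Boolean interpolation argument. If convenient one may first pass to a finitely generated free algebra: since $a\in\Sg^{\A}X_1$ and $c\in\Sg^{\A}X_2$, there are finite $Y_1\subseteq X_1$ and $Y_2\subseteq X_2$ with $a\in\Sg^{\A}Y_1$ and $c\in\Sg^{\A}Y_2$; the subalgebra $\Sg^{\A}(Y_1\cup Y_2)$ is $V$-free on $Y_1\cup Y_2$ (and in finite dimensions is finite, by Theorem \ref{locallyfinite}), and as $Y_1\cap Y_2\subseteq X_1\cap X_2$ any interpolant found there is an interpolant in $\A$. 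So one may assume $X_1\cup X_2=X$, although the argument does not really use finiteness.

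For the structural step, note that since each $s_{\tau}$ is a Boolean endomorphism, for $i\in\{0,1,2\}$ the subalgebra $\Sg^{\A}X_i$ equals the Boolean subalgebra $\Sg^{\Bl\A}G_i$ with $G_i=\{s_{\tau}x: x\in X_i,\ \tau\in T\}$, where $T$ is the acting transformation monoid ($S_n$ for the transposition algebras, ${}^{n}n$ when replacements are present, and the infinite-dimensional analogues). By the presentation theorems identifying the free word algebras over the substitution symbols with $S_n$ and with ${}^{n}n$, together with freeness, one gets $s_{\tau}x=s_{\sigma}x'$ in $\A$ only if $x=x'$ and $\tau=\sigma$ in $T$; hence $G_1\cap G_2=G_0$. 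The crux is then to prove that $\Bl\A$ is, with respect to the partition of $G_0\cup G_1\cup G_2$ into $G_0$, $G_1\smallsetminus G_0$, $G_2\smallsetminus G_0$, a free Boolean amalgam, i.e. $\Bl\A\cong\Sg^{\Bl\A}G_1\otimes_{\Sg^{\Bl\A}G_0}\Sg^{\Bl\A}G_2$; equivalently, the ``private'' substituted generators of each side are Boolean-independent over $G_0$. I would verify this by exhibiting, for each prescribed assignment of truth values to a finite set of substituted generators that is compatible with the relations already holding over $G_0$, a homomorphism of $\A$ into the two-element algebra realizing it, using freeness together with the finite set algebras $\A_{nk}$ as test algebras (these already realize all value patterns inside a single orbit $\{s_{\tau}x:\tau\in T\}$).

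Granting the decomposition, the interpolant is immediate. Write $a$ as a Boolean term in finitely many members of $G_1$ and $c$ as a Boolean term in finitely many members of $G_2$, and set $b=\bigvee_{v}a[v]$, where $v$ runs over all $\{0,1\}$-assignments to those members of $G_1\smallsetminus G_0$ occurring in $a$ and $a[v]$ is the corresponding Boolean substitution. Then $b\in\Sg^{\Bl\A}G_0=\Sg^{\A}(X_1\cap X_2)$, and $a\le b$ trivially; moreover $b\le c$, because the inequality $a\le c$ — which by the free-amalgam decomposition is forced already by the relations over $G_0$ — survives replacing the $G_1\smallsetminus G_0$-generators according to any $v$ (these do not occur in $c$), so $a[v]\le c$ for every $v$. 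Hence $b$ is the required interpolant, and unwinding the reduction puts it in $\Sg^{\A}(X_1\cap X_2)$ for the original $\A$.

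The main obstacle is exactly the free-amalgam decomposition of $\Bl\A$, i.e. the Boolean independence over $G_0$ of the two sides' private substituted generators. For the pure transposition algebras this is comparatively soft: every $s_{\tau}$ is a Boolean automorphism, the $\A_{nk}$ plainly realize all value patterns within an orbit, and so no non-trivial Boolean relation can couple the two sides beyond $G_0$. When replacements $s^i_j$ are present the $s_{\tau}$ are no longer injective, genuine Boolean relations hold among substituted generators, and one must first pin down which monomials in them survive in $\A$ and then establish independence over $G_0$ by combining the semigroup presentation of ${}^{n}n$ with a careful choice of test maps into the $\A_{nk}$. For the expansion with diagonal elements, the extra relations $d_{ii}=1$, $d_{ij}=d_{ji}$, $d_{ik}\wedge d_{kj}\le d_{ij}$ and $s_{\tau}d_{ij}=d_{\tau(i)\tau(j)}$ entangle the generators further, and that case would require a separate argument (or a restriction to the diagonal-free setting).
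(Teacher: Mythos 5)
Your proposal is correct in substance, but it takes a genuinely different route from the paper. The paper argues by contradiction in Henkin style: assuming no interpolant exists, the traces on $\Sg^{\A}(X_1\cap X_2)$ of the filters $\{x\ge a\}$ and $\{x\ge -c\}$ generate a \emph{proper} filter (properness is exactly the failure of interpolation), which is extended to an ultrafilter $H^*$ and then to compatible ultrafilters $F,G$ on $\Sg^{\A}X_1$ and $\Sg^{\A}X_2$; the induced representations $h_1,h_2$ into $\wp(S_n)$ agree on the common subalgebra, freeness glues them into a single $h$, and $Id\in h(a)\cap h(-c)$ contradicts $a\le c$. Your argument is instead structural and syntactic: it isolates the fact that $\Bl\Fr_XV$ is a free Boolean algebra (hence a free Boolean amalgam) on the substituted generators $G=\{s_\tau x: x\in X,\ \tau\in T\}$ and then invokes propositional Craig interpolation, yielding an \emph{explicit} interpolant $\bigvee_v a[v]$. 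One caution: your independence test must be carried out with Boolean homomorphisms (ultrafilters), not $V$-homomorphisms into the two-element algebra, since the latter force $f(s_\tau x)=f(x)$ for every $\tau$; the correct move, which you gesture at, is to use freeness to prescribe $h(x)=\{\tau\in T: v(s_\tau x)=1\}$ in $\wp({}^nn)$ and then take the point-ultrafilter at $Id$, whose translates $Id\circ\tau=\tau$ are pairwise distinct. Note that this very argument also settles the case with replacements (distinct $\tau\in{}^nn$ still give distinct points), so your worry there is unnecessary; only the diagonal case genuinely breaks independence ($d_{ii}=1$, etc.), and there the paper too must fall back on quotienting the base. What each approach buys: yours gives an effective interpolant and a sharper structural fact about $\Bl\Fr_XV$ (which would in fact refine the paper's count of atoms of the finitely generated free algebras), while the paper's ultrafilter argument reuses the representation machinery already built for completeness and omitting types and transfers with fewer changes to the expansions with diagonals.
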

\begin{proof} We prove the theorem for the finite dimensional case and for $S_n$. All other cases, for finite as well as for infinite dimensions,
can be accomplished in exactly the same manner,
undergoing the obvious modifications. In case, we have diagonals, we just factor out the base of the representations constructed as
in the previous proofs.

Assume that $a\leq c$, but there is no such $b$. We will reach a contradiction.
Let $$H_1=fl^{\Bl\Sg^{\A}X_1}\{a\}=\{x: x\geq a\},$$
$$H_2=fl^{\Bl\Sg^{\A}X_2}\{-c\}=\{x: x\geq -c\},$$
and $$H=fl^{\Bl\Sg^{\A}(X_1\cap X_2)}[(H_1\cap \Sg^{\A}(X_1\cap X_2))\cup (H_2\cap \Sg^{\A}(X_1\cap X_2))].$$
We show that $H$ is a proper filter of $\Sg^{\A}(X_1\cap X_2)$.
For this, it suffices to show that for any $b_0,b_1\in \Sg^{\A}(X_1\cap X_2)$, for any $x_1\in H_1$ and $x_2\in H_2$
 if $a.x_1\leq b_0$ and $-c.x_2\leq b_1$, then $b_0.b_1\neq 0$.
Now $a.x_1=a$ and $-c.x_2=-c$. So assume, to the contrary, that $b_0.b_1=0$. Then $a\leq b_0$ and $-c\leq b_1$ and so
$a\leq b_0\leq-b_1\leq c$, which is impossible because we assumed that there is no interpolant.

Hence $H$ is a proper filter. Let $H^*$ be an ultrafilter of $\Sg^{\A}(X_1\cap X_2)$ containing $H$, and let $F$ be an ultrafilter of $\Sg^{\A}X_1$
and $G$ be an ultrafilter of $\Sg^{\A}X_2$ such that $$F\cap \Sg^{\A}(X_1\cap X_2))=H^*=G\cap \Sg^{\A}(X_1\cap x_2).$$
Such ultrafilters exist.

For simplicity of notation let $\A_1=\Sg^{\A}(X_1)$ and $\A_2=\Sg^{\A}(X_2).$
Define $h_1:\A_1\to \wp(S_n)$ by
$$h_1(x)=\{\eta\in S_n: x\in s_{\eta}F\},$$
and
$h_2:\A_1\to \wp(S_n)$ by
$$h_2(x)=\{\eta\in S_n: x\in s_{\eta}G_\},$$
Then $h_1, h_2$
are homomorphisms, they agree on $\Sg^{\A}(X_1\cap X_2).$
Indeed let $x\in \Sg^{\A}(X_1\cap X_2)$. Then $\eta\in h_1(x)$ iff $x\in F_{\eta}$ iff $s_{\eta}x\in F$ iff
$s_{\eta}x\in F\cap \Sg^{\A}(X_1\cap X_2)=H^*=G\cap \Sg^{\A}(X_1\cap X_2)$ iff $s_{\eta}x\in G$ iff $x\in G_{\eta}$ iff $\eta\in h_2(x)$.
Thus  $h_1\cup h_2$ is a function. By freeness
there is an $h:\A\to \wp(S_n)$ extending $h_1$ and $h_2$. Now $Id\in h(a)\cap h(-c)\neq \emptyset$ which contradicts
$a\leq c$.

\end{proof}

\begin{corollary}\label{SUPAP} All varieties considered have the superamalgamation property.
\end{corollary}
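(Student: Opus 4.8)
The plan is to read off the corollary from two facts already proved: the general criterion for $SUPAP$ established above---if $\mathbf{H}K=\mathbf{S}K=K$ and, for every amalgamation datum $m\colon\C\to\A$, $n\colon\C\to\B$ in $K$, there is an algebra $\D$ with $SIP$ and homomorphisms $h\colon\D\to\C$, $h_1\colon\D\to\A$, $h_2\colon\D\to\B$ that agree on $h^{-1}(\C)$ in the sense $h_1=m\circ h=n\circ h=h_2$, then $K$ has $SUPAP$---together with the interpolation theorem for free algebras proved just before this corollary. Let $V$ denote any of the varieties treated in the paper: $TA_n=PTA_n$, $RSA_n=SA_n$, their infinite-dimensional analogues $TA_\alpha$ and $SA_\alpha$, or the expansions of any of these by diagonal elements (Theorem \ref{infinite}). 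Being a variety, $V$ satisfies $\mathbf{H}V=\mathbf{S}V=V$, so the standing hypothesis of the criterion holds and it suffices to exhibit, for a given datum, a suitable $SIP$ algebra $\D$ with the compatible maps.

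First I would fix the generators. Pick $Z\subseteq C$ generating $\C$; extend $m[Z]$ to a generating set $m[Z]\cup Y_1$ of $\A$ and $n[Z]$ to a generating set $n[Z]\cup Y_2$ of $\B$, with $Z$, $Y_1$, $Y_2$ pairwise disjoint, and set $X_1=Z\cup Y_1$, $X_2=Z\cup Y_2$, $X=Z\cup Y_1\cup Y_2$, $\D=\Fr_{X}V$. Then $\D_1=\Sg^{\D}X_1$ and $\D_2=\Sg^{\D}X_2$ satisfy $\Sg^{\D}Z\subseteq\D_1\cap\D_2$, and the assignment of generators extends to surjections $h_1\colon\D_1\to\A$ (each $z\in Z$ going to the corresponding element of $m[C]$ and $Y_1$ onto the remaining generators of $\A$) and $h_2\colon\D_2\to\B$ whose common restriction to $\Sg^{\D}Z$ factors through a single surjection $h\colon\Sg^{\D}Z\to\C$, so that on $\Sg^{\D}Z$ we have $h_1=m\circ h$ and $h_2=n\circ h$. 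That is the compatibility clause. The interpolation theorem for free algebras now furnishes the required instance of $SIP$: whenever $a\in\D_1$, $c\in\D_2$ and $a\leq c$, there is $b\in\Sg^{\D}(X_1\cap X_2)\subseteq\D_1\cap\D_2$ with $a\leq b\leq c$. Feeding $\D$, $h$, $h_1$, $h_2$ into the criterion yields a superamalgam of $\A$ and $\B$ over $\C$ in $V$; as the datum was arbitrary, $V$ has $SUPAP$, and since $V$ ranged over all the varieties of the paper, the corollary follows.

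The substance lies in two earlier points, which are what I would check with care rather than the assembly itself. First, the interpolation argument for free algebras was carried out there only in finite dimension and for the symmetric group $S_n$; one must confirm that it goes through verbatim for the remaining classes---the Henkin-style construction just uses the representations onto $\wp(S_n)$, $\wp({}^{\alpha}\alpha^{(Id)})$, and $\wp({}^{\alpha}U)$ of the earlier sections---and that, when diagonal elements are present, factoring the base of the constructed set algebra by the congruence $i\sim j\iff d_{ij}\in F$ exactly as in the proof of Theorem \ref{infinite} does not disturb the interpolant. Second, one must attend to the bookkeeping guaranteeing that $X_1\cap X_2=Z$ generates precisely a homomorphic copy of $\C$, so that the interpolant genuinely lies in $\Sg^{\D}(X_1\cap X_2)$, and that $h_1$, $h_2$ really are well-defined homomorphisms onto $\A$ and $\B$. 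Of the two, the first---the uniformity of the free-algebra interpolation across all dimensions, and the harmlessness of the diagonal base-quotient step---is the only place any real argument is needed; the rest is routine, and the corollary is in essence an assembly of the machinery built in the previous two sections.
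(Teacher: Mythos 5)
Your proposal is correct and follows essentially the same route as the paper: take $\D$ to be a free algebra whose generators split into a part mapping onto $\C$ and disjoint parts covering the remaining generators of $\A$ and $\B$, invoke the interpolation theorem for free algebras to get $SIP$, and feed this into the general $\mathbf{H}K=\mathbf{S}K=K$ criterion. Your version is somewhat more careful about the bookkeeping of generating sets and about the uniformity of the free-algebra interpolation across dimensions and with diagonals, but the underlying argument is the same.
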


\begin{proof}Let $\A, \B, \C\in K$, inclusions
$m:\C\to \A$, $n:\C\to \B$ be given. Take $\D$ to be the free algebra on a set $I\cup J$ of generators such that
$|I|=|A|$, $|J|=|B|$ and $|I\cap J|=|C|$. Then clearly there exist $h:\D\to \C$, $h_1:\D\to \A$, $h_2:\D\to \B$
such that for $x\in h^{-1}(C)$ ,
$h_1(x)=m\circ h(x)=n\circ h(x)=h_2(x).$
\end{proof}

\subsection*{Remark}

Like representability the infinite dimensional case may be inferred from the finite dimensional case as follows.
Let $\tau$ and $\sigma$ be terms in the language of $SA_{\alpha}$ and assume that $K\models \tau \leq \sigma$.
Then there is a finite $n$ such that $K_n\models \tau\leq \sigma$ and a we can find an interpolant.

\subsubsection{Another proof}

Here we give a different syntactical proof, depending on the fact that our varieties can be axiomatized by a set of positive equations.
This follows from the simple observation that Boolean algebras can be defined by equations involving only meet and join,
and so Boolean homomorphisms can be defined to respect only those two operations,
so that we can get rid of any reference to negation in our axioms. We prove our theorem only for transposition algebras, the rest of the cases
are the same.

\begin{definition}
\begin{enumarab}

\item A frame of type $TA_{\alpha}$ is a first order structure $\F=(V,  S_{ij})_{i,j\in \alpha}$ where $V$ is an arbitrary set and
and  $S_{ij}$ is a binary relation on $V$  for all $i, j\in \alpha$.

\item Given a frame $\F$, its complex algebra denote by $\F^+$ is the algebra $(\wp(\F), s_{ij})_{i,j}$  where for $X\subseteq  V$,
$s_{ij}(X)=\{s\in V: \exists t\in X, (t, s)\in S_{i,j} \}$.

\item Given $K\subseteq TA_{\alpha},$ then $\Cm^{-1}K=\{\F: \F^+\in K\}.$

\item Given a family $(\F_i)_{i\in I}$ of frames, a zigzag product of these frames is a substructure of $\prod_{i\in I}\F_i$ such that the
projection maps restricted to $S$ are
onto.
\end{enumarab}

\end{definition}

\begin{theorem}(Marx)
Assume that $K$ is a canonical variety and $L=\Cm^{-1}K$ is closed under finite zigzag products. Then $K$ has the superamalgamation
property.
\end{theorem}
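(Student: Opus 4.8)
The natural route is the discrete (J\'onsson--Tarski) duality between BAOs and frames: use canonicity to pass from the algebras to their ultrafilter frames, build an amalgam on the frame side as a suitable substructure of a product, and pull the construction back with the complex-algebra functor. So, given $\A_1,\A_2\in K$ with embeddings $i_1\colon\A_0\to\A_1$ and $i_2\colon\A_0\to\A_2$ (hence $\A_0\in K$ as well), I would first dualize. Since $K$ is canonical, $\A_j^{+}=\Cm((\A_j)_{+})\in K$, i.e. $(\A_j)_{+}\in L=\Cm^{-1}K$ for $j=0,1,2$. Each $i_j$ dualizes to $\rho_j\colon(\A_j)_{+}\to(\A_0)_{+}$, $\rho_j(F)=i_j^{-1}[F]$, a bounded morphism (p-morphism of frames) by the standard duality, and \emph{surjective} because $i_j$ is injective. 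Then I would form the fibered product
$$\F=\{(u,v)\in(\A_1)_{+}\times(\A_2)_{+}:\rho_1(u)=\rho_2(v)\},$$
a substructure of $(\A_1)_{+}\times(\A_2)_{+}$ with coordinatewise relations. The projections $\pi_1,\pi_2$ of $\F$ onto $(\A_1)_{+},(\A_2)_{+}$ are onto (given $u$, surjectivity of $\rho_2$ supplies $v$ with $\rho_2(v)=\rho_1(u)$, and symmetrically), so $\F$ is a zigzag product of the two frames $(\A_1)_{+},(\A_2)_{+}\in L$; by hypothesis $\F\in L$, whence $\D:=\F^{+}\in K$. This $\D$ will be the amalgam.

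Next I would check that $\pi_1,\pi_2$ are bounded morphisms: ``forth'' is automatic for a substructure of a product, and ``back'' for $\pi_1$ follows from ``forth'' for $\rho_1$ together with ``back'' for $\rho_2$ (apply the latter at $v$, using $\rho_2(v)=\rho_1(u)$ and $\rho_1(u)\,R\,\rho_1(u')$), and symmetrically for $\pi_2$. Hence $\pi_j^{+}\colon\A_j^{+}\to\D$, $X\mapsto\pi_j^{-1}[X]$, is a BAO-embedding, and composing with the canonical embedding $e_j\colon\A_j\to\A_j^{+}$, $a\mapsto\hat a=\{F\in(\A_j)_{+}:a\in F\}$, gives embeddings $m_j:=\pi_j^{+}\circ e_j\colon\A_j\to\D$, with the explicit description $m_1(x)=\{(u,v)\in\F:x\in u\}$ and $m_2(y)=\{(u,v)\in\F:y\in v\}$. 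Commutativity $m_1\circ i_1=m_2\circ i_2$ is then immediate from the defining condition of $\F$: $m_1(i_1(z))=\{(u,v)\in\F:z\in\rho_1(u)\}=\{(u,v)\in\F:z\in\rho_2(v)\}=m_2(i_2(z))$.

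It remains to verify the superamalgamation clause, which I expect to be the main obstacle --- everything before it is duality bookkeeping, while here one genuinely needs the Boolean prime ideal theorem and a short construction. Suppose $x\in A_1$, $y\in A_2$ and $m_1(x)\le m_2(y)$ in $\D$; unwinding the description of $m_1,m_2$, this says: for all ultrafilters $u$ of $\A_1$ and $v$ of $\A_2$ with $i_1^{-1}[u]=i_2^{-1}[v]$, $x\in u$ implies $y\in v$. Put $H_1=\{z\in A_0:x\le i_1(z)\}$, a filter of $\A_0$, and $N_2=\{z\in A_0:i_2(z)\le y\}$, an ideal of $\A_0$; an interpolant is exactly an element of $H_1\cap N_2$. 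If that intersection were empty, the ultrafilter theorem yields an ultrafilter $H^{*}$ of $\A_0$ with $H_1\subseteq H^{*}$ and $H^{*}\cap N_2=\emptyset$. Then $i_1[H^{*}]\cup\{x\}$ has the finite intersection property in $\A_1$ --- otherwise $i_1(z)\wedge x=0$, i.e. $x\le i_1(-z)$, for some $z\in H^{*}$, forcing $-z\in H_1\subseteq H^{*}$ --- so it extends to an ultrafilter $u$ of $\A_1$ with $x\in u$ and $i_1^{-1}[u]=H^{*}$; likewise $i_2[H^{*}]\cup\{-y\}$ has the finite intersection property in $\A_2$ --- otherwise $i_2(z)\le y$, i.e. $z\in N_2$, for some $z\in H^{*}$ --- so it extends to an ultrafilter $v$ of $\A_2$ with $y\notin v$ and $i_2^{-1}[v]=H^{*}$. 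Now $(u,v)\in\F$, $x\in u$, and $y\notin v$, contradicting $m_1(x)\le m_2(y)$. Hence $H_1\cap N_2\neq\emptyset$, so there is $z\in A_0$ with $x\le i_1(z)$ and $i_2(z)\le y$, and $K$ has $SUPAP$. The only extra-duality ingredients are canonicity (to land $(\A_j)_{+}$ in $L$), the zigzag-closure hypothesis (to land $\D$ in $K$), and, for the last paragraph, the Boolean prime ideal theorem.
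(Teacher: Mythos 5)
Your proof is correct. Note, however, that the paper itself offers no proof of this statement: it is quoted as an external result of Marx and used as a black box to derive $SUPAP$ for $TA_\alpha$, so there is no in-paper argument to compare yours against. What you have written is the standard J\'onsson--Tarski duality proof of Marx's theorem, and all the essential points are in place: canonicity puts the ultrafilter frames $(\A_j)_+$ in $L$; the duals $\rho_j$ of the embeddings are surjective bounded morphisms; the fibered product $\F$ has surjective projections (so it is a zigzag product in the paper's sense) and you correctly verify the ``back'' condition for the projections, which is what is actually needed for $\pi_j^+$ to be a BAO-homomorphism rather than merely a Boolean one; and the final separation argument (filter $H_1$ versus ideal $N_2$, extended by the prime filter theorem to an ultrafilter $H^*$ that lifts along both $\rho_1$ and $\rho_2$ to a point of $\F$ violating $m_1(x)\le m_2(y)$) is exactly the right way to extract the interpolant. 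The only cosmetic omission is an explicit appeal to symmetry for the $(j,k)=(2,1)$ clause of $SUPAP$, which your construction supplies for free.
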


\begin{theorem}
The variety $TA_{\alpha}$ has $SUPAP$.
\end{theorem}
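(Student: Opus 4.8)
The plan is to apply the theorem of Marx just quoted, so the task reduces to verifying its two hypotheses for $K = TA_\alpha$: that $TA_\alpha$ is a canonical variety, and that $L = \Cm^{-1}(TA_\alpha)$, the class of frames whose complex algebras satisfy $\Sigma_\alpha$, is closed under finite zigzag products. Canonicity is the easier half: $TA_\alpha = \Mod(\Sigma_\alpha)$ and every equation in $\Sigma_\alpha$ is a Sahlqvist equation (indeed the substitution operators are required to be Boolean endomorphisms — axioms 2,3 — and the remaining axioms 4,5,6 are equations between compositions of the diamonds $s_{ij}$ applied to a single variable, which are visibly Sahlqvist). Hence $TA_\alpha$ is closed under canonical extensions, i.e.\ it is a canonical variety. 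Alternatively one can invoke the first proof of Theorem~(on $TA_\alpha = PTA_\alpha$) together with the fact that $PTA_\alpha$ is generated by complex algebras of frames and is closed under canonical extensions since its defining equations are positive (as observed in the "Another proof" subsection).

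The substantive step is the closure of $L$ under finite zigzag products. So I would take frames $\F_1,\dots,\F_k$, each of type $TA_\alpha$ with $\F_\ell^+ \models \Sigma_\alpha$, let $\F = (V, S_{ij})$ be a zigzag product: a substructure of $\prod_\ell \F_\ell$ with all projections $\pi_\ell : V \to V_\ell$ surjective on $S$ (equivalently, the $S_{ij}$ are the coordinatewise relations restricted to $V$, and each $\pi_\ell$ maps $S_{ij}$ onto the $\ell$-th relation). I must check $\F^+ \models \Sigma_\alpha$. The Boolean axioms are free. For the endomorphism axioms 2,3: $s_{ij}$ on $\F^+$ is a normal additive operator by construction; to see it is a \emph{complete} Boolean endomorphism one wants each relation $S_{ij}$ to be (the graph of) a \emph{bijection} of $V$ — i.e.\ functional and with functional converse — since $s_{ij}(-X) = -s_{ij}(X)$ holds in a complex algebra precisely when $S_{ij}$ is such a bijection. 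In each factor $\F_\ell$, since $\F_\ell^+ \models \Sigma_\alpha$ and axioms 3,4 force $s_{ij}$ to be an involutive endomorphism, $S_{ij}^{\ell}$ is an involutive bijection of $V_\ell$; the coordinatewise relation on $\prod_\ell V_\ell$ is then also an involutive bijection, and one must check its restriction to the zigzag substructure $V$ is still total and surjective on $V$ — this is where the zigzag condition (surjectivity of projections) is used, together with the fact that $S_{ij}$ is defined on $V$ as the restriction of the product relation. Granting $S_{ij}$ is an involutive bijection of $V$, axiom 4 ($s_{ij}s_{ij}x = x$) is immediate, and axioms 5,6 (the commutation and braid relations) reduce to the corresponding identities among the permutations $[i,i{+}1]$ of the index set, which hold coordinatewise in the product of permutations and hence on $V$; equivalently they transfer from the factors because each $\pi_\ell$ is a p-morphism of frames intertwining the $S_{ij}$.

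I expect the \textbf{main obstacle} to be precisely the verification that the restricted relations $S_{ij}$ on the zigzag product $V$ remain \emph{total functions with total converse} — i.e.\ everywhere-defined bijections of $V$, not merely partial injections — since the complex-algebra identities $s_{ij}s_{ij}x = x$ and $s_{ij}(-x) = -s_{ij}(x)$ both fail if $S_{ij}$ is merely a partial bijection. The clean way to handle this is to note that the correct notion of "frame of type $TA_\alpha$" should build in that each $S_{ij}$ is (the graph of) a bijection of $V$ — which is exactly what $\Cm^{-1}(TA_\alpha)$ forces anyway — and then observe that a substructure of a product of such frames on which all projections are surjective inherits the property: given $s \in V$ and a factor index $\ell$, the unique $\F_\ell$-predecessor of $\pi_\ell(s)$ under $S_{ij}^\ell$ exists, and the zigzag/surjectivity condition guarantees these local predecessors assemble to a point of $V$. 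Once this structural point is settled, everything else is routine transfer along the projection p-morphisms, and Marx's theorem delivers $SUPAP$ for $TA_\alpha$.
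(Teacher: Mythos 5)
Your proposal follows the same route as the paper: both reduce the statement to Marx's theorem and then check the two hypotheses, canonicity of $TA_\alpha$ and closure of $\Cm^{-1}(TA_\alpha)$ under finite zigzag products. The difference lies in how the hypotheses are discharged. The paper obtains canonicity from the fact that $\Sigma_\alpha$ consists of positive equations and obtains zigzag closure from the one-line remark that the first-order frame correspondents of positive equations are Horn, hence preserved; you argue canonicity via Sahlqvist and verify zigzag closure by hand, showing that each $S_{ij}$ is a total involutive bijection and that the commutation and braid relations transfer coordinatewise. Your more explicit treatment has the merit of isolating the one genuinely delicate point, which the paper's Horn argument glosses over: a substructure of a product whose projections are merely surjective on universes need \emph{not} be closed under the coordinatewise bijections (take $V=\{(a,a),(a,b),(b,a)\}$ inside the square of a two-element frame carrying the swap; then $(a,a)$ has no successor in $V$ and axiom 3 fails in the complex algebra), so surjectivity alone does not suffice. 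Your argument goes through under the standard (Marx) reading of ``zigzag product'', in which the projections are required to be surjective \emph{bounded} morphisms: the back condition at any single factor, combined with functionality and totality of the factor relations, forces $V$ to be closed under the product bijection, exactly as you sketch in your final paragraph. With that reading of the definition made explicit, your verification is complete and, if anything, more careful than the paper's.
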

\begin{proof}  Since $TA_{\alpha}$ is defined by positive equations then it is canonical.
In this case $L=\Cm^{-1}TA_{\alpha}$ consists of frames $(V, S_{i,j})$
such that if $s\in V$, then $s\circ [i,j]\in V$ and $s\circ [i,j]$ is in $V$.
The first order correspondents of the positive equations translated to the class of frames will be Horn formulas, hence clausifiable
and so $L$ is closed under finite zigzag products. Marx's theorem finishes the proof.
\end{proof}

\subsection*{Remark}

When we add cylindrifications things blow up. For such algebras the class of subdirect products of set algebras is a variety,
but it is not finitely axiomatizable not decidable, and the free algebras are not atomic.
If we do not insist on commutativity of cylindrifiers we get a nice representation theory, witness the theorems of Andreka Resek Thompson
Ferenzci [reference to be provided].

\section{Logical consequences}

Let $\L_n$ denote the fragment of first order logic with $n$ many variables.
A particular language has countably many relation symbols of the form
$R(x_0,..x_{n-1})$, where the variables occur n their natural order and and the $s_{[i,j]}$'s are treated as connectives. A structure $\M$ is specified,
like ordinary first order logic, specifying for every relation symbol an
$n$-ary relation on $M$ the domain of $\M$. Satisfiability is defined inductively the usual way:
For $s\in {}^nM$ and a formula $\phi$, $s\in {}^nM$, $\M\models s_{[i,j]}\phi$ iff
$s\circ [i,j]$ satisfies $\phi$.
For a structure $\M$, a formula $\phi$,  and an assignment $s\in {}^nM$, we write
$\M\models \phi[s]$ if $s$ satisfies $\phi$ in $\M$. We write $\phi^{\M}$  for the set of all assignments satisfying $\phi.$
Then the algebra with universe $\{\phi^{\M}:\phi\in \L\}$ is a set algebra.

Now consider the basic declarative statement in this fragment of first order logic
concerning the truth of a formula in a model under an assignment $s$.
$\M\models \phi[s].$
We can read this from a modal perspective `the formula $\phi$ is true in $\M$ at state $s$'.
Indeed we can replace the above truth definition
with the modal equivalent
$\M\models s_{[i,j]}\phi[s]$ iff there is  $t\in M$ with $t\equiv_{i,j}s$ and $\M\models \phi[t]$
where $\equiv_{i,j}$ is the relation on $^nM$ defined by $s\equiv_{i,j} t$ iff $s\circ [i,j]=t.$
In other words, the substitutions behave like a modal diamond having
$\equiv{i,j}$ as its accessibility relation.
So we can look at set algebras as complex algebras  of frames of the form $(^nU, \equiv_{i,j}).$
Since the semantics of the Boolean connectives in the predicate calculus is the same as in modal logic,
this shows that the inductive clauses in the truth definition of first order logic
neatly fit a modal mould. In fact, the modal disguise of this fragment of first order logic  is so thin,
that there is an absolutely straightforward translation mapping formulas to modal ones.
But we can relativize the set of states to permutable sets of sequences.

Now, bearing this double view in mind,  we consider the multi dimensional modal logic $\L$ corresponding to $TA_n$.
We chose to work with $S_n$ since it has more (metalogical) theorems. The infinite dimensional case is also identical modulo replacing $S_n$
by finite permutation in $^{\alpha}\alpha^{(Id)}$ where $\alpha$ is the dimension.
The proofs of the common theorems are the same underlying the obvious  modifications.

$\L$ has a set $P$ of countably many propositional variables, the Boolean connections and a modality $R_{i,j}$ for every
$i,j\in n$. A frame is a tuple $(V, R_{i,j})$ where $V$ is permutable; called the set of states and $R_{i,j}$ are binary relations on $V$ defined by
$(s,t)\in R_{i,j}$ if $s\circ [i,j]=t$. A model $\M$ is a triple $(V, R_{i,j}, s)$ where $(V,R_{i,j})$ is a frame and $s:P\to \wp(V)$.
The notion of satisfiability in $\M$ of a formula $\phi$ at state $w$ is defined inductively the usual way,
and the semantical relation $\models$ defined accordingly.

Now terms in the language of $TA_n$ translates to formulas also the usual way. One translates effectively the set of axioms of
$TA_n$ to a finite set of formula schema  $Ax$
each of the form of an equivalence. This can be done inductively. For a term $t$ write $\phi_t$ for the corresponding formula schema.
Then we have $TA_n\models t_1=t_2$ iff $Ax\vdash \phi_{t_1}\leftrightarrow \phi_{t_2}$.
We now formulate the metalogical counterparts of our algebraic results using standard machinery of algebraic  logic.

\begin{theorem} $Ax$ with modus ponens  is a finite complete Hilbert-style axiomatization. That is for any set $\Gamma$ of formulas
$\Gamma\models \phi,$ then $\Gamma\vdash \phi$. Furthermore, there is an effective proof of $\phi$.
\end{theorem}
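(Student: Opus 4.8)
The plan is to deduce this completeness theorem from the algebraic representation theorem \ref{rep} via a Lindenbaum--Tarski argument, in the spirit of the algebraic--logic dictionary set up in this section. Soundness --- that $\Gamma\vdash\phi$ implies $\Gamma\models\phi$ --- I would dispose of first in one line: each member of $Ax$ is the formula translation $\phi_{t_1}\leftrightarrow\phi_{t_2}$ of an equation $t_1=t_2$ of $\Sigma_n$, hence is true at every state of every permutable-set model because $\wp(V)\models\Sigma_n$, and modus ponens preserves truth at a state. The real content is completeness.

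For completeness I would argue contrapositively: assume $\Gamma\not\vdash\phi$. Let $\equiv$ be provable equivalence in the calculus given by $Ax$ and modus ponens, and form the Lindenbaum--Tarski algebra $\Fm_{\equiv}=\Fm/\!\equiv$, the operators $S_{[i,j]}$ acting on classes in the obvious way. The first step is to check $\Fm_{\equiv}\in TA_n$: since $Ax$ contains the schematic instances of exactly the equations of $\Sigma_n$ and all of these are provable, $\Fm_{\equiv}$ is a Boolean algebra on which each $S_{[i,j]}$ is a Boolean endomorphism and the presentation relations of $S_n$ hold, i.e.\ $\Fm_{\equiv}\models\Sigma_n$. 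Next let $F_0$ be the Boolean filter of $\Fm_{\equiv}$ generated by $\{\gamma/\!\equiv:\gamma\in\Gamma\}$; a short ``compactness of derivations'' argument gives $\phi/\!\equiv\notin F_0$, for otherwise $\gamma_1\wedge\cdots\wedge\gamma_k\leq\phi$ for finitely many $\gamma_i\in\Gamma$ and then $\Gamma\vdash\phi$. Extend $F_0$ to an ultrafilter $F$ of $\Fm_{\equiv}$ with $-\phi/\!\equiv\in F$.

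I would then feed $F$ into the construction from the proof of theorem \ref{rep}: the map $h\colon\Fm_{\equiv}\to\wp(S_n)$ with $h(z)=\{\xi\in S_n:S_\xi^{\Fm_{\equiv}}(z)\in F\}$ is a homomorphism, and $\wp(S_n)$ is a genuine $\L$-set algebra because $S_n$ is permutable. Since every $\gamma/\!\equiv$ lies in $F$ and $S_{Id}$ is the identity operator, $Id\in h(\gamma/\!\equiv)$ for all $\gamma\in\Gamma$, whereas $Id\in h(-\phi/\!\equiv)$, i.e.\ $Id\notin h(\phi/\!\equiv)$. Now take the model $\M=(S_n,R_{[i,j]},v)$ with $v(p)=h(p/\!\equiv)$ on propositional variables; the remaining step is a truth lemma, proved by induction on formula structure: $\M,w\models\psi$ iff $w\in h(\psi/\!\equiv)$ for all formulas $\psi$ and all states $w$. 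The Boolean cases are immediate, and the modal step uses that $h$ commutes with the $S_{[i,j]}$ and that the axioms $Ax$ faithfully mirror the equations of $\Sigma_n$. Evaluating at $w=Id$ then contradicts $\Gamma\models\phi$, completing the proof of $\Gamma\models\phi\Rightarrow\Gamma\vdash\phi$. I expect the truth lemma to be the only delicate point, and even there the essential fact --- that the diamonds $R_{[i,j]}$ on $S_n$ have exactly the algebraic operators $S_{[i,j]}$ of $\wp(S_n)$ as their complex-algebra counterparts --- is already in place from the set-up of this section; what must be watched is that treating $Ax$ as a schematic system suffices to keep $\Fm_{\equiv}$ inside $TA_n$ without a separate necessitation rule for the $S_{[i,j]}$ (which is harmless, these being Boolean endomorphisms, so $S_{[i,j]}\top=\top$).

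Finally, for the ``effective proof'' clause: since $Ax$ is a finite set of schemas, the predicate ``$d$ is an $Ax$-plus-modus-ponens derivation of $\phi$'' is decidable, so the set of theorems (and, for recursive $\Gamma$, of consequences of $\Gamma$) is recursively enumerable. Moreover, by the local finiteness theorem \ref{locallyfinite} a formula in $m$ propositional variables is valid iff it is satisfied by every algebra of $TA_n$ generated by at most $m$ elements, and all such algebras are finite of cardinality at most $2^{m\cdot n!}$, hence there are only finitely many of them up to isomorphism and validity is decidable. Combining the two, a proof of any valid $\phi$ is produced effectively by unbounded proof search, which terminates by the completeness just established; this yields the ``furthermore'' part.
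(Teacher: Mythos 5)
Your proposal is correct and follows essentially the same route as the paper: form the Lindenbaum--Tarski algebra, verify it lies in $TA_n$, and invoke the representation machinery of theorem \ref{rep} (which you inline as the ultrafilter map $h(z)=\{\xi\in S_n: S_\xi z\in F\}$) to extract a countermodel; the paper quotients by the theory and cites the representation theorem as a black box, whereas you keep the absolute Lindenbaum algebra and carry $\Gamma$ along as a filter, but this is the same argument. You are in fact more thorough than the paper on two points it leaves implicit --- the admissibility issue of closing provability under the $s_{[i,j]}$ without a necessitation rule, and the ``effective proof'' clause via local finiteness and proof search.
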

\begin{proof} We prove that any consistent set $T$ of formula is satisfiable, and indeed satisfiable in a finite model.
Assume that $T$ and $\phi$ are given. Form the Lindenbaum Tarski algebra $\A=\Fm_T$ and let $a=\phi/T$.
We have $a$ is non-zero, because $\phi$ is consistent with $T$.
Let $\B$ be a set algebra with unit $D$ and $f:\A\to \wp(D)$ be a representation
such that $f(a)\neq 0$.
We extract a model $D$ of $T$, with base $M$, from $\B$ as follows.
For  a relation symbol $R$ and $s\in D$, $D,s\models R$ if $s\in f(R(x_0,x_1\ldots ..)/T)$. Here the variables occur in their
natural order.
\end{proof}
\begin{corollary}$\L$ has the finite base property that is if $\phi$ is satisfiable in a model, then it is satisfiable in a finite model
$\L$ is complete with respect to the class of finite frames.
\end{corollary}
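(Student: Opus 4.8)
The plan is to read the finite base property off the proof of the completeness theorem just established, since that proof already produces a model whose set of states is $S_n$, which is finite. First I would reduce completeness with respect to finite frames to the finite base property by the standard contrapositive: if $\phi$ is not derivable from $Ax$ then $\neg\phi$ is consistent, so it suffices to show that every consistent formula is satisfied at some state of a finite model over a permutable set of states; and conversely a formula valid on all finite frames but underivable would contradict this.

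Next, given a consistent formula $\psi$, I would pass to its Lindenbaum--Tarski algebra $\A\in TA_n=\mathbf{Mod}(\Sigma_n)$ and note that $a=\psi/{\equiv}$ is nonzero there. (If one wants $\A$ itself finite one may first restrict to the subalgebra generated by the finitely many propositional letters occurring in $\psi$, which is finite by Theorem \ref{locallyfinite}; but this is not actually needed below.) Applying the representation Theorem \ref{rep}, I would pick an ultrafilter $F$ of $\A$ containing $a$ and set $h(x)=\{\xi\in S_n: s_\xi^{\A}(x)\in F\}$, obtaining a homomorphism $h:\A\to \wp(S_n)$ with $Id\in h(a)$; the crucial point is that the set of states $S_n$ is finite regardless of the size of $\A$. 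From this I would build the finite frame $(S_n,(R_{ij})_{i,j\in n})$ with $(\sigma,\tau)\in R_{ij}$ iff $\sigma\circ[i,j]=\tau$ (a permutable, finite set of states) together with the valuation sending each propositional letter $p$ of $\psi$ to $h(p/{\equiv})$, giving a finite model $\M$.

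Finally I would prove the truth lemma: by induction on the structure of the subformulas of $\psi$, $\M,\xi\models\chi$ if and only if $\xi\in h(\chi/{\equiv})$, using that $h$ is a Boolean homomorphism commuting with the operators $S_{ij}$ and that the modality $R_{ij}$ has exactly $S_{ij}$ as its complex-algebra operation. In particular $\M, Id\models\psi$, so $\psi$ is satisfied in a finite model; taking $\psi=\neg\phi$ then yields completeness with respect to finite frames. I do not expect any real obstacle here: the representation over the finite base $S_n$ is already supplied by Theorem \ref{rep} and the finiteness of finitely generated algebras by Theorem \ref{locallyfinite}, so the only remaining work is the routine truth-lemma induction, where the match between the semantics of $R_{ij}$ and the operation $S_{ij}$ is built into the translation from terms to formula schemas.
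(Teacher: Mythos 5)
Your proof is correct, but it takes a different route from the paper's. The paper disposes of this corollary in one line by appealing to local finiteness (Theorem \ref{locallyfinite}): the Lindenbaum--Tarski algebra of the finitely many propositional letters occurring in $\phi$ is finite, and one then reads off a finite model. You instead observe that the representation supplied by Theorem \ref{rep} always lands in $\wp(S_n)$, so the set of states of the extracted model is $S_n$ \emph{regardless} of the size of the algebra; local finiteness is never invoked. Your route is arguably the more direct one here, and it buys something extra: a uniform bound of $n!$ on the size of the satisfying frame, independent of $\phi$, whereas the local-finiteness argument by itself only tells you the algebra is finite and still needs the representation over a finite permutable set (in effect the same $S_n$) to produce a finite frame. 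Conversely, the paper's appeal to local finiteness is the argument that generalizes to settings where the canonical finite base $S_n$ is not available. Your truth-lemma step is the standard correspondence between the diamond $R_{ij}$ and the complex-algebra operation $S_{ij}$ (using that $[i,j]$ is an involution), and the frame $(S_n,R_{ij})$ is indeed a permutable finite frame, so there is no gap.
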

\begin{proof} Since the variety considered is locally finite.
\end{proof}
\begin{theorem}
\begin{enumarab}
\item $\L$ has the Craig interpolation property. That is to say, if $\phi, \psi$ are formulas such that $\phi\to \psi$ then there is a formula
$\theta$ in their common vocabulary such that $\vdash \phi\to \theta$ and $\theta\to \phi.$
\item $\L$ has the joint consistency property
that if $T_1$ and $T_2$ are theories such that $T_1\cap T_2$ is consistent then $T_1\cup T_2$ is consistent.
\item $\L$ has the Beth definability property
\end{enumarab}
\end{theorem}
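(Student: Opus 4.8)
The plan is to transfer each of (i)--(iii) to a statement about the variety $TA_n$ and its free algebras, all of which has effectively been proved above, and then to come back to the logic via the completeness theorem established earlier in this section. Throughout I would use that $TA_n$ is a variety, so $\Fr_X TA_n\in TA_n$ for every set $X$, and that, by the translation $t\mapsto\phi_t$ together with completeness, for formulas over a set of propositional variables $X$ and the corresponding named elements $[\chi]$ of $\A=\Fr_X TA_n$ one has $\vdash\chi_1\to\chi_2$ iff $[\chi_1]\le[\chi_2]$.

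For (i), assume $\models\phi\to\psi$. Let $X_1$ and $X_2$ be the sets of propositional variables occurring in $\phi$ and in $\psi$, put $X=X_1\cup X_2$, and work inside $\A=\Fr_X TA_n$. Then $[\phi]\in\Sg^{\A}X_1$, $[\psi]\in\Sg^{\A}X_2$, and by completeness $[\phi]\le[\psi]$. The interpolation theorem for free algebras proved just before Corollary~\ref{SUPAP} (with $V=TA_n$) gives $b\in\Sg^{\A}(X_1\cap X_2)$ with $[\phi]\le b\le[\psi]$. Since $\Sg^{\A}(X_1\cap X_2)$ is exactly the set of elements named by formulas whose propositional variables lie in $X_1\cap X_2$ (the modalities $R_{ij}$ being logical symbols, present in every vocabulary), $b=[\theta]$ for such a $\theta$, and $[\phi]\le[\theta]\le[\psi]$ yields $\vdash\phi\to\theta$ and $\vdash\theta\to\psi$ by completeness again; thus $\theta$ is the required interpolant.

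Parts (ii) and (iii) I would obtain from (i) by the classical derivations, which apply here because $\L$ is a Boolean-based logic with modus ponens for which the deduction theorem holds and whose consequence relation is finitary (both consequences of the Hilbert axiomatization and completeness theorem above). For (ii): if $T_1\cap T_2$ were consistent while $T_1\cup T_2$ were not, compactness would yield finite conjunctions $\sigma_1,\sigma_2$ of members of $T_1,T_2$ with $\vdash\sigma_1\to\neg\sigma_2$; an interpolant $\theta$ supplied by (i) would lie in the common vocabulary of $T_1$ and $T_2$ and satisfy $T_1\cap T_2\vdash\theta$ and $T_1\cap T_2\vdash\neg\theta$, contradicting consistency. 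Alternatively one amalgamates the Lindenbaum--Tarski algebras $\Fm_{T_1}$ and $\Fm_{T_2}$ over $\Fm_{T_1\cap T_2}$ by Corollary~\ref{SUPAP} and extracts a common model, as in the completeness proof. For (iii): if $T$ implicitly defines a propositional variable $p$ over the remaining variables, then with a fresh $p'$ one has $T(p)\cup T(p')\vdash p\leftrightarrow p'$; picking finite $\sigma\subseteq T$ with $\vdash(\sigma(p)\wedge p)\to(\sigma(p')\to p')$ and applying (i) produces a $\theta$ in the common ($p,p'$-free) vocabulary with $T\vdash p\to\theta$ and $T\vdash\theta\to p$, an explicit definition.

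The only real work is bookkeeping: identifying ``common vocabulary of formulas'' with the subalgebra $\Sg^{\A}(X_1\cap X_2)$ of the free algebra, and recording that the logic has the finitariness and deduction-theorem features needed to run the compactness arguments in (ii) and (iii). The substantive mathematics --- the interpolation property of the (arbitrarily generated) free algebras of $TA_n$, equivalently superamalgamation --- is already in hand from the theorem on $\Fr_XV$ and Corollary~\ref{SUPAP}, so no new obstacle beyond these routine adaptations is expected.
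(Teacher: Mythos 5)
Your proof of (i) is correct and, in substance, the same as the paper's: the paper disposes of the whole theorem in one line by citing Corollary~\ref{SUPAP}, and the engine behind that corollary is precisely the interpolation theorem for $\Fr_X V$ that you invoke directly, so you have simply unfolded the paper's appeal to superamalgamation into an explicit argument (free-algebra interpolation plus completeness plus the identification of $\Sg^{\A}(X_1\cap X_2)$ with the common-vocabulary formulas), which is arguably more informative than the paper's version. Where you genuinely diverge is in (ii) and (iii): the paper derives these from the algebraic bridge theorems (amalgamation for joint consistency, surjectivity of epimorphisms for Beth), whereas you derive them syntactically from (i) using compactness and the deduction theorem; both routes are standard, yours is more elementary and self-contained, the paper's is shorter but leans on unproved folklore correspondences. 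One caveat on your (ii): from $\vdash\sigma_1\to\theta$ and $\vdash\theta\to\neg\sigma_2$ you get $T_1\vdash\theta$ and $T_2\vdash\neg\theta$, but your claimed conclusion $T_1\cap T_2\vdash\theta$ and $T_1\cap T_2\vdash\neg\theta$ does not follow without assuming that $T_1\cap T_2$ is complete in the common vocabulary (the usual hypothesis of Robinson's theorem). This is not really your error --- the statement as printed in the paper is too weak to be literally true (take $T_1=\{p\}$, $T_2=\{\neg p\}$, so $T_1\cap T_2=\emptyset$ is consistent while $T_1\cup T_2$ is not) --- but you should either add the completeness hypothesis or route (ii) through your alternative suggestion of amalgamating the Lindenbaum--Tarski algebras over $\Fm_{T_1\cap T_2}$, which is closer to what the paper intends.
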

\begin{proof} By theorem \ref{SUPAP}, by noting that $SUPAP$ implies $CP$, and implies that epimorphisms are surjective,
which is the equivalent of Beth definability.
\end{proof}
\begin{definition} Let $T$ be a theory. A set $\Gamma$ is principal if there exists $\phi$ consistent with $T$ such that $T\models \phi\to \Gamma$.
Else $\Gamma$ is non-principal.
\end{definition}

\begin{theorem} If $\Gamma$ is non-principal, then there is a model $D$ of $T$ for which there is no $w$ such that $\M,w\models  \phi$ for all $\phi$
in $\Gamma.$
\end{theorem}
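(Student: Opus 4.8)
The plan is to run the standard algebraic omitting-types argument, passing through the Lindenbaum--Tarski algebra and invoking Theorem \ref{OTT}.

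First I would translate the problem into algebra. Form the Lindenbaum--Tarski algebra $\A=\Fm_T$ exactly as in the completeness proof above; since the language of $\L$ has only countably many propositional variables (relation symbols), $\A$ is a countable member of $TA_n$, and since $T$ is consistent $0^\A\neq 1^\A$. Put $X=\{\gamma/T:\gamma\in\Gamma\}\subseteq A$. I claim that $\Gamma$ being non-principal is exactly the statement $\prod X=0$ in $\Bl\A$. Indeed, a nonzero lower bound $a=\phi/T$ of $X$ in $\Bl\A$ is the same thing as a formula $\phi$ consistent with $T$ satisfying $T\models\phi\to\gamma$ for every $\gamma\in\Gamma$, i.e.\ $T\models\phi\to\Gamma$; so $X$ has no nonzero lower bound precisely when no such $\phi$ exists, which is the definition of non-principality.

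Next, apply Theorem \ref{OTT} to $\A$, the nonzero element $a=1^\A$, and the set $X$ with $\prod X=0$: this yields a permutable set $V$ and a representation $h:\A\to\wp(V)$ with $\bigcap_{x\in X}h(x)=\emptyset$ (the auxiliary clause $h(1)\neq\emptyset$ being automatic). Now extract a model $D$ of $T$ from $h$ exactly as in the proof of the completeness theorem: take $V$ as the set of states, and for each relation symbol $R$ and $s\in V$ declare $D,s\models R$ iff $s\in h\big(R(x_0,\ldots,x_{n-1})/T\big)$, with the variables in their natural order. A routine induction on formulas --- the \emph{truth lemma} --- then gives $\phi^{D}=h(\phi/T)$ for every formula $\phi$ of $\L$: the Boolean cases use that $h$ preserves $\cap$ and $-$, while the modal cases use that $h$ preserves the $S_{ij}$'s, which is precisely how the substitution modalities $s_{[i,j]}$ were translated. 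Letting $\phi$ range over the members of $T$ gives $\phi^{D}=h(1^\A)=V$, so $D\models T$; letting $\phi$ range over $\Gamma$ gives $\bigcap_{\gamma\in\Gamma}\phi^{D}=\bigcap_{x\in X}h(x)=\emptyset$, i.e.\ there is no state $w\in V$ with $D,w\models\phi$ for all $\phi\in\Gamma$. This $D$ is the required model.

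The only genuine work is the truth lemma together with the bookkeeping needed to present the abstract representation $h$ as an honest $\L$-model on a permutable (rather than square) set of states; both are direct transcriptions of the completeness argument already carried out, so I anticipate no real obstacle. I would also remark that Theorem \ref{OTT} is stated for a single non-principal type, which is exactly what is needed here, but by the remark following it the same argument omits countably many non-principal $\Gamma$ simultaneously should that be wanted.
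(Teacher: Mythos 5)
Your proposal is correct and follows exactly the route the paper intends: the paper's own proof is just the citation ``By theorem \ref{OTT}'', and you have supplied precisely the bookkeeping that citation suppresses (forming $\Fm_T$, identifying non-principality of $\Gamma$ with $\prod X=0$, and extracting the model via the truth lemma as in the completeness proof). No discrepancy with the paper's argument.
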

\begin{proof} By theorem \ref{OTT}
\end{proof}
The above theorem extends to omitting $< covK$ types.
\begin{definition} Let $T$ be a given $\L$ theory.
\begin{enumarab}
\item A formula $\phi$ is said to be complete in $T$ iff for every formula $\psi$ exactly one of
$$T\models \phi\to \psi, \\ T\models \phi\to \neg \psi$$
holds.
\item A formula $\theta$ is completable in $T$ iff there is a complete formula $\phi$ with $T\models \phi\to \theta$.
\item $T$ is atomic iff if every formula consistent with $T$ is completable in $T.$

\item A model $D$ of $T$ is atomic iff for every $s\in V$, there is a complete formula $\phi$ such that $V, s\models \phi.$
\end{enumarab}
\end{definition}

\begin{theorem} If $T$ is atomic, then $T$ has a model $D$, such that for any state $w$ there is an atomic formula $\phi$ such that
$D,w\models \phi$.
\end{theorem}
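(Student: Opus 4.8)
The plan is to pass to the Tarski--Lindenbaum algebra and invoke the complete representability of atomic transposition algebras, Theorem \ref{com}.

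First I would form $\A=\Fm_T$, the Tarski--Lindenbaum algebra of $T$ in the language $\L$; it belongs to $TA_n$ (which, by the results of \S3, equals the variety $\mathbf{HSP}RTA_n$ corresponding to $\L$), and its universe consists of the classes $\phi/T$ of formulas. The elementary observation is that $T$ being atomic translates exactly into $\A$ being atomic: a formula $\phi$ is complete in $T$ iff $\phi/T$ is an atom of $\Bl\A$, and the clause ``every formula consistent with $T$ is completable in $T$'' says precisely that every non-zero element of $\A$ lies above an atom. Hence $\Bl\A$ is atomic, and Theorem \ref{com} yields a complete representation $f\colon\A\to\wp(V)$ for some permutable $V$. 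By the lemma preceding Theorem \ref{com}, a complete representation is atomic, so $f^{-1}(s)$ is a principal ultrafilter for every $s\in V$ and, writing $X=\At\A$, we have $\bigcup_{x\in X}f(x)=V$.

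Next I would extract a model $D$ from $f$ exactly as in the proof of the completeness theorem: the base $M$ of $D$ is the base of the set algebra $\wp(V)$ (in the presence of diagonals one first factors the base by the congruence $i\sim j\iff \diag{i}{j}$ lies in the relevant ultrafilter, as in the proof of Theorem \ref{infinite}), and each relation symbol $R$ is interpreted by declaring $D,s\models R$ iff $s\in f\bigl(R(x_0,\dots,x_{n-1})/T\bigr)$, the variables in their natural order. A routine truth lemma, proved by induction on the structure of $\phi$ and using that $f$ is a homomorphism intertwining the connectives $s_{[i,j]}$ with the accessibility relations $\equiv_{i,j}$ on $V$, gives $\phi^{D}=f(\phi/T)$ for every formula $\phi$; in particular $D\models T$, since every sentence of $T$ is mapped to $1^{\wp(V)}=V$.

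Finally, given a state $w\in V$, the equality $\bigcup_{x\in X}f(x)=V$ produces an atom $x=\psi/T\in X$ with $w\in f(x)=\psi^{D}$; as observed above $\psi$ is then a complete (i.e.\ atomic) formula with $D,w\models\psi$, so $D$ is the desired atomic model. The only non-cosmetic step is the truth lemma, i.e.\ checking clause by clause that the modal semantics of $\L$ is faithfully mirrored by the set-theoretic operations of $\wp(V)$ under $f$; everything else is bookkeeping, and the argument is uniform in the dimension, the infinite-dimensional case being obtained by replacing the permutation group $S_n$ throughout with the group of finite permutations in ${}^{\alpha}\alpha^{(Id)}$.
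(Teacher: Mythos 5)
Your proposal is correct and follows essentially the same route as the paper, which simply cites Theorem \ref{com}: pass to the Tarski--Lindenbaum algebra, note that atomicity of $T$ is atomicity of that algebra, obtain a complete (hence atomic) representation with $\bigcup_{x\in \At\A}f(x)=V$, and read off an atomic model. You have merely supplied the routine details (the truth lemma and the extraction of the model) that the paper leaves implicit.
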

\begin{proof} From theorem \ref{com}
\end{proof}

\end{document}